\documentclass[UTF-8,reqno]{amsart}
\usepackage{enumerate}
\usepackage[margin=1.2in]{geometry}
\usepackage{amssymb,url,color, booktabs,nccmath}
\usepackage{mathrsfs}
\usepackage{enumitem}
\usepackage{graphicx}
\usepackage{tikz}
\usepackage{amsthm}
\usepackage{ulem}
\usepackage{color}
\usepackage[colorlinks=true]{hyperref}
\hypersetup{
    linkcolor=blue,          % color of internal links
    citecolor=red,        % color of links to bibliography
    filecolor=blue,      % color of file links
    urlcolor=cyan
}
\newif\ifshowrevisioncolors
\showrevisioncolorsfalse
\newif\ifshowbluerevisions
\showbluerevisionsfalse
\definecolor{darkergreen}{rgb}{0.0, 0.5, 0.0}
\newif\ifshowgreenrevisions
\showgreenrevisionstrue
\definecolor{newrevisionviolet}{rgb}{0.55,0.0,0.65}
\newif\ifshownewrevisions
\shownewrevisionsfalse

\ifshowrevisioncolors\else
  \showbluerevisionsfalse
  \showgreenrevisionsfalse
  \shownewrevisionsfalse
  \definecolor{blue}{rgb}{0,0,0}
  \definecolor{red}{rgb}{0,0,0}
  \definecolor{magenta}{rgb}{0,0,0}
  \definecolor{darkergreen}{rgb}{0,0,0}
  \definecolor{newrevisionviolet}{rgb}{0,0,0}
  \hypersetup{linkcolor=black,citecolor=black,filecolor=black,urlcolor=black}
\fi

\newcommand{\blue}{\ifshowrevisioncolors\ifshowbluerevisions\color{blue}\fi\fi}
\newcommand{\red}{\ifshowrevisioncolors\color{red}\fi}
\newcommand{\rev}[1]{\ifshowrevisioncolors\ifshowgreenrevisions\textcolor{darkergreen}{#1}\else#1\fi\else#1\fi}
\newenvironment{revblock}{\ifshowrevisioncolors\ifshowgreenrevisions\color{darkergreen}\fi\fi}{}
\newcommand{\newrev}[1]{\ifshowrevisioncolors\ifshownewrevisions\textcolor{newrevisionviolet}{#1}\else#1\fi\else#1\fi}
\newenvironment{newrevblock}{\ifshowrevisioncolors\ifshownewrevisions\color{newrevisionviolet}\fi\fi}{}

\long\def\zhu#1{}
\long\def\zhao#1{}
\long\def\wang#1{}

\numberwithin{equation}{section}

\newcommand{\be}{\begin{eqnarray}}
\newcommand{\ee}{\end{eqnarray}}
\newcommand{\ce}{\begin{eqnarray*}}
\newcommand{\de}{\end{eqnarray*}}
\newtheorem{theorem}{Theorem}[section]
\newtheorem{lemma}[theorem]{Lemma}
\newtheorem{remark}[theorem]{Remark}
\newtheorem{definition}[theorem]{Definition}
\newtheorem{proposition}[theorem]{Proposition}
\newtheorem{Examples}[theorem]{Example}
\newtheorem{corollary}[theorem]{Corollary}
\newtheorem{assumption}{Assumption}[section]
\newtheorem*{theorem*}{Theorem}
\newtheorem*{remark*}{Remark}

\def\eps{\varepsilon}

\def\p{\partial}

\def\[{{\Big[}}
\def\]{{\Big]}}
\def\<{{\langle}}
\def\>{{\rangle}}
\def\({{\Big(}}
\def\){{\Big)}}

\def\bx{{\mathbf{x}}}

\def\sgn{\mbox{\rm sgn}}

\def\dif{{\mathord{{\rm d}}}}

\def\min{{\mathord{{\rm min}}}}

\def\={&\!\!=\!\!&}

\def\bB{{\mathbf B}}

\def\cP{{\mathcal P}}

\def\cR{{\mathcal R}}

\def\mE{{\mathbb E}}

\def\mI{{\mathbb I}}

\def\mN{{\mathbb N}}

\def\mR{{\mathbb R}}

\def\bB{{\mathbf B}}
\def\bP{{\mathbf P}}

\def\1{{\mathbf{1}}}

\def\sF{{\mathscr F}}

\def\geq{\geqslant}
\def\leq{\leqslant}
\def\ge{\geqslant}
\def\le{\leqslant}

\def\eps{\varepsilon}

\def\p{\partial}

\def\[{{\Big[}}
\def\]{{\Big]}}
\def\<{{\langle}}
\def\>{{\rangle}}
\def\({{\Big(}}
\def\){{\Big)}}

\def\bx{{\mathbf{x}}}

\def\sgn{\mbox{\rm sgn}}

\def\dif{{\mathord{{\rm d}}}}

\def\min{{\mathord{{\rm min}}}}

\def\={&\!\!=\!\!&}
\def\bt{\begin{theorem}}
\def\et{\end{theorem}}
\def\bl{\begin{lemma}}
\def\el{\end{lemma}}
\def\br{\begin{remark}}
\def\er{\end{remark}}
\def\bx{\begin{Examples}}
\def\ex{\end{Examples}}
\def\bd{\begin{definition}}
\def\ed{\end{definition}}
\def\bp{\begin{proposition}}
\def\ep{\end{proposition}}
\def\bc{\begin{corollary}}
\def\ec{\end{corollary}}

\def\geq{\geqslant}
\def\leq{\leqslant}
\def\ge{\geqslant}
\def\le{\leqslant}

\def\bP{{\mathbf P}}

\def\<{\langle} \def\>{\rangle}

\allowdisplaybreaks

\begin{document}

\title[Nonlinear Kinetic Fokker-Planck Equations]{\large Kinetic Fokker-Planck Equations with Nonlinear Diffusion} 

\author[Zimo Hao]{\large Zimo Hao}
\address[Z. Hao]{School of Mathematics and Statistics, Beijing Institute of Technology, Beijing 100081, China}
\email{zmhao@bit.edu.cn}

\author[Zhengyan Wu]{\large Zhengyan Wu}
\address[Z. Wu]{Department of Mathematics, Technische Universit\"at M\"unchen, Boltzmannstr. 3, 85748 Garching, Germany}
\email{wuzh@cit.tum.de} 

\author[Xicheng Zhang]{\large Xicheng Zhang}
\address[X. Zhang]{School of Mathematics and Statistics, Beijing Institute of Technology, Beijing 100081, China}
\email{XichengZhang@gmail.com}

\begin{abstract}
We study existence, regularity, and uniqueness for the nonlinear kinetic
Fokker--Planck equation
$$
    \partial_t f=\Delta_v\Psi(f)-v\cdot\nabla_x f,
    \qquad f|_{t=0}=f_0,
$$
on $\mathbb R^{2d}$. In the model case $\Psi(r)=r^s$, this equation couples
nonlinear fast-diffusion/porous-medium type diffusion with kinetic transport.
A distinctive feature is that the diffusion acts only in the velocity variable
$v$, so that compactness in the spatial variable $x$ cannot be obtained from
standard elliptic estimates and must instead be recovered through the
hypoelliptic structure.

Under general structural assumptions on $\Psi$, including the fast-diffusion
powers $\Psi(r)=r^s$ with $s\in(0,1)$, we construct nonnegative weak solutions
and prove quantitative anisotropic Besov regularity estimates. Under an
additional mass-critical growth condition on the fast-diffusion side, the
constructed weak solution preserves mass, admits a renormalized kinetic
formulation, and is unique in the $L^1$-class of mass-preserving
renormalized kinetic solutions. In the power-law case $\Psi(r)=r^s$, this
condition is precisely $s\ge 1-1/d$ when $d\ge2$, while in dimension $d=1$
the whole fast-diffusion range $s\in(0,1)$ is covered.

The main analytic ingredient is a parameter-dependent smoothing estimate for
the kinetic semigroup generated by
$$
    \Psi'(\zeta)\Delta_v - v\cdot\nabla_x ,
$$
which quantitatively tracks the dependence on the kinetic level $\zeta$.
Combined with the kinetic formulation, this estimate yields compactness in
both spatial and velocity variables for the nonlinear hypoelliptic problem.
As an application, we also obtain martingale-problem solutions to the
associated distributional-density dependent stochastic differential equation.
\end{abstract}

\keywords{\rev{kinetic porous-medium type equation; nonlinear Fokker--Planck dynamics; kinetic formulation; anisotropic Besov estimates; well-posedness}}

\date{\today}

\maketitle

\setcounter{tocdepth}{1}

\tableofcontents

\section{Introduction}

In this paper, we develop an existence, regularity, and $L^1$-uniqueness
theory for the nonlinear kinetic Fokker--Planck equation
\begin{equation}\label{PDE-0}
    \partial_t f=\Delta_v\Psi(f)-v\cdot\nabla_x f,
    \qquad f(0)=f_0,
\end{equation}
on $\mR^{2d}$. Here $\Psi:[0,\infty)\to\mR$ is a nonlinear diffusion
function, $f=f(t,x,v):[0,\infty)\times\mR^{2d}\to[0,\infty)$, and
$f_0\in L^1(\mR^{2d})\cap L^2(\mR^{2d})$ is nonnegative. The precise
assumptions on $\Psi$ and $f_0$ will be stated below.

The guiding model is the power-law nonlinearity
\begin{equation*}
    \Psi(\zeta)=\zeta^s .
\end{equation*}
For $s\in(0,1)$ this is the fast-diffusion regime, while $s>1$ corresponds
to the porous-medium regime. Thus \eqref{PDE-0} couples nonlinear diffusion
with kinetic transport. The essential feature, and the main source of
difficulty, is that the diffusion acts only in the velocity variable $v$.
The spatial variable $x$ can therefore be regularized only indirectly,
through the hypoelliptic interaction between $\Delta_v$ and
$-v\cdot\nabla_x$.

This makes \eqref{PDE-0} fundamentally different from both classical
porous-medium type equations and linear kinetic equations. In a fully parabolic
phase-space equation, the natural energy estimate controls derivatives in
all variables and gives compactness by standard Sobolev and Aubin--Lions
arguments. For \eqref{PDE-0}, the corresponding energy estimate controls
only $\nabla_vH(f)$, where
\begin{equation*}
    H(r):=\int_0^r\sqrt{\Psi'(\zeta)}\,d\zeta .
\end{equation*}
No derivative in $x$ is obtained at the energy level. On the other hand,
linear kinetic theory recovers spatial smoothing through the semigroup
generated by $\Delta_v-v\cdot\nabla_x$, but this theory relies on a fixed
diffusion coefficient. In \eqref{PDE-0}, the effective diffusion coefficient
depends on the unknown density.

The analytical theory of kinetic porous-medium type equations of the form
\eqref{PDE-0} is still largely undeveloped. One of the few available results
is the recent work \cite{BCD26}, which studies the fundamental solution in
the special power-law case $\Psi(\zeta)=\zeta^s$ for
\begin{equation*}
    s\in(1-\frac1d,1)\cup(1,1+\frac1d).
\end{equation*}
Much more recently, in \cite{BDM26}, the unique nonnegative weak solution was constructed for $s>1-\frac1d$.

The present paper provides a general solution theory under structural
assumptions on $\Psi$. These assumptions include all fast-diffusion powers
$\Psi(\zeta)=\zeta^s$ with $s\in(0,1)$, as well as the regularized power
nonlinearities
\begin{equation*}
    \Psi(\zeta)=(1+\zeta)^\alpha-1,\qquad \alpha\in(0,2).
\end{equation*}

Our first main result is the existence of nonnegative weak solutions,
together with quantitative anisotropic Besov regularity estimates in the
phase variables $(x,v)$. These estimates provide the compactness missing
from the basic energy inequality. Under an additional mass-critical growth
condition on the fast-diffusion side, the constructed weak solution
preserves mass, admits a renormalized kinetic formulation, and is unique in
the $L^1$-class of mass-preserving renormalized kinetic solutions. In the
model case $\Psi(\zeta)=\zeta^s$ with $s\in(0,1)$, this condition is exactly
\begin{equation*}
    s\ge 1-\frac1d
\end{equation*}
when $d\ge2$. In particular, the uniqueness theory reaches the critical
endpoint $s=1-\frac1d$. In dimension $d=1$, the whole fast-diffusion range
$s\in(0,1)$ is covered.

The main new analytic ingredient is a parameter-dependent estimate for the
kinetic semigroup generated by
\begin{equation*}
    \Psi'(\zeta)\Delta_v-v\cdot\nabla_x .
\end{equation*}
The kinetic formulation linearizes the nonlinear diffusion at each kinetic
level $\zeta$, but it also produces a kinetic measure entering through a
$\zeta$-derivative. Consequently, one needs smoothing estimates which are
quantitative in $\zeta$ and stable under differentiation with respect to
$\zeta$. This parameter-dependent hypoelliptic estimate allows us to
recover compactness in both $x$ and $v$, despite the absence of direct
spatial diffusion. As an application, we also obtain martingale-problem
solutions to the associated distributional-density dependent stochastic
differential system.
\subsection{Background from probability theory and statistical mechanics} 
Before stating the precise assumptions and results, we recall the
probabilistic origin of \eqref{PDE-0}. This discussion is formal, but it
explains why the coefficient $\Psi(f)/f$ and the kinetic degeneracy are
natural from the viewpoint of distributional-density dependent dynamics.

Equation \eqref{PDE-0} admits a natural probabilistic interpretation and can be viewed as the macroscopic mean-field limit of a class of interacting particle systems. Formally,
 \eqref{PDE-0} can be interpreted as a nonlinear Fokker--Planck equation associated with the distributional-density dependent stochastic differential system
\begin{align}\label{dDSDE}
\begin{split}
&dX(t)=V(t)dt,\\
&dV(t)=\sqrt{2a(f(t,X(t),V(t)))} \mI_{d\times d}\,dB(t),
\end{split}
\end{align}
with initial $(X(0),V(0))$ distributed according to the density $f_0$, where $B(t)$ is a standard $d$-dimensional Brownian motion and
$$
a(\zeta):=\frac{\Psi(\zeta)}{\zeta}1_{\zeta>0}.
$$
Here $f(t,\cdot,\cdot)$ denotes the density of $(X(t),V(t))$. Applying It\^o's formula to a test function $\varphi\in C^{\infty}_c(\mathbb{R}^{2d})$ and using that the diffusion matrix in the velocity variable is $2a(f)I_{d\times d}$, one obtains formally
$$
\frac{d}{dt}\int_{\mathbb R^{2d}}\varphi(x,v)f(t,x,v)\,dx\,dv
=
\int_{\mathbb R^{2d}}
\bigl(v\cdot\nabla_x\varphi+a(f)\Delta_v\varphi\bigr)f\,dx\,dv.
$$
Since $a(f)f=\Psi(f)$, this is precisely the weak formulation of \eqref{PDE-0}.

The same computation can be viewed from an interacting-particle perspective. For $N\geq 1$, let $(B_i)_{i=1}^N$ be independent $d$-dimensional Brownian motions and consider the second-order system
\begin{align*}%\label{particle-system-intro}
\begin{split}
dX_i(t) &= V_i(t)\,dt,\\
dV_i(t) &= \sqrt{2a\bigl(u^N_t(X_i(t),V_i(t))\bigr)}\,\mI_{d\times d}\,dB_i(t),
\qquad i=1,\ldots,N,
\end{split}
\end{align*}
where $u^N_t := \phi_{\delta_N} \ast \pi^N_t$ is the regularized empirical density, with $\phi_\delta$ a smooth mollifier on $\mathbb{R}^{2d}$ and
$$
\pi^N_t := \frac{1}{N}\sum_{i=1}^N \delta_{(X_i(t),V_i(t))}.
$$
In this setting, the interaction becomes localized in phase space in the limit through the regularizing effect of the empirical density. For every $\varphi\in C_c^\infty(\mathbb R^{2d})$, It\^o's formula gives
\begin{align*}
d\langle \pi^N_t,\varphi\rangle
&=
\left\langle \pi^N_t,
v\cdot\nabla_x\varphi
+a(u^N_t)\Delta_v\varphi
\right\rangle dt
+dM^{N,\varphi}_t,
\end{align*}
where the martingale $M^{N,\varphi}$ has quadratic variation of order $N^{-1}$. Therefore, if the empirical measures converge in the mean-field limit $f$ and if the smoothing scale $\delta_N$ is chosen so that $u^N_t\to f(t)$, then the martingale term disappears and the drift converges formally to
$$
\int_{\mathbb R^{2d}}
\bigl(v\cdot\nabla_x\varphi+a(f)\Delta_v\varphi\bigr)f\,dx\,dv
=
\int_{\mathbb R^{2d}}
\bigl(v\cdot\nabla_x\varphi\, f+\Psi(f)\Delta_v\varphi\bigr)\,dx\,dv.
$$
After integration by parts, the limiting one-particle density is expected to solve
$$
\partial_t f+v\cdot\nabla_x f=\Delta_v(a(f)f)=\Delta_v\Psi(f),
$$
which is exactly \eqref{PDE-0}. This formal derivation places \eqref{PDE-0} in the class of nonlinear Fokker--Planck equations and McKean--Vlasov type limits, but with the additional kinetic degeneracy that diffusion acts only in the velocity variable.

\iffalse
it can be viewed as the mean-field limit of the second-order interacting particle system
\begin{align}
&dX_i=V_i dt,\notag\\
&dV_i=\sqrt{2a(\phi_{\delta(N)}\ast\pi_N)}\,dB_i,\notag\\
&(X_i(0),V_i(0))=(X_{i,0},V_{i,0}),\quad i=1,\dots,N,
\end{align}
where $(B_i)_{i\geq1}$ is a sequence of independent $d$-dimensional Brownian motions, $a:\mathbb{R}\rightarrow\mathbb{R}_{\geq0}$ is a nonlinear function given by
$$
a(\zeta):=\frac{\Psi(\zeta)}{\zeta},
$$
and $\pi_N:=\frac{1}{N}\sum_{i=1}^N\delta_{(X_i,V_i)}$ denotes the empirical measure. The kernel $\phi_{\delta(N)}(\cdot):=\delta(N)^{-2d}\phi(\cdot/\delta(N))$ is a standard mollifier on $\mathbb{R}^{2d}$.% If $f_0$ is the (formal) limit of $\phi_{\delta(N)}\ast\frac{1}{N}\sum_{i=1}^N\delta_{(X_{i,0},V_{i,0})}$ and $\Psi(\zeta)=a(\zeta)\zeta$, then the limiting measure $\pi_N$ is expected to solve \eqref{PDE-0} in a weak sense.
\fi

%\rev{To the best of our knowledge, the present work provides the first analytic well-posedness result for density-dependent velocity diffusion in the whole space under the class of nonlinearities specified in Assumption~\ref{A3}.}

\subsection{Main results}
In the following, we state \rev{assumptions on} the coefficient $\Psi$ and initial data $f_0$. 

% Historical draft material removed in the revised version.
% \begin{Examples}
%     For any $\alpha>0$, we let
%     \begin{align*}
%         \Psi(\zeta)=(1+\zeta)^\alpha-1.
%     \end{align*}
%     Then it is easy to see that for any $\ell\in(0,\frac13)$,
%     \begin{align*}
%         \left|\frac{\Psi''(\zeta)}{[\Psi'(\zeta)]^{1+l}}\right|\lesssim_\alpha (1+\zeta)^{-1-\ell(\alpha-1)}\le 1. 
%     \end{align*}
%     In this case, for any $\ell\in(0,\frac{1}{(\alpha-1)\vee0})$,
%     \begin{align*}
%         \int^{f_0}_0|\Psi'(\zeta)|^{-l}d\zeta\lesssim_\alpha  \int^{f_0}_0(1+\zeta)^{-(\alpha-1)\ell}d\zeta\lesssim (1+f_0)^{1-(\alpha-1)\ell}-1\lesssim_{\alpha,\ell} f_0,
%     \end{align*}
%     which implies that
%     \begin{align*}
%         c_0=\int_{\mR^{2d}}\int^{f_0}_0|\Psi'(\zeta)|^{-l}d\zeta d z\lesssim \|f_0\|_{L^1}.
%     \end{align*}
% \end{Examples}

% The previous separate assumptions are unified below.
\begin{assumption}\label{A3}
	For the coefficient $\Psi(\cdot)$, we assume that $\Psi \in C([0,\infty))$, with $\Psi', \Psi'' \in C(0,\infty)$, and that $\Psi(\zeta) > 0$, $\Psi'(\zeta) > 0$, for all $\zeta > 0$. Moreover, there exist constants $0<r_2<r_1<2$, $\lambda \in (1,2]$, $l \in (0,\tfrac{1}{3})$ and $C>0$ such that
\begin{align*}
\Psi(\zeta)\leq C(\zeta^{r_2}+\zeta^{r_1}),\quad \left|\frac{\Psi''(\zeta)}{[\Psi'(\zeta)]^{1+l}}\right| \le C \zeta^{\lambda-2}, \quad \text{for all } \zeta > 0.
\end{align*} 
For the initial data $f_0$, we assume that $f_0\in L^1(\mathbb{R}^{2d})\cap L^2(\mathbb{R}^{2d})$ satisfying 
\begin{align*}
\rev{f_0\ge0\quad \text{a.e.}}\quad \text{ and }\quad c_0:=\int_{\mathbb{R}^{2d}}\int^{f_0}_0|\Psi'(\zeta)|^{-l}d\zeta dz<\infty. 	
\end{align*}

\end{assumption}

%\begin{revblock}
%The parameters in Assumption~\ref{A3} are tied to the semigroup estimates used below. The restriction $l<1/3$ guarantees the integrability of the time singularity $s^{-3l}$, which appears when differentiating the kinetic kernel with respect to the parameter $\zeta$. The exponent $\lambda$ matches the weighted energy estimate for the term $\zeta^{\lambda-2}(\varepsilon+\Psi'_\varepsilon(\zeta))|\nabla_v f_\varepsilon|^2$. Finally, $c_0$ controls the initial layer in the Duhamel representation after the $\partial_\zeta$-estimate is applied to the initial kinetic function.
%\end{revblock}

%The question is that under Assumption \ref{???}, if we can obtain Proposition \ref{prp-uniform-Llambda}?
We give two classes of nonlinearities covered by Assumption~\ref{A3}.

\begin{Examples}
    For any $\alpha\in(0,2)$, we let
    \begin{align*}
        \Psi(\zeta)=(1+\zeta)^\alpha-1.
    \end{align*}
    Then it is easy to see that
    $$
|\Psi(\zeta)|\lesssim \zeta+\zeta^\alpha,
    $$
  and for any $\ell\in(0,\frac13)$ and for $\lambda=2$, 
    \begin{align*}
        \left|\frac{\Psi''(\zeta)}{[\Psi'(\zeta)]^{1+l}}\right|\lesssim_\alpha (1+\zeta)^{-1-\ell(\alpha-1)}\le 1=\zeta^{\lambda-2},\quad \text{for all }\zeta>0. 
    \end{align*}
    In this case, for any $\ell\in(0,\frac13)$,
    \begin{align*}
        \int^{f_0}_0|\Psi'(\zeta)|^{-l}d\zeta&\lesssim_\alpha  \int^{f_0}_0(1+\zeta)^{-(\alpha-1)\ell}d\zeta\\
        &\lesssim_\alpha  \int^{f_0}_0(1\vee\zeta^{\ell})d\zeta\lesssim f_0+f_0^{1+\ell}\lesssim f_0+f_0^2,
    \end{align*}
    which implies that
    \begin{align*}
        c_0=\int_{\mR^{2d}}\int^{f_0}_0|\Psi'(\zeta)|^{-l}d\zeta d z\lesssim \|f_0\|_{L^1(\mathbb{R}^{2d})\cap L^2(\mathbb{R}^{2d})}.
    \end{align*}
\end{Examples}

The fast-diffusion powers are also covered. 
\begin{Examples}
    For any $s\in(0,1)$, we let
    \begin{align*}
        \Psi(\zeta)=\zeta^s.
    \end{align*}
    Then it is easy to see that for any $l\in(0,\frac13)$,
    \begin{align*}
        \left|\frac{\Psi''(\zeta)}{[\Psi'(\zeta)]^{1+l}}\right|\lesssim_s \zeta^{l(1-s)-1}=\zeta^{\lambda-2}\quad \text{with $\lambda:=1+l(1-s)\in(1,2]$}. 
    \end{align*}
    In this case, 
    \begin{align*}
        \int^{f_0}_0|\Psi'(\zeta)|^{-l}d\zeta\lesssim_s  \int^{f_0}_0 \zeta^{(1-s)l}d\zeta\lesssim f_0^{1+(1-s)l}=f_0^\lambda. 
    \end{align*}
    Then it follows from $f_0\in L^1(\mathbb{R}^{2d})\cap L^2(\mathbb{R}^{2d})$ and interpolation inequalities that 
    \begin{align*}
        c_0=\int_{\mR^{2d}}\int^{f_0}_0|\Psi'(\zeta)|^{-l}d\zeta d z\lesssim \|f_0\|_{L^\lambda(\mathbb{R}^{2d})}^\lambda<\infty.
    \end{align*}
\end{Examples}

The conditions in Assumption \ref{A3} involving $l$, $\lambda$, and $c_0$ are tailored to the
parameter-dependent semigroup estimate used in the proof. The restriction
$l<1/3$ is exactly what makes the time singularity generated by the
$\partial_\zeta$-estimate integrable. The exponent $\lambda$ matches the
weighted energy bound for the kinetic measure, while $c_0$ controls the
initial layer in the Duhamel formula.

Now we state our main results concerning the well-posedness and regularity of \eqref{PDE-0}.   
\begin{theorem}[Existence]\label{thm:main}
	 Under the Assumption \ref{A3}, there exists a weak solution of \eqref{PDE-0}, in the sense of Definition \ref{def-weaksolution} below. %{\color{red}\sout{Furthermore, the renormalized kinetic solution is also a weak solution (see Definition \ref{def-weaksolution} later on).}} %\rev{The solution additionally satisfies the mass conservation and a priori estimates stated in \eqref{L2-es} and \eqref{L2-es-def-weak}.}

Moreover, there exists a constant
	\newrev{$C=C(d,p,\beta,l,T,\Psi,\|f_0\|_{L^1\cap L^2},c_0)>0$} such that for any $p\in(1,\frac{2d}{2d-l})$ and $\beta:=2l-4(d-d/p)$,
\begin{equation*}
\sup_{t\in[0,T]}\|f(t)\|_{L^2(\mathbb{R}^{2d})}^2+\int_{0}^{T}\int_{\mathbb{R}^{2d}}|\nabla_v H(f(t,z))|^2dzdt+\int^T_0\|f(t)\|_{\bB^\beta_{p;a}}dt\leq C,
\end{equation*}
where 
\begin{align*}
    H(\zeta):=\int_0^\zeta \sqrt{\Psi'(\zeta')}\dif \zeta',\quad \zeta\ge0. 
\end{align*}
    
\end{theorem}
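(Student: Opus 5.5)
The plan is to construct $f$ as a limit of solutions of a regularized problem. Replace $\Psi$ by $\Psi_\eps$ with $\Psi_\eps'\ge \eps$, and add full diffusion $\eps\Delta_v$ (and possibly $\eps\Delta_x$). Also mollify $f_0$ into smooth, compactly supported, nonnegative data. For each $\eps>0$ the problem
\[
\partial_t f_\eps=\Delta_v\Psi_\eps(f_\eps)+\eps\Delta_v f_\eps-v\cdot\nabla_x f_\eps
\]
is a nondegenerate quasilinear parabolic equation. Classical theory gives a smooth, nonnegative, mass-preserving solution; nonnegativity and mass preservation follow from the maximum principle and integration.

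\textbf{Uniform energy bounds.} Testing with $f_\eps$ kills the transport term, since $\int v\cdot\nabla_x f\, f=0$. Using $\Psi'(f)|\nabla_v f|^2=|\nabla_v H(f)|^2$, this gives uniformly in $\eps$
\[
\sup_t\|f_\eps\|_{L^2}^2+2\int_0^T\!\!\int|\nabla_vH(f_\eps)|^2\le\|f_0\|_{L^2}^2 .
\]
Combined with the $L^1$ bound, this yields the first two terms of the estimate. Next I test with $\int_0^{f}|\Psi'|^{-l}$, which is where $c_0$ enters. I also test with a $\lambda$-type power weight, using $|\Psi''|\lesssim \zeta^{\lambda-2}(\Psi')^{1+l}$. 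Together these give uniform control of the weighted kinetic dissipation measure
\[
m_\eps(t,z,\zeta)=\delta_{\zeta=f_\eps}\,\Psi_\eps'(\zeta)|\nabla_v f_\eps|^2 ,
\]
integrated against $|\Psi'(\zeta)|^{-l}$ and $\zeta^{\lambda-2}$.

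\textbf{Besov bound via the kinetic formulation.} Introduce $\chi_\eps(t,z,\zeta)=1_{0<\zeta<f_\eps(t,z)}$, which satisfies
\[
\partial_t\chi_\eps+v\cdot\nabla_x\chi_\eps-\Psi_\eps'(\zeta)\Delta_v\chi_\eps=\partial_\zeta m_\eps .
\]
For fixed $\zeta$, write Duhamel with the kinetic semigroup $P^{\zeta}_t$ generated by $\Psi'(\zeta)\Delta_v-v\cdot\nabla_x$. This represents $\chi_\eps(t)$ as $P^\zeta_t\chi_\eps(0)$ plus $\int_0^tP^\zeta_{t-s}\partial_\zeta m_\eps(s)\,ds$. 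Integrate in $\zeta$ to recover $f_\eps=\int\chi_\eps\,d\zeta$, and move $\partial_\zeta$ onto the semigroup by integration by parts.

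The parameter-dependent smoothing estimate then enters. I assume it reads as follows: $\|\partial_\zeta P^\zeta_t g\|_{\bB^{\beta}_{p;a}}$ is bounded by $|\Psi''(\zeta)|/\Psi'(\zeta)^{1+l}$ times an integrable singularity $t^{-3l}$ times an $L^1$ norm of $g$. The Sobolev loss $L^1\to L^p$ in anisotropic dimension $4d$ costs $4(d-d/p)$, which accounts for $\beta=2l-4(d-d/p)$ and the constraint $p<\frac{2d}{2d-l}$. The weights are then absorbed by the measure bounds above, and the initial layer by $c_0$. Integrating in $t$ gives $\int_0^T\|f_\eps\|_{\bB^\beta_{p;a}}\le C$ uniformly in $\eps$.

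\textbf{Passing to the limit.} The bounds on $\nabla_vH(f_\eps)$ and on $\bB^\beta_{p;a}$ (with $\beta$ possibly small but positive in the relevant regime), together with time regularity from the equation in a negative Sobolev space, allow an Aubin–Lions/Simon type argument. This gives strong $L^1_{loc}$ compactness, and tightness follows from moment or mass bounds. Strong convergence plus the growth bound $\Psi\le C(\zeta^{r_2}+\zeta^{r_1})$ with $r_1<2$ and the $L^2$ bound give uniform integrability of $\Psi(f_\eps)$. Hence $\Psi(f_\eps)\to\Psi(f)$ in $L^1_{loc}$, and one passes to the limit in the weak formulation. The estimates survive by lower semicontinuity.

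\textbf{Main obstacle.} The hardest step is the Duhamel/Besov estimate. Applying the $\zeta$-derivative to the semigroup produces the singular factor. Matching it exactly with the weights the energy estimates provide, while controlling the boundary terms in $\zeta$ near $0$ where $\Psi'$ may blow up, is delicate.
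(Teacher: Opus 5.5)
Your plan is essentially the paper's proof: regularize $\Psi$ and add $\varepsilon\Delta_v$; get the $\zeta^{\lambda-2}$-weighted dissipation bound from the $L^\lambda$ estimate (Proposition~\ref{prp-uniform-Llambda} with $r=\lambda$); write Duhamel for the kinetic formulation with the level-dependent Kolmogorov semigroup and move $\partial_\zeta$ onto the kernel (the bound you posit is exactly Lemma~\ref{lem-kineticsemigroup-es} with $k=1$, $\ell=l$, proved by scaling the explicit Gaussian density); control the initial layer by $c_0$; and conclude via Aubin--Lions--Simon, Vitali for $\Psi_\varepsilon(f_\varepsilon)$, and lower semicontinuity. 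Two small corrections: at level $\varepsilon$ the Duhamel kernel is built from $\varepsilon+\Psi_\varepsilon'(\zeta)$, not $\Psi'(\zeta)$, so the regularization must satisfy $|\Psi_\varepsilon''|\le C\zeta^{\lambda-2}(\Psi_\varepsilon')^{1+l}$ and $\Psi_\varepsilon'\ge \Psi'\wedge\varepsilon^{-1}$ uniformly in $\varepsilon$, which is what Lemma~\ref{approx-Psi} provides; and $c_0$ enters only through the initial layer $\int_{\mathbb R}P^\varepsilon_t(\zeta)\chi_0\,d\zeta$, so your extra renormalized test with $\int_0^f|\Psi'|^{-l}$ is unnecessary, because the only weight produced by $\partial_\zeta$ is $|\Psi''|/(\Psi')^{1+l}\le C\zeta^{\lambda-2}$.
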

\iffalse
\begin{theorem}[Regularity estimates]
	Under the Assumption \ref{A3}, let $f$ be the renormalized kinetic solution of \eqref{PDE-0} in Theorem \ref{thm:main} with initial data $f_0$. Then there exists a constant
	\newrev{$C=C(d,p,\beta,l,T,\Psi,\|f_0\|_{L^1\cap L^2},c_0)>0$} such that for any $p\in(1,\frac{2d}{2d-l})$ and $\beta:=2l-4(d-d/p)$,
\begin{equation*}
\sup_{t\in[0,T]}\|f(t)\|_{L^2(\mathbb{R}^{2d})}^2+\int_{0}^{T}\int_{\mathbb{R}^{2d}}\Psi'(f(t))|\nabla_v f(t)|^2dzdt+\int^T_0\|f(t)\|_{\bB^\beta_{p;a}}dt\leq C.
\end{equation*}

\end{theorem}
\fi

For uniqueness, we impose an additional condition.
\begin{theorem}[Uniqueness]\label{thm:unique}
	Under Assumption~\ref{A3}, and assuming that $r_2 \geq 1 - 1/d$ if $d \ge 2$, the weak solution constructed in Theorem~\ref{thm:main} is also a renormalized kinetic solution (see Definition~\ref{def-kineticsolution}). Furthermore, the following $L^1$-conservation property holds:
\begin{align}\label{mass-conservation}
    \|f(t)\|_{L^1(\mR^{2d})}
    =
    \|f_0\|_{L^1(\mR^{2d})},
    \qquad \forall t \in [0,T].
\end{align}
Moreover, this renormalized kinetic solution is unique in the class of solutions satisfying \eqref{mass-conservation}.
    %Furthermore, this unique renormalized kinetic solution is also a weak solution (see Definition \ref{def-weaksolution} later on). %\rev{The solution additionally satisfies the mass conservation and a priori estimates stated in \eqref{L2-es} and \eqref{L2-es-def-weak}.}
\end{theorem}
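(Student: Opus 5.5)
We first establish mass conservation and then the renormalized kinetic formulation. Uniqueness follows from an $L^1$-contraction argument of Chen--Perthame type adapted to the kinetic transport.

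\emph{Mass conservation.} We work at the level of the approximating solutions $f_\varepsilon$ (nondegenerate, smooth $\Psi_\varepsilon$) used in the proof of Theorem~\ref{thm:main}. Mass is conserved for each $f_\varepsilon$, so the task is to rule out loss of mass in the limit. This requires uniform tightness in $z=(x,v)$ and equi-integrability near $f=0$.

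For tightness, we test the equation with a cutoff $\phi_R(x,v)=\phi(x/R^2,v/R)$, chosen to match the kinetic scaling. This gives
\[
\frac{d}{dt}\int f\phi_R=\int \Psi(f)\Delta_v\phi_R+\int f\,v\cdot\nabla_x\phi_R .
\]
The transport term is $O(R^{-1})\|f\|_{L^1}$. The diffusion term is bounded by $R^{-2}\int_{R\le |v|\lesssim R}\Psi(f)$. Since $\Psi(f)\lesssim f^{r_2}+f^{r_1}$, the $f^{r_1}$ part is controlled by $L^1\cap L^2$ because $r_1<2$. The delicate part is $f^{r_2}$ with $r_2<1$. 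By H\"older on a phase-space ball of volume $\sim R^{2d}\cdot R^{d}$ (an $x$-extent of $R^2$ in $d$ dimensions and a $v$-extent of $R$), this part is at most $R^{-2}(R^{3d})^{1-r_2}\|f\|_{L^1}^{r_2}$.

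That bound is not small when $r_2$ is close to $1-1/d$, so we localize more carefully. We exploit that the $x$-cutoff can be taken at a larger scale $R'$, independent of $R$, using only the transport term. The $v$-localized Laplacian then costs only $R^{-2}R^{d(1-r_2)}$ from the $v$-volume, with the $x$-integration handled by the uniform $L^1$ bound through H\"older in $v$ only. Concretely, we first integrate in $x$ and set $\rho(v)=\int f\,dx$, using Jensen-type estimates for $\int f^{r_2}dx$. The resulting exponent $-2+d(1-r_2)\le -1<0$ holds precisely when $r_2\ge 1-1/d$, since then $d(1-r_2)\le 1$. At the endpoint we additionally need the tail $\int_{|v|>R}\rho\to0$ to obtain $o(1)$. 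This step is the main obstacle, and I expect the $x$-integration of $f^{r_2}$ to need an extra decay or moment bound. For $d=1$ the condition $-2+(1-r_2)<0$ holds for every $r_2\in(0,1)$. The resulting uniform tightness, combined with the a.e.\ convergence coming from the Besov compactness of Theorem~\ref{thm:main}, gives \eqref{mass-conservation}.

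\emph{Renormalized kinetic formulation.} For the kinetic function $\chi(f)=1_{0<\zeta<f}$, each $f_\varepsilon$ satisfies
\[
\partial_t\chi+v\cdot\nabla_x\chi-\Psi'(\zeta)\Delta_v\chi=\partial_\zeta q_\varepsilon,\qquad q_\varepsilon=\delta(\zeta-f_\varepsilon)\Psi'(f_\varepsilon)|\nabla_vf_\varepsilon|^2 .
\]
The energy bound on $\nabla_vH(f_\varepsilon)$ gives uniform local bounds on $q_\varepsilon$. The weighted bounds (with weight $\zeta^{\lambda-2}$) control the behaviour near $\zeta=0$. We then pass to weak-$*$ limits of measures and identify the limit using strong $L^1$ convergence. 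The parabolic defect measure is handled as in Chen--Perthame: the chain rule holds for $\nabla_v \int_0^f\sqrt{\Psi'}\,\psi\,d\zeta$, and weak lower semicontinuity of the dissipation gives the inequality $q\ge \delta(\zeta-f)\Psi'(f)|\nabla_v f|^2$. Mass conservation also shows that the kinetic measure vanishes at $\zeta\to\infty$ and at $\zeta\to0$ in the required renormalized sense.

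\emph{Uniqueness.} Take two solutions $f,g$ and doubled variables $\chi^1(z,\zeta)$, $\chi^2(z',\eta)$. We compute
\[
\frac{d}{dt}\int\!\!\int\chi^1(1-\chi^2)\varrho_\delta(z-z')\varphi_\gamma(\zeta-\eta)\,dz\,dz'\,d\zeta\,d\eta .
\]
The transport terms cancel up to a commutator of the form $(v-v')\cdot\nabla_x\varrho_\delta$, which we control by choosing anisotropic mollifiers with $x$-scale $\delta_x\gg\delta_v$. The diffusion terms produce, after the standard Chen--Perthame rearrangement, the quantity $-(\sqrt{\Psi'(\zeta)}\nabla_v-\sqrt{\Psi'(\eta)}\nabla_{v'})^2$, which has a good sign. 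The remainder terms are $|\sqrt{\Psi'(\zeta)}-\sqrt{\Psi'(\eta)}|^2\Delta\varrho$, and these are small by the local H\"older continuity of $\sqrt{\Psi'}$ away from $0$. To cut off $\zeta$ near $0$ and $\infty$ we use renormalization, and the mass-preserving class ensures that the cutoff errors vanish. Letting the parameters tend to zero gives $\|f(t)-g(t)\|_{L^1}\le\|f_0-g_0\|_{L^1}$, and hence uniqueness.
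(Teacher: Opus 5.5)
Your renormalized-kinetic part (weak-$*$ limit of $q_\varepsilon$, domination by lower semicontinuity) and your uniqueness part (doubling of variables, good-sign measure term, lower $\zeta$-cutoff removed via $\beta^{-1}q(\mathbb{R}^{2d}\times[\beta/2,\beta]\times[0,T])\to0$ from mass preservation, cf.\ Proposition~\ref{vanish-0-kinetic}) follow the paper. The genuine gap is the mass-conservation step, on which everything else rests.

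\textbf{Why your mass-conservation argument fails.}
\begin{itemize}
\item With the kinetic cutoff $\phi(x/R^2,v/R)$ the support has volume $\sim R^{3d}$. H\"older then only gives $R^{3d(1-r_2)-2}$, which does not decay at $r_2=1-1/d$, as you note.
\item Your repair gives the $x$-cutoff its own scale $R'$. The uniform transport bound $\lesssim (R/R')\|f\|_{L^1}$ forces $R'\gg R$. If you keep the $x$-localization, H\"older brings back the factor $(R')^{d(1-r_2)}$.
\item If instead you remove the $x$-localization, you are left with $\int_{\mathbb{R}^d}f^{r_2}\,dx$ over an unbounded region. For $r_2<1$ this is not controlled by $\int f\,dx$: Jensen needs finite measure, and an $L^1\cap L^2$ function need not have $f^{r_2}$ integrable.
\item At the endpoint the two prefactors are $R^{-2}(R'R)^{d(1-r_2)}=R'/R$ and $R/R'$. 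Their product is constant, so no choice of scales makes both small uniformly in $\varepsilon$. Smallness must then come from the $L^1$-tail of the solution, which you do not control uniformly for the $f_\varepsilon$.
\item Your exponent $-2+d(1-r_2)$ would be negative for every $r_2>1-2/d$, which beats the stated threshold and signals that the bookkeeping is off.
\end{itemize}
Your uniform-tightness route can be made to work strictly above the threshold by taking $R'=R^{1+\theta}$ with $\theta>0$ small. It cannot reach $r_2=1-1/d$, which the theorem includes.

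\textbf{The missing idea.} Run the cutoff argument on the limit $f$ itself, in the already established limiting weak formulation, with the isotropic cutoff $\phi_R=\alpha_R(x)\alpha_R(v)$.
\begin{itemize}
\item By Fatou, $f$ is a single function in $L^1([0,T]\times\mathbb{R}^{2d})$, so the transport term needs no decaying prefactor. Since $|v|\,|\nabla_x\alpha_R(x)|\lesssim1$ on $\{|v|\le2R\}$, that term is bounded by $C\int_0^t\int_{\{|x|\ge R\}}f\to0$.
\item On $A_R=(0,T)\times\{|x|\le2R\}\times\{R\le|v|\le2R\}$, of volume $\sim R^{2d}$, H\"older gives $R^{-2}\int_{A_R}f^{r_2}\lesssim R^{2d(1-r_2)-2}\bigl(\int_{A_R}f\bigr)^{r_2}$.
\item This tends to zero exactly when $r_2\ge1-1/d$, endpoint included, because $A_R\subset\{|v|\ge R\}$ and $f$ is integrable. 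The $f^{r_1}$ part is handled by the $L^1$- and $L^2$-tails.
\end{itemize}
This is Step~5 of Proposition~\ref{prop-existence-weak-pde}.

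\textbf{Two smaller points.} Vanishing of $q$ as $\zeta\to\infty$ follows from the finiteness of $q$, not from mass conservation. You also still need to exclude time atoms of $q$, via the $L^1$-continuity of $\chi$, to have the kinetic formulation for every $t$.
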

{\color{red}
Based on the superposition principle, as a result of Theorem \ref{thm:main}, we have the following existence of nonlinear martingale solutions to \eqref{dDSDE}.
\bc\label{corollary-sde}
Under the Assumption \ref{A3} with $r_2>1-1/d$, we let $f$ be the weak solution constructed in Theorem \ref{thm:main} and assume $f_0$ is a probability density function. Then, for any $T>0$, there is a nonlinear martingale solution to distributional-density dependent SDE \eqref{dDSDE} on $[0,T]$ in the sense of Definition \ref{def:weakSDE} below. 
\ec

}
\subsection{Main analytic ideas}

We now describe the compactness mechanism behind the proof. The basic
energy estimate for \eqref{PDE-0} gives only velocity regularity and hence
does not yield compactness in the spatial variable. The key idea is to use
the kinetic formulation to expose a family of linear hypoelliptic equations
indexed by the kinetic level $\zeta$.

For a smooth nonnegative solution, set
\begin{equation*}
    \chi(t,x,v,\zeta):=\mathbf 1_{\{f(t,x,v)>\zeta>0\}} .
\end{equation*}
Then a formal computation gives
\begin{equation}\label{kinetic-formula-intro}
    \partial_t\chi
    =
    \Psi'(\zeta)\Delta_v\chi
    -
    v\cdot\nabla_x\chi
    +
    \partial_\zeta q,
\end{equation}
where $q$ is called the kinetic measure associated with $f$. At the smooth level,
it is given explicitly by
\begin{equation*}
    q=\delta_{f=\zeta}\Psi'(\zeta)|\nabla_v f|^2
    =
    \delta_{f=\zeta}|\nabla_vH(f)|^2.
\end{equation*}
For smooth approximations this identity is exact. In the limiting equation,
the weak limit of the nonnegative measures may contain an additional defect
measure. For this reason, the renormalized kinetic solution is formulated
through the domination
\begin{equation*}
    q\geq \delta_{f=\zeta}\Psi'(\zeta)|\nabla_v f|^2 ,
\end{equation*}
rather than through equality. This formulation is stable under weak
compactness and is also sufficient for the uniqueness argument.

For each fixed $\zeta>0$, the linear part of
\eqref{kinetic-formula-intro} is generated by the kinetic operator
\begin{equation*}
    \Psi'(\zeta)\Delta_v-v\cdot\nabla_x .
\end{equation*}
Let $P_t(\zeta)$ denote the corresponding kinetic semigroup (see Section \ref{sec-3} for details). It admits the
representation
\begin{equation*}%\label{k-semigroup-intro}
    (P_t(\zeta)h)(x,v)
    =
    (\Gamma_t p_t(\zeta))\ast(\Gamma_t h)(x,v),
\end{equation*}
where $p_t(\zeta)$ is the Gaussian density of the Kolmogorov process with
diffusivity $\Psi'(\zeta)$, and
\begin{equation*}%\label{t-semigroup-intro}
    (\Gamma_t h)(x,v):=h(x-tv,v).
\end{equation*}
Since $f$ is recovered from $\chi$ by integration in the kinetic variable,
Duhamel's formula formally gives
\begin{align*}
    f(t)
    &=
    \int_{\mathbb R_+} \chi(t,\zeta)\,d\zeta  \\
    &=
    \int_{\mathbb R_+} P_t(\zeta)\chi_0(\zeta)\,d\zeta
    +
    \int_0^t\int_{\mathbb R_+}
        (\Gamma_{t-s}p_{t-s}(\zeta))
        \ast
        (\Gamma_{t-s}\partial_\zeta q)
    \,d\zeta\,ds  \\
    &=
    \int_{\mathbb R_+} P_t(\zeta)\chi_0(\zeta)\,d\zeta
    -
    \int_0^t\int_{\mathbb R_+}
        \partial_\zeta(\Gamma_{t-s}p_{t-s}(\zeta))
        \ast
        (\Gamma_{t-s}q)
    \,d\zeta\,ds .
\end{align*}
Here the last identity is the crucial point: the kinetic measure enters as
$\partial_\zeta q$, and after integration by parts in $\zeta$ the derivative
falls on the kinetic kernel. Thus the required estimate is not merely a
standard smoothing estimate for the kinetic semigroup. One needs a
parameter-dependent estimate for
$\partial_\zeta(\Gamma_t p_t(\zeta))$, with constants tracking the
dependence on $\Psi'(\zeta)$ and $\Psi''(\zeta)$.

This is the main new analytic ingredient of the paper. The estimate proved
in Lemma~\ref{lem-kineticsemigroup-es} gives quantitative smoothing for the
semigroup generated by
$\Psi'(\zeta)\Delta_v-v\cdot\nabla_x$, uniformly with respect to the kinetic
level. Combined with the energy control of the kinetic measure, it yields
anisotropic Besov estimates in the phase variables $(x,v)$, with the
kinetic scaling $a=(3,1)$. These estimates recover compactness in both
velocity and space, despite the fact that the original energy inequality
only controls velocity derivatives.

The existence proof is then carried out by regularizing $\Psi$ and adding a
small viscous term $\varepsilon\Delta_v f$. The parameter-dependent
semigroup estimates give uniform Besov bounds for the approximate solutions,
while local time-regularity estimates allow us to apply the
Aubin--Lions--Simon compactness criterion. This yields strong local
compactness and permits passage to the limit in the nonlinear term
$\Psi(f)$.

Finally, the uniqueness proof uses the renormalized kinetic formulation and
a doubling-of-variables argument in the $L^1$ framework. The domination
property of the kinetic measure is enough for the coercive measure term,
while mass preservation gives the small-$\zeta$ decay of the kinetic measure
needed to remove the lower kinetic cutoff. This leads to the $L^1$-stability
estimate in the class of mass-preserving renormalized kinetic solutions.

\subsection{Mass preservation and uniqueness}

The compactness argument described above yields convergence of the
approximating solutions only locally in phase space, for instance in
$L^1([0,T];L^1_{\rm loc}(\mR^{2d}))$. This is sufficient to identify the
nonlinear term $\Psi(f)$ in the weak formulation, but it does not by itself
exclude loss of mass at infinity. At the level of the limiting solution, one
therefore obtains first only the lower semicontinuity bound
\begin{equation*}
    \|f(t)\|_{L^1(\mR^{2d})}
    \le
    \|f_0\|_{L^1(\mR^{2d})},
    \qquad t\in[0,T].
\end{equation*}
To upgrade this inequality to equality, one has to test the equation against
cutoffs approximating the constant function $1$ in the whole phase space.

Let us explain where the critical exponent comes from in the model
fast-diffusion case
\begin{equation*}
    \Psi(\zeta)=\zeta^s,\qquad s\in(0,1).
\end{equation*}
Let $\alpha_R\in C_c^\infty(\mR^d)$ satisfy $0\le \alpha_R\le1$,
$\alpha_R=1$ on $\{|y|\le R\}$, $\alpha_R=0$ on $\{|y|\ge2R\}$, and
\begin{equation*}
    |\nabla\alpha_R|\le \frac{C}{R},
    \qquad
    |\Delta\alpha_R|\le \frac{C}{R^2}.
\end{equation*}
Testing \eqref{PDE-0} with
$\phi_R(x,v):=\alpha_R(x)\alpha_R(v)$, the transport boundary term satisfies
\begin{align*}
\left|
\int_0^t\int_{\mR^{2d}}
    f\,v\cdot\nabla_x\alpha_R(x)\alpha_R(v)\,dz\,dr
\right|
&\lesssim
\int_0^t
\int_{\{R\le |x|\le 2R,\ |v|\le 2R\}}
    f(r,z)\,dz\,dr  \to 0,
\end{align*}
as $R\to\infty$, since $f\in L^1([0,T]\times\mR^{2d})$. Thus no velocity
moment is needed; the order of the cutoffs is enough to remove the transport
boundary contribution.

The nonlinear diffusion term is more delicate. Set
\begin{equation*}
    A_R(t):=(0,t)\times\{|x|\le 2R\}\times\{R\le |v|\le 2R\}.
\end{equation*}
Then
\begin{align*}
\left|
\int_0^t\int_{\mR^{2d}}
    f^s\alpha_R(x)\Delta_v\alpha_R(v)\,dz\,dr
\right|
&\lesssim
\frac1{R^2}\int_{A_R(t)} f^s\,dz\,dr  \\
&\lesssim
R^{2d(1-s)-2}
\left(
    \int_{A_R(t)} f\,dz\,dr
\right)^s .
\end{align*}
Since $A_R(t)\subset (0,t)\times\{|v|\ge R\}$, the last integral tends to
zero as $R\to\infty$. Hence the cutoff argument closes at the endpoint
provided
\begin{equation*}
    2d(1-s)-2\le0,
    \qquad\text{that is,}\qquad
    s\ge 1-\frac1d .
\end{equation*}
This gives the mass-critical threshold in the fast-diffusion regime. For
$d=1$, the condition becomes $s\ge0$, and therefore the whole range
$s\in(0,1)$ is covered. For a general nonlinearity $\Psi$, the additional
growth condition in Theorem~\ref{thm:unique} is precisely the analogue of
this estimate; it allows the diffusion boundary term to be controlled by
the $L^1$-tail and the $L^2$-tail of the solution.

Mass preservation is also a structural input in the uniqueness argument.
Indeed, it implies the small-$\zeta$ decay of the kinetic measure (see Proposition \ref{vanish-0-kinetic} below),
\begin{equation*}
    \lim_{\beta\downarrow0}
    \beta^{-1}
    q\bigl(
        \mR^{2d}\times[\beta/2,\beta]\times[0,T]
    \bigr)
    =0.
\end{equation*}
This estimate is used to remove the lower kinetic cutoff in the
doubling-of-variables argument. Together with the domination property of
the kinetic measure, it leads to the $L^1$-stability estimate for
mass-preserving renormalized kinetic solutions. Thus the same threshold is
responsible both for conservation of mass and for the uniqueness theory.

The exponent $1-\frac1d$ is also consistent with the intrinsic
self-similar scaling of the kinetic fast-diffusion equation
\begin{equation*}
    \partial_t f=\Delta_v f^s-v\cdot\nabla_x f.
\end{equation*}
If one looks for a mass-preserving self-similar profile of the form
\begin{equation*}
    f(t,x,v)
    =
    t^{-\alpha}
    F\left(\frac{x}{t^\beta},\frac{v}{t^\gamma}\right),
\end{equation*}
then the kinetic transport structure gives $\beta=\gamma+1$, the balance
between time derivative and velocity diffusion gives
\begin{equation*}
    \alpha+1=\alpha s+2\gamma,
\end{equation*}
and conservation of mass gives
\begin{equation*}
    \alpha=d(\beta+\gamma)=d(2\gamma+1).
\end{equation*}
Eliminating $\beta$ and $\gamma$ yields
\begin{equation*}
    \alpha=\frac{2d}{1-d(1-s)}.
\end{equation*}
A mass-preserving spreading profile requires $\alpha>0$, which is equivalent
to $s>1-\frac1d$; the endpoint $s=1-\frac1d$ is therefore critical. This
scaling heuristic matches the cutoff argument above and shows that the
condition is not an artifact of the proof.

\subsection{Comments on the literature}

We briefly situate the paper within several related directions. The closest
theme is the study of kinetic porous-medium type equations, while the main
tools come from nonlinear diffusion theory, hypoelliptic regularity for
kinetic equations, and kinetic formulations of nonlinear PDEs. Each of these
theories provides an important ingredient, but none of them directly applies
to \eqref{PDE-0}. The novelty of the present work is to combine nonlinear
diffusion estimates, kinetic hypoelliptic smoothing, and kinetic
$L^1$-stability in a setting where the diffusion coefficient is nonlinear,
possibly degenerate or singular, and tied to the kinetic level.

{\bf Kinetic porous-medium type equations.}
The analytical theory of equations of the form \eqref{PDE-0} is still rather
limited. The recent work \cite{BCD26} studies the fundamental solution in
the special power-law case $\Psi(\zeta)=\zeta^s$ for
\begin{equation*}
    s\in(1-\frac1d,1)\cup(1,1+\frac1d).
\end{equation*}
A much more recently contribution
in this direction is the work of Bouin, Dolbeault, and Mellet~\cite{BDM26},
which studies the nonlinear kinetic Fokker--Planck equation
\begin{equation*}
    \partial_t f+v\cdot\nabla_x f
    =
    \Delta_v f^m+\nabla_v\cdot(vf).
\end{equation*}
They prove well-posedness of weak solutions, establish entropy estimates,
and study the diffusion limit toward macroscopic porous-medium or
fast-diffusion equations. In the fast-diffusion regime, their assumptions
reflect the additional difficulties caused by the heavy tails of the local
equilibria; for instance, on the whole space one needs restrictions of the
form $m>1-\frac1d$ for well-posedness and stronger conditions for the entropy
framework and diffusion limit.

The present paper is complementary to this work in both model and method.
We study the unconfined equation \eqref{PDE-0}, which does not contain the
friction term $\nabla_v\cdot(vf)$ and therefore does not possess the same
local-equilibrium or entropy structure. Instead, our compactness mechanism
is based on the kinetic formulation and on parameter-dependent estimates for
the kinetic semigroup generated by
\begin{equation*}
    \Psi'(\zeta)\Delta_v-v\cdot\nabla_x .
\end{equation*}
This allows us to treat a broad class of nonlinearities $\Psi$, including
all fast-diffusion powers $\Psi(\zeta)=\zeta^s$ with $s\in(0,1)$ at the
level of existence. Moreover, our $L^1$-uniqueness theory reaches the
mass-critical endpoint $s=1-\frac1d$ for $d\ge2$.

{\bf Nonlinear diffusion and porous-medium equations.}
The classical porous-medium equation
\begin{equation*}
    \partial_t u=\Delta(u^m),\qquad m>0,
\end{equation*}
has a rich and well-developed theory. For $m>1$, foundational works such as
\cite{AB79,CP82} established existence, uniqueness, and $L^1$-contraction
properties for weak solutions. Further regularity and free-boundary
phenomena were studied, for instance, in \cite{CF80}; we refer to
\cite{V07} for a comprehensive account of the classical theory.

The range $m\in(0,1)$ corresponds to the fast diffusion equation. This
regime is substantially more delicate because the diffusion becomes singular
near zero, and solutions may exhibit extinction and other phenomena absent
from the porous-medium range. The Cauchy problem, extinction behavior, and
regularity theory have been studied in works such as \cite{HP85,DK07,BV14};
see also \cite{BGV08} for extensions to more general geometric settings.

These results strongly motivate the energy estimates and $L^1$-contraction
mechanisms used in the present paper. However, the classical nonlinear
diffusion theory is parabolic in the same variables in which compactness is
needed. This is precisely what fails for \eqref{PDE-0}: the solution depends
on the full phase variable $(x,v)$, whereas the nonlinear diffusion controls
only the velocity variable $v$. Thus the main compactness problem in $x$ is
not addressed by the standard porous-medium or fast-diffusion theory.

{\bf Hypoelliptic kinetic estimates.}
The second ingredient comes from the regularity theory of kinetic equations.
The Kolmogorov operator
\begin{equation*}
    \partial_t-\Delta_v+v\cdot\nabla_x
\end{equation*}
is a basic hypoelliptic operator and appears in many models from kinetic
theory, including Vlasov--Fokker--Planck and Landau type equations.
Kolmogorov's classical work \cite{Ko34} identified the explicit Gaussian
structure of the associated kinetic semigroup. Subsequent works developed
heat-kernel estimates and regularity theory for kinetic equations; see, for
example, \cite{Me11,RZ25} and the references therein.

A fundamental feature of this theory is that velocity regularity can be
transferred to spatial regularity through the transport operator. This is
captured, for example, by Bouchut's hypoelliptic estimate \cite{Bo02}.
More recently, anisotropic Besov and Schauder estimates for kinetic
operators have been developed in \cite{HWZ20,CHM21,IS21,HZZZ24,HRZ25}.
For the kinetic semigroup $P_t$ generated by $\Delta_v-v\cdot\nabla_x$, one
has estimates of the form
\begin{align}\label{in:semi-est}
    \|P_t f\|_{\bB^{\alpha+\beta}_{{\bf p};a}}
    \lesssim
    t^{-\frac{\alpha}{2}}
    \|f\|_{\bB^{\beta}_{{\bf p};a}},
    \qquad
    \alpha\ge0,\quad \beta\in\mR,\quad t>0,
\end{align}
where the anisotropic Besov space is introduced in Section \ref{sec:2.1besov} and anisotropic scaling $a$ is adapted to the kinetic relation between
$x$ and $v$.

The estimates needed in the present paper are not a direct application of
this linear theory. After passing to the kinetic formulation, the relevant
linear operator at level $\zeta$ is
\begin{equation*}
    \Psi'(\zeta)\Delta_v-v\cdot\nabla_x .
\end{equation*}
Thus the diffusion strength is not fixed, but depends on the kinetic
variable $\zeta$. Moreover, the kinetic measure enters through a
$\zeta$-derivative, so the Duhamel formula requires estimates for
$\partial_\zeta(\Gamma_t p_t(\zeta))$, not only for the kernel itself. The
main semigroup contribution of this paper is precisely a
parameter-dependent kinetic estimate which tracks the dependence on
$\Psi'(\zeta)$ and $\Psi''(\zeta)$. This is the mechanism that recovers
compactness in $x$ and $v$ for the nonlinear hypoelliptic problem.

{\bf Kinetic formulations and $L^1$-stability.}
The third ingredient is the kinetic formulation of nonlinear PDEs. This
method was introduced in the context of scalar conservation laws, where weak
solutions may develop shocks and entropy conditions are needed to recover
uniqueness. Foundational results in this direction include the works of
Lax \cite{Lax57} and Kruzhkov \cite{K70}. The kinetic formulation of Lions,
Perthame, and Tadmor \cite{LPT94} provides a linear representation of
nonlinear conservation laws at the cost of introducing a kinetic measure;
see also \cite{D16} for background on conservation laws.

In the present paper, the kinetic formulation plays two roles. First, it is
a compactness device: it converts the nonlinear diffusion equation into a
family of linear kinetic equations indexed by the kinetic level $\zeta$,
which makes the parameter-dependent semigroup estimates applicable. Second,
it is the basis of the $L^1$-stability argument. The limiting kinetic
measure is allowed to contain a nonnegative defect measure, and the solution
concept is formulated through a domination property rather than an exact
identity. This formulation is stable under weak compactness and is strong
enough for the doubling-of-variables argument. In addition, mass
preservation yields the small-$\zeta$ decay of the kinetic measure needed to
remove the lower kinetic cutoff. This combination of kinetic compactness,
defect-measure stability, and $L^1$ contraction is specific to the nonlinear
kinetic equation \eqref{PDE-0}.

\subsection{Structure of the paper}
The paper is organized as follows. In Section \ref{sec-2}, we introduce the basic definitions of the relevant function spaces and notations. Section \ref{sec-3} is devoted to establishing estimates for the kinetic semigroup with parameters. In Section \ref{sec-4}, we present an approximation scheme, and establish uniform estimates for this scheme in Section \ref{sec-5}. Using these estimates, we prove Theorem \ref{thm:main} along with Corollary \ref{corollary-sde} in Section \ref{sec-6}. Finally, Section \ref{sec-7} is dedicated to demonstrating uniqueness, Theorem \ref{thm:unique}.

\section{Preliminaries}\label{sec-2}

We introduce basic notation. For $p\in[1,\infty]$, let $\|\cdot\|_{L^p(\mathbb{R}^{2d})}$ be the Lebesgue norm and $\langle\cdot,\cdot\rangle$ the $L^2$ inner product. Denote by $C^\infty$ and $C_c^\infty$ the spaces of smooth and compactly supported smooth functions on $\mathbb{R}^{2d}\times(0,\infty)$, respectively. For $k\in\mathbb{N}$, let $W^{k,p}(\mathbb{R}^{2d})$ be the Sobolev space, set $H^a=W^{a,2}$, and denote its dual by $H^{-a}$.

\subsection{Anisotropic Besov spaces}\label{sec:2.1besov}
Let $n,N\in\mathbb{N}_+$, $m=(m_1,\dots,m_n)\in\mathbb{N}^n$ with $\sum m_i=N$, and $a=(a_1,\dots,a_n)\in[1,\infty)^n$. Define $a\cdot m=\sum a_im_i$. For $x,y\in\mathbb{R}^{m_1}\times\cdots\times\mathbb{R}^{m_n}$, set
$$
|x-y|_a:=\sum_{i=1}^n|x_i-y_i|^{1/a_i},\quad 
B_r^a(y):=\{x:|x-y|_a\le r\},\ B_r^a:=B_r^a(0).
$$
Let $\chi^a\in C^\infty(\mathbb{R}^N)$ be symmetric, nonnegative, equal to $1$ on $B_1^a$ and $0$ outside $B_{4/3}^a$. For $\xi=(\xi_1,\dots,\xi_n)$ and $j\ge-1$, define
\begin{align*}
\phi_j^a(\xi)=
\begin{cases}
\chi^a(\xi), & j=-1,\\
\chi^a(2^{-a(j+1)}\xi)-\chi^a(2^{-aj}\xi), & j\ge0,
\end{cases}
\quad 
2^{-aj}\xi:=(2^{-a_1j}\xi_1,\dots,2^{-a_nj}\xi_n).
\end{align*}
Then $\phi_j^a(\xi)=\phi_0^a(2^{-aj}\xi)$ and $\sum_{j\ge-1}\phi_j^a(\xi)=1$.

Define block operators for $f\in\mathcal{S}'$ by
\begin{align*}
\mathcal{R}_j^a f:=\mathcal{F}^{-1}(\phi_j^a\mathcal{F}f)=\check{\phi}_j^a\ast f,
\end{align*}
with $\langle \mathcal{R}_j^a f,g\rangle=\langle f,\mathcal{R}_j^a g\rangle$ for $g\in\mathcal{S}$.

\begin{definition}\label{def:Besov}
For $s\in\mathbb{R}$ and $p\in[1,\infty]$, define
\begin{align*}
\mathbf{B}_{p;a}^s:=\Big\{f\in\mathcal{S}':\ \|f\|_{\mathbf{B}_{p;a}^s}:=\sup_{j\ge-1}2^{sj}\|\mathcal{R}_j^a f\|_{L^p}<\infty\Big\}.
\end{align*}
If $a=(1,\dots,1)$, write $\mathbf{B}_p^s$, $\mathcal{R}_j$, and $\phi_j$.
\end{definition}

We fix
\begin{equation}\label{parameters}
N=2d,\quad n=2,\quad m_1=m_2=d,\quad a=(3,1).
\end{equation}

\subsection{Definitions of solutions} 
In the study of \eqref{PDE-0}, we will introduce a sequence of approximation scheme. For every $\varepsilon\geq0$, consider 
\begin{equation}\label{PDE-ep}
\partial_tf=\varepsilon\Delta_vf+\Delta_v\Psi(f)-v\cdot\nabla_x f, \quad f(0)=f_0.  	
\end{equation}
In the following, we introduce definition of solutions to \eqref{PDE-ep} for every $\varepsilon\geq0$. When $\varepsilon=0$, \eqref{PDE-ep} returns to \eqref{PDE-0}. 
We first introduce the concept of renormalized kinetic solutions. Let $S$ be a smooth function. Formally, by the chain rule, we obtain that 
\begin{align*}
	dS(f)=&S'(f)\Delta_v(\varepsilon f+\Psi(f))-S'(f)v\cdot\nabla_x f\\
	=&\nabla_v\cdot(S'(f)(\varepsilon+\Psi'(f))\nabla_vf)-S''(f)(\varepsilon+\Psi'(f))|\nabla_vf|^2-S'(f)v\cdot\nabla_x f. 
\end{align*}
Let $\chi=\mathbf{1}_{\{f>\zeta>0\}}$, we derive the kinetic formula, 
\begin{align*}
\partial_t\chi=(\varepsilon+\Psi'(\zeta))\Delta_v\chi-v\cdot\nabla_x\chi+\partial_{\zeta}q, 	
\end{align*}
where $q=\delta_{f=\zeta}(\varepsilon+\Psi'(\zeta))|\nabla_vf|^2$. Based on the above analysis, we introduce the following definition of renormalized kinetic solutions.
% We first define the concept of the kinetic measures. 

% \begin{definition}\label{def-kineticmeasure}
% 	 A \emph{kinetic measure} $q$ is nonnegative, locally finite measure on $\mathbb{R}^{2d}\times(0,\infty)\times[0,T]$.  
% \end{definition}

\begin{definition}\label{def-kineticsolution} 
Assume that $f_0$ and $\Psi$ satisfy Assumption \ref{A3}, and let $l$ be as in Assumption~\ref{A3}. For $\varepsilon\ge 0$, a nonnegative function $f$ is called a \emph{renormalized kinetic solution} to \eqref{PDE-ep} with initial data $f_0$ if the following hold. 
\begin{enumerate}
			\item \newrev{$L^1$-bound}: for every $t\in[0,T]$,
		\begin{equation}\label{l1-bound-kinetic}
		\|f(t)\|_{L^{1}(\mathbb{R}^{2d})}\le\|f_0\|_{L^{1}\left(\mathbb{R}^{2d}\right)}.
		\end{equation}

        Define
\begin{equation}\label{H-ep}
\mathcal H_\varepsilon(r):=\int_0^r(\varepsilon+\Psi'(\zeta))^{1/2}\,d\zeta .
\end{equation}
In the sequel, $(\varepsilon+\Psi'(f))|\nabla_v f|^2$ is understood as $|\nabla_v\mathcal H_\varepsilon(f)|^2$, and $(\varepsilon+\Psi'(f))\nabla_v f$ as $(\varepsilon+\Psi'(f))^{1/2}\nabla_v\mathcal H_\varepsilon(f)$.
		 \item {\color{darkergreen}Basic integrability: $f\in L^\infty(0,T;L^2(\mathbb{R}^{2d}))$ and $\nabla_v\mathcal{H}_{\varepsilon}(f)\in L^2([0,T];L^2(\mathbb{R}^{2d}))$. 
         
         }
		%\item There exists a nonnegative kinetic measure $q$ satisfying the following properties.
		\item The kinetic measure: a nonnegative, locally finite measure on $\mathbb{R}^{2d} \times (0,\infty) \times [0,T]$ such that, in the distributional sense,
{\color{darkergreen}
for every nonnegative $\rho\in C_c([0,T]\times\mathbb R^{2d}\times(0,\infty))$,
\begin{align}\label{control KM}
\int_0^T\int_{\mathbb R^{2d}}
\rho(t,z,f(t,z))|\nabla_v\mathcal H_\varepsilon(f(t,z))|^2\,dzdt
\leq \int \rho\,dq .
\end{align}}

		\item Vanishing at infinity:  
		\begin{align}\label{control CKM VI}
		\liminf_{M\to \infty}q(\mathbb{R}^{2d}\times [M,M+1]\times [0,T])=0.
		\end{align}
		\item The equation: for every $t\in[0,T]$ and $\psi\in\mathrm{C}_{c}^{\infty}\left(\mathbb{R}^{2d}\times(0,\infty)\right)$,
	\begin{align}\label{MC kenitic solution}
\int_{\mathbb{R}}\int_{\mathbb{R}^{2d}} \chi(z, \zeta, t) \psi(z, \zeta)=&\int_{\mathbb{R}} \int_{\mathbb{R}^{2d}} \chi(z, \zeta, 0)  \psi(z, \zeta)\notag\\
&{\color{darkergreen}-\int_{0}^{t} \int_{\mathbb{R}^{2d}}(\varepsilon+\Psi'(f))^{1/2}\nabla_v\mathcal H_\varepsilon(f)\cdot(\nabla_v\psi)(z,f)}\notag\\
&+\int^t_0\int_{\mathbb{R}}\int_{\mathbb{R}^{2d}}v\chi\cdot\nabla_x\psi-\int^t_0\int_{\mathbb{R}}\int_{\mathbb{R}^{2d}}\partial_{\zeta}\psi dq. 
\end{align}
\end{enumerate}			
\end{definition}

% \begin{revblock}
% In the below,
% the solutions constructed in Sections~\ref{sec-4}--\ref{sec-6} satisfy the following regularity estimate: there exists a constant $c>0$ independent of $\eps$, such that for any $p\in(1,\frac{2d}{2d-l})$ and $\beta:=2l-4(d-d/p)$,
% \begin{equation}\label{L2-es}
% {\color{darkergreen}\sup_{t\in[0,T]}\|f(t)\|_{L^2(\mathbb{R}^{2d})}^2+\int_{0}^{T}\int_{\mathbb{R}^{2d}}|\nabla_v\mathcal H_\varepsilon(f(t))|^2dzdt+\int^T_0\|f(t)\|_{\bB^\beta_{p;a}}dt\leq c.}
% \end{equation}
% \end{revblock}

Furthermore, we also introduce the concept of weak solutions.  
\begin{definition}\label{def-weaksolution}
Assume that $f_0$ and $\Psi$ satisfy Assumption \ref{A3}, and let $l$ be as in Assumption~\ref{A3}. For $\varepsilon\ge 0$, a nonnegative function $f$ is called a \emph{weak solution} to \eqref{PDE-ep} with initial data $f_0$ if the following hold. 
\begin{enumerate}
			\item \newrev{$L^1$-bound}: for every $t\in[0,T]$,
		\begin{equation}\label{preservation-mass-weak}
		\|f(t)\|_{L^{1}(\mathbb{R}^{2d})}\le\|f_0\|_{L^{1}\left(\mathbb{R}^{2d}\right)}.
		\end{equation}
		\item {\color{darkergreen}Basic integrability: $f\in L^\infty([0,T];L^2(\mathbb{R}^{2d}))$, $\Psi(f)\in L^1([0,T];L^1_{\mathrm{loc}}(\mathbb{R}^{2d}))$ and  $\nabla_v\mathcal{H}_{\varepsilon}(f)\in L^2([0,T];L^2(\mathbb{R}^{2d}))$, where $\mathcal{H}_{\varepsilon}$ is defined by \eqref{H-ep}. }
% and, with
% \begin{equation*}
% \mathcal H_\varepsilon(r):=\int_0^r(\varepsilon+\Psi'(\zeta))^{1/2}\,d\zeta,
% \end{equation*}
% one has $\nabla_v\mathcal H_\varepsilon(f)\in L^2((0,T)\times\mathbb R^{2d})$. Equivalently, $(\varepsilon+\Psi'(f))|\nabla_v f|^2$ is understood as $|\nabla_v\mathcal H_\varepsilon(f)|^2$.}
       \item The equation: for every $\varphi\in C^{\infty}_c(\mathbb{R}^{2d})$ and every $t\in[0,T]$, 
	\begin{align*}
	\int_{\mathbb{R}^{2d}}f(t)\varphi=&\int_{\mathbb{R}^{2d}}f_0\varphi+\int^t_0\int_{\mathbb{R}^{2d}}(\varepsilon f+\Psi(f))\Delta_v\varphi+\int^t_0\int_{\mathbb{R}^{2d}}vf\cdot\nabla_x\varphi.  
	\end{align*} 
\end{enumerate}	
\end{definition}
% \begin{revblock}
% For weak solutions obtained from the kinetic construction, the following estimate is also proved and used later:
% \begin{equation}\label{L2-es-def-weak}
% {\color{darkergreen}\sup_{t\in[0,T]}\|f(t)\|_{L^2(\mathbb{R}^{2d})}^2+\int_{0}^{T}\int_{\mathbb{R}^{2d}}|\nabla_v\mathcal H_\varepsilon(f(t))|^2dzdt+\int^T_0\|f(t)\|_{\bB^\beta_{p;a}}dt\leq c(T,f_0).}
% \end{equation}
% \end{revblock}
\begin{remark}
{\color{darkergreen}Here we remark that, due to Assumption \ref{A3}, $|\Psi(\zeta)|\lesssim \zeta^{r_2}+\zeta^{r_1}$ with $0<r_2<r_1<2$. Since $f\in L^\infty([0,T];L^1\cap L^2(\mathbb{R}^{2d}))$, it follows by interpolation on compact sets that $\Psi(f)$ is locally integrable. Consequently, the term}
$$
\int_0^t \int_{\mathbb{R}^{2d}} (\varepsilon f + \Psi(f)) \Delta_v \varphi
$$
in Definition \ref{def-weaksolution} is well-defined. {\color{darkergreen}Moreover, in Definition \ref{def-kineticsolution}, the compact support of the test function, together with \eqref{control KM}, ensures that the term
$$
\int_0^t \int_{\mathbb{R}^{2d}} (\varepsilon + \Psi'(f))^{1/2} \nabla_v\mathcal H_\varepsilon(f) \cdot (\nabla_v \psi)(z,f)
$$
is also well-defined.}
\end{remark}
We begin by establishing a decay property for the kinetic measure. To this end, we introduce a family of truncation functions that will be used throughout the argument. 

For each $\beta\in(0,1)$, let $\varphi_{\beta}:\mathbb{R}\to[0,1]$ denote the unique nondecreasing piecewise linear function such that
\begin{equation}
\varphi_{\beta}(\zeta)=1\  {\rm{if}}\  \zeta\ge\beta, \quad \varphi_{\beta}(\zeta)=0\ {\rm{if}}\  \zeta\le\frac{\beta}{2},\quad {\rm{and}}\quad \varphi'_{\beta}=\frac{2}{\beta}\mathbf{1}_{\{\frac{\beta}{2}<\zeta<\beta\}}.
\end{equation}
For each $M\in\mathbb{N}$, let $\zeta_M:\mathbb{R}\to[0,1]$ be the unique nonincreasing piecewise linear function satisfying
\begin{equation}
\zeta_M(\zeta)=0\ {\rm{if}}\ \zeta\ge M+1,\quad \zeta_M(\zeta)=1\ {\rm{if}}\ \zeta\le M,\quad {\rm{and}}\quad \zeta'_M=-\mathbf{1}_{\{M<\zeta<M+1\}}.
\end{equation}
Finally, for every $R > 0$, let $\alpha_R : \mathbb{R}^d \to \mathbb{R}$ be a smooth cut-off function such that
\begin{align}\label{truncation-property}
    \alpha_R(v) = 1 \ \text{for } |v| \le R,
    \quad
    \alpha_R(v) = 0 \ \text{for } |v| > 2R,
\end{align}
and satisfying the uniform estimate
\begin{align}\label{truncation-property-2}
	R |\nabla^2 \alpha_R| + |\nabla \alpha_R| \le \frac{c}{R},
\end{align}
for some constant $c > 0$ independent of $R$.  

\begin{proposition}\label{vanish-0-kinetic}
Assume that $f_0$ and $\Psi$ satisfy Assumption \ref{A3}. Let $\varepsilon\geq0$. Let $f$ be a renormalized kinetic solution to \eqref{PDE-ep} with initial data $f_0$ and associated kinetic measure \rev{$q$} in the sense of Definition~\ref{def-kineticsolution}. {\red Assume that for every $t\in[0,T]$,
		\begin{equation}\label{preservation-mass}
		\|f(t)\|_{L^{1}(\mathbb{R}^{2d})}=\|f_0\|_{L^{1}\left(\mathbb{R}^{2d}\right)}.
		\end{equation}
} 
Then 
\begin{align}\label{kinetic-measure-decay}
\lim_{\beta \to 0} \, \beta^{-1} q\big( \mathbb{R}^{2d} \times [\beta/2, \beta] \times [0, T] \big) = 0.
\end{align}
\end{proposition}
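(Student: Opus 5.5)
Test the kinetic equation \eqref{MC kenitic solution} with the product $\psi(z,\zeta)=\alpha_R(x)\alpha_R(v)\,\varphi_\beta(\zeta)\,\zeta_M(\zeta)$, which has the effect of integrating $\chi$ against $\varphi_\beta\zeta_M$ in $\zeta$. Strictly this $\psi$ is not $C_c^\infty$, but it is admissible after a routine mollification in $\zeta$, and it is compactly supported in $\zeta\in(0,\infty)$. Writing $\Phi_{\beta,M}(r):=\int_0^r\varphi_\beta\zeta_M\,d\zeta$, the left side becomes $\int\Phi_{\beta,M}(f(t))\alpha_R\alpha_R\,dz$.

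Since $\partial_\zeta\psi=\alpha_R\alpha_R(\varphi_\beta'\zeta_M+\varphi_\beta\zeta_M')$, the measure term contributes
\[
-\frac{2}{\beta}\int \alpha_R\alpha_R\,\mathbf 1_{[\beta/2,\beta]}\,dq+\int\alpha_R\alpha_R\,\mathbf 1_{[M,M+1]}\,dq .
\]
We therefore get an identity expressing $\frac{2}{\beta}\int_0^t\!\int\alpha_R\alpha_R\mathbf 1_{[\beta/2,\beta]}dq$ in terms of four quantities: the mass-type difference $\int\Phi_{\beta,M}(f_0)\alpha\alpha-\int\Phi_{\beta,M}(f(t))\alpha\alpha$, the $M$-term, the transport term, and the diffusion term. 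The latter two involve $\nabla_x\alpha_R$ and $\nabla_v\alpha_R$ respectively, with the diffusion integrand $(\varepsilon+\Psi'(f))^{1/2}\nabla_v\mathcal H_\varepsilon(f)\,\varphi_\beta(f)\zeta_M(f)\cdot\nabla_v\alpha_R$. We take limits in the order: $M\to\infty$ along the subsequence in \eqref{control CKM VI}, then $R\to\infty$, and finally $\beta\to0$.

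\textbf{Limit $M\to\infty$.} The $M$-term vanishes by \eqref{control CKM VI}, and monotone convergence handles $\zeta_M\uparrow1$.

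\textbf{Limit $R\to\infty$.} The transport term is bounded by $\frac{C}{R}\int_0^t\!\int_{|v|\le2R}|v|\,\Phi_\beta(f)\mathbf 1_{R\le|x|\le2R}$. Using $\Phi_\beta(f)\le f$ and $|v|/R\le2$, this tends to $0$ by dominated convergence since $f\in L^1$. For the diffusion term, rewrite $(\varepsilon+\Psi'(f))^{1/2}\nabla_v\mathcal H_\varepsilon(f)\varphi_\beta(f)=\nabla_v G_\beta(f)$ with $G_\beta(r)=\int_0^r(\varepsilon+\Psi')\varphi_\beta$. Integrating by parts gives $\int G_\beta(f)\Delta_v\alpha_R\cdot\alpha_R$, and $|G_\beta(f)|\le\varepsilon f+\Psi(f)\mathbf 1_{f\ge\beta/2}\lesssim_\beta f+f^2$, because $\Psi(f)\le C(f^{r_2}+f^{r_1})$ with $r_1<2$ and $f\ge\beta/2$. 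Since $|\Delta\alpha_R|\le C/R^2$ and $f\in L^\infty_tL^1\cap L^2$, this vanishes as $R\to\infty$ for fixed $\beta$. The cutoff away from $0$ is exactly what removes any need for the mass-critical condition here. Consequently
\[
\frac{2}{\beta}\,q\big(\mathbb R^{2d}\times[\beta/2,\beta]\times[0,t]\big)\le\int\Phi_\beta(f_0)-\int\Phi_\beta(f(t)),
\]
and the inequality is in fact an equality.

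\textbf{Limit $\beta\to0$.} This step uses mass preservation \eqref{preservation-mass}. We have $\Phi_\beta(r)\uparrow r$ with $0\le r-\Phi_\beta(r)\le\min(r,\beta)$, so the right side equals
\[
\int\big(f_0-\Phi_\beta(f_0)\big)-\int\big(f(t)-\Phi_\beta(f(t))\big)+\|f_0\|_{L^1}-\|f(t)\|_{L^1}.
\]
The last difference is zero by \eqref{preservation-mass}. The first two integrals tend to $0$ by dominated convergence (dominated by $f_0$ and $f(t)$), and taking $t=T$ yields \eqref{kinetic-measure-decay}.

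The main obstacle is justifying the diffusion boundary term: it requires the chain rule / integration by parts for $G_\beta(f)$ in the weak setting, and it is where the $\zeta\ge\beta/2$ cutoff has to be used to bound $\Psi(f)$ by $f+f^2$. The other delicate point is the order of limits, specifically taking $R\to\infty$ before $\beta\to0$.
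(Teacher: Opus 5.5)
Your proof is correct and rests on the same identity as the paper's: testing \eqref{MC kenitic solution} with $\alpha\,\varphi_\beta\zeta_M$ expresses $2\beta^{-1}q(\mathbb R^{2d}\times[\beta/2,\beta]\times[0,t])$ as the defect $\int\Phi_\beta(f_0)-\int\Phi_\beta(f(t))$, and this defect vanishes as $\beta\to0$ by \eqref{preservation-mass}. What differs is the order in which the cutoffs are removed. The paper keeps $\zeta_M$ to the end and uses separate cutoffs, sending $R_1\to\infty$ for fixed $R_2$ (so $|v|\alpha_{R_2}(v)$ is bounded) and then $R_2\to\infty$. Because $\Psi'$ is bounded on $[\beta/2,M+1]$, its diffusion term is bounded by $C(\beta,M,T)R_2^{-2}\|f_0\|_{L^1}$, which uses only the $L^1$ bound and continuity of $\Psi'$. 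You remove $\zeta_M$ first and use a single $R$. This makes the transport term cheaper: $|v|\alpha_R(v)|\nabla_x\alpha_R(x)|\le C$ reduces it to the $L^1$-tail in $x$, as in Step 5 of Proposition~\ref{prop-existence-weak-pde}. The price is on the diffusion side, where you need the growth bound $\Psi\le C(\zeta^{r_2}+\zeta^{r_1})$ with $r_1<2$ and the $L^\infty_tL^2$ bound to get $G_\beta(f)\lesssim_\beta f+f^2$. One adjustment is needed in your ordering. Perform the integration by parts in $v$ while $\zeta_M$ is still present, where $(\varepsilon+\Psi')^{1/2}\varphi_\beta\zeta_M$ is bounded and the chain rule through $\mathcal H_\varepsilon(f)$ is unproblematic. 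Only afterwards let $M\to\infty$, using $G_{\beta,M}(f)\uparrow G_\beta(f)\in L^1$. The reason is that the untruncated flux $(\varepsilon+\Psi'(f))^{1/2}\nabla_v\mathcal H_\varepsilon(f)\varphi_\beta(f)$ is not guaranteed to be locally integrable under Assumption~\ref{A3}: only $\nabla_v\mathcal H_\varepsilon(f)\in L^2$ is known, and $\Psi'$ may be unbounded at infinity. So your ``monotone convergence'' step in $M$ is justified only in the integrated-by-parts form.
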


\begin{proof}
Fix $\beta,M,R_1,R_2>0$. We apply the kinetic formulation \eqref{MC kenitic solution} with a sequence of smooth approximations of $\alpha_{R_1}(x)\alpha_{R_2}(v)\varphi_{\beta}(\zeta)\zeta_M(\zeta)$, whose derivatives with respect to $\zeta$ converge almost everywhere to 
$$
\alpha_{R_1}(x)\alpha_{R_2}(v)\cdot\big(2\beta^{-1} \mathbf{1}_{\{\beta/2 \leq \zeta \leq \beta\}} - \mathbf{1}_{\{M \leq \zeta \leq M+1\}}\big).
$$
Passing to the limit by the dominated convergence theorem, we obtain
\begin{align*}
2\beta^{-1}q(&\alpha_{R_1}\alpha_{R_2},[\beta/2,\beta]\times[0,T])
= q(\alpha_{R_1}\alpha_{R_2},[M,M+1]\times[0,T])\\
&-\int^T_0\int_{\mathbb{R}^{2d}}(\varepsilon+\Psi'(f))\nabla_v f\cdot\rev{\alpha_{R_1}\nabla_v\alpha_{R_2}}\varphi_{\beta}(f)\zeta_M(f)\\
&-\int_{\mathbb{R}^{2d+1}}\chi\alpha_{R_1}\alpha_{R_2}\varphi_{\beta}\zeta_M\Big|^{s=T}_{s=0}
+\int^T_0\int_{\mathbb{R}^{2d+1}}v\chi\cdot\nabla_x\alpha_{R_1}\alpha_{R_2}\varphi_{\beta}\zeta_M. 
\end{align*}

We first estimate the diffusion term. By the dominated convergence theorem, the chain rule, conservation of the $L^1(\mathbb{R}^{2d})$-mass, and the properties of $\alpha_{R_1}$ and $\alpha_{R_2}$, we obtain
\begin{align*}
&\Big|-\int^T_0\int_{\mathbb{R}^{2d}}(\varepsilon+\Psi'(f))\nabla_v f\cdot\alpha_{R_1}\nabla_v\alpha_{R_2}\varphi_{\beta}(f)\zeta_M(f)\Big|\\
=&\left|-\int^T_0\int_{\mathbb{R}^{2d}}\left(\int^f_0(\varepsilon+\Psi'(\zeta))\varphi_{\beta}(\zeta)\zeta_M(\zeta)d\zeta\right)\alpha_{R_1}\Delta_v\alpha_{R_2}\right|\\
\leq&\frac{C(\beta,M)}{R_2^2}\int^T_0\|f_0\|_{L^1(\mathbb{R}^{2d})}ds \to 0,
\end{align*}
as $R_2\to\infty$.

For the transport term, using integration by parts, the dominated convergence theorem, and the bounds \eqref{control CKM VI} and \eqref{truncation-property-2}, we deduce
\begin{align*}
\int^T_0\int_{\mathbb{R}^{2d+1}}v\cdot\chi\nabla_x\alpha_{R_1}\alpha_{R_2}\varphi_{\beta}
\leq \frac{c}{R_1}\int^T_0\int_{\mathbb{R}^{2d+1}}|v\cdot\chi\alpha_{R_2}|
\to 0,
\end{align*}
as $R_1\to\infty$.

\rev{We first fix $R_2$, then the velocity cutoff makes $|v|\alpha_{R_2}(v)$ bounded, and the estimate above allows $R_1\to\infty$. Only after the spatial cutoff has been removed do we let $R_2\to\infty$, and finally $M\to\infty$ by \eqref{control CKM VI}.}
Combining the above estimates and passing successively to the limits \rev{$R_1 \to +\infty$ for fixed $R_2$, then $R_2 \to +\infty$, and finally $M \to +\infty$}, we conclude that
\begin{align*}
2\beta^{-1}q(\mathbb{R}^{2d}\times[\beta/2,\beta]\times[0,T])
&=-\int_{\mathbb{R}^{2d}}\int^f_0\varphi_{\beta}(\zeta)d\zeta\Big|^{s=T}_{s=0}\\
&\to -(\|f(T)\|_{L^1(\mathbb{R}^{2d})}-\|f_0\|_{L^1(\mathbb{R}^{2d})})=0,
\end{align*}
as $\beta\to0$. This completes the proof.
\end{proof}

In the following, we will show the equivalence of the above two kinds to solutions when $\varepsilon>0$.

\begin{lemma}\label{lem-equivalence}
Assume that $f_0$ and $\Psi$ satisfy Assumption \ref{A3}, and let $\varepsilon\ge 0$. Then any renormalized kinetic solution to \eqref{PDE-ep} {\red satisfying \eqref{preservation-mass}} is a weak solution. Let $\mathcal{H}_{\varepsilon}$ be defined by \eqref{H-ep}
. If, in addition, $\varepsilon>0$ and $\Psi'\in L^{\infty}((0,+\infty))$,
% $$
% \nabla_v\mathcal{H}_{\varepsilon}(f)\in L^2([0,T];L^2(\mathbb{R}^{2d})), 
% $$
{\red under the $L^1$-preservation \eqref{preservation-mass},} the two notions are equivalent. 
\end{lemma}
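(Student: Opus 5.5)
For the first claim (renormalized kinetic $\Rightarrow$ weak), I would test the kinetic formulation \eqref{MC kenitic solution} with $\psi(z,\zeta)=\varphi(z)\varphi_\beta(\zeta)\zeta_M(\zeta)$, where $\varphi\in C_c^\infty(\mathbb{R}^{2d})$, exactly as in the proof of Proposition~\ref{vanish-0-kinetic}. Since $\int_{\mathbb R}\chi\,\varphi_\beta\zeta_M\,d\zeta=\int_0^f\varphi_\beta\zeta_M\,d\zeta$, the left-hand side and the initial term become $\int\varphi\int_0^{f}\varphi_\beta\zeta_M$. The transport term becomes $\int_0^t\int v\cdot\nabla_x\varphi\int_0^f\varphi_\beta\zeta_M$. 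The measure term is
$$
-\int_0^t\int \varphi(z)\bigl(2\beta^{-1}\mathbf 1_{[\beta/2,\beta]}-\mathbf 1_{[M,M+1]}\bigr)\,dq .
$$
For the diffusion term I would write, by the chain rule,
$$
(\varepsilon+\Psi'(f))\nabla_v f\,\varphi_\beta(f)\zeta_M(f)=\nabla_v\Big(\int_0^f(\varepsilon+\Psi'(\zeta))\varphi_\beta\zeta_M\,d\zeta\Big)
$$
and then integrate by parts to obtain $\int_0^t\int \Phi_{\beta,M}(f)\Delta_v\varphi$, where $\Phi_{\beta,M}(r)=\int_0^r(\varepsilon+\Psi'(\zeta))\varphi_\beta\zeta_M\,d\zeta$. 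Because the truncation keeps $\zeta\in[\beta/2,M+1]$, where $\Psi'$ is bounded, this step is justified through $\nabla_v\mathcal H_\varepsilon(f)\in L^2$.

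Next I would pass to the limits. As $\beta\to0$, the term $\beta^{-1}q(\mathbb{R}^{2d}\times[\beta/2,\beta]\times[0,T])\to0$ by Proposition~\ref{vanish-0-kinetic}, which is where mass preservation \eqref{preservation-mass} is needed; $|\varphi|$ is bounded, so this controls the localized term. As $M\to\infty$ along a subsequence, the $[M,M+1]$ term vanishes by \eqref{control CKM VI}. For the remaining terms, $\int_0^f\varphi_\beta\zeta_M\to f$ by monotone convergence with $f\in L^1$, and $\Phi_{\beta,M}(f)\to \varepsilon f+\Psi(f)-\Psi(0)$. I would take $\Psi(0)=0$, which follows from positivity together with $\Psi\le C(\zeta^{r_2}+\zeta^{r_1})$ and continuity. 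Convergence then holds dominatedly on $\operatorname{supp}\varphi$ by the bound $|\Psi(f)|\lesssim f^{r_2}+f^{r_1}$ and local integrability of $f^{r_i}$ (Remark above). This yields the weak formulation. The integrability in Definition~\ref{def-weaksolution} follows from items (1)–(2) of Definition~\ref{def-kineticsolution}.

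For the converse, assume $\varepsilon>0$ and $\Psi'\in L^\infty$. Then the equation is uniformly parabolic in $v$ with bounded coefficient $\varepsilon+\Psi'(f)\in[\varepsilon,\varepsilon+\|\Psi'\|_\infty]$, and $\nabla_v\mathcal H_\varepsilon(f)\in L^2$ gives $\nabla_v f\in L^2$, hence $\Psi(f)$ has $\nabla_v\Psi(f)=\Psi'(f)\nabla_vf\in L^2$. I would mollify the weak equation in $(x,v)$ with $f^\delta=f*\rho_\delta$, which gives $\partial_t f^\delta=\Delta_v(\varepsilon f+\Psi(f))^\delta-v\cdot\nabla_xf^\delta+[v\cdot\nabla_x,\rho_\delta*]f$, where the commutator tends to $0$ in $L^1_{loc}$ (DiPerna–Lions). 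Then I would apply the chain rule with $S(r)=\int_0^r\mathbf 1_{r'>\zeta}\,dr'$ smoothed, i.e. test against $\psi(z,f^\delta)$ integrated in $\zeta$. This produces \eqref{MC kenitic solution} with $q^\delta=\delta_{f^\delta=\zeta}\,\nabla_v f^\delta\cdot\nabla_v(\varepsilon f+\Psi(f))^\delta$. Letting $\delta\to0$, I obtain the exact measure $q=\delta_{f=\zeta}(\varepsilon+\Psi'(f))|\nabla_vf|^2$, so \eqref{control KM} holds with equality.

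Finally, \eqref{control CKM VI} follows because $q(\mathbb{R}^{2d}\times[M,M+1]\times[0,T])=\int_0^T\int\mathbf 1_{M<f<M+1}|\nabla_v\mathcal H_\varepsilon(f)|^2$ is summable in $M$, so its liminf is zero. The $L^1$ bound is given. I expect the main obstacle to be the limit $\delta\to0$ in the nonlinear product defining $q^\delta$. I would avoid it by working directly at the level of $S(f)$ for smooth convex $S$ with bounded $S'$, using the bounded coefficient and $f\in L^2_tH^1_v$, and deriving the renormalized identity via the Lions–Magenes type chain rule in $v$ combined with transport renormalization in $x$. The kinetic form then follows by choosing $S'(r)=\int\partial_\zeta\psi\ldots$ in the usual way.
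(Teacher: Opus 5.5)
Your proposal is correct and follows essentially the paper's route. For renormalized $\Rightarrow$ weak you use the $\zeta$-cutoffs near $0$ and $\infty$, controlled by Proposition~\ref{vanish-0-kinetic} and \eqref{control CKM VI}; for the converse you mollify and apply the chain rule. The paper mollifies only in $x$, so $v\cdot\nabla_x$ commutes with the convolution and no DiPerna--Lions commutator is needed. The product limit you flag as the main obstacle is in fact harmless, since both $\nabla_v f^\delta$ and $\nabla_v(\varepsilon f+\Psi(f))^\delta$ converge strongly in $L^2_{\mathrm{loc}}$ when $\Psi'$ is bounded.
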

\begin{proof}
We first show that, when $\varepsilon>0$ and $\Psi'\in L^{\infty}((0,+\infty))$, any weak solution of \eqref{PDE-ep} is a renormalized kinetic solution. Let $S\colon \mathbb{R}\to \mathbb{R}$ be a bounded smooth function with compact support in $(0,+\infty)$. Due to the lack of spatial regularity of weak solutions, we first introduce a spatial regularization. Let $(\eta_{\gamma})_{\gamma \in (0,1)}$ be a family of standard convolution kernels on $\mathbb{R}^d_x$. For every $\psi \in C_c^{\infty}(\mathbb{R}^{2d})$, applying the chain rule to $\int_{\mathbb{R}^{2d}} S(\eta_{\gamma} \ast f(t))\psi$, we obtain that for every $t \in [0,T]$,
\begin{align*}
\int_{\mathbb{R}^{2d}} &S(\eta_{\gamma} \ast f(t))\psi = \int_{\mathbb{R}^{2d}} S(\eta_{\gamma} \ast f_0)\psi
- \int_0^t \int_{\mathbb{R}^{2d}} S'(\eta_{\gamma} \ast f)\,\eta_{\gamma} \ast((\varepsilon+\Psi'(f))\nabla_vf)\cdot \nabla_v \psi \\
&- \int_0^t \int_{\mathbb{R}^{2d}} S''(\eta_{\gamma} \ast f)\,\nabla_v(\eta_{\gamma} \ast f)\cdot\eta_{\gamma} \ast((\varepsilon+\Psi'(f))\nabla_vf) \psi
- \int_0^t \int_{\mathbb{R}^{2d}} S'(\eta_{\gamma} \ast f)\, v \cdot \nabla_x(\eta_{\gamma} \ast f)\,\psi.
\end{align*}
Using the $L^2([0,T];L^2(\mathbb{R}^{2d}))$-integrability of $f$ and $\nabla_v f$, since $\Psi'\in L^{\infty}((0,+\infty))$, we obtain, along a subsequence,
\begin{align}\label{passtolimit-etagamma}
\eta_{\gamma} \ast f \to f, \quad 
\eta_{\gamma} \ast \nabla_v f \to \nabla_v f,\quad \eta_{\gamma} \ast (\varepsilon+\Psi'(f)\nabla_v f) \to (\varepsilon+\Psi'(f)\nabla_v f),
\end{align}
as $\gamma \to 0$, in $L^2([0,T];L^2_{\mathrm{loc}}(\mathbb{R}^{2d}))$ and almost everywhere. Therefore, by the dominated convergence theorem and an integration by parts formula, passing to the limit $\gamma \to 0$, we obtain
\begin{align*}
\int_{\mathbb{R}^{2d}} S(f(t))\psi
=& \int_{\mathbb{R}^{2d}} S(f_0)\psi
- \int_0^t \int_{\mathbb{R}^{2d}} S'(f)\,(\varepsilon+\Psi'(f))\nabla_v f \cdot \nabla_v \psi \\
&- \int_0^t \int_{\mathbb{R}^{2d}} S''(f)\,(\varepsilon+\Psi'(f))|\nabla_v f|^2 \psi
+ \int_0^t \int_{\mathbb{R}^{2d}} S(f)\, v \cdot \nabla_x \psi.
\end{align*}
Let $\beta_S(x,v,\zeta)=\psi(x,v)S'(\zeta)$ for $(x,v,\zeta)\in\mathbb{R}^{2d+1}$. Then, for every $t\in[0,T]$, we have
\begin{align}\label{kinetic-formula}
\int_{\mathbb{R}}\int_{\mathbb{R}^{2d}} \chi(z, \zeta, t)\, \beta_S(z, \zeta)
=&\int_{\mathbb{R}}\int_{\mathbb{R}^{2d}} \chi(z, \zeta, 0)\, \beta_S(z, \zeta)
- \int_{0}^{t} \int_{\mathbb{R}^{2d}} (\varepsilon+\Psi'(f))\nabla_v f \cdot (\nabla_v \beta_S)(z,f) \notag\\
&+ \int_0^t \int_{\mathbb{R}}\int_{\mathbb{R}^{2d}} \chi\, v \cdot \nabla_x \beta_S
- \int_0^t \int_{\mathbb{R}}\int_{\mathbb{R}^{2d}} \partial_{\zeta}\beta_S \, dq,
\end{align}
where $q = \delta_{f=\zeta}(\varepsilon+\Psi'(\zeta))\, |\nabla_v f|^2$. Since test functions of the form $\beta_S$ are dense in $C_c^{\infty}(\mathbb{R}^{2d}\times(0,+\infty))$, it follows that $f$ satisfies the kinetic formulation \eqref{MC kenitic solution}. In particular, all conditions listed in Definition~\ref{def-kineticsolution} are satisfied. Consequently, $q$ is a kinetic measure, and we conclude that $f$ is a renormalized kinetic solution. 

The converse implication is obtained by a cutoff argument. Let $\varepsilon\geq0$. For each fixed $\delta \in (0,1)$, let $h_{\delta} \in C^{\infty}([0,\infty))$ be a smooth, nondecreasing cutoff function satisfying $0 \leq h_{\delta} \leq 1$ and
\begin{align*}
 h_{\delta}(\zeta) =
  \begin{cases}
    1, & \text{if } \zeta \in [\delta, +\infty), \\
    0, & \text{if } \zeta \in [0, \delta/2], \\
    \text{smoothly interpolated}, & \text{otherwise}.
  \end{cases}
\end{align*}
By construction, there exists a constant $c > 0$, independent of $\delta$, such that
$$
\left|h_{\delta}'(\zeta)\right| \leq \frac{c}{\delta},\quad \left|h_{\delta}''(\zeta)\right| \leq \frac{c}{\delta^2} \quad \text{for all } \zeta \in [0,\infty).
$$
For every $\delta \in (0,1)$, define the function $S_{\delta} \in C^{\infty}([0,\infty))$ by
$$
S_{\delta}(\zeta) := h_{\delta}(\zeta) \zeta, \quad \forall \zeta \in [0,\infty).
$$
Let $\psi\in C^{\infty}_c(\mathbb{R}^{2d})$. \rev{Since $S'_\delta$ is not compactly supported in the $\zeta$-variable, we first set $S_{\delta,M}(\zeta):=S_\delta(\zeta)\zeta_M(\zeta)$ and use the compactly supported test function $\beta_{\psi,\delta,M}(x,v,\zeta)=\psi(x,v)\partial_\zeta S_{\delta,M}(\zeta)$.}
Taking $\beta_{\psi,\delta,M}$ in the kinetic formulation, 
\begin{align*}
\int_{\mathbb{R}^{2d}} S_{\delta,M}(f(t))\psi
=& \int_{\mathbb{R}^{2d}} S_{\delta,M}(f_0)\psi
- \rev{\int_0^t \int_{\mathbb{R}^{2d}} S_{\delta,M}'(f)\,(\varepsilon+\Psi'(f))\nabla_v f \cdot \nabla_v \psi} \\
&- \int_0^t\int_{\mathbb{R}} \int_{\mathbb{R}^{2d}} S_{\delta,M}''(\zeta)\,dq+ \int_0^t \int_{\mathbb{R}^{2d}} S_{\delta,M}(f)\, v \cdot \nabla_x \psi\\
=& \int_{\mathbb{R}^{2d}} S_{\delta,M}(f_0)\psi
+ \rev{\int_0^t \int_{\mathbb{R}^{2d}} \left(\int^f_0 S_{\delta,M}'(\zeta)\,(\varepsilon+\Psi'(\zeta))d\zeta\right)\Delta_v \psi} \\
&- \int_0^t\int_{\mathbb{R}} \int_{\mathbb{R}^{2d}} S_{\delta,M}''(\zeta)\,dq+ \int_0^t \int_{\mathbb{R}^{2d}} S_{\delta,M}(f)\, v \cdot \nabla_x \psi.
\end{align*}
{\blue By \eqref{control CKM VI} and dominated convergence, choosing $M_k\to\infty$ such that $q(\mathbb{R}^{2d}\times[M_k,M_{k}+1]\times[0,T])\to0$ as $k\to\infty$, we let $k\to\infty$ and obtain}
\begin{align*}
\int_{\mathbb{R}^{2d}} S_{\delta}(f(t))\psi
=& \int_{\mathbb{R}^{2d}} S_{\delta}(f_0)\psi
+ \rev{\int_0^t \int_{\mathbb{R}^{2d}} \left(\int^f_0 S_{\delta}'(\zeta)\,(\varepsilon+\Psi'(\zeta))d\zeta\right)\Delta_v \psi} \\
&- \int_0^t\int_{\mathbb{R}} \int_{\mathbb{R}^{2d}} S_{\delta}''(\zeta)\,dq+ \int_0^t \int_{\mathbb{R}^{2d}} S_{\delta}(f)\, v \cdot \nabla_x \psi. 
\end{align*}
A straightforward computation shows that 
\begin{align}\label{eq-7.1}
 |S_{\delta}'(\zeta)| = |h_{\delta}'(\zeta) \zeta + h_{\delta}(\zeta)| \leq c \mathbf{1}_{\{\zeta \geq \frac{\delta}{2}\}},\quad S_{\delta}'(\zeta)\rightarrow1,\text{ as }\delta\to0,
\end{align}
and 
\begin{align}\label{eq-7.2}
 |S_{\delta}''(\zeta)| = |h_{\delta}''(\zeta) \zeta + 2 h_{\delta}'(\zeta)| \leq \delta^{-1} \mathbf{1}_{\{\frac{\delta}{2} \leq \zeta \leq \delta\}}. 
\end{align} 
Consequently, combining \eqref{eq-7.1}, \eqref{eq-7.2} and the decay property of the kinetic measure \eqref{kinetic-measure-decay}, we can pass to the limits $\delta\to0$ to see that 
\begin{align*}
\int_{\mathbb{R}^{2d}} f(t)\psi
=& \int_{\mathbb{R}^{2d}} f_0\psi
+ \int_0^t \int_{\mathbb{R}^{2d}} (\varepsilon f+\Psi(f))\Delta_v \psi + \int_0^t \int_{\mathbb{R}^{2d}} f\, v \cdot \nabla_x \psi. 
\end{align*}

% From the above definition, it immediately follows that the support of the derivative $h_{\delta}'$ is contained in the interval $[\frac{\delta}{2}, \infty)$. Moreover, we have the explicit expression
% \begin{align}
% h_{\delta}'(\zeta) = \psi_{\delta}'(\zeta) \zeta + \psi_{\delta}(\zeta) \leq c \mathbf{1}_{\{\zeta \geq \frac{\delta}{2}\}},
% \end{align}
% where $c > 0$ is a constant independent of $\delta$. Similarly, the second derivative satisfies
% \begin{align}
% h_{\delta}''(\zeta) = \psi_{\delta}''(\zeta) \zeta + 2 \psi_{\delta}'(\zeta) \leq c(\delta) \mathbf{1}_{\{\frac{\delta}{2} \leq \zeta \leq \delta\}},
% \end{align}
% for some constant $c(\delta) > 0$ that depends on $\delta$.

This recovers the weak formulation of the solution, and thus completes the proof. 
\end{proof}

To conclude this section, we introduce a martingale-problem formulation
for the distributional-density dependent stochastic differential equation
\eqref{dDSDE}. In the following definition, we only assume that
$
    \Psi(0)=0
$ and $\Psi\ge0$.
These conditions are automatic under Assumption \ref{A3}.

For a Banach space $E$, we denote by $\cP(E)$ the space
of all Borel probability measures on $E$. Fix $T>0$ and set
$$
    \Omega_T:=C([0,T];\mR^{2d}),
$$
endowed with the uniform topology and its Borel $\sigma$-field. We denote
by $(w_t)_{t\in[0,T]}$ the canonical process on $\Omega_T$, namely
$$
    w_t(\omega):=\omega(t), \qquad \omega\in\Omega_T.
$$
Writing $w_t=(w^X_t,w^V_t)$, we let
$$
    \sF_t^0:=\sigma(w_s:\,0\le s\le t),\qquad t\in[0,T],
$$
be the canonical filtration.

For a nonnegative function $f$, we use the convention
$$
    a_f(t,z):=
    \begin{cases}
    \dfrac{\Psi(f(t,z))}{f(t,z)}, & f(t,z)>0,\\[1.2ex]
    0, & f(t,z)=0.
    \end{cases}
$$
In particular, since $\Psi(0)=0$, $a_f(t,z)f(t,z)=\Psi(f(t,z))$
for a.e. $(t,z)$.

\bd\label{def:weakSDE}
Let $T>0$, and let $f_0$ be a probability density on $\mR^{2d}$.
Let
$$
    f\in L^1([0,T];L^1_{\rm loc}(\mR^{2d}))
$$
be nonnegative. Assume that, for every $\varphi\in C^\infty_c(\mR^{2d})$,
the map
$$
    t\longmapsto \int_{\mR^{2d}} f(t,z)\varphi(z)\,dz
$$
is continuous on $[0,T]$, and that
$$
    \Psi(f)\in L^1([0,T];L^1_{\rm loc}(\mR^{2d})).
$$

We call $\bP\in \cP(\Omega_T)$ a nonlinear martingale solution to
\eqref{dDSDE} on $[0,T]$, with density $f$, if the following conditions hold.

\begin{enumerate}
    \item The initial marginal is given by
    $$
        \bP\circ w_0^{-1}(dz)=f_0(z)\,dz.
    $$

    \item For every $\varphi\in C^\infty_c(\mR^{2d})$ and every $t\in[0,T]$,
    $$
        \int_{\mR^{2d}} \varphi(z) f(t,z)\,dz
        =
        \int_{\Omega_T} \varphi(w_t(\omega))\,\bP(d\omega).
    $$

    \item For every $\varphi\in C^\infty_c(\mR^{2d})$, the process
    $$
    M_t^\varphi
    :=
    \varphi(w_t)-\varphi(w_0)
    -
    \int_0^t
    \Big[
        w^V_s\cdot\nabla_x\varphi(w_s)
        +
        a_f(s,w_s)\Delta_v\varphi(w_s)
    \Big]\,ds
    $$
    is a martingale under $\bP$ with respect to the canonical filtration
    $(\sF_t^0)_{t\in[0,T]}$.
\end{enumerate}
\ed

\br
Item {\rm (2)} identifies the time marginal of $\bP$ with
$f(t,z)\,dz$. In particular, since $\bP$ is a probability measure,
$$
    \int_{\mR^{2d}} f(t,z)\,dz=1=\int_{\mR^{2d}}f_0(z)\,dz,
    \qquad t\in[0,T].
$$

The assumptions above ensure that all terms in the martingale problem are
well-defined. Indeed, for $\varphi\in C^\infty_c(\mR^{2d})$,
$$
\begin{aligned}
\mE_\bP\int_0^t
\left|w^V_s\cdot\nabla_x\varphi(w_s)\right|\,ds
&=
\int_0^t\int_{\mR^{2d}}
\left|v\cdot\nabla_x\varphi(z)\right| f(s,z)\,dz\,ds
<\infty,
\end{aligned}
$$
because $v\cdot\nabla_x\varphi$ is bounded and compactly supported.
Moreover,
$$
\begin{aligned}
\mE_\bP\int_0^t
\left|a_f(s,w_s)\Delta_v\varphi(w_s)\right|\,ds
=
\int_0^t\int_{\{f>0\}}
\left|\Psi(f(s,z))\right|
\left|\Delta_v\varphi(z)\right|\,dz\,ds <\infty,
\end{aligned}
$$
by the local integrability of $\Psi(f)$. Hence $M^\varphi$ is well-defined
for every $\varphi\in C^\infty_c(\mR^{2d})$.
\er

\section{\rev{The kinetic semigroup with parameters}}\label{sec-3}
In this section, we derive estimates for the kinetic semigroup with parameters.  
Let $B(t)$ be a $d$-dimensional Brownian motion, and let $\Psi$ satisfy Assumption \ref{A3}. For each $\zeta>0$, we define the stochastic process
$$
(X_t(\zeta), V_t(\zeta)) := \left(-\sqrt{2} \int_0^t \Psi'(\zeta)^{1/2} B_s \, \dif s, \, \sqrt{2} \Psi'(\zeta)^{1/2} B_t \right).
$$
The corresponding kinetic semigroup $(P_t)_{t \geq 0}$ associated with this process is defined by
\begin{align}\label{k-semigroup}
    (P_t(\zeta) f)(x,v) 
    = \mathbb{E}\big[f(x - tv + X_t(\zeta),\, v + V_t(\zeta))\big] 
    = (\Gamma_t p_t(\zeta)) \ast (\Gamma_t f)(x,v),
\end{align}
where $p_t(\zeta)$ denotes the transition density of the process $(X_t(\zeta), V_t(\zeta))$, and $\Gamma_t$ is the translation operator given by
\begin{align}\label{t-semigroup}
    (\Gamma_t f)(x,v) = f(x - tv, v).
\end{align}

An explicit expression for $p_t(\zeta)$ can be computed following the approach in \cite{Ko34}. A direct calculation yields
\begin{align}\label{density}
    p_t(x,v,\zeta) 
    = \left(\frac{4\pi^2 t^4 \Psi'(\zeta)^2}{3}\right)^{-d/2}
      \exp\left\{ -\frac{3|x|^2 + |3x + 2tv|^2}{4t^3 \Psi'(\zeta)} \right\}.
\end{align}

Let $u_0$ be a smooth initial datum, and let $u$ be a smooth solution to the kinetic equation
\begin{align*}
    \partial_t u = \Psi'(\zeta)\Delta_v u - v \cdot \nabla_x u, 
    \quad u(0) = u_0.
\end{align*}
In order to derive a probabilistic representation of $u$, we apply It\^o's formula to the process
$$
s \mapsto u(t - s,\, x - sv + X_s,\, v + V_s,\, \zeta).
$$
A straightforward computation shows that the drift terms cancel out, which yields
\begin{align*}
    \mathbb{E}\big[u(0, x - tv + X_t, v + V_t, \zeta)\big]
    = u(t, x, v, \zeta).
\end{align*}
Consequently, we obtain the semigroup representation
\begin{align*}
    u(t, x, v, \zeta) = (P_t(\zeta)u_0)(x, v).
\end{align*}

Moreover, for any test function $\varphi \in C_b^\infty(\mathbb{R}^{2d})$, an application of It\^o's formula to the process $t \mapsto \varphi(X_t, V_t)$ gives
\begin{align*}
    \mathbb{E}\,\varphi(X_t, V_t)
    = \varphi(0)
    + \mathbb{E}\int_0^t
    (\Psi'(\zeta)\Delta_v - V_s \cdot \nabla_x)\varphi(X_s, V_s)\,\mathrm{d}s.
\end{align*}
Since $\mathbb{E}\,\varphi(X_t, V_t) = \int_{\mathbb{R}^{2d}} \varphi(x, v)p_t(x, v)\,\mathrm{d}x\,\mathrm{d}v$, an integration by parts in both $x$ and $v$ leads to the Fokker--Planck equation
\begin{align}\label{S4:FPE}
    \partial_t p_t 
    = (\Psi'(\zeta)\Delta_v + v \cdot \nabla_x)p_t,
    \quad \lim_{t \to 0} p_t = \delta_0
    \quad \text{in the sense of distributions.}
\end{align}

Now we give the following estimates:
\begin{lemma}\label{lem-kineticsemigroup-es}
For any $k=0,1$, $\ell \ge 0$ and $p \in [1,\infty]$, there exists a constant $C = C(d, \ell, p,k) > 0$ such that for all $t > 0$, $\zeta>0$, and $j \ge 0$,
\begin{align*}
    \left( \int_{\mathbb{R}^{2d}} \big| \mathcal{R}_j^a \partial_\zeta^k (\Gamma_t p_t)(z, \zeta) \big|^p \, \mathrm{d}z \right)^{\frac{1}{p}}
    \le C \, 2^{4j(d - \frac{d}{p})} 
    \left|\frac{\Psi''(\zeta)}{\Psi'(\zeta)}\right|^k
    \Big[ \Psi'(\zeta)^{-1} \big( 2^{-2j}t^{-1} + 2^{-6j}t^{-3} \big) \Big]^{\ell}.
\end{align*}
\end{lemma}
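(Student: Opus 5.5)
Write $\kappa:=\Psi'(\zeta)$. The plan is to reduce everything to a fixed Gaussian profile by exact scaling, and then to handle the Littlewood--Paley block $\mathcal R^a_j$ by Fourier multiplier considerations.

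\textbf{Scaling.} From \eqref{density},
\begin{align*}
\Gamma_t p_t(x,v,\zeta)=(\kappa t^3)^{-d/2}(\kappa t)^{-d/2}\,G\big((\kappa t^3)^{-1/2}x,(\kappa t)^{-1/2}v\big),
\end{align*}
with $G$ a fixed nondegenerate Gaussian on $\mathbb R^{2d}$. This holds because $\Gamma_t$ maps $|3x+2tv|$ to $|3x-tv|$, which is still a positive definite quadratic form in the scaled variables. Equivalently, on the Fourier side,
\begin{align*}
\widehat{\Gamma_tp_t}(\xi,\eta)=\widehat G\big((\kappa t^3)^{1/2}\xi,(\kappa t)^{1/2}\eta\big),
\end{align*}
where $\widehat G(\xi,\eta)=\exp(-Q(\xi,\eta))$ with $Q$ a positive definite quadratic form, so $Q(\xi,\eta)\ge c(|\xi|^2+|\eta|^2)$.

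\textbf{The case $k=0$.} On the support of $\phi^a_j$ with $j\ge0$ we have $|\xi|\asymp 2^{3j}$ or $|\eta|\asymp 2^{j}$. Hence
\begin{align*}
|\xi|^2\kappa t^3+|\eta|^2\kappa t\gtrsim \kappa\big(2^{6j}t^3+2^{2j}t\big)=:\Lambda_j .
\end{align*}
The $L^p$ bound comes from Bernstein/Young-type reasoning. Write $\mathcal R^a_j g=\check\phi^a_j*\tilde{\mathcal R}^a_j g$, where $\tilde{\mathcal R}^a_j$ is a fattened block. Then
\begin{align*}
\|\mathcal R^a_jg\|_{L^p}\le\|\check\phi^a_j\|_{L^p}\,\|\tilde{\mathcal R}^a_jg\|_{L^1},\qquad \|\check\phi^a_j\|_{L^p}\lesssim 2^{(3d+d)j(1-1/p)}=2^{4j(d-d/p)} .
\end{align*}
It therefore suffices to prove $\|\tilde{\mathcal R}^a_j\Gamma_tp_t\|_{L^1}\lesssim_\ell \Lambda_j^{-\ell}$.

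To prove this, write $\widehat G$ at the scaled frequency as $e^{-Q(\cdot)}$ and use $e^{-Q}\le C_\ell Q^{-\ell}$ together with the corresponding derivative bounds. A cleaner route is to rescale the block by the anisotropic dilation $2^{aj}$. The $L^1$ norm is dilation invariant, and the rescaled symbol is
\begin{align*}
\phi^a_0(\xi,\eta)\,\widehat G\big((\kappa t^3)^{1/2}2^{3j}\xi,(\kappa t)^{1/2}2^{j}\eta\big),
\end{align*}
supported in an annulus. Its $L^1$-kernel norm is bounded by finitely many Sobolev norms of the symbol, i.e.\ by $\sup|\partial^\gamma(\cdot)|$ for $|\gamma|\le 2d+1$. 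On the annulus each derivative falls on $e^{-Q}$ and produces polynomial factors in the scaled frequency, whose size is at most $(1+\Lambda_j)^{|\gamma|/2}$. These factors are absorbed by $e^{-c\Lambda_j}$, which gives $\lesssim_\ell \Lambda_j^{-\ell}$ uniformly. For $\Lambda_j\le1$ the same bound is trivial, since the factor is $O(1)$ when $\ell\ge0$. Note finally that
\begin{align*}
\Lambda_j^{-1}=\kappa^{-1}\big(2^{-6j}t^{-3}+2^{-2j}t^{-1}\big)\cdot O(1)
\end{align*}
is not quite right as stated: we have $\Lambda_j^{-1}\le \kappa^{-1}\min(2^{-6j}t^{-3},2^{-2j}t^{-1})$, which is bounded by the sum. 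This gives the claim for $k=0$.

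\textbf{The case $k=1$, and the main obstacle.} By the chain rule in $\kappa$,
\begin{align*}
\partial_\zeta\widehat{\Gamma_tp_t}=\Psi''(\zeta)\,\partial_\kappa\big[e^{-\kappa Q_t(\xi,\eta)}\big]=-\frac{\Psi''}{\Psi'}\,\kappa Q_t\, e^{-\kappa Q_t},
\end{align*}
where $Q_t(\xi,\eta)=Q(t^{3/2}\xi,t^{1/2}\eta)$. The multiplier $\kappa Q_t e^{-\kappa Q_t}=:m(\kappa Q_t)$ with $m(r)=re^{-r}$ has the same structure as before: it is a smooth function of the scaled frequency, bounded by $C_\ell(1+r)^{-\ell}$ and with derivative bounds of the same type. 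Repeating the dilation/annulus argument therefore gives the factor $|\Psi''/\Psi'|$ times the $k=0$ bound.

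The delicate point is verifying uniformity of the symbol derivative bounds on the rescaled annulus, in particular when $\Lambda_j$ is small, where polynomial growth must not occur. This holds because $\ell\ge0$ and the symbols are Schwartz-bounded uniformly on compact sets.
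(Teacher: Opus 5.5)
\textbf{The decay step is not uniform in $\Psi'(\zeta)$.} Your scaling, the Bernstein reduction to an $L^1$ bound, and the identity $\partial_\zeta\widehat{\Gamma_tp_t}=-\frac{\Psi''}{\Psi'}\,\kappa Q_t e^{-\kappa Q_t}$ are all correct. The gap is in the central decay estimate, at exactly the point you call delicate. That point is the whole content of the lemma: uniformity in $\zeta$.

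Set $A:=(\kappa t^3)^{1/2}2^{3j}$ and $B:=(\kappa t)^{1/2}2^{j}$. Your rescaled symbol is then $m_{A,B}(\xi,\eta)=\phi^a_0(\xi,\eta)\,e^{-Q(A\xi,B\eta)}$ on a fixed annulus. Differentiating in $(\xi,\eta)$ does not only produce polynomial factors in the scaled frequency. By the chain rule, each $\xi$-derivative brings a factor $A$ and each $\eta$-derivative a factor $B$. On the annulus you only have $e^{-Q(A\xi,B\eta)}\le e^{-c\min(A^2,B^2)}$, and the ratio $A/B=t2^{2j}$ is unrestricted.

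For example, near $\xi=0$ with $4/3<|\eta|<2$, where $\phi^a_0\equiv1$, the $\xi$-Hessian of $m_{A,B}$ is $A^2\big[(\nabla_\xi Q)^{\otimes 2}-\nabla^2_{\xi\xi}Q\big](0,B\eta)\,e^{-Q(0,B\eta)}$. For $B=1$ and generic such $\eta$ this has size comparable to $A^2$. Now take $\kappa t2^{2j}=1$, so $B=1$ and $A=\kappa^{-1}$. The right-hand side of the lemma is of order one, yet $\sup_{|\gamma|\le 2d+1}|\partial^\gamma m_{A,B}|$ grows like $\Psi'(\zeta)^{-(2d+1)}$ as $\Psi'(\zeta)\to0$. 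Symmetrically, powers of $B$ are not absorbed as $\Psi'(\zeta)\to\infty$.

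So your argument gives the estimate only with a constant depending on $\Psi'(\zeta)$. That is useless in Proposition~\ref{uniform-besov-2}. There the lemma is applied at $\zeta=f_\varepsilon(s,z')$, and $\varepsilon+\Psi_\varepsilon'(\zeta)$ is not bounded away from $0$ and $\infty$ uniformly in $\varepsilon$.

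\textbf{How to fix it.} Use the product structure instead of symbol derivatives, which is what the paper does. Since $\phi^a_0$ vanishes near the origin, $(\Delta_x+\Delta_v)^{-\ell}\check\phi^a_0$ is Schwartz. For integer $\ell$, write $\check\phi^a_0=(\Delta_x+\Delta_v)^{\ell}\big[(\Delta_x+\Delta_v)^{-\ell}\check\phi^a_0\big]$, move $(\Delta_x+\Delta_v)^{\ell}$ onto the rescaled Gaussian $G_{A,B}(x,v)=A^{-d}B^{-d}G(x/A,v/B)$, and apply Young's inequality. In physical space, derivatives of the Gaussian cost negative powers of its scales: $\|\partial_x^{\gamma_1}\partial_v^{\gamma_2}G_{A,B}\|_{L^1}=A^{-|\gamma_1|}B^{-|\gamma_2|}\|\partial_x^{\gamma_1}\partial_v^{\gamma_2}G\|_{L^1}$. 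This gives
\begin{equation*}
\|\mathcal F^{-1}m_{A,B}\|_{L^1}\lesssim_\ell (A^{-2}+B^{-2})^{\ell}=\Big[\kappa^{-1}\big(2^{-2j}t^{-1}+2^{-6j}t^{-3}\big)\Big]^{\ell}
\end{equation*}
uniformly in $\kappa$. Non-integer $\ell$ follows by interpolation. The case $k=1$ is identical with $G$ replaced by the Schwartz function whose Fourier transform is $-Qe^{-Q}$; this is the paper's $q=q_1+q_2$.

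Alternatively, dilate by $(A,B)$ instead of $2^{aj}$ and bound the kernel by $L^1$ norms of symbol derivatives. These derivatives then fall on $\phi^a_0(\cdot/A,\cdot/B)$ and cost only $A^{-1},B^{-1}\le1$ when $\min(A,B)\ge1$.

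\textbf{A smaller error.} On $\operatorname{supp}\phi^a_j$ you only get $|\xi|\gtrsim2^{3j}$ \emph{or} $|\eta|\gtrsim2^{j}$. The correct lower bound is therefore $\kappa\min(2^{6j}t^3,2^{2j}t)=\min(A^2,B^2)$, not the sum. Its reciprocal is still bounded by $\kappa^{-1}(2^{-2j}t^{-1}+2^{-6j}t^{-3})$, so your target survives. However, the bound with $\kappa^{-1}\min(2^{-6j}t^{-3},2^{-2j}t^{-1})$ that your $\Lambda_j$ would give is false. At $(\xi,\eta)=(0,2^{j}\eta_0)$ with $|\eta_0|=3/2$, the symbol equals $e^{-Q(0,B\eta_0)}$. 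This is of order one when $B=1$ and $t2^{2j}\gg1$, while that bound tends to $0$.
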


\begin{proof}
For the sake of simplicity, we present the argument only in the case $k=1$, as the case $k=0$ can be treated analogously and is in fact simpler. 

From \eqref{density}, we observe the following scaling relation:
\begin{align*}
    \Gamma_t p_t(x,v,\zeta)
    = t^{-2d} \Gamma_1 p_1(t^{-\frac{3}{2}}x, t^{-\frac{1}{2}}v, \zeta).
\end{align*}
Let us denote
\begin{align*}
    \bar{p}_t(x,v)
    := \left( \frac{4\pi^2 t^4}{3} \right)^{-d/2}
       \exp\left\{ -\frac{3|x|^2 + |3x + 2tv|^2}{4t^3} \right\}.
\end{align*}
By rescaling the Gaussian kernel in both space and velocity variables, we have
\begin{align*}
    \Gamma_1 p_1(x,v,\zeta)
    = (\Psi'(\zeta))^{-d}
      \Gamma_1 \bar{p}_1(\Psi'(\zeta)^{-1/2}x, \Psi'(\zeta)^{-1/2}v),
\end{align*}
and hence
\begin{align*}
    \partial_\zeta \Gamma_1 p_1(x,v,\zeta)
    &= -\tfrac{1}{2} (\Psi'(\zeta))^{-d - \frac{3}{2}} \Psi''(\zeta) 
       \, x \cdot \nabla_x \Gamma_1 \bar{p}_1(\Psi'(\zeta)^{-1/2}x, \Psi'(\zeta)^{-1/2}v) \\
    &\quad - \tfrac{1}{2} (\Psi'(\zeta))^{-d - \frac{3}{2}} \Psi''(\zeta) 
       \, v \cdot \nabla_v \Gamma_1 \bar{p}_1(\Psi'(\zeta)^{-1/2}x, \Psi'(\zeta)^{-1/2}v) \\
    &\quad - d (\Psi'(\zeta))^{-d - 1} \Psi''(\zeta)
       \Gamma_1 \bar{p}_1(\Psi'(\zeta)^{-1/2}x, \Psi'(\zeta)^{-1/2}v).
\end{align*}
Defining the auxiliary functions
\begin{align*}
    q_1(x,v) := -\tfrac{1}{2} [x \cdot \nabla_x + v \cdot \nabla_v] \Gamma_1 \bar{p}_1(x,v),
    \quad
    q_2(x,v) := -d \, \Gamma_1 \bar{p}_1(x,v),
\end{align*}
we can rewrite the above as
\begin{align}\label{0000}
    \partial_\zeta \Gamma_1 p_1(x,v,\zeta)
    = (\Psi'(\zeta))^{-d-1} \Psi''(\zeta)
      q(\Psi'(\zeta)^{-1/2}x, \Psi'(\zeta)^{-1/2}v),
\end{align}
where $q := q_1 + q_2$.

Now, by definition of $\mathcal{R}_j^a$, we have
\begin{align*}
    I_j(t,z)
    := \mathcal{R}^a_j \partial_\zeta \Gamma_t p_t(z,\zeta)
    &= \int_{\mathbb{R}^{2d}} \check{\phi}^a_j(z - z') \, \partial_\zeta \Gamma_t p_t(z', \zeta) \, \mathrm{d}z' \\
    &= 2^{4dj} t^{-2d} \int_{\mathbb{R}^{2d}}
       \check{\phi}^a_0(2^{a j}z - 2^{a j}z') 
       \partial_\zeta \Gamma_1 p_1(t^{-3/2}x', t^{-1/2}v', \zeta)
       \, \mathrm{d}z' \\
    &= t^{-2d} \int_{\mathbb{R}^{2d}}
       \check{\phi}^a_0(2^{a j}z - z')
       \partial_\zeta \Gamma_1 p_1(t^{-3/2}2^{-3j}x', t^{-1/2}2^{-j}v', \zeta)
       \, \mathrm{d}z',
\end{align*}
where $2^{a j}z := (2^{3j}x, 2^j v)$.

Let us introduce the scaling parameter
$$
    h := t^{-1/2} 2^{-j}.
$$
Then the above identity becomes
\begin{align*}
    I_j(t,z)
    &= t^{-2d} \int_{\mathbb{R}^{2d}}
       \check{\phi}^a_0(2^{a j}z - z')
       \partial_\zeta \Gamma_1 p_1(h^3 x', h v', \zeta) \, \mathrm{d}z' \\
    &= t^{-2d} \int_{\mathbb{R}^{2d}}
       (\Delta_{x'} + \Delta_{v'})^{-\ell} \check{\phi}^a_0(2^{a j}z - z')
       (\Delta_{x'} + \Delta_{v'})^{\ell}
       \partial_\zeta \Gamma_1 p_1(h^3 x', h v', \zeta) \, \mathrm{d}z'.
\end{align*}
Using \eqref{0000}, we obtain
\begin{align*}
    I_j(t,z)
    &= (\Psi'(\zeta))^{-d-1} \Psi''(\zeta) t^{-2d}
       \int_{\mathbb{R}^{2d}}
       (\Delta_{x'} + \Delta_{v'})^{-\ell} \check{\phi}^a_0(2^{a j}z - z') \\
    &\quad \times (\Delta_{x'} + \Delta_{v'})^{\ell}
       q(\Psi'(\zeta)^{-1/2}h^3 x', \Psi'(\zeta)^{-1/2}h v', \zeta)
       \, \mathrm{d}z'.
\end{align*}
Then, applying Young's convolution inequality yields
\begin{align*}
    \|I_j(t,\cdot)\|_{L^p(\mathbb{R}^{2d})}
    &\le (\Psi'(\zeta))^{-d-1} |\Psi''(\zeta)| t^{-2d}
        \|(\Delta_{x'} + \Delta_{v'})^{-\ell} \check{\phi}^a_0(2^{a j}\cdot)\|_{L^p(\mathbb{R}^{2d})} \\
    &\quad \times \int_{\mathbb{R}^{2d}}
        \big|(\Delta_{x'} + \Delta_{v'})^{\ell}
        q(\Psi'(\zeta)^{-1/2}h^3 x', \Psi'(\zeta)^{-1/2}h v', \zeta)\big|
        \, \mathrm{d}z' \\
    &\le (\Psi'(\zeta))^{-d-1} |\Psi''(\zeta)| t^{-2d}
        2^{-j\frac{4d}{p}}
        \|(\Delta_{x'} + \Delta_{v'})^{-\ell} \check{\phi}^a_0\|_{L^p(\mathbb{R}^{2d})} \\
    &\quad \times \int_{\mathbb{R}^{2d}} \Psi'(\zeta)^{-\ell}
        \left|
        \sum_{k=0}^{\ell}
        h^{2(3k + \ell - k)}
        [\Delta_{x'}^k \Delta_{v'}^{\ell - k} q]
        (\Psi'(\zeta)^{-1/2}h^3 x', \Psi'(\zeta)^{-1/2}h v', \zeta)
        \right| \, \mathrm{d}z'.
\end{align*}
A change of variables in the last integral gives
\begin{align*}
    \|I_j(t,\cdot)\|_{L^p(\mR^{2d})}
    &\lesssim (\Psi'(\zeta))^{-d-1-\ell} |\Psi''(\zeta)|
        t^{-2d} 2^{-j\frac{4d}{p}} h^{-4d} (\Psi'(\zeta))^d
        \sum_{k=0}^{\ell} h^{2(3k + \ell - k)}
        \|\Delta_{x'}^k \Delta_{v'}^{\ell - k} q\|_{L^1(\mathbb{R}^{2d})} \\
    &\lesssim_\ell (\Psi'(\zeta))^{-1 - \ell} |\Psi''(\zeta)|
        2^{j(4d - \frac{4d}{p})} (h^{2\ell}  + h^{6\ell})\\
         &\lesssim \Psi'(\zeta)^{-1} |\Psi''(\zeta)|
        2^{j(4d - \frac{4d}{p})}\left(\Psi'(\zeta)^{-1}(h^2 + h^{6})\right)^{\ell}, 
\end{align*}
which proves the desired estimate with $\ell=0,1,2,3,...$. For any $\ell \ge 0$ with $\ell \notin \mathbb{\mN}$, the left-hand side of the inequality is independent of $\ell$. The claim therefore follows by interpolation, writing $\ell = \theta (n-1) + (1-\theta)n$ for the unique $n \in \mathbb{N}$ with $\ell \in (n-1,n)$ and $\theta = n-\ell$.
\end{proof}

\section{Approximation scheme}\label{sec-4}
In this section, we establish the well-posedness of an approximate kinetic equation associated with the following original Cauchy problem:
\begin{equation*}
    \partial_t f
    =  \nabla_v \cdot \big( \Psi'(f) \nabla_v f \big)- v \cdot \nabla_x f,
    \quad f(0) = f_0.
\end{equation*}
The construction relies on an approximation of $\Psi'$ that preserves Assumption~\ref{A3}. 
\begin{revblock}
The approximation hierarchy is as follows. First, $\Psi'$ is replaced by a smooth bounded coefficient $\Psi'_\varepsilon$ and a viscosity term $\varepsilon\Delta_v f$ is added. Second, for fixed $\varepsilon$ we solve the compactifying equation with $n^{-1}\Delta_x f$. This step is obtained through a finite-dimensional nonlinear Galerkin system and then by removing the Galerkin and transport cutoffs. Third, the estimates are uniform in $n$, which allows $n\to\infty$ and gives the regularized equation. The final passage $\varepsilon\to0$ is carried out in Section~\ref{sec-6}.
\end{revblock}

We begin with the following lemma.
\begin{lemma}\label{approx-Psi}
Let $\alpha \in (1,2)$, $\beta \in (-1,0]$, and $C>0$. Let $g$ satisfy
$g \in C^1(0,\infty)$, with $g(\zeta) > 0$ for all $\zeta > 0$. Assume that
$$
\frac{|g'(x)|}{g(x)^\alpha} \le C x^\beta, \qquad \forall x>0.
$$
Then there exists a family
$$
g_\varepsilon \in C^\infty(\mR), \qquad 0<\varepsilon<1,
$$
such that $g_\varepsilon>0$, $g_\eps(x)=g_\eps(0)$ for any $x\le0$,
$$
g_\varepsilon,\ g'_\varepsilon \in L^\infty(\mR),
$$
$$
g_\varepsilon \to g \qquad \text{locally uniformly on } (0,\infty),
$$
and
$$
\sup_{0<\varepsilon<1}
\frac{|g_\varepsilon'(x)|}{g_\varepsilon(x)^\alpha}
\le C x^\beta,
\qquad \forall x>0 .
$$
Moreover, one may choose the family so that
\begin{align}\label{est:bounded_eps}
    \bigl(g(x)\wedge \varepsilon^{-1}\bigr)\le g_\varepsilon(x)\le C_1 \left(g(x)\wedge \varepsilon^{-1}\right),
\qquad \forall x>0,
\end{align}
where the constant $C_1=C_1(\alpha)>1$ is independent of $\varepsilon$.
\end{lemma}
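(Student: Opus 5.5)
The plan is to linearize the structural condition by passing to a negative power of $g$. Set $u:=g^{1-\alpha}$, which is positive and $C^1$ on $(0,\infty)$. Since $\alpha>1$,
$$
u'=(1-\alpha)g^{-\alpha}g',\qquad\text{so}\qquad \frac{|g'(x)|}{g(x)^\alpha}\le Cx^\beta \iff |u'(x)|\le (\alpha-1)Cx^\beta .
$$
Every operation on $u$ that does not increase this pointwise derivative bound is therefore harmless. Because $\beta>-1$, $x^\beta$ is integrable at $0$, so $u(0+)$ exists and is finite; then $u$ extends continuously to $[0,\infty)$. Also, $g\wedge\varepsilon^{-1}$ corresponds to $u_\varepsilon:=u\vee\varepsilon^{\alpha-1}$. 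This function is locally Lipschitz and satisfies $|u_\varepsilon'|\le|u'|$ a.e., so the bound $|u_\varepsilon'|\le(\alpha-1)Cx^\beta$ survives.

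\textbf{Freezing near $0$.} Fix $x_0=x_0(\varepsilon)\in(0,1)$ and replace $u_\varepsilon$ by $w_\varepsilon(x):=u_\varepsilon(x\vee x_0)$ for $x\in\mathbb R$. Then $w_\varepsilon'=0$ on $(-\infty,x_0)$, and $|w_\varepsilon'|\le(\alpha-1)C\,(x\vee x_0)^\beta\le(\alpha-1)Cx^\beta$ for $x>0$. We also have $|w_\varepsilon'|\le (\alpha-1)Cx_0^\beta$ everywhere and $w_\varepsilon\ge\varepsilon^{\alpha-1}$. On $(0,x_0)$ the error satisfies $|w_\varepsilon-u_\varepsilon|\le C' x_0^{1+\beta}$.

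\textbf{Right-looking mollification.} Let $\rho_\delta\ge0$ be a smooth mollifier supported in $[0,\delta]$, and set
$$
\tilde u(x):=\int w_\varepsilon(x+y)\rho_\delta(y)\,dy .
$$
Since $x\mapsto x^\beta$ is nonincreasing, for $x>0$ we get $|\tilde u'(x)|\le\int (\alpha-1)C(x+y)^\beta\rho_\delta(y)\,dy\le(\alpha-1)Cx^\beta$. Moreover $\tilde u$ is smooth, constant on $(-\infty,x_0-\delta]$, satisfies $\tilde u\ge\varepsilon^{\alpha-1}$, and $|\tilde u'|\le(\alpha-1)Cx_0^\beta$. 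For $x>0$,
$$
|\tilde u(x)-u_\varepsilon(x)|\le C'(\delta^{1+\beta}+x_0^{1+\beta})=:\eta .
$$
Choose $\delta,x_0\le\varepsilon$ small enough that $\eta\le\tfrac12\varepsilon^{\alpha-1}\le\tfrac12u_\varepsilon$. This gives $\tfrac12u_\varepsilon\le\tilde u\le\tfrac32u_\varepsilon$.

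\textbf{Definition of $g_\varepsilon$.} Set $g_\varepsilon:=(\tilde u/\lambda)^{-1/(\alpha-1)}$ with $\lambda=3/2$. Then $g_\varepsilon\ge u_\varepsilon^{-1/(\alpha-1)}=g\wedge\varepsilon^{-1}$, and
$$
g_\varepsilon\le 3^{1/(\alpha-1)}(g\wedge\varepsilon^{-1}),
$$
which is \eqref{est:bounded_eps} with $C_1=3^{1/(\alpha-1)}$. Differentiating,
$$
\frac{g_\varepsilon'}{g_\varepsilon^\alpha}=-\frac{\tilde u'}{(\alpha-1)\lambda},
$$
so $|g_\varepsilon'|/g_\varepsilon^\alpha\le Cx^\beta/\lambda\le Cx^\beta$, uniformly in $\varepsilon$. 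The remaining properties follow directly:
\begin{itemize}
\item $g_\varepsilon>0$, it is smooth, and it is constant for $x\le0$ (indeed on $(-\infty,x_0-\delta]$).
\item $g_\varepsilon\le\lambda^{1/(\alpha-1)}\varepsilon^{-1}$, so $g_\varepsilon$ is bounded.
\item $|g_\varepsilon'|\le \frac{1}{\alpha-1}\lambda^{1/(\alpha-1)}\,\tilde u^{-\alpha/(\alpha-1)}|\tilde u'|\le c(\varepsilon)\,x_0^\beta$, so $g_\varepsilon'$ is bounded.
\end{itemize}

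\textbf{Convergence and main obstacle.} Local uniform convergence on compact sets $K\subset(0,\infty)$ follows once $\varepsilon$ is small: there $g\le\varepsilon^{-1}$, so $u_\varepsilon=u$, and $x_0<\min K$. Then $|\tilde u-u|\le C'\delta^{1+\beta}\to0$ uniformly on $K$, with $u$ bounded below on $K$. The one real problem with this raw normalization is that it does not converge: $\lambda$ and the comparison with $u_\varepsilon$ only give a ratio bounded by $C_1$. The fix I would try first is to use a $\lambda$ close to $1$ rather than the fixed value $3/2$. Taking $\lambda=\lambda_\varepsilon:=1+\eta/\varepsilon^{\alpha-1}\downarrow1$ (by choosing $\delta,x_0$ so that $\eta/\varepsilon^{\alpha-1}\to0$) preserves $\tilde u\le\lambda u_\varepsilon$ and $\lambda\ge1$, while giving $\tilde u/\lambda\to u$ on $K$. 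This keeps every estimate above and yields $g_\varepsilon\to g$ locally uniformly. The main delicate point is this simultaneous choice of $\delta,x_0,\lambda$ compatible with the lower bound, the uniform derivative bound and convergence; the monotonicity of $x^\beta$ with a right-supported kernel is what makes the derivative bound survive smoothing.
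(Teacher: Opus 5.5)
Your final construction is correct. Use $\lambda=\lambda_\varepsilon\downarrow1$ and discard the fixed choice $\lambda=3/2$.

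Two parameter choices need to be made explicit:
\begin{itemize}
\item The right-looking kernel evaluates $w_\varepsilon$ on $[x,x+\delta]$. So $g_\varepsilon$ is constant on $(-\infty,0]$ only if $\delta\le x_0$; requiring $\delta,x_0\le\varepsilon$ is not enough.
\item Take, say, $\delta=x_0$ with $2C'x_0^{1+\beta}\le\frac12\varepsilon^{\alpha}$. Then $\theta:=\eta/\varepsilon^{\alpha-1}\le\varepsilon/2\le\frac12$, and $\lambda_\varepsilon=1+\theta\to1$.
\end{itemize}
With these choices, $\frac13u_\varepsilon\le\frac{1-\theta}{1+\theta}u_\varepsilon\le\tilde u/\lambda_\varepsilon\le u_\varepsilon$. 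This gives \eqref{est:bounded_eps} with $C_1=3^{1/(\alpha-1)}$, the derivative bound with constant $C/\lambda_\varepsilon\le C$, and convergence on compact subsets of $(0,\infty)$.

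The paper takes a different route. It reparametrizes by $t=\Phi(x)=x^{1+\beta}/(1+\beta)$ and sets $u(t)=A_\alpha(g(X(t)))$ with $A_\alpha(s)=s^{1-\alpha}/(1-\alpha)<0$. This turns the weighted hypothesis into a plain $C$-Lipschitz bound in $t$. It then truncates at $b_\varepsilon=A_\alpha(\varepsilon^{-1})$, freezes on $t\le\delta_\varepsilon$, and uses an ordinary symmetric mollifier, which preserves the Lipschitz constant. The lower bound $g_\varepsilon\ge g\wedge\varepsilon^{-1}$ is restored by adding the constant $C(\delta_\varepsilon+r_\varepsilon)$. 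This shift is small relative to $|b_\varepsilon|$, which gives $C_1=(8/3)^{1/(\alpha-1)}$, and it tends to $0$, which gives convergence.

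You instead stay in the $x$-variable. You preserve the weighted bound through a one-sided kernel and the monotonicity of $x^\beta$, and you restore the lower bound multiplicatively, which even improves the derivative constant. An additive shift $\tilde u-\eta$, the analogue of the paper's device in your variable, would also have worked.

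Your argument is somewhat more elementary, since it avoids the change of variables altogether. The price is that your smoothing step genuinely needs the weight $x^\beta$ to be nonincreasing. The paper's reparametrization needs no monotonicity of the weight in the smoothing step and uses $\beta\le0$ only to bound $g_\varepsilon'$. Both arguments use $\beta>-1$ in the same way: for the finiteness of $\Phi$ in the paper, and for the error terms $\delta^{1+\beta}$, $x_0^{1+\beta}$ in yours.
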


\begin{proof}
Define
$$
\Phi(x):=\int_0^x s^\beta\,ds=\frac{x^{\beta+1}}{\beta+1}, \qquad x\ge0,
$$
which is strictly increasing from $[0,\infty)$ onto $[0,\infty)$. Let $X:=\Phi^{-1}$.

Define
$$
A_\alpha(s):=
\dfrac{s^{1-\alpha}}{1-\alpha},
\qquad s>0,
$$
so that $A_\alpha$ is strictly increasing and $A_\alpha'(s)=s^{-\alpha}$.

For $t>0$, set
$$
u(t):=A_\alpha\bigl(g(X(t))\bigr).
$$
If $t=\Phi(x)$, then by the chain rule,
$$
u'(t)=A_\alpha'(g(x))g'(x)X'(t)
=\frac{g'(x)}{g(x)^\alpha}x^{-\beta},
$$
hence $|u'(t)|\le C$. Thus $u$ is $C$-Lipschitz on $(0,\infty)$.

Let $\rho\in C_c^\infty((-1,1))$ be nonnegative with $\int_{\mathbb R}\rho=1$, and define $\rho_r(t)=r^{-1}\rho(t/r)$.

Fix $0<\varepsilon<1$ and set

$$
b_\varepsilon:=A_\alpha(\varepsilon^{-1})<0,
$$
$$
\delta_\varepsilon
:=
\min\left\{
\varepsilon,\frac{|b_\varepsilon|}{4C}
\right\},
\qquad
r_\varepsilon:=\frac{\delta_\varepsilon}{4}.
$$
Then
$$
C(\delta_\varepsilon+r_\varepsilon)
=
\frac54 C\delta_\varepsilon
\le
\frac{5}{16}|b_\varepsilon|
<
\frac12 |b_\varepsilon|.
$$

Define the truncation
$$
w_\varepsilon(t):=\min\{u(t),b_\varepsilon\}, \qquad t>0.
$$
Since the truncation map is $1$-Lipschitz, $w_\varepsilon$ is also $C$-Lipschitz. Extend it by freezing near the origin:
$$
W_\varepsilon(t):=
\begin{cases}
w_\varepsilon(\delta_\varepsilon), & t\le \delta_\varepsilon,\\[4pt]
w_\varepsilon(t), & t>\delta_\varepsilon.
\end{cases}
$$
Then $W_\varepsilon$ is $C$-Lipschitz on $\mathbb R$ and constant on $(-\infty,\delta_\varepsilon]$.

Define the mollification
$$
v_\varepsilon
:=
\rho_{r_\varepsilon} * W_\varepsilon
+
C(\delta_\varepsilon+r_\varepsilon).
$$
Then $v_\varepsilon\in C^\infty(\mathbb R)$ and $|v_\varepsilon'|\le C$. Moreover, since $W_\varepsilon\le b_\varepsilon$,
$$
v_\varepsilon
\le
b_\varepsilon+C(\delta_\varepsilon+r_\varepsilon)
\le
b_\varepsilon+\frac12 |b_\varepsilon|
=
\frac12 b_\varepsilon<0,
$$
and $v_\varepsilon$ is constant on $(-\infty,\delta_\varepsilon-r_\varepsilon]$.

We also claim that
$$
v_\varepsilon(t)\ge w_\varepsilon(t),
\qquad t>0.
$$
Indeed, since $W_\varepsilon$ is $C$-Lipschitz,
$$
\rho_{r_\varepsilon}*W_\varepsilon(t)
\ge
W_\varepsilon(t)-Cr_\varepsilon.
$$
If $t>\delta_\varepsilon$, then $W_\varepsilon(t)=w_\varepsilon(t)$. If
$0<t\le\delta_\varepsilon$, then by the $C$-Lipschitz continuity of
$w_\varepsilon$,
$$
W_\varepsilon(t)
=
w_\varepsilon(\delta_\varepsilon)
\ge
w_\varepsilon(t)-C\delta_\varepsilon.
$$
Therefore, for every $t>0$,
$$
\rho_{r_\varepsilon}*W_\varepsilon(t)
\ge
w_\varepsilon(t)-C(\delta_\varepsilon+r_\varepsilon).
$$
Hence
$$
v_\varepsilon(t)\ge w_\varepsilon(t).
$$

We shall also use the following upper bound. Since $W_\varepsilon$ is
$C$-Lipschitz and
$$
|W_\varepsilon(t)-w_\varepsilon(t)|\le C\delta_\varepsilon,
\qquad t>0,
$$
we have
$$
\rho_{r_\varepsilon}*W_\varepsilon(t)
\le
w_\varepsilon(t)+C(\delta_\varepsilon+r_\varepsilon).
$$
Therefore
$$
v_\varepsilon(t)
\le
w_\varepsilon(t)+2C(\delta_\varepsilon+r_\varepsilon).
$$
Since $r_\varepsilon=\delta_\varepsilon/4$ and
$\delta_\varepsilon\le |b_\varepsilon|/(4C)$,
$$
2C(\delta_\varepsilon+r_\varepsilon)
=
\frac52 C\delta_\varepsilon
\le
\frac58 |b_\varepsilon|.
$$
Moreover $w_\varepsilon(t)\le b_\varepsilon<0$, and hence
$|w_\varepsilon(t)|\ge |b_\varepsilon|$. Thus
$$
v_\varepsilon(t)
\le
w_\varepsilon(t)+\frac58 |w_\varepsilon(t)|
=
\frac38 w_\varepsilon(t),
\qquad t>0.
$$

Define, for $x\ge0$,
$$
g_\varepsilon(x):=A_\alpha^{-1}\bigl(v_\varepsilon(\Phi(x))\bigr),
$$
where
$$
A_\alpha^{-1}(y)=
\bigl((1-\alpha)y\bigr)^{\frac{1}{1-\alpha}},
$$
In the case $1<\alpha<2$, this is well-defined since $v_\varepsilon<0$.

Because $v_\varepsilon$ is constant near $0$, we have $g_\varepsilon\in C^\infty(\mR)$ by defining $g_\eps(x):=g_\eps(0)$ for $x\le0$. Monotonicity of $A_\alpha$ yields

$$
g_\varepsilon(x)
\le
A_\alpha^{-1}\left(\frac12 b_\varepsilon\right)
=
2^{\frac1{\alpha-1}}\varepsilon^{-1},
$$
so $g_\varepsilon\in L^\infty(\mR)$.

Next, since $v_\varepsilon(\Phi(x))\ge w_\varepsilon(\Phi(x))$ and
$A_\alpha^{-1}$ is increasing,
$$
g_\varepsilon(x)
\ge
A_\alpha^{-1}\bigl(w_\varepsilon(\Phi(x))\bigr).
$$
But
$$
w_\varepsilon(\Phi(x))
=
\min\{A_\alpha(g(x)),A_\alpha(\varepsilon^{-1})\}.
$$
Since $A_\alpha$ is increasing,
$$
\min\{A_\alpha(g(x)),A_\alpha(\varepsilon^{-1})\}
=
A_\alpha(g(x)\wedge \varepsilon^{-1}).
$$
Therefore
$$
g_\varepsilon(x)\ge g(x)\wedge \varepsilon^{-1},
\qquad x>0.
$$

Similarly, using
$$
v_\varepsilon(\Phi(x))
\le
\frac38 w_\varepsilon(\Phi(x)),
$$
and the monotonicity of $A_\alpha^{-1}$, we obtain
$$
g_\varepsilon(x)
\le
A_\alpha^{-1}\left(\frac38 w_\varepsilon(\Phi(x))\right).
$$
Since
$$
w_\varepsilon(\Phi(x))
=
A_\alpha(g(x)\wedge \varepsilon^{-1}),
$$
and, for $1<\alpha<2$,
$$
A_\alpha^{-1}\left(\frac38 A_\alpha(s)\right)
=
\left(\frac83\right)^{\frac1{\alpha-1}}s,
\qquad s>0,
$$
we get the pointwise upper bound
$$
g_\varepsilon(x)
\le
\left(\frac83\right)^{\frac1{\alpha-1}}
\bigl(g(x)\wedge \varepsilon^{-1}\bigr),
\qquad x>0,
$$
which gives $C_1=\left(\frac83\right)^{\frac1{\alpha-1}}$ in \eqref{est:bounded_eps}.

For $x>0$, we compute
$$
\frac{|g_\varepsilon'(x)|}{g_\varepsilon(x)^\alpha}
=
\bigl|(A_\alpha(g_\varepsilon))'(x)\bigr|
=
|v_\varepsilon'(\Phi(x))|\Phi'(x).
$$
Since $\Phi'(x)=x^\beta$ and $|v_\varepsilon'|\le C$, we obtain
$$
\frac{|g_\varepsilon'(x)|}{g_\varepsilon(x)^\alpha}
\le C x^\beta.
$$

To bound $g_\varepsilon'$, observe that $v_\varepsilon$ is constant on {$(-\infty,\delta_\varepsilon-r_\varepsilon]$}, hence $g_\varepsilon'(x)=0$ when {$\Phi(x)\le \delta_\varepsilon-r_\varepsilon$}. Otherwise,
$$
|g_\varepsilon'(x)|
=
g_\varepsilon(x)^\alpha |v_\varepsilon'(\Phi(x))| x^\beta
\le
C\left(2^{\frac1{\alpha-1}}\varepsilon^{-1}\right)^\alpha x^\beta.
$$
Since $\beta\le0$ and {$x\ge X(\delta_\varepsilon-r_\varepsilon)$}, this yields a uniform bound and hence $g_\varepsilon'\in L^\infty([0,\infty))$.

Finally, let {$K\Subset(0,\infty)$}. {Then $\Phi(K)$ is a compact subset of $(0,\infty)$. For $\varepsilon$ small enough, the interval
$$
\{t\in\mathbb R:\operatorname{dist}(t,\Phi(K))\le r_\varepsilon\}
$$
is contained in $(\delta_\varepsilon,\infty)$, and the truncation is inactive on the corresponding compact set. Thus $W_\varepsilon=u$ on this neighbourhood of $\Phi(K)$.} Using the Lipschitz property of $u$,
{
$$
|v_\varepsilon(t)-u(t)|
\le
Cr_\varepsilon+C(\delta_\varepsilon+r_\varepsilon)
\to 0
$$
}
uniformly for $t\in\Phi(K)$. By continuity of $A_\alpha^{-1}$,
$$
g_\varepsilon(x)\to g(x),
$$
locally uniformly on $(0,\infty)$. This completes the proof.
\end{proof}

Let $\Psi$ and $f_0$ satisfy Assumption \ref{A3}, and set $g = \Psi'$. We consider a family of smooth approximations $(\Psi_{\varepsilon})_{\varepsilon \in (0,1)}$ defined by
$$
\Psi_{\varepsilon}(x) := \int_0^x \Psi_\varepsilon'(y)\,dy,
$$
where $\Psi'_{\varepsilon}(\cdot) = g_{\varepsilon}(\cdot)$ is constructed as in Lemma \ref{approx-Psi}. For every $\varepsilon\in(0,1)$, we consider 
\begin{equation}\label{ep-approx-eq}
    \partial_t f
    = \varepsilon \Delta_v f - v \cdot \nabla_x f
      + \nabla_v \cdot \big( \Psi_{\varepsilon}'(f) \nabla_v f \big),
    \quad f(0) = f_0. 
\end{equation}
% where $(\Psi_{\varepsilon}'(\cdot))_{\varepsilon \in (0,1)}$ is a smooth approximation of $\Psi'(\cdot)$ satisfying
% \begin{align*}
% 	\Psi_{\varepsilon}'(\zeta) =
% 	\begin{cases}
% 		\Psi'(\zeta), & \text{if } \zeta \in [\varepsilon, 1/\varepsilon], \\[3pt]
% 		\Psi'(2/\varepsilon), & \text{if } \zeta \in [2/\varepsilon, +\infty), \\[3pt]
% 		\Psi'(\varepsilon/2), & \text{if } \zeta \in (-\infty, \varepsilon/2], \\[3pt]
% 		\text{smooth}, & \text{otherwise},
% 	\end{cases}
% \end{align*}
% and $\Psi_{\varepsilon}'(\zeta) > 0$ for all $\zeta \in \mathbb{R}$.

\smallskip

To construct a solution, we introduce the following approximation scheme.  
For every $n \geq1$ and $\eps>0$, we consider 
\begin{equation}\label{ep-approx-eq-iteration}
    \partial_t f_{n}
    = \varepsilon \Delta_v f_{n}+\frac{1}{n}\Delta_xf_n - v \cdot \nabla_x f_{n}
      + \nabla_v \cdot \big( \Psi_{\varepsilon}'(f_n) \nabla_v f_{n} \big),
    \quad f_{n}(0) = f_0.
\end{equation}
We first prove the well-posedness of the equation \eqref{ep-approx-eq-iteration}.

\begin{proposition}\label{wp-approx-eq}
Let $f_0 \in L^2(\mathbb{R}^{2d})$ and fix $n \ge 1$. 
Then there exists a weak solution $f_{n}$ of \eqref{ep-approx-eq-iteration} with initial data $f_0$.
\end{proposition}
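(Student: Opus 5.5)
We will follow the hierarchy announced at the beginning of this section: a finite-dimensional nonlinear Galerkin system, a cutoff of the unbounded transport coefficient $v$, uniform energy bounds, and passage to the limit by compactness. Since $n$ and $\varepsilon$ are fixed, the operator $\varepsilon\Delta_v+\frac1n\Delta_x$ is uniformly elliptic on $\mathbb{R}^{2d}$. By Lemma~\ref{approx-Psi}, $\Psi'_\varepsilon$ is smooth, bounded, positive and has bounded derivative. Hence the equation is a nondegenerate quasilinear parabolic equation whose only unbounded coefficient is the transport field $v\cdot\nabla_x$.

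\emph{Step 1 (truncation and Galerkin).} For $R>0$, replace $v\cdot\nabla_x f$ by $\alpha_R(v)v\cdot\nabla_x f$, whose coefficient is bounded, smooth and divergence free in $x$. Fix an orthonormal basis $(e_k)\subset C_c^\infty(\mathbb{R}^{2d})$ of $L^2(\mathbb{R}^{2d})$ and look for $f^{N}=\sum_{k\le N}c_k(t)e_k$ solving the projected weak formulation:
\[
\tfrac{d}{dt}\langle f^N,e_k\rangle=-\varepsilon\langle\nabla_v f^N,\nabla_v e_k\rangle-\tfrac1n\langle\nabla_x f^N,\nabla_x e_k\rangle+\langle \alpha_R v f^N,\nabla_x e_k\rangle-\langle\Psi'_\varepsilon(f^N)\nabla_v f^N,\nabla_v e_k\rangle .
\]
The right-hand side is locally Lipschitz in $c\in\mathbb{R}^N$, since $\Psi'_\varepsilon$ and $(\Psi'_\varepsilon)'$ are bounded. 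Peano's or Picard's theorem therefore gives a local solution.

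\emph{Step 2 (energy estimate).} Testing with $f^N$ itself, the transport term vanishes because $\nabla_x\cdot(\alpha_R(v)v)=0$. Using $\Psi'_\varepsilon>0$, we obtain
\[
\sup_t\|f^N(t)\|_{L^2}^2+2\int_0^T\big(\varepsilon\|\nabla_v f^N\|_{L^2}^2+\tfrac1n\|\nabla_x f^N\|_{L^2}^2\big)\le\|f_0\|_{L^2}^2,
\]
uniformly in $N$ and $R$. This bound makes the solution global. It also gives $\partial_t f^N$ bounded in $L^2(0,T;H^{-1}_{\mathrm{loc}})$, with an $R$-independent bound on each fixed ball, because $|\alpha_R v|\le|v|$ is bounded on compact sets.

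\emph{Step 3 (limits $N\to\infty$, then $R\to\infty$).} Aubin--Lions--Simon on balls yields strong $L^2(0,T;L^2_{\mathrm{loc}})$ convergence along a subsequence, together with weak convergence of gradients. Strong convergence and bounded continuity of $\Psi'_\varepsilon$ give $\Psi'_\varepsilon(f^N)\nabla_v f^N\rightharpoonup\Psi'_\varepsilon(f)\nabla_v f$ weakly in $L^2_{\mathrm{loc}}$. Alternatively, one can write this term as $\nabla_v\Psi_\varepsilon(f^N)$ and pass to the limit against $\Delta_v\varphi$, since $\Psi_\varepsilon$ is Lipschitz. The same argument, with estimates uniform in $R$, handles $R\to\infty$: for a compactly supported test function, $\alpha_R(v)=1$ on its support once $R$ is large. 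Weak-in-time continuity then follows from the equation. Lower semicontinuity gives the required $L^2$, gradient and $\nabla_v\mathcal H_\varepsilon(f)$ bounds.

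\emph{Main obstacle: the $L^1$-bound \eqref{preservation-mass-weak}.} This is the delicate point, since the Galerkin scheme is not positivity- or $L^1$-preserving; nonnegativity of the limit is also needed. I would first attempt an $L^1$-contraction and positivity argument at the level of the $R$-truncated, nondegenerate equation, where a bounded-coefficient quasilinear theory applies. One route is to smooth $f_0$, obtain classical solutions, and use the test functions $\mathrm{sgn}_\delta(f)$ and $f^-$ with cutoffs. The $\alpha_{R_1}(x)\alpha_{R_2}(v)$ cutoff argument of Proposition~\ref{vanish-0-kinetic} controls the boundary terms, and these properties then pass to the limit by lower semicontinuity. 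If this proves hard within the Galerkin framework, I would instead construct the solution for truncated bounded transport by a fixed-point argument in $L^2(0,T;L^2)$ on the linear problem with frozen coefficient $\Psi'_\varepsilon(g)$. That route has a maximum principle and $L^1$-contraction available, and those properties are stable under the limits above.
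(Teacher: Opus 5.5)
Your Steps 1--3 are essentially the paper's proof. Both truncate the transport field to $\alpha_R(v)v$, solve a locally Lipschitz Galerkin ODE, use the energy identity in which the transport term drops out, extract a locally strongly convergent subsequence, and identify the flux by pairing strong convergence of $\Psi'_\varepsilon(f^N)$ with weak convergence of $\nabla_v f^N$.

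What you call the main obstacle is not part of this statement. The proposition assumes only $f_0\in L^2(\mathbb{R}^{2d})$, with no sign and no $L^1$ condition. Definition~\ref{def-weaksolution} is formulated for \eqref{PDE-ep} under Assumption~\ref{A3}, not for \eqref{ep-approx-eq-iteration}. The paper's proof delivers only the distributional formulation of \eqref{ep-approx-eq-iteration} together with the energy bound. Nonnegativity and the $L^1$-bound are proved later, and only for the $n\to\infty$ limit solving \eqref{ep-approx-eq}, in Lemma~\ref{lem:L1} via the renormalized identity of Lemma~\ref{chain_rule}. You can drop that part of your plan.

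\textbf{The gap: the uniform bound on $\partial_t f^N$.} You claim that the energy estimate bounds $\partial_t f^N$ in $L^2(0,T;H^{-1}_{\mathrm{loc}})$ uniformly in $N$ for an arbitrary $L^2$-orthonormal basis $(e_k)\subset C_c^\infty$. This does not follow.
\begin{enumerate}
\item In a Galerkin scheme, $\langle\partial_t f^N,\phi\rangle=\langle G(f^N),P_N\phi\rangle$, where $G$ is the full right-hand side and $P_N$ is the $L^2$-projection onto $\mathrm{span}\{e_1,\dots,e_N\}$. You therefore need $\sup_N\|P_N\phi\|_{H^1}\lesssim\|\phi\|_{H^m}$, which need not hold for a general basis.
\item Similarly, passing from the test functions $e_k$ to arbitrary $\varphi\in C_c^\infty$ requires the span to be dense in $H^1$. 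An $L^2$-orthonormal basis need not have this property.
\end{enumerate}

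\textbf{How the paper handles it.} The paper uses the Hermite basis precisely for this reason. Its projections $P_M$ commute with $-\Delta+|z|^2$, so they are uniformly bounded on the associated weighted Sobolev spaces. This gives \eqref{0616:00} and the bound \eqref{galerkin-time} uniformly in $M$ and $R$; the oscillator also controls the weight $|v|$.

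\textbf{Repair if you keep compactly supported $e_k$.}
\begin{enumerate}
\item Generate the $e_k$ by Gram--Schmidt from a sequence whose span is dense in $H^1$. This fixes the density issue.
\item Replace the time-derivative bound by the uniform $L^2(0,T)$ bound on $\frac{d}{dt}\langle f^N,e_k\rangle$ for each fixed $k$. Arzel\`a--Ascoli and a diagonal argument then give $f^N(t)\rightharpoonup f(t)$ in $L^2$ for every $t$.
\item Apply the Ehrling inequality $\|u\|_{L^2}\le\eta\|u\|_{H^1}+C_\eta\sum_k2^{-k}|\langle u,e_k\rangle|$ to $u=\chi(f^N-f)$. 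Combined with the $L^2(0,T;H^1)$ bound, this yields strong convergence in $L^2(0,T;L^2_{\mathrm{loc}})$.
\end{enumerate}
With that repair, your ordering of limits is fine: first $N\to\infty$ at fixed $R$, then $R\to\infty$ at the level of the PDE with compactly supported test functions.
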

{\blue 
\begin{proof}
For every $R>0$, recall that $\alpha_R$ is defined by
\eqref{truncation-property} and \eqref{truncation-property-2}. Let
$(e_k)_{k\ge1}$ be the Hermite basis of $L^2(\mathbb R^{2d})$, and let
$P_M$ be the orthogonal projection onto $\mathrm{span}\{e_1,\ldots,e_M\}$. Then for any $m\in\mN$ and $\varphi\in C^\infty_c(\mR^d)$,
\begin{align}\label{0616:00}
    P_M \varphi\to\varphi\quad \text{in}\quad H^m(\mR^{2d}).
\end{align}

We consider
\begin{equation}\label{eq-galerkin}
    \partial_t f_{M,R}
    =
    P_M\Big(
        \varepsilon\Delta_v f_{M,R}
        +\frac1n\Delta_x f_{M,R}
        -\alpha_R(v)v\cdot\nabla_x f_{M,R}
        +\nabla_v\cdot
        \big(\Psi_\varepsilon'(f_{M,R})\nabla_v f_{M,R}\big)
    \Big),
    \quad
    f_{M,R}(0)=P_Mf_0 .
\end{equation}
Since $\Psi_\varepsilon'$ is smooth and bounded with bounded derivative, the
right-hand side is locally Lipschitz on the finite-dimensional space $P_ML^2$.
Thus classical ODE theory gives a local solution.

We take the $L^2$ inner product of \eqref{eq-galerkin} with $f_{M,R}$. This is
legitimate since $f_{M,R}\in P_ML^2$ and $P_M$ is an orthogonal projection.
The transport term vanishes because $\alpha_R(v)v$ is independent of $x$.
Hence
\begin{align*}
    \frac12\frac{d}{dt}\|f_{M,R}\|_{L^2(\mathbb{R}^{2d})}^2
    &+
    \varepsilon\|\nabla_v f_{M,R}\|_{L^2(\mathbb{R}^{2d})}^2
    +\frac1n\|\nabla_x f_{M,R}\|_{L^2(\mathbb{R}^{2d})}^2        \\
    &+
    \int_{\mathbb R^{2d}}
        \Psi_\varepsilon'(f_{M,R})
        |\nabla_v f_{M,R}|^2\,dz
    =0 .
\end{align*}
Therefore, for every $t\in[0,T]$,
\begin{align}\label{galerkin-energy}
    &\|f_{M,R}(t)\|_{L^2(\mathbb{R}^{2d})}^2
    +2\varepsilon\int_0^t\|\nabla_v f_{M,R}\|_{L^2(\mathbb{R}^{2d})}^2\,ds
    +\frac2n\int_0^t\|\nabla_x f_{M,R}\|_{L^2(\mathbb{R}^{2d})}^2\,ds       \notag\\
    &\qquad
    +2\int_0^t\int_{\mathbb R^{2d}}
        \Psi_\varepsilon'(f_{M,R})
        |\nabla_v f_{M,R}|^2\,dzds
    \le
    \|f_0\|_{L^2(\mathbb{R}^{2d})}^2 .
\end{align}
This estimate is uniform in $M$ and $R$, and in particular extends the local
solution to $[0,T]$.

Let $\chi\in C_c^\infty(\mathbb R^{2d})$. From \eqref{eq-galerkin},
\eqref{galerkin-energy}, the boundedness of $\Psi_\varepsilon'$, and the compact
support of $\chi$, we obtain, for some $m>d+2$,
\begin{equation}\label{galerkin-time}
    \|\partial_t(f_{M,R}\chi)\|_{L^2([0,T];H^{-m}(\mathbb R^{2d}))}
    \le
    C(f_0,\varepsilon,n,\chi,T),
\end{equation}
with a constant independent of $M$ and $R$. The transport term is controlled
locally by writing
\begin{equation*}
    (\alpha_R(v)v\cdot\nabla_x f_{M,R})\chi
    =
    \nabla_x\cdot(\alpha_R(v)v f_{M,R}\chi)
    -
    f_{M,R}\alpha_R(v)v\cdot\nabla_x\chi ,
\end{equation*}
and using that $v$ is bounded on $\operatorname{supp}\chi$.

By \eqref{galerkin-energy}, $(f_{M,R}\chi)_{M,R}$ is bounded in
$L^2(0,T;H^1(\mathbb R^{2d}))$, while \eqref{galerkin-time} gives a uniform
time-regularity bound. The Aubin--Lions lemma and a diagonal argument yield a
subsequence, still denoted by $(f_{M,R})$, and a function $f_n$ such that
\begin{equation*}
    f_{M,R}\to f_n
    \quad\text{strongly in }
    L^2(0,T;L^2_{\mathrm{loc}}(\mathbb R^{2d})),
\end{equation*}
and
\begin{equation*}
    \nabla_v f_{M,R}\rightharpoonup\nabla_v f_n,
    \qquad
    \nabla_x f_{M,R}\rightharpoonup\nabla_x f_n
    \quad\text{weakly in }
    L^2(0,T;L^2(\mathbb R^{2d})).
\end{equation*}

It remains to pass to the limit in \eqref{eq-galerkin}. For
$\varphi\in C_c^\infty(\mathbb R^{2d})$, we test the Galerkin equation against
$P_M\varphi$ and then let $M,R\to\infty$. Based on \eqref{0616:00}, the linear terms pass to the limit by
the above weak convergences and the fact that $\alpha_R(v)\to1$ locally. For
the nonlinear term, the strong local $L^2$ convergence and the Lipschitz
continuity of $\Psi_\varepsilon'$ imply
\begin{equation*}
    \Psi_\varepsilon'(f_{M,R})
    \to
    \Psi_\varepsilon'(f_n)
    \quad\text{strongly in }
    L^2([0,T];L^2_{\mathrm{loc}}(\mathbb R^{2d})).
\end{equation*}
Together with the weak convergence of $\nabla_v f_{M,R}$, this identifies the
limit of the nonlinear flux as
$\Psi_\varepsilon'(f_n)\nabla_v f_n$. Hence $f_n$ is a weak solution of
\eqref{ep-approx-eq-iteration}. The energy estimate follows from
\eqref{galerkin-energy} by lower semicontinuity. This completes the proof.
\end{proof}
}

In what follows, we establish uniform estimates for the iterative scheme introduced above.

\begin{lemma}\label{uniform-L2}
Under the assumptions of Proposition \ref{wp-approx-eq}, let $f_{n}$ be a weak solution of \eqref{ep-approx-eq-iteration} with initial data $f_0$. Then the following estimate holds:
$$
\sup_{n\geq1} \left( \sup_{t\in[0,T]} \|f_{n}(t)\|_{L^2(\mathbb{R}^{2d})}^2 + \varepsilon \int_0^T \|\nabla_v f_{n}(s)\|_{L^2(\mathbb{R}^{2d})}^2 \, ds \right) \leq \|f_0\|_{L^2(\mathbb{R}^{2d})}^2.
$$
\end{lemma}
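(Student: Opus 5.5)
The estimate will come from the energy identity of \eqref{ep-approx-eq-iteration}. The most direct route uses the construction in Proposition~\ref{wp-approx-eq}: there $f_n$ was obtained as a limit of the Galerkin approximations $f_{M,R}$, which satisfy \eqref{galerkin-energy} uniformly in $M$ and $R$. The right-hand side of \eqref{galerkin-energy} is $\|P_Mf_0\|_{L^2}^2\le\|f_0\|_{L^2}^2$, and it does not depend on $n$. Dropping the nonnegative terms $\frac2n\int_0^t\|\nabla_x f_{M,R}\|^2$ and $2\int_0^t\int\Psi'_\varepsilon(f_{M,R})|\nabla_v f_{M,R}|^2$ (recall $\Psi'_\varepsilon=g_\varepsilon>0$ by Lemma~\ref{approx-Psi}) gives, for every $t$,
\begin{align*}
\|f_{M,R}(t)\|_{L^2(\mathbb{R}^{2d})}^2+2\varepsilon\int_0^t\|\nabla_v f_{M,R}\|_{L^2(\mathbb{R}^{2d})}^2\,ds\le\|f_0\|_{L^2(\mathbb{R}^{2d})}^2 .
\end{align*}

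Next I pass to the limit $M,R\to\infty$ along the subsequence from Proposition~\ref{wp-approx-eq}. The gradient term is handled by weak convergence, $\nabla_v f_{M,R}\rightharpoonup\nabla_v f_n$ in $L^2([0,T];L^2)$, together with weak lower semicontinuity of the norm. The pointwise-in-time term requires showing $f_{M,R}(t)\rightharpoonup f_n(t)$ weakly in $L^2(\mathbb{R}^{2d})$ for every $t$, not only for a.e.\ $t$. For this I use the uniform $L^2$ bound together with the time-derivative bound \eqref{galerkin-time}. These give equicontinuity of $t\mapsto\langle f_{M,R}(t),\varphi\rangle$ for each $\varphi\in C_c^\infty$. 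An Arzel\`a--Ascoli and density argument then yields weak convergence for all $t$, and also weak continuity of $f_n$ in $L^2$. Lower semicontinuity then gives $\|f_n(t)\|^2+2\varepsilon\int_0^t\|\nabla_vf_n\|^2\le\|f_0\|^2$, which is stronger than the claim by a factor of 2 on the gradient term. Taking the supremum over $t$ and over $n$ finishes the proof.

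An alternative would be to derive the energy equality directly for an arbitrary weak solution $f_n$ by testing with $f_n$. That approach needs $\partial_t f_n\in L^2H^{-1}$ and the Lions--Magenes lemma, and the transport term $v\cdot\nabla_x f_n$ is not in $L^2H^{-1}$ globally, since $v$ is unbounded. Handling this would need a cutoff $\alpha_R(v)$ and the identity $\int v\cdot\nabla_x(f^2/2)\alpha_R=0$. I would therefore read the lemma as referring to the solution constructed in Proposition~\ref{wp-approx-eq}, which the phrase ``under the assumptions of'' suggests, and use the approximation route. The only real obstacle is the everywhere-in-$t$ weak convergence, which the equicontinuity argument above settles.
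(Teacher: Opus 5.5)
Your argument is correct for the solution produced in Proposition~\ref{wp-approx-eq}. However, it takes a different route from the paper, and it proves a narrower statement than the one written. The paper does not return to the Galerkin scheme. It applies the chain rule directly to the given weak solution $f_n$, tested against the localized energy $|f_n|^2\beta_r$ with $\beta_r(x,v)=\alpha_{r^2}(x)\alpha_r(v)$, and then lets $r\to\infty$. The obstacle you cite for this direct route is exactly what the double cutoff removes:
\begin{itemize}
\item On $\operatorname{supp}\beta_r$ the velocity is bounded, so $\beta_r\,v\cdot\nabla_x f_n\in L^2L^2$. This uses $\nabla_x f_n\in L^2$, which the $\frac1n\Delta_x$ term provides.
\item The resulting transport contribution $\frac12\int_0^t\int v f_n^2\cdot\nabla_x\beta_r$ is $O(r^{-1})\int_0^t\|f_n\|_{L^2}^2$, since $|v|\le 2r$ and $|\nabla_x\alpha_{r^2}|\lesssim r^{-2}$.
\item The other cutoff terms are $O(r^{-1})$ by Cauchy--Schwarz and the boundedness of $\Psi'_\varepsilon$.
\end{itemize}
So no global $L^2H^{-1}$ bound on $\partial_t f_n$ is needed.

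Each approach has its own advantage. Yours uses the exact finite-dimensional energy identity and then only needs lower semicontinuity plus weak convergence at every $t$. Your equicontinuity argument via \eqref{galerkin-time} correctly supplies that convergence, so you never have to justify a chain rule at the level of a weak solution. The cost is that your conclusion holds only for that particular Galerkin limit. Uniqueness for \eqref{ep-approx-eq-iteration} is not available, so it does not transfer to ``a weak solution'' as the lemma states. The paper's localized energy argument gives the bound for every weak solution in the natural energy class. For how the lemma is used later (Lemma~\ref{uniform-besov}, Proposition~\ref{existence-regularized-pde}, where $f_n$ is the constructed solution), your narrower version is enough.
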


\begin{proof}
\newrev{For $r\ge1$, define $\beta_r(x,v):=\alpha_{r^2}(x)\alpha_r(v)$.} By the chain rule, we obtain for every $t \in [0,T]$ the identity
\begin{align*}
&\int_{\mathbb{R}^{2d}} |f_{n}(t)|^2 \beta_r + \varepsilon \int_0^t \int_{\mathbb{R}^{2d}} |\nabla_v f_{n}|^2 \beta_r + \frac{1}{n} \int_0^t \int_{\mathbb{R}^{2d}} |\nabla_x f_{n}|^2 \beta_r  \\
= &\int_{\mathbb{R}^{2d}} |f_0|^2 \beta_r-\varepsilon\int^t_0\int_{\mathbb{R}^{2d}}f_{n}\nabla_vf_{n}\cdot\nabla_v\beta_r-\frac{1}{n}\int^t_0\int_{\mathbb{R}^{2d}}f_{n}\nabla_xf_{n}\cdot\nabla_x\beta_r\\
&+ \int_0^t \int_{\mathbb{R}^{2d}} \frac{1}{2} v f_{n}^2 \cdot \nabla_x \beta_r - \int_0^t \int_{\mathbb{R}^{2d}} \Psi'_{\varepsilon}(f_n) |\nabla_v f_{n}|^2 \beta_r  - \int_0^t \int_{\mathbb{R}^{2d}} \Psi'_{\varepsilon}(f_n) f_{n} \nabla_v f_{n} \cdot \nabla_v \beta_r.
\end{align*}

Since $\Psi'_{\varepsilon}$ is bounded and nonnegative by assumption, and using the properties of $\beta_r$, we may send $r \to \infty$. The boundary terms vanish due to the cutoff function and integrability conditions. Consequently, we deduce
$$
\sup_{n\geq1} \left( \sup_{t\in[0,T]} \|f_{n}(t)\|_{L^2(\mathbb{R}^{2d})}^2 + \varepsilon \int_0^T \|\nabla_v f_{n}(s)\|_{L^2(\mathbb{R}^{2d})}^2 \, ds \right) \leq \|f_0\|_{L^2(\mathbb{R}^{2d})}^2,
$$
which completes the proof.
\end{proof}

\begin{lemma}\label{uniform-besov}
Under the assumptions of Proposition \ref{wp-approx-eq}, for every $\beta \in (0,1)$, there exists a constant $C = C(f_0, \varepsilon)$ such that
$$
\sup_{n \geq 1} \|f_{n}\|_{L^2([0,T]; \mathbf{B}^\beta_{2;a})} \leq C.
$$
\end{lemma}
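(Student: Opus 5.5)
We need a uniform-in-$n$ bound in $L^2([0,T];\mathbf{B}^\beta_{2;a})$ for $\beta\in(0,1)$, using only the $L^2$ energy bound from Lemma~\ref{uniform-L2}. That lemma controls $\varepsilon\int_0^T\|\nabla_v f_n\|_{L^2}^2$, i.e.\ one full derivative in $v$, but nothing in $x$ uniformly in $n$. The plan is to transfer velocity regularity to the $x$-variable through the linear kinetic semigroup with constant diffusivity $\varepsilon$. We treat the nonlinear diffusion as a source in divergence form and the term $\frac1n\Delta_x f_n$ as a harmless extra dissipation.

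\textbf{Duhamel representation.} Let $P^\varepsilon_t$ denote the kinetic semigroup \eqref{k-semigroup} with $\Psi'(\zeta)$ replaced by the constant $\varepsilon$; equivalently $\Psi'\equiv\varepsilon$ in the formula \eqref{density}. Composing with the $x$-heat semigroup $e^{\frac tn\Delta_x}$ gives the semigroup $Q^n_t$ of $\varepsilon\Delta_v+\frac1n\Delta_x-v\cdot\nabla_x$. This composition is legitimate because in Fourier variables the kernel is still Gaussian, and the extra factor is a contraction on every $\mathcal R^a_j$ block. Writing $G_n:=\Psi'_\varepsilon(f_n)\nabla_vf_n$, which lies in $L^2([0,T];L^2)$ uniformly in $n$ since $\Psi'_\varepsilon$ is bounded by Lemma~\ref{approx-Psi}, we have
$$
f_n(t)=Q^n_tf_0+\int_0^t Q^n_{t-s}\nabla_v\cdot G_n(s)\,ds .
$$
We apply $\mathcal R^a_j$ and use Lemma~\ref{lem-kineticsemigroup-es} with $k=0$ and $p=1$ (convolution kernel in $L^1$), with $\Psi'(\zeta)$ replaced by $\varepsilon$. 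This gives, for any $\ell\ge0$,
$$
\|\mathcal R^a_jQ^n_t h\|_{L^2}\lesssim_\varepsilon \bigl(1\wedge(2^{-2j}t^{-1}+2^{-6j}t^{-3})^\ell\bigr)\|\mathcal R^a_j h\|_{L^2},
$$
with constants independent of $n$. Here we use the frequency-localized version, writing $\mathcal R^a_j=\mathcal R^a_j\tilde{\mathcal R}^a_j$ for a fattened block. For the $\nabla_v$ term, $\nabla_v$ commutes with convolution by $\Gamma_tp_t$ only up to the shear, so instead we move the derivative onto the kernel: $\|\mathcal R^a_j\nabla_v(\Gamma_tp_t)\|_{L^1}\lesssim 2^{j}(\cdots)^\ell$, which follows from the same scaling proof. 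The operator $\Gamma_t$ must be handled with care; as in \eqref{k-semigroup}, the convolution is taken after shearing $h$, and $\Gamma_t$ is an $L^2$ isometry.

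\textbf{Summation.} The free part gives $\|\mathcal R^a_jQ^n_tf_0\|_{L^2}\lesssim \min(1,(2^{2j}t)^{-\ell})\|f_0\|_{L^2}$. Squaring, multiplying by $2^{2\beta j}$ and integrating in $t$ yields $\int_0^T\lesssim 2^{-2j}$, which is summable against $2^{2\beta j}$ for $\beta<1$. The source part is bounded by
$$
\int_0^t 2^{j}\min\bigl(1,(2^{2j}(t-s))^{-\ell}\bigr)\|G_n(s)\|_{L^2}ds .
$$
By Young's inequality in time, its $L^2_t$ norm is at most $2^j\cdot 2^{-2j}\|G_n\|_{L^2_{t,z}}=2^{-j}\|G_n\|_{L^2_{t,z}}$. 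Multiplying by $2^{\beta j}$ gives $2^{(\beta-1)j}$, which is bounded since $\beta<1$. Taking the supremum over $j$ (the Besov norm is an $\ell^\infty$ norm, and $\sup_j\le$ sum once the factors are summable) yields $\|f_n\|_{L^2_T\mathbf B^\beta_{2;a}}\le C(f_0,\varepsilon)$. The low block $j=-1$ is controlled by $\sup_t\|f_n\|_{L^2}$.

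\textbf{Expected obstacle.} The main difficulty is justifying the kernel bound for $\nabla_v$ in the presence of the shear $\Gamma_t$. After shearing, the derivative $\nabla_v$ produces $\nabla_v-t\nabla_x$ on the other factor, and $t\nabla_x$ costs $t2^{3j}$ on the $j$-th block. I would first try to verify directly from the explicit Gaussian \eqref{density} and the scaling argument of Lemma~\ref{lem-kineticsemigroup-es} that $\|\mathcal R^a_j\,\Gamma_t(\nabla_vp_t)\|_{L^1}\lesssim (t^{-1/2}+2^{j})\min(1,(2^{2j}t)^{-\ell})$. The $t^{-1/2}$ term is integrable near zero and, after the same Young argument, still gives a factor $2^{-j}$. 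A secondary point is the rigorous derivation of Duhamel for the weak solution $f_n$, which uses the $H^1$ regularity coming from $\frac1n\Delta_x$ at fixed $n$; the bound itself stays uniform in $n$ because $e^{\frac tn\Delta_x}$ is a Fourier contraction.
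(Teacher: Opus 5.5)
Your proposal is correct and follows essentially the paper's route. Both use Duhamel's formula for $\varepsilon\Delta_v+\frac1n\Delta_x-v\cdot\nabla_x$, treat the $x$-heat semigroup as an $L^2$-contraction commuting with $P^\varepsilon_t$ and $\mathcal R^a_j$, and combine block-wise kinetic smoothing with Young's inequality in time to get $\|\mathcal R^a_jf_n\|_{L^2([0,T];L^2)}\lesssim_\varepsilon 2^{-j}$. The only difference is in the divergence source term. The paper cites \cite[Lemma 2.11]{HRZ25} with the $\mathbf B^{-1}_{2;a}$-norm of $\nabla_v\cdot(\Psi'_\varepsilon(f_n)\nabla_vf_n)$. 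You instead move $\nabla_v$ onto the kernel via $\Gamma_t\nabla_v=(\nabla_v+t\nabla_x)\Gamma_t$ and rerun the scaling proof of Lemma~\ref{lem-kineticsemigroup-es}. That argument indeed gives $\|\mathcal R^a_j\Gamma_t(\nabla_vp_t)\|_{L^1}\lesssim_\varepsilon t^{-1/2}\min(1,(2^{2j}t)^{-\ell})$, whose time integral is $O(2^{-j})$ for $\ell>1/2$.

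Drop the intermediate claim $\|\mathcal R^a_jQ^n_th\|_{L^2}\lesssim(\cdots)\|\mathcal R^a_jh\|_{L^2}$ with a fattened block. It is false, because $\widehat{\Gamma_th}(\xi,\eta)=\hat h(\xi,\eta+t\xi)$ does not preserve the anisotropic blocks. You never actually use it: your estimates only need $\|\mathcal R^a_jQ^n_th\|_{L^2}\le\|\mathcal R^a_j\Gamma_tp_t\|_{L^1}\|h\|_{L^2}$, which follows from Young's inequality since $\Gamma_t$ and $e^{\frac tn\Delta_x}$ do not increase the $L^2$ norm.
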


\begin{proof}
For each $\varepsilon \in (0,1)$, let $(P^\varepsilon_t)_{t \in [0,T]}$ denote the kinetic semigroup generated by the operator $\varepsilon \Delta_v - v \cdot \nabla_x$, and for each $n\geq1$, let $(H^n_t)_{t\in[0,T]}$ denote the heat semigroup generated by $\frac{1}{n}\Delta_x$. Applying Duhamel's formula to $\varepsilon\Delta_v-v\cdot\nabla_x+\frac{1}{n}\Delta_x$, we can express the solution as
$$
f_{n}(t) = P^\varepsilon_t H^n_tf_0 + \int_0^t P^\varepsilon_{t-s} H^n_{t-s}\nabla_v \cdot \left( \Psi'_{\varepsilon}(f_n) \nabla_v f_{n} \right) ds.
$$

Applying the frequency localization operators $\mathcal{R}^a_j$ for $j \geq -1$ to the equation, and using that the semigroup $H_t^n$ commutes with both $P_t^{\varepsilon}$ and $\mathcal{R}^a_j$, together with the $L^2(\mathbb{R}^{2d})$-boundedness of $H_t^n$, we obtain that for every $g \in L^2(\mathbb{R}^{2d})$ and all $t \in [0,T]$,
$$
\|H_t^n g\|_{L^2(\mathbb{R}^{2d})} \leq \|g\|_{L^2(\mathbb{R}^{2d})}.
$$
Hence, we obtain
\begin{align*}
&\|\mathcal{R}^a_j f_{n}(t)\|_{L^2(\mathbb{R}^{2d})}\\
\leq& \|\mathcal{R}^a_j P^\varepsilon_t H^n_tf_0\|_{L^2(\mathbb{R}^{2d})} + \int_0^t \| \mathcal{R}^a_j P^\varepsilon_{t-s}H^n_{t-s} \nabla_v \cdot (\Psi'_{\varepsilon}(f_n) \nabla_v f_{n}) \|_{L^2(\mathbb{R}^{2d})} ds \\
\leq& \|\mathcal{R}^a_j P^\varepsilon_t f_0\|_{L^2(\mathbb{R}^{2d})} + \int_0^t \| \mathcal{R}^a_j P^\varepsilon_{t-s} \nabla_v \cdot (\Psi'_{\varepsilon}(f_n) \nabla_v f_{n}) \|_{L^2(\mathbb{R}^{2d})} ds \\
\lesssim_{\varepsilon}& [1 \wedge (2^{-2j} t^{-1})] \|f_0\|_{L^2(\mathbb{R}^{2d})} + \int_0^t 2^{j} [1 \wedge (2^{-2j} (t-s)^{-1})^2] \| \nabla_v \cdot (\Psi'_{\varepsilon}(f_n) \nabla_v f_{n}) \|_{\mathbf{B}^{-1}_{2;a}} ds.
\end{align*}
The smoothing properties of $P^\varepsilon_t$, which we used here in the \rev{last} inequality, can be found in \cite[Lemma 2.11]{HRZ25}.

Taking the $L^2$-norm in time and applying Young's convolution inequality in time, we obtain
\begin{align*}
\|\mathcal{R}^a_j f_{n}\|_{L^2([0,T]; L^2(\mathbb{R}^{2d}))} &\lesssim_{\varepsilon} \left( \int_0^T [1 \wedge (2^{-2j} t^{-1})]^2 dt \right)^{1/2} \|f_0\|_{L^2(\mathbb{R}^{2d})} \\
&\quad + \left( \int_0^T 2^{j} [1 \wedge (2^{-2j} t^{-1})^2] dt \right) \|\Psi'_{\varepsilon}(f_n) \nabla_v f_{n}\|_{L^2([0,T]; L^2(\mathbb{R}^{2d}))}.
\end{align*}

Utilizing the uniform $L^2$-bounds from Lemma \ref{uniform-L2}, for any $\delta \in (0, 1/2)$, we deduce that
$$
\|\mathcal{R}^a_j f_{n}\|_{L^2([0,T]; L^2(\mathbb{R}^{2d}))} \leq C(f_0,\varepsilon) 2^{-(1-\delta)j},
$$
which implies the desired Besov space regularity and completes the proof.
\end{proof}

\begin{lemma}\label{App:cpt}
Let $\alpha > 0$ and $1 \leq p < \infty$. Then the embedding
$$
\bB^\alpha_{p;a} \hookrightarrow L^p
$$
is compact on any bounded domain. More precisely, for any uniformly bounded sequence $(f_n)_{n\geq1} \subset \bB^\alpha_{p;a}$, the sequence $(f_n)_{n\geq1}$ is relatively compact in $L^p(D)$ for every bounded domain $D \subset \mathbb{R}^{2d}$.
\end{lemma}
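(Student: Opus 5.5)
I plan to prove Lemma~\ref{App:cpt} through the Kolmogorov--Riesz--Fréchet criterion on $L^p(D)$. Let $(f_n)$ be bounded in $\bB^\alpha_{p;a}$ and fix a bounded domain $D$. Since $D$ is bounded, tightness is automatic. The criterion then needs two facts: that $(f_n)$ is bounded in $L^p(D)$, and that the translation moduli satisfy $\sup_n\|f_n(\cdot+h)-f_n\|_{L^p(D)}\to0$ as $|h|\to0$. For both, split $f_n=\sum_{j\ge-1}\mathcal R^a_jf_n$. Because $\alpha>0$, the definition of the norm gives
$$
\sum_{j\ge-1}\|\mathcal R^a_jf_n\|_{L^p}\le\sum_{j}2^{-\alpha j}\|f_n\|_{\bB^\alpha_{p;a}}\lesssim \|f_n\|_{\bB^\alpha_{p;a}} .
$$
The series therefore converges in $L^p(\mR^{2d})$, and $(f_n)$ is bounded in $L^p$ globally.

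For the translation estimate, fix $J\in\mN$ and split into low and high frequencies. The tail is easy: $\sum_{j>J}\|\mathcal R^a_jf_n(\cdot+h)-\mathcal R^a_jf_n\|_{L^p}\le 2\sum_{j>J}2^{-\alpha j}\sup_n\|f_n\|_{\bB^\alpha_{p;a}}$, which is small uniformly in $n$ once $J$ is large. For the low frequencies $j\le J$, I would use a Bernstein-type bound
$$
\|\nabla\mathcal R^a_jg\|_{L^p}\lesssim 2^{3j}\|\mathcal R^a_j g\|_{L^p}.
$$
To get it, write $\mathcal R^a_j=\widetilde{\mathcal R}^a_j\mathcal R^a_j$, where $\widetilde{\mathcal R}^a_j$ is the sum of the neighbouring blocks $\mathcal R^a_{j-1}+\mathcal R^a_j+\mathcal R^a_{j+1}$. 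The kernel of $\widetilde{\mathcal R}^a_j$ is an anisotropic dilation of a fixed Schwartz function. Its $x$-gradient then has $L^1$ norm of order $2^{3j}$ and its $v$-gradient of order $2^j$, and Young's inequality gives the bound. It follows that $\|\mathcal R^a_jf_n(\cdot+h)-\mathcal R^a_jf_n\|_{L^p}\lesssim |h|\,2^{3j}\|f_n\|_{\bB^\alpha_{p;a}}$. Summing over $j\le J$ yields a bound $C2^{3J}|h|$. Choosing $J$ first and then $|h|$ small makes the total modulus uniformly small.

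Applying Kolmogorov--Riesz to the restrictions to $D$ gives relative compactness in $L^p(D)$. The translation bound is global, and restricting to a slightly larger bounded set handles the boundary without trouble. The only step I would check carefully is the anisotropic Bernstein inequality. It follows from the scaling $\phi^a_j(\xi)=\phi^a_0(2^{-aj}\xi)$, which makes the kernels $2^{4dj}\check\phi^a_0(2^{aj}\cdot)$, so I expect no real obstacle here.
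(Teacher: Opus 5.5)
Your proposal is correct and follows the same route as the paper, which proves the lemma via the Kolmogorov--Riesz criterion and defers the details to \cite[Lemma 5.6]{HWZ25}. Your Littlewood--Paley splitting supplies exactly the uniform translation estimate that the criterion requires: the tail over $j>J$ is geometric because $\alpha>0$, and the finitely many low blocks $j\le J$ are handled by the anisotropic Bernstein bound $\|\nabla\mathcal R^a_j g\|_{L^p}\lesssim 2^{3j}\|\mathcal R^a_j g\|_{L^p}$.
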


\begin{proof}
This result follows from the Kolmogorov-Riesz compactness theorem. For a detailed and rigorous proof, we refer the reader to \cite[Lemma 5.6]{HWZ25}. 
\end{proof}

\begin{lemma}\label{uniform-time-regularity}
Under the assumptions of Proposition \ref{wp-approx-eq}, for any cut-off function $\chi \in C_c^\infty(\mathbb{R}^{2d})$, the following uniform time-regularity estimate holds:
\begin{align}\label{time-regularity-n}
    \sup_{n \geq 1} \|f_{n} \chi\|_{W^{1, 2}([0, T]; \bB^{-6}_{2; a})} \leq C(f_0, T, \varepsilon).
\end{align}
\end{lemma}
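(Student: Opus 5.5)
We first bound $f_n\chi$ in $L^2([0,T];\bB^{-6}_{2;a})$, and then its time derivative $\partial_t(f_n\chi)$ in the same space. Together these give the $W^{1,2}([0,T];\bB^{-6}_{2;a})$ estimate. The first bound is immediate. Since $\chi$ is bounded, $\|f_n\chi\|_{\bB^{-6}_{2;a}}\lesssim\|f_n\chi\|_{L^2}\le\|\chi\|_\infty\|f_n\|_{L^2}$, because $\|\mathcal R^a_j g\|_{L^2}\lesssim\|g\|_{L^2}$ uniformly in $j$. Lemma \ref{uniform-L2} then gives a bound by $T^{1/2}\|\chi\|_\infty\|f_0\|_{L^2}$, uniformly in $n$.

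For the time derivative we use the weak formulation of \eqref{ep-approx-eq-iteration}. In the distributional sense,
\begin{align*}
\partial_t(f_n\chi)=&\ \varepsilon\nabla_v\cdot(\chi\nabla_vf_n)-\varepsilon\nabla_v\chi\cdot\nabla_vf_n+\frac1n\nabla_x\cdot(\chi\nabla_xf_n)-\frac1n\nabla_x\chi\cdot\nabla_xf_n\\
&-\nabla_x\cdot(v\chi f_n)+f_n\,v\cdot\nabla_x\chi+\nabla_v\cdot(\chi\Psi_\varepsilon'(f_n)\nabla_vf_n)-\nabla_v\chi\cdot\Psi'_\varepsilon(f_n)\nabla_vf_n .
\end{align*}
We then estimate each term in $\bB^{-6}_{2;a}$ using the anisotropic rule that $\nabla_v$ costs one derivative and $\nabla_x$ costs three. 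Thus $\|\nabla_v g\|_{\bB^{-6}_{2;a}}\lesssim\|g\|_{\bB^{-5}_{2;a}}\lesssim\|g\|_{L^2}$ and $\|\nabla_x g\|_{\bB^{-6}_{2;a}}\lesssim\|g\|_{\bB^{-3}_{2;a}}\lesssim\|g\|_{L^2}$; these follow from Bernstein-type bounds on the blocks $\mathcal R^a_j$.

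The terms are then controlled as follows.
\begin{itemize}
\item The $\varepsilon$-terms and the nonlinear flux are bounded by $(\varepsilon+\|\Psi'_\varepsilon\|_\infty)\|\nabla_vf_n\|_{L^2}$. Here $\Psi'_\varepsilon$ is bounded by Lemma \ref{approx-Psi}, with bound $\lesssim\varepsilon^{-1}$. Squaring and integrating in time, Lemma \ref{uniform-L2} controls these terms by $C(\varepsilon)\|f_0\|_{L^2}$.
\item The transport terms are bounded by $\sup_{\mathrm{supp}\chi}|v|\,\|f_n\|_{L^2}$, because $v\chi$ and $v\cdot\nabla_x\chi$ are smooth and compactly supported.
\item The $\frac1n\Delta_x$ terms are the only delicate point, since Lemma \ref{uniform-L2} gives no control of $\nabla_x f_n$. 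We avoid $\nabla_x f_n$ altogether by moving both derivatives off: write $\frac1n\chi\Delta_x f_n=\frac1n\big(\Delta_x(\chi f_n)-2\nabla_x\cdot(f_n\nabla_x\chi)+f_n\Delta_x\chi\big)$. In $\bB^{-6}_{2;a}$ the term $\Delta_x$ costs exactly six derivatives, so $\|\Delta_x(\chi f_n)\|_{\bB^{-6}_{2;a}}\lesssim\|\chi f_n\|_{L^2}$. All these terms are therefore bounded by $\frac1n C(\chi)\|f_n\|_{L^2}\le C\|f_0\|_{L^2}$.
\end{itemize}
This is presumably why the exponent $-6$ is chosen. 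The rigorous version is done by testing the weak formulation against $\mathcal R^a_j$-localized test functions and taking the supremum over $j$.

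Summing these bounds gives $\int_0^T\|\partial_t(f_n\chi)\|^2_{\bB^{-6}_{2;a}}dt\le C(f_0,T,\varepsilon)$ uniformly in $n$, which proves \eqref{time-regularity-n}.

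The main obstacle is checking the anisotropic multiplier bounds $\|\partial_{x_i}g\|_{\bB^{s-3}_{2;a}}\lesssim\|g\|_{\bB^{s}_{2;a}}$ and $\|\partial_{v_i}g\|_{\bB^{s-1}_{2;a}}\lesssim\|g\|_{\bB^{s}_{2;a}}$ for the dyadic blocks with scaling $a=(3,1)$. These are standard: on $\mathrm{supp}\,\phi^a_j$ we have $|\xi_x|\lesssim2^{3j}$ and $|\xi_v|\lesssim2^j$. A second point to check is that the weak formulation legitimately allows the distributional identity above.
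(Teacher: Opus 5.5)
Your proposal is correct and follows essentially the same route as the paper: you use the equation for $\partial_t f_n$ and rewrite each $\chi$-weighted term in divergence form plus a lower-order remainder. You then absorb the derivatives into the negative anisotropic Besov norm and close with the $L^\infty_tL^2$ bound, the $\varepsilon$-weighted $\nabla_v$ bound, and $\|\Psi'_\varepsilon\|_\infty\lesssim\varepsilon^{-1}$. Your double integration by parts for $\frac1n\chi\Delta_x f_n$ is valid, but the obstacle you flagged there is not a real one: the paper's single integration by parts also works, because the energy identity behind Lemma \ref{uniform-L2} (cf.\ \eqref{galerkin-energy}) controls $\frac1n\int_0^T\|\nabla_x f_n\|_{L^2}^2\lesssim\|f_0\|_{L^2}^2$.
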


\begin{proof}
Starting from the iteration equation \eqref{ep-approx-eq-iteration} and utilizing the boundedness of $\Psi_{\varepsilon}'(\cdot)$, we apply the chain rule to estimate the time derivative. Specifically, we have
\begin{align*}
    \|f_{n} \chi\|_{W^{1,2}([0,T]; \bB^{-6}_{2;a})} 
    \leq{}& C(f_0,T) + \varepsilon \|(\Delta_v f_{n}) \chi\|_{L^2([0,T]; \bB^{-6}_{2;a})}+ \frac{1}{n} \|(\Delta_x f_{n}) \chi\|_{L^2([0,T]; \bB^{-6}_{2;a})} \\ 
    &+ \|(v \cdot \nabla_x f_{n}) \chi\|_{L^2([0,T]; \rev{\bB^{-6}_{2;a}})} \\
    &+ \|\nabla_v \cdot (\Psi_{\varepsilon}'(f_n) \nabla_v f_{n}) \chi\|_{L^2([0,T]; \bB^{-6}_{2;a})}.
\end{align*}

To handle the derivatives acting on products, we rewrite terms such as
$$
(\Delta_v f_{n}) \chi = \nabla_v \cdot (\nabla_v f_{n} \chi) - \nabla_v f_{n} \cdot \nabla_v \chi,
$$
and similarly for the other differential operators, separating terms to exploit the smoothness and compact support of $\chi$. Using these decompositions and the boundedness of all involved terms, we obtain
\begin{align*}
    \|f_{n} \chi\|_{W^{1,2}([0,T]; \bB^{-6}_{2;a})} \leq C(f_0, T, \varepsilon),
\end{align*}
which concludes the proof.
\end{proof}

\begin{lemma}\label{compactness}
Under the assumptions of Proposition \ref{wp-approx-eq}, the sequence $(f_n)_{n\geq1}$ is relatively compact in $L^2([0,T]; L^2(D))$ for every bounded domain $D \subset \mathbb{R}^{2d}$.
\end{lemma}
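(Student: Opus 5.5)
The plan is an Aubin--Lions--Simon argument that combines the uniform spatial regularity of Lemma~\ref{uniform-besov} with the uniform time regularity of Lemma~\ref{uniform-time-regularity}, applied to the localized functions $f_n\chi$.

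Fix a bounded domain $D\subset\mathbb{R}^{2d}$ and choose $\chi\in C_c^\infty(\mathbb{R}^{2d})$ with $\chi\equiv1$ on $D$. It suffices to show that $(f_n\chi)_{n\ge1}$ is relatively compact in $L^2([0,T];L^2(\mathbb{R}^{2d}))$.

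\textbf{Step 1: spatial bound for the localized sequence.} By Lemma~\ref{uniform-besov}, for a fixed $\beta\in(0,1)$,
\[
\sup_n\|f_n\|_{L^2([0,T];\mathbf{B}^\beta_{2;a})}\le C.
\]
Multiplication by a Schwartz function is bounded on $\mathbf{B}^\beta_{2;a}$ for $\beta>0$. This follows by the usual paraproduct argument, or more simply, for $\beta<1$, from the difference characterization of the anisotropic Besov norm. Hence
\[
\sup_n\|f_n\chi\|_{L^2([0,T];\mathbf{B}^\beta_{2;a})}\le C .
\]

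\textbf{Step 2: choice of the triple.} Since all the $f_n\chi$ are supported in the fixed compact set $K:=\operatorname{supp}\chi$, consider the triple
\[
X:=\{g\in\mathbf{B}^\beta_{2;a}:\operatorname{supp} g\subset K\}\ \hookrightarrow\ B:=L^2(K)\ \hookrightarrow\ Y:=\mathbf{B}^{-6}_{2;a}.
\]
The first embedding is compact by Lemma~\ref{App:cpt}: a bounded set in $\mathbf{B}^\beta_{2;a}$ is relatively compact in $L^2$ of any bounded domain, and supports are uniformly contained in $K$. The second embedding is continuous, because $L^2=\mathbf{B}^0_{2,2;a}\hookrightarrow\mathbf{B}^0_{2;a}\hookrightarrow\mathbf{B}^{-6}_{2;a}$.

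\textbf{Step 3: Aubin--Lions--Simon.} Lemma~\ref{uniform-time-regularity} gives $\sup_n\|\partial_t(f_n\chi)\|_{L^2([0,T];Y)}<\infty$. Together with the bound in $L^2([0,T];X)$ from Step~1, the Aubin--Lions--Simon lemma (Simon's version, $L^p(X)$ bound with $p=2$ and $\partial_t$ in $L^1(Y)$) yields relative compactness of $(f_n\chi)$ in $L^2([0,T];L^2(K))$. Restricting to $D$ gives the claim.

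\textbf{Main obstacle.} The only delicate point is the functional-analytic setup, not any estimate. First, one must check that the localized sequence indeed stays bounded in the Besov space, i.e.\ the multiplier property in Step~1. Second, Simon's lemma requires $X\hookrightarrow B$ to be compact as a Banach-space embedding. If the difference characterization is inconvenient, I would instead use Kolmogorov--Riesz directly in the space-time variables. Spatial equicontinuity comes from the Besov bound, since
\[
\|g(\cdot+h)-g\|_{L^2}\lesssim |h|_a^{\beta}\|g\|_{\mathbf{B}^\beta_{2;a}}
\]
after integration in time. Temporal equicontinuity is obtained by interpolating the $W^{1,2}([0,T];\mathbf{B}^{-6}_{2;a})$ bound against the $L^2([0,T];\mathbf{B}^\beta_{2;a})$ bound, which controls time translates in $L^2([0,T];L^2)$.
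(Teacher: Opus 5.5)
Your proposal is correct and follows the same route as the paper, which simply invokes the Aubin--Lions criterion by combining Lemmas~\ref{uniform-besov}, \ref{App:cpt} and \ref{uniform-time-regularity}. Your localization by $\chi$, the multiplier bound on $\mathbf{B}^\beta_{2;a}$, and the explicit triple $X\hookrightarrow L^2(K)\hookrightarrow\mathbf{B}^{-6}_{2;a}$ spell out details that the paper's two-line proof leaves implicit.
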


\begin{proof}
Combining Lemmas \ref{uniform-besov}, \ref{App:cpt}, and \ref{uniform-time-regularity}, we apply the Aubin-Lions compactness criterion. The uniform bounds in the spatial Besov space and the time regularity estimate imply the desired compactness in $L^2([0,T]; L^2(D))$.
\end{proof}

\begin{proposition}\label{existence-regularized-pde}
Let $\eps>0$ and \rev{$f_0 \in L^1(\mathbb{R}^{2d})\cap L^2(\mathbb{R}^{2d})$}. Then there exists a \newrev{distributional solution satisfying the weak formulation} of \eqref{ep-approx-eq} with initial data $f_0$.
\end{proposition}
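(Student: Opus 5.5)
The plan is to obtain the solution of \eqref{ep-approx-eq} as a limit, as $n\to\infty$, of the weak solutions $f_n$ of \eqref{ep-approx-eq-iteration} given by Proposition~\ref{wp-approx-eq}. Throughout, $\varepsilon>0$ is fixed. All bounds used are uniform in $n$: Lemma~\ref{uniform-L2}, Lemma~\ref{uniform-besov} and Lemma~\ref{uniform-time-regularity}.

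\textbf{Step 1: extraction of a limit.} By Lemma~\ref{compactness} and a diagonal argument over an exhausting sequence of balls $D_k\uparrow\mathbb R^{2d}$, there is a subsequence $f_{n_k}$ and a limit $f$ such that $f_{n_k}\to f$ strongly in $L^2([0,T];L^2_{\rm loc}(\mathbb R^{2d}))$ and almost everywhere. Lemma~\ref{uniform-L2} gives a uniform bound on $\|f_n\|_{L^\infty_tL^2}$ and on $\varepsilon\|\nabla_v f_n\|_{L^2_tL^2}$. Hence we may also assume
\begin{itemize}
\item $f_{n_k}\rightharpoonup f$ weak-$*$ in $L^\infty([0,T];L^2)$;
\item $\nabla_v f_{n_k}\rightharpoonup\nabla_v f$ weakly in $L^2([0,T];L^2)$.
\end{itemize}
Lower semicontinuity then gives the energy bound $\sup_t\|f(t)\|_{L^2}^2+\varepsilon\|\nabla_vf\|^2_{L^2_tL^2}\le\|f_0\|_{L^2}^2$.

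\textbf{Step 2: passing to the limit in the weak formulation.} Test \eqref{ep-approx-eq-iteration} with $\varphi\in C_c^\infty(\mathbb R^{2d})$.
\begin{itemize}
\item \emph{Linear terms.} The terms $\varepsilon\int f_n\Delta_v\varphi$ and $\int f_n v\cdot\nabla_x\varphi$ converge by local strong convergence, since $v$ is bounded on ${\rm supp}\,\varphi$.
\item \emph{Artificial viscosity.} We have $\frac1n\int_0^t\int f_n\Delta_x\varphi\to0$, because $f_n$ is bounded in $L^2$.
\item \emph{Nonlinear flux.} Write $\Psi_\varepsilon'(f_n)\nabla_vf_n=\nabla_v\Psi_\varepsilon(f_n)$, so the term is $\int_0^t\int\Psi_\varepsilon(f_n)\Delta_v\varphi$. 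Since $\Psi_\varepsilon'$ is bounded, $\Psi_\varepsilon$ is Lipschitz, so $\Psi_\varepsilon(f_n)\to\Psi_\varepsilon(f)$ in $L^2_tL^2_{\rm loc}$. Equivalently, $\Psi'_\varepsilon(f_n)\to\Psi'_\varepsilon(f)$ strongly in $L^2_{\rm loc}$ and boundedly, and paired with the weakly converging $\nabla_vf_n$ this identifies the limit flux.
\end{itemize}
This gives the weak formulation for almost every $t$.

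\textbf{Step 3: every $t$ and nonnegativity.} To upgrade from almost every $t$ to every $t\in[0,T]$, use Lemma~\ref{uniform-time-regularity}. It gives equicontinuity of $t\mapsto\int f_n\varphi$, hence $f$ has a weakly continuous representative and the identity holds for all $t$. If nonnegativity is required, obtain it by testing \eqref{ep-approx-eq-iteration} with $f_n^-$: the estimate closes exactly as in Lemma~\ref{uniform-L2}, with the same cutoff $\beta_r$, so $f_n\ge0$ and hence $f\ge0$ a.e.

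\textbf{Step 4: $L^1$-bound.} For the $L^1$-bound $\|f(t)\|_{L^1}\le\|f_0\|_{L^1}$, the plan is to show $\|f_n(t)\|_{L^1}\le\|f_0\|_{L^1}$. Test with a smooth approximation of ${\rm sgn}(f_n)\beta_r$, use the nonpositive sign of the renormalized dissipation term, and let $r\to\infty$ as in Lemma~\ref{uniform-L2}. Then conclude by Fatou's lemma along the a.e. convergent subsequence.

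The main obstacle is the $L^1$ step. The cutoff boundary terms must vanish as $r\to\infty$, and $f_n$ is only known to be in $L^2$. I would handle this with the anisotropic cutoff $\alpha_{r^2}(x)\alpha_r(v)$, which makes $v\cdot\nabla_x\beta_r=O(r^{-1})$. Then write the diffusion boundary terms in divergence form, bounded by $r^{-1}\|\nabla_v f_n\|_{L^2}\|\mathbf 1_{\{|v|\sim r\}}\|$. If this fails, I would instead first propagate $L^1\cap L^2$ through the Galerkin level before $M\to\infty$.
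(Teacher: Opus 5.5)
Your Steps~1--2, together with the every-$t$ upgrade in Step~3, are essentially the paper's proof. The paper likewise extracts a limit via Lemma~\ref{compactness} and a diagonal argument, uses weak convergence of $\nabla_v f_n$ from the bound of Lemma~\ref{uniform-L2}, and identifies the flux through the Lipschitz continuity of $\Psi_\varepsilon$. Your equicontinuity argument also supplies the justification for ``every $t$'' that the paper leaves implicit. This already proves the statement, which asks only for a distributional solution of \eqref{ep-approx-eq}. Nonnegativity and the $L^1$-bound are not part of it; the paper establishes them afterwards for the limit $f$ (Lemmas~\ref{lem:L1} and~\ref{mass-conservation-eps}).

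Since you flag Step~4 as the main obstacle, note that your sketch would not close as written. The set $\operatorname{supp}\nabla_v\beta_r$ has volume of order $r^{3d}$, so Cauchy--Schwarz only gives $\frac{C}{r}\,r^{3d/2}\|\nabla_v f_n\|_{L^2}$, which need not tend to zero. The Hermite--Galerkin fallback does not help either, because $P_M$ preserves neither sign nor $L^1$ bounds. By contrast, your $L^2$-type argument for $f_n^-$ does close when $f_0\ge0$, since there every boundary term is a product of two globally $L^2$ quantities.

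The paper's remedy is to integrate the diffusion boundary term by parts once more. Both boundary terms are then controlled by $\frac{C}{r}\int_0^t J_{2r}(s)\,ds$ with $J_r=\int|f|\beta_r$. One then iterates $J_r(t)\le\|f_0\|_{L^1}+\frac{C}{r}\int_0^t J_{2r}(s)\,ds$ over dyadic radii, using the $L^2$ bound only to control the last term crudely.
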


\begin{proof}
For each $n\geq1$, let $f_n$ be a weak solution of \eqref{ep-approx-eq-iteration} with initial data $f_0$. By Lemma \ref{compactness} and a standard diagonal argument, there exists a subsequence $(f_n)_{n \geq 1}$ (still indexed by $n$) that converges in $L^2([0,T]; L^2_{\mathrm{loc}}(\mathbb{R}^{2d}))$, while $(\nabla_v f_{n})_{n \geq 1}$ converges weakly in $L^2([0,T]; L^2(\mathbb{R}^{2d}))$. Let $f \in L^2([0,T]; L^2_{\mathrm{loc}}(\mathbb{R}^{2d}))$ denote the limit of $f_n$.   

Since $\Psi_{\varepsilon}(\cdot)$ is Lipschitz continuous, passing to the limit in the weak formulation of \eqref{ep-approx-eq-iteration} for any test function $\varphi \in C_c^\infty(\mathbb{R}^{2d})$, we obtain for all $t \in [0,T]$,
\begin{align*}
\int_{\mathbb{R}^{2d}} f(t) \varphi &= \int_{\mathbb{R}^{2d}} f_0 \varphi - \varepsilon \int_0^t \int_{\mathbb{R}^{2d}} \nabla_v f \cdot \nabla_v \varphi \\
&\quad - \int_0^t \int_{\mathbb{R}^{2d}} \Psi_{\varepsilon}'(f) \nabla_v f \cdot \nabla_v \varphi + \int_0^t \int_{\mathbb{R}^{2d}} v f \cdot \nabla_x \varphi,
\end{align*}
which shows that $f$ is indeed a \newrev{distributional solution satisfying the weak formulation} of \eqref{ep-approx-eq}.
\end{proof}

{\red
\begin{lemma}\label{chain_rule}
Under the assumptions of Proposition~\ref{existence-regularized-pde}, let $f$
be the distributional solution constructed in
Proposition~\ref{existence-regularized-pde}. Let
$\varphi\in C_c^\infty(\mathbb R^{2d})$, and let $F\in C^2(\mathbb R)$ satisfy
\begin{equation*}
    |F'(u)|\le C(1+|u|),
    \qquad
    |F''(u)|\le C,
    \qquad u\in\mathbb R ,
\end{equation*}
for some constant $C>0$. Then
it holds for every $t\in[0,T]$ that
\begin{align*}
    &\int_{\mathbb R^{2d}} F(f(t))\varphi\,dz
    +
    \int_0^t\int_{\mathbb R^{2d}}
    F''(f)(\eps+\Psi_\eps'(f))
    |\nabla_v f|^2\varphi\,dzds       \\
    &=
    \int_{\mathbb R^{2d}} F(f_0)\varphi\,dz
    -
    \int_0^t\int_{\mathbb R^{2d}}
    F'(f)(\eps+\Psi_\eps'(f))
    \nabla_v f\cdot\nabla_v\varphi\,dzds  \\
    &\quad+
    \int_0^t\int_{\mathbb R^{2d}}
    F(f)v\cdot\nabla_x\varphi\,dzds .
\end{align*}
\end{lemma}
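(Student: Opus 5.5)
We prove the chain rule by regularizing $f$ only in the spatial variable $x$, as in the proof of Lemma~\ref{lem-equivalence}. For fixed $\eps>0$, the constructed $f$ has the following properties: $f\in L^\infty([0,T];L^2(\mathbb R^{2d}))$, $\nabla_v f\in L^2([0,T];L^2(\mathbb R^{2d}))$ (from Lemma~\ref{uniform-L2} and lower semicontinuity), and $\Psi'_\eps$ is bounded with $\Psi'_\eps\ge0$. Let $\eta_\gamma$ be a mollifier in $x$ and set $f^\gamma:=\eta_\gamma\ast f$. Convolving the weak formulation of \eqref{ep-approx-eq} in $x$ gives
\begin{equation*}
\partial_t f^\gamma=\nabla_v\cdot\big(\eta_\gamma\ast G\big)-v\cdot\nabla_x f^\gamma,\qquad G:=(\eps+\Psi'_\eps(f))\nabla_v f\in L^2([0,T];L^2(\mathbb R^{2d})).
\end{equation*}
Here convolution in $x$ commutes with $v\cdot\nabla_x$ and with $\nabla_v$. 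Then $f^\gamma$ is smooth in $x$, and $\partial_t f^\gamma\in L^2([0,T];H^{-1}_{v,\mathrm{loc}})$ once the transport term is taken into account; the transport term is now a bounded function on compact sets.

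The first step is to test with $F'(f^\gamma)\varphi$. To make this rigorous, we additionally mollify in $(t,v)$, or use the standard Lions--Magenes time-derivative lemma on $L^2_tH^1_v\cap H^1_tH^{-1}_v$ localized by $\varphi$. The growth hypotheses $|F'(u)|\lesssim 1+|u|$ and $|F''|\le C$ guarantee that $F'(f^\gamma)\varphi\in L^2_tH^1_v$. This yields, for every $t$,
\begin{align*}
\int F(f^\gamma(t))\varphi=&\int F(\eta_\gamma\ast f_0)\varphi-\int_0^t\!\!\int F'(f^\gamma)\,\eta_\gamma\ast G\cdot\nabla_v\varphi\\
&-\int_0^t\!\!\int F''(f^\gamma)\,\nabla_v f^\gamma\cdot \eta_\gamma\ast G\,\varphi+\int_0^t\!\!\int F(f^\gamma)\,v\cdot\nabla_x\varphi .
\end{align*}
The transport term is obtained from $F'(f^\gamma)v\cdot\nabla_x f^\gamma=v\cdot\nabla_x F(f^\gamma)$ by integrating by parts in $x$.

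The second step is to pass to $\gamma\to0$. As $\gamma\to0$, we have $f^\gamma\to f$, $\nabla_v f^\gamma=\eta_\gamma\ast\nabla_v f\to\nabla_v f$ and $\eta_\gamma\ast G\to G$ in $L^2([0,T];L^2)$, and a.e. along a subsequence. Consider first the term $F''(f^\gamma)\nabla_vf^\gamma\cdot\eta_\gamma\ast G$. It is a product of a bounded, a.e. convergent factor with two factors converging strongly in $L^2$, so it converges in $L^1$ to $F''(f)(\eps+\Psi'_\eps(f))|\nabla_v f|^2$. In the term with $F'$, the factor $F'(f^\gamma)$ converges strongly in $L^2_{\mathrm{loc}}$, since $F'$ is Lipschitz. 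The terms $F(f^\gamma)$ converge in $L^1_{\mathrm{loc}}$ because $|F(u)|\lesssim 1+|u|^2$ and $f^\gamma\to f$ in $L^2$. The initial term converges since $\eta_\gamma\ast f_0\to f_0$ in $L^2$.

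The main obstacle is obtaining the identity for \emph{every} $t\in[0,T]$, not just a.e. Convergence $f^\gamma(t)\to f(t)$ in $L^2$ holds for each fixed $t$ because $f(t)\in L^2$, provided $f(t)$ is a well-defined representative. We will use the weakly continuous representative $t\mapsto f(t)$ in $L^2$, which exists since the weak formulation holds for all $t$ and $f\in L^\infty_tL^2$. For each fixed $t$, $\eta_\gamma\ast f(t)\to f(t)$ strongly in $L^2$, and so $\int F(f^\gamma(t))\varphi\to\int F(f(t))\varphi$, which gives the claim for every $t$.
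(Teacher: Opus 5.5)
Your proof is correct, but it is organized differently from the paper's.

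\textbf{The paper's route.} The paper mollifies jointly in phase space with the kinetic scaling $\rho_h(x,v)=h^{-3d}\rho(h^{-1}x,h^{-2}v)$. Then $f_h$ is smooth in $(x,v)$ and absolutely continuous in $t$, so the classical chain rule applies pointwise. The price is the commutator $[v\cdot\nabla_x,\rho_h\ast]f=\int w\cdot\nabla_x\rho_h(y,w)f(\cdot-(y,w))\,dy\,dw$, which the anisotropic scaling makes $O(h\|f\|_{L^2})$. The paper first obtains the identity only for a.e.\ $t$. It then upgrades to every $t$ using weak continuity of $t\mapsto\langle f(t),\psi\rangle$ together with continuity of local $L^2$ norms, the latter obtained from the case $F(u)=u^2$.

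\textbf{Your route.} You mollify only in $x$. This commutes exactly with $v\cdot\nabla_x$ and $\nabla_v$, so no commutator appears at that stage. The cost is that $f^\gamma$ is still rough in $(t,v)$. You therefore need either a localized Lions--Magenes chain rule in the triple $L^2_xH^1_v\subset L^2\subset L^2_xH^{-1}_v$, or a second mollification in $v$. The Lions--Magenes option works because $F'(f^\gamma)\varphi\in L^2_tL^2_xH^1_v$ thanks to $|F''|\le C$ and $|F'(u)|\le C(1+|u|)$.

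\textbf{The second mollification reintroduces a commutator.} If you take the $v$-mollification option, you get $\int w\cdot\nabla_x f^\gamma(x,v-w)\eta_\kappa(w)\,dw$. Its size is $\kappa\|\nabla_x f^\gamma\|_{L^2}\lesssim(\kappa/\gamma)\|f\|_{L^2}$, which is harmless as $\kappa\to0$ for fixed $\gamma$. The paper's coupling $\gamma=h$, $\kappa=h^2$ is exactly a choice making $\kappa/\gamma\to0$. So both arguments rest on the same mechanism, done in one step versus two.

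\textbf{Every $t$.} Fixing the weakly continuous $L^2$ representative up front is cleaner than the paper's a posteriori upgrade. You should add one line: the time-continuous representative of $f^\gamma$ produced by Lions--Magenes coincides with $\eta_\gamma\ast f(t)$ for all $t$, since both are weakly continuous and agree for a.e.\ $t$.

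\textbf{A small inaccuracy.} $v\cdot\nabla_x f^\gamma$ is not a bounded function on compact sets, because $x$-mollification gives no control in $v$. It is only in $L^\infty_tL^2_{\mathrm{loc}}$, which is still enough for the pairing with $F'(f^\gamma)\varphi$.
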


\begin{proof}
Let $\rho\in C_c^\infty(\mathbb R^{2d})$ be a nonnegative probability density.
For $h>0$, set
\begin{equation*}
    \rho_h(x,v):=h^{-3d}\rho(h^{-1}x,h^{-2}v),
    \qquad
    f_h:=f*\rho_h .
\end{equation*}
For simplicity, write
\begin{equation*}
    A_\eps(a):=\eps+\Psi_\eps'(a),
    \qquad
    Q:=A_\eps(f)\nabla_v f .
\end{equation*}
Since $\Psi_\eps'$ is bounded for fixed $\eps$, and since
$\nabla_v f\in L^2([0,T]\times\mathbb R^{2d})$, we have
\begin{equation*}
    Q\in L^2([0,T]\times\mathbb R^{2d}) .
\end{equation*}

Testing the equation \eqref{ep-approx-eq} with $\rho_h(z-\cdot)$, we obtain the mollified equation
\begin{equation*}
    \partial_t f_h
    =
    \nabla_v\cdot(\rho_h*Q)
    -
    v\cdot\nabla_x f_h
    +
    [v\cdot\nabla_x,\rho_h*]f ,
\end{equation*}
where
\begin{equation*}
    [T_1,T_2]:=T_1T_2-T_2T_1 .
\end{equation*}
For $z=(x,v)$, the commutator is given by
\begin{equation*}
    [v\cdot\nabla_x,\rho_h*]f(t,z)
    =
    \int_{\mathbb R^{2d}}
    w\cdot\nabla_x\rho_h(y,w)\,
    f(t,z-(y,w))\,dydw .
\end{equation*}
Hence
\begin{equation}\label{0613:00}
    \|[v\cdot\nabla_x,\rho_h*]f(t)\|_{L^2(\mathbb{R}^{2d})}
    \le
    \|w\nabla_x\rho_h\|_{L^1(\mathbb{R}^{2d})}\|f(t)\|_{L^2(\mathbb{R}^{2d})}
    \lesssim
    h\|f(t)\|_{L^2(\mathbb{R}^{2d})}.
\end{equation}

By the classical chain rule applied to the smooth function $f_h$, we have
\begin{equation*}
    \partial_tF(f_h)
    =
    F'(f_h)
    \Big(
        \nabla_v\cdot(\rho_h*Q)
        -
        v\cdot\nabla_x f_h
        +
        [v\cdot\nabla_x,\rho_h*]f
    \Big).
\end{equation*}
Multiplying by $\varphi$, integrating over $[0,t]\times\mathbb R^{2d}$, and
integrating by parts in $v$ and $x$, we obtain
\begin{align*}
    \int_{\mathbb R^{2d}} F(f_h(t))\varphi\,dz
    &=
    \int_{\mathbb R^{2d}} F(f_0*\rho_h)\varphi\,dz      \\
    &\quad-
    \int_0^t\int_{\mathbb R^{2d}}
    F''(f_h)(\rho_h*Q)\cdot\nabla_v f_h\,\varphi\,dzds       \\
    &\quad-
    \int_0^t\int_{\mathbb R^{2d}}
    F'(f_h)(\rho_h*Q)\cdot\nabla_v\varphi\,dzds        \\
    &\quad+
    \int_0^t\int_{\mathbb R^{2d}}
    F(f_h)v\cdot\nabla_x\varphi\,dzds        \\
    &\quad+
    \int_0^t\int_{\mathbb R^{2d}}
    F'(f_h)[v\cdot\nabla_x,\rho_h*]f\,\varphi\,dzds .
\end{align*}

We now pass to the limit $h\to0$. Since
\begin{equation*}
    |F'(u)|\le C(1+|u|),
    \qquad
    |F(u)|\le C(1+|u|^2),
\end{equation*}
and since $f_h\to f$ in $L^2([0,T];L^2(\mR^{2d}))$ and a.e., we have, for a.e.
$t\in[0,T]$,
\begin{equation*}
    \int_{\mathbb R^{2d}}F(f_h(t))\varphi\,dz
    \to
    \int_{\mathbb R^{2d}}F(f(t))\varphi\,dz,
\end{equation*}
and
\begin{equation*}
    \int_{\mathbb R^{2d}}F(f_0*\rho_h)\varphi\,dz
    \to
    \int_{\mathbb R^{2d}}F(f_0)\varphi\,dz .
\end{equation*}
Similarly,
\begin{equation*}
    \int_0^t\int_{\mathbb R^{2d}}
    F(f_h)v\cdot\nabla_x\varphi\,dzds
    \to
    \int_0^t\int_{\mathbb R^{2d}}
    F(f)v\cdot\nabla_x\varphi\,dzds .
\end{equation*}

We next treat the quadratic velocity term. Since $F''$ is bounded,
$\rho_h*Q\to Q$ in $L^2$, and
$\nabla_v f_h=\rho_h*\nabla_v f\to\nabla_v f$ in $L^2$, we obtain
\begin{align*}
    &\left|
    \int_0^t\int_{\mathbb R^{2d}}
    F''(f_h)(\rho_h*Q)\cdot\nabla_v f_h\,\varphi\,dzds
    -
    \int_0^t\int_{\mathbb R^{2d}}
    F''(f)Q\cdot\nabla_v f\,\varphi\,dzds
    \right|       \\
    &\le
    \|F''\|_\infty\|\varphi\|_\infty
    \|\rho_h*Q-Q\|_{L^2([0,T]\times\operatorname{supp}\varphi)}
    \|\nabla_v f_h\|_{L^2([0,T]\times\operatorname{supp}\varphi)}       \\
    &\quad+
    \|F''\|_\infty\|\varphi\|_\infty
    \|Q\|_{L^2([0,T]\times\operatorname{supp}\varphi)}
    \|\nabla_v f_h-\nabla_v f\|_{L^2([0,T]\times\operatorname{supp}\varphi)}       \\
    &\quad+
    \left|
    \int_0^t\int_{\mathbb R^{2d}}
    (F''(f_h)-F''(f))Q\cdot\nabla_v f\,\varphi\,dzds
    \right| .
\end{align*}
The first two terms vanish as $h\to0$. The last one also vanishes by dominated
convergence, because $F''(f_h)\to F''(f)$ a.e., $F''$ is bounded, and
\begin{equation*}
    |Q||\nabla_v f|\in L^1([0,T]\times\operatorname{supp}\varphi).
\end{equation*}
Thus
\begin{equation*}
    \int_0^t\int_{\mathbb R^{2d}}
    F''(f_h)(\rho_h*Q)\cdot\nabla_v f_h\,\varphi\,dzds
    \to
    \int_0^t\int_{\mathbb R^{2d}}
    F''(f)A_\eps(f)|\nabla_v f|^2\varphi\,dzds .
\end{equation*}

For the linear velocity term, we write
\begin{align*}
    &\left|
    \int_0^t\int_{\mathbb R^{2d}}
    F'(f_h)(\rho_h*Q)\cdot\nabla_v\varphi\,dzds
    -
    \int_0^t\int_{\mathbb R^{2d}}
    F'(f)Q\cdot\nabla_v\varphi\,dzds
    \right|       \\
    &\le
    \|\nabla_v\varphi\|_\infty
    \|F'(f_h)\|_{L^2((0,t)\times\operatorname{supp}\varphi)}
    \|\rho_h*Q-Q\|_{L^2([0,T]\times\operatorname{supp}\varphi)}       \\
    &\quad+
    \|\nabla_v\varphi\|_\infty
    \|F'(f_h)-F'(f)\|_{L^2((0,t)\times\operatorname{supp}\varphi)}
    \|Q\|_{L^2([0,T]\times\operatorname{supp}\varphi)}.
\end{align*}
The first term tends to zero because $\rho_h*Q\to Q$ in $L^2$ and
$F'(f_h)$ is bounded in local $L^2$. The second term tends to zero since
$F''$ is bounded, hence $F'$ is Lipschitz, and $f_h\to f$ in local $L^2$.
Therefore
\begin{equation*}
    \int_0^t\int_{\mathbb R^{2d}}
    F'(f_h)(\rho_h*Q)\cdot\nabla_v\varphi\,dzds
    \to
    \int_0^t\int_{\mathbb R^{2d}}
    F'(f)A_\eps(f)\nabla_v f\cdot\nabla_v\varphi\,dzds .
\end{equation*}

Finally, by \eqref{0613:00},
\begin{align*}
    &\left|
    \int_0^t\int_{\mathbb R^{2d}}
    F'(f_h)[v\cdot\nabla_x,\rho_h*]f\,\varphi\,dzds
    \right|       \\
    &\qquad\le
    \|F'(f_h)\varphi\|_{L^2((0,t)\times\mathbb R^{2d})}
    \|[v\cdot\nabla_x,\rho_h*]f\|_{L^2((0,t)\times\mathbb R^{2d})}
    \to0 .
\end{align*}

Passing to the limit in the mollified identity gives the desired identity for
a.e. $t\in[0,T]$. From the weak formulation, $t\mapsto\langle f(t),\psi\rangle$ is continuous
for every $\psi\in C_c^\infty(\mathbb R^{2d})$; taking $F(u)=u^2$ in the
identity already obtained for a.e. $t$ and using a cutoff
$\chi\equiv1$ on $\operatorname{supp}\varphi$, we also get the continuity of
$t\mapsto\int_{\mathbb R^{2d}}|f(t)|^2\chi$. Hence, if $t_n\to t$, then
$f(t_n)\rightharpoonup f(t)$ weakly in $L^2(\operatorname{supp}\chi)$ and the
local $L^2$ norms converge, so $f(t_n)\to f(t)$ strongly in
$L^2(\operatorname{supp}\chi)$; since $|F'(u)|\le C(1+|u|)$, this implies the
continuity of $t\mapsto\int_{\mathbb R^{2d}}F(f(t))\varphi$, and the identity
extends from a.e. $t$ to every $t\in[0,T]$.
\end{proof}

}

\begin{lemma}\label{lem:L1}
Under the assumptions of Proposition \ref{existence-regularized-pde}, let $f$ be \newrev{the distributional solution constructed in Proposition~\ref{existence-regularized-pde}}. Then $f(t,x) \geq 0$ for almost every $(t,x) \in [0,T] \times \mathbb{R}^{2d}$.
\end{lemma}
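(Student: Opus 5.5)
We apply the chain rule of Lemma~\ref{chain_rule} to the negative part of $f$. Since $F(u)=(u^-)^2=(\min\{u,0\})^2$ is only $C^{1,1}$, we first fix a $C^2$ convex regularization $F_\delta$. We take $F_\delta(u)=0$ for $u\ge0$, $F_\delta''\ge0$ bounded, $|F_\delta'(u)|\le C(1+|u|)$, and $F_\delta\uparrow (u^-)^2$ as $\delta\to0$. For instance, $F_\delta$ is defined by $F_\delta''(u)=2\min\{1,|u|/\delta\}\mathbf 1_{\{u<0\}}$ with $F_\delta(0)=F_\delta'(0)=0$. Since $f_0\ge0$, Lemma~\ref{chain_rule} with test function $\varphi$ gives
\begin{align*}
\int_{\mathbb R^{2d}}F_\delta(f(t))\varphi
+\int_0^t\int_{\mathbb R^{2d}}F_\delta''(f)(\varepsilon+\Psi'_\varepsilon(f))|\nabla_vf|^2\varphi
=-\int_0^t\int_{\mathbb R^{2d}}F_\delta'(f)(\varepsilon+\Psi'_\varepsilon(f))\nabla_vf\cdot\nabla_v\varphi
+\int_0^t\int_{\mathbb R^{2d}}F_\delta(f)\,v\cdot\nabla_x\varphi .
\end{align*}
We choose $\varphi\ge0$. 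Then the second term on the left is nonnegative, because $\Psi'_\varepsilon>0$ by Lemma~\ref{approx-Psi} and $F_\delta''\ge0$, so it can be dropped.

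Next we remove the cutoff. We take $\varphi=\beta_r(x,v)=\alpha_{r^2}(x)\alpha_r(v)$ as in the proof of Lemma~\ref{uniform-L2}. This choice makes $|v\cdot\nabla_x\beta_r|\lesssim r\cdot r^{-2}=r^{-1}$ on the support, since $|v|\le 2r$ there. Using $|F'_\delta(u)|\lesssim |u|$, $F_\delta(u)\lesssim u^2$ and the boundedness of $\Psi'_\varepsilon$, the right-hand side is bounded by
$$
\frac{C_\varepsilon}{r}\Big(\|f\|_{L^2([0,T]\times\mathbb R^{2d})}\|\nabla_vf\|_{L^2([0,T]\times\mathbb R^{2d})}+\|f\|^2_{L^2([0,T]\times\mathbb R^{2d})}\Big).
$$
These global $L^2$ norms are finite. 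They come from lower semicontinuity of the bounds of Lemma~\ref{uniform-L2} under the convergence in Proposition~\ref{existence-regularized-pde}: $f_n\to f$ locally strongly and $\nabla_vf_n\rightharpoonup\nabla_vf$ weakly in $L^2$.

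Letting $r\to\infty$ then gives $\int F_\delta(f(t))\le0$. Monotone convergence as $\delta\to0$ yields $\int_{\mathbb R^{2d}}(f(t)^-)^2\,dz\le0$ for every $t$, so $f\ge0$ a.e. The only real point is the last one: the global integrability of $f$ and $\nabla_vf$ for the limit solution, which requires checking that the uniform bounds survive the limit $n\to\infty$. Alternatively, one can run the same negative-part argument directly at the level of $f_n$ and pass to the limit using local strong convergence.
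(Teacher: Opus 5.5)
Your argument is correct, and it reaches the statement by a different and shorter route than the paper.

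The paper works in an $L^1$ framework. It applies Lemma~\ref{chain_rule} to $a_\delta(u)=\sqrt{u^2+\delta^2}-\delta\approx|u|$. Since $f$ is not known a priori to be globally integrable, it controls the cutoff errors through the recursive inequality $J_r(t)\le\|f_0\|_{L^1}+\frac{C}{r}\int_0^tJ_{2r}(s)\,ds$ for $J_r=\int|f|\beta_r$. This is iterated over dyadic radii and closed with the $L^\infty_tL^2$ bound, which gives $\int|f(t)|\le\int f_0$. Only then does the paper apply the chain rule to $b_\delta\approx u^-$ and let $r\to\infty$.

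You instead use the quadratic functional $(u^-)^2$. Its cutoff errors are $O(1/r)$ times $\|f\|_{L^2_{t,z}}\|\nabla_vf\|_{L^2_{t,z}}+\|f\|^2_{L^2_{t,z}}$. These global norms are indeed available by lower semicontinuity, and the paper itself uses them in the proof of Lemma~\ref{chain_rule}. So you need neither the iteration nor the integration by parts in the diffusion boundary term, and $F_\delta(f_0)=0$ closes the argument at once.

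What the paper's route buys is the bound $\|f(t)\|_{L^1}\le\|f_0\|_{L^1}$ as a byproduct. Lemma~\ref{mass-conservation-eps} explicitly takes this bound from Lemma~\ref{lem:L1}. With your proof, that bound would still need a separate tail argument. Even once $f\ge0$ is known, testing with $\beta_r$ leaves boundary terms of size $\frac{C}{r}\int_0^t\int_{\operatorname{supp}\beta_r}f$, and these do not vanish using only the $L^2$ bound.
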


\begin{proof}
For any $\delta>0$ and $t\ge0$, we define $a_\delta(u):=\sqrt{u^2+\delta^2}-\delta $. It is easy to see that
\begin{align*}
 |a_\delta(u)|\le |u|,\quad |a_\delta'(u)|\le 1\quad \text{and}\quad   a_\delta''(u)=\frac{\delta}{(u^2+\delta^2)^{3/2}}\in (0,\delta^{-1}).
\end{align*}

We begin by establishing an $L^1(\mathbb{R}^{2d})$ estimate. Let $(\beta_r)_{r\geq 1}$ denote the truncation functions defined by $\beta_r(x,v):=\beta(x/r^2,v/r)$ with some $\beta\in C^\infty_c(\mR^{2d})$ satisfying $\beta(x,v)=1$ for $|x|+|v|\le1$. Then by Lemma \ref{chain_rule}, we obtain for every $t\in[0,T]$,
\begin{align*}
\partial_t \int_{\mathbb{R}^{2d}} a_\delta(f)\beta_r 
= {}& - \int_{\mathbb{R}^{2d}} (\varepsilon + \Psi_{\varepsilon}'(f)) a_\delta''(f) |\nabla_v f|^2 \beta_r \\
& - \int_{\mathbb{R}^{2d}} (\varepsilon + \Psi_{\varepsilon}'(f)) a_\delta'(f) \nabla_v f \cdot \nabla_v \beta_r  + \int_{\mathbb{R}^{2d}} v\, a_\delta(f) \cdot \nabla_x \beta_r \\
=:& I_1 + I_2 + I_3.
\end{align*}

Since $\Psi_{\varepsilon}'$ and $a_\delta''$ are nonnegative, the first term satisfies
$$
I_1\leq 0.
$$
For the second term, using the chain rule together with integration by parts, we write
\begin{align*}
I_2
=& -\int_{\mathbb{R}^{2d}} \nabla_v\left(\int_0^{f(t)} (\varepsilon + \Psi'_{\varepsilon}(\zeta))a_\delta'(\zeta)\,d\zeta\right)\cdot \nabla_v \beta_r \\
=& \int_{\mathbb{R}^{2d}} \left(\int_0^{f(t)} (\varepsilon + \Psi'_{\varepsilon}(\zeta))a_\delta'(\zeta)\,d\zeta\right)\Delta_v \beta_r 
\leq C(\varepsilon)\frac{1}{r^2}\int_{\mathbb{R}^{2d}} |f(t)|\beta_{2r}.
\end{align*}
Similarly, by the properties of $\beta_r$, we have
\begin{align*}
I_3 \leq \frac{C}{r}\int_{\mathbb{R}^{2d}} a_\delta(f(t))\beta_{2r}\le \frac{C}{r}\int_{\mathbb{R}^{2d}} |f(t)|\beta_{2r}.
\end{align*}

Let $J_r := \int_{\mathbb{R}^{2d}} |f|\beta_r$. Passing to the limit $\delta \to 0$, we obtain for any $r\ge1$ and $t\in[0,T]$,
\begin{align*}%\label{iteration-r}
J_r(t) \leq \|f_0\|_{L^1(\mathbb{R}^{2d})}
+ C\left(\frac{1}{r} + \frac{1}{r^2}\right)\int_0^t J_{2r}(s)\,ds\le \|f_0\|_{L^1(\mathbb{R}^{2d})} + \frac{C}{r}\int_0^t J_{2r}(s)\,ds.
\end{align*}
By iterating this inequality, %as in \cite[Lemma 5.5]{HWZ25}, 
it follows that for any $n\ge1$,
\begin{align*}
J_{2^n r}(t) &\leq (1+\sum_{k=0}^{n-1} \frac{C}{2^k r})\|f_0\|_{L^1(\mathbb{R}^{2d})}+(\frac{C}{r})^n(n!)^{-1} \int_0^t \int_{\mR^{2d}}|f(t,z)|\beta_{2^{n+1}r}\\
&\leq (1+\frac{2C}{r})\|f_0\|_{L^1(\mathbb{R}^{2d})}+(\frac{C}{r})^n(n!)^{-1} (2^{n+1}r)^{3d/2}\|f\|_{L^\infty([0,T];L^2(\mathbb{R}^{2d}))},
\end{align*}
which by taking $n\to \infty$ and $r\to \infty$ implies that
$$
\int_{\mathbb{R}^{2d}} |f(t)| \leq \int_{\mathbb{R}^{2d}} f_0.
$$

Applying Lemma \ref{chain_rule} again to the function $b_\delta(u):=\frac{1}{2}(a_\delta(u)-u)\to \frac{1}{2}(|u|-u)=u^{-}$, we have
\begin{align*}
\int_{\mathbb{R}^{2d}} b_\delta(f(t))\beta_r \le \int_{\mathbb{R}^{2d}} b_\delta(f_0)\beta_r +\frac{C}{r}\int_0^t\int_{\mR^{2d}}f(t,z),
\end{align*}
which by taking $r\to\infty$ and $\delta\to 0$ implies that
$$
\int_{\mathbb{R}^{2d}} f(t)^- \leq \int_{\mathbb{R}^{2d}} f_0^- = 0,
$$
which implies that $f \geq 0$ almost everywhere. This completes the proof.
\end{proof}
{\blue As a result of Lemma \ref{lem:L1}, we have
\begin{lemma}\label{mass-conservation-eps}
Under the assumptions of Proposition~\ref{existence-regularized-pde}, let
$f_\varepsilon$ be the nonnegative weak solution of \eqref{ep-approx-eq}
constructed in Proposition~\ref{existence-regularized-pde}. Then, for every
$t\in[0,T]$,
\begin{equation}\label{mass-conservation-eps-eq}
    \int_{\mathbb R^{2d}} f_\varepsilon(t,z)\,dz
    =
    \int_{\mathbb R^{2d}} f_0(z)\,dz .
\end{equation}
\end{lemma}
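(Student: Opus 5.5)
Since $f_\varepsilon\ge0$ by Lemma~\ref{lem:L1}, the quantity $\int f_\varepsilon(t)\,dz$ is just the $L^1$ norm. The proof of Lemma~\ref{lem:L1} already gives the upper bound $\|f_\varepsilon(t)\|_{L^1(\mathbb R^{2d})}\le\|f_0\|_{L^1(\mathbb R^{2d})}$. So it remains to prove the reverse inequality. The plan is to test the weak formulation of \eqref{ep-approx-eq} directly with the cutoff $\beta_r(x,v)=\beta(x/r^2,v/r)$ from the proof of Lemma~\ref{lem:L1}, and then let $r\to\infty$.

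\textbf{Tested identity.} Using $\beta_r$ in the weak formulation (equivalently, Lemma~\ref{chain_rule} with $F(u)=u$), we get for every $t\in[0,T]$
\begin{align*}
\int_{\mathbb R^{2d}} f_\varepsilon(t)\beta_r
=\int_{\mathbb R^{2d}} f_0\beta_r
+\int_0^t\!\!\int_{\mathbb R^{2d}}\big(\varepsilon f_\varepsilon+\Psi_\varepsilon(f_\varepsilon)\big)\Delta_v\beta_r
+\int_0^t\!\!\int_{\mathbb R^{2d}} f_\varepsilon\, v\cdot\nabla_x\beta_r .
\end{align*}
Here the diffusion term has been integrated by parts, using $\Psi_\varepsilon'(f)\nabla_v f=\nabla_v\Psi_\varepsilon(f)$. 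This is legitimate because $\Psi_\varepsilon$ is Lipschitz with $\Psi_\varepsilon(0)=0$.

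\textbf{Remainder terms.} The key observation is that, for fixed $\varepsilon$, $\Psi_\varepsilon'=g_\varepsilon$ is bounded by Lemma~\ref{approx-Psi}. Hence $0\le\Psi_\varepsilon(f)\le \|g_\varepsilon\|_\infty f$, so the whole diffusion integrand is bounded by $C(\varepsilon)f_\varepsilon$. Since $|\Delta_v\beta_r|\lesssim r^{-2}$, the diffusion term is at most $C(\varepsilon)r^{-2}t\|f_0\|_{L^1}$, which tends to $0$. For the transport term, on the support of $\nabla_x\beta_r$ we have $|v|\lesssim r$ and $|\nabla_x\beta_r|\lesssim r^{-2}$. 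It is therefore bounded by $Cr^{-1}\int_0^t\|f_\varepsilon(s)\|_{L^1}ds\le Cr^{-1}t\|f_0\|_{L^1}$, which also tends to $0$.

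\textbf{Conclusion.} By monotone convergence (using $f_\varepsilon\ge0$ and $\beta\ge0$ with $\beta_r\uparrow1$, or dominated convergence otherwise), the left side converges to $\int f_\varepsilon(t)$ and the first term on the right converges to $\int f_0$. This gives \eqref{mass-conservation-eps-eq} for every $t$.

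\textbf{Main obstacle.} The only delicate point is justifying the tested identity for every $t$ rather than almost every $t$, and ensuring the limits commute. Lemma~\ref{chain_rule} already provides the identity for every $t$. The uniform-in-time $L^1$ bound from Lemma~\ref{lem:L1} supplies the domination needed in the remainder estimates. The essential structural input is the boundedness of $\Psi_\varepsilon'$; it fails at $\varepsilon=0$, which is why mass conservation in the limit requires the extra condition of Theorem~\ref{thm:unique}.
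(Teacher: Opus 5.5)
Your proof is correct and follows essentially the same route as the paper: test the weak formulation with a phase-space cutoff, bound the diffusion remainder by $C_\varepsilon r^{-2}\int_0^t\|f_\varepsilon(s)\|_{L^1}\,ds$ using the boundedness of $\Psi_\varepsilon'$, and remove the transport remainder with the uniform $L^1$ bound from Lemma~\ref{lem:L1}. The only differences are cosmetic. You use the single anisotropic cutoff $\beta(x/r^2,v/r)$, for which $|v|\,|\nabla_x\beta_r|\lesssim r^{-1}$ directly, whereas the paper uses the product cutoff $\alpha_{R_1}(x)\alpha_{R_2}(v)$ and removes the spatial cutoff first ($R_1\to\infty$ for fixed $R_2$). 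The paper also handles $\Psi_\varepsilon(0)$ via $\int\Delta_v\alpha_{R_2}=0$, while you use the equally valid fact that $\Psi_\varepsilon(0)=0$ by construction.
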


\begin{proof}
Let $R_1,R_2>1$ and take
$\varphi_{R_1,R_2}(x,v)=\alpha_{R_1}(x)\alpha_{R_2}(v)$
as a test function in the weak formulation of \eqref{ep-approx-eq}. Then
\begin{align*}
    \int_{\mathbb R^{2d}} f_\varepsilon(t)\varphi_{R_1,R_2}
    &=
    \int_{\mathbb R^{2d}} f_0\varphi_{R_1,R_2}       
    +
    \int_0^t\int_{\mathbb R^{2d}}
        \big(\varepsilon f_\varepsilon+\Psi_\varepsilon(f_\varepsilon)\big)
        \alpha_{R_1}(x)\Delta_v\alpha_{R_2}(v)\,dzds        \\
    &\quad
    +
    \int_0^t\int_{\mathbb R^{2d}}
        f_\varepsilon v\cdot\nabla_x\alpha_{R_1}(x)
        \alpha_{R_2}(v)\,dzds .
\end{align*}
We first treat the transport term. For fixed $R_2$, since
$|v|\alpha_{R_2}(v)\le C R_2$ and
$|\nabla_x\alpha_{R_1}|\le C/R_1$, Lemma~\ref{lem:L1} gives
\begin{equation*}
    \left|
    \int_0^t\int_{\mathbb R^{2d}}
        f_\varepsilon v\cdot\nabla_x\alpha_{R_1}
        \alpha_{R_2}\,dzds
    \right|
    \le
    \frac{C R_2}{R_1}
    \int_0^t\|f_\varepsilon(s)\|_{L^1(\mathbb{R}^{2d})}\,ds
    \le
    \frac{C R_2T}{R_1}\|f_0\|_{L^1(\mathbb{R}^{2d})}.
\end{equation*}
Hence this term vanishes as $R_1\to\infty$ for fixed $R_2$.

For the diffusion term, set
$G_\varepsilon(\zeta):=\varepsilon\zeta+\Psi_\varepsilon(\zeta)-\Psi_\varepsilon(0)$.
Since $\Psi_\varepsilon'$ is bounded and $f_\varepsilon\ge0$, we have
$|G_\varepsilon(f_\varepsilon)|\le C_\varepsilon f_\varepsilon$. Moreover,
\begin{equation*}
    \int_{\mathbb R^{2d}}
        \Psi_\varepsilon(0)\alpha_{R_1}(x)\Delta_v\alpha_{R_2}(v)\,dz
    =
    \Psi_\varepsilon(0)
    \int_{\mathbb R^d}\alpha_{R_1}(x)\,dx
    \int_{\mathbb R^d}\Delta_v\alpha_{R_2}(v)\,dv
    =
    0 .
\end{equation*}
Therefore,
\begin{align*}
    \left|
    \int_0^t\int_{\mathbb R^{2d}}
        \big(\varepsilon f_\varepsilon+\Psi_\varepsilon(f_\varepsilon)\big)
        \alpha_{R_1}\Delta_v\alpha_{R_2}\,dzds
    \right|&
    =
    \left|
    \int_0^t\int_{\mathbb R^{2d}}
        G_\varepsilon(f_\varepsilon)
        \alpha_{R_1}\Delta_v\alpha_{R_2}\,dzds
    \right|        \\
    &
	\le
	    \newrev{\frac{C_\varepsilon}{R_2^2}
	    \int_0^t\|f_\varepsilon(s)\|_{L^1(\mathbb{R}^{2d})}\,ds}
\end{align*}
Letting first $R_1\to\infty$ and then $R_2\to\infty$, this term also vanishes.

Consequently, passing successively to the limits $R_1\to\infty$ for fixed
$R_2$, and then $R_2\to\infty$, we obtain
\begin{equation*}
    \int_{\mathbb R^{2d}} f_\varepsilon(t,z)\,dz
    =
    \int_{\mathbb R^{2d}} f_0(z)\,dz .
\end{equation*}
This proves the claim.
\end{proof}
}
\newrev{Consequently, the nonnegative function $f_\varepsilon$ constructed
above is a weak solution of \eqref{ep-approx-eq} in the sense of
Definition~\ref{def-weaksolution}.}

\section{Uniform estimates in $\varepsilon\in(0,1)$}\label{sec-5}

{\red
\begin{proposition}\label{prp-uniform-Llambda}
Under Assumption~\ref{A3}, for every $\eps\in(0,1)$, let $f_\eps$ be a
nonnegative weak solution of \eqref{ep-approx-eq} with initial data $f_0$.
Then, for every $r\in(1,2]$ and every $t\in[0,T]$,
\begin{align}\label{Llambda-es}
    &\|f_\eps(t)\|_{L^r(\mR^{2d})}^r
    +
    r(r-1)
    \int_0^t\int_{\mathbb R^{2d}}
    \big(\eps+\Psi_\eps'(f_\eps)\big)
    f_\eps^{r-2}|\nabla_v f_\eps|^2\,dzds
   \le
    \|f_0\|_{L^r(\mR^{2d})}^r.
\end{align}
Consequently,
\begin{equation*}
    \sup_{t\in[0,T]}
    \|f_\eps(t)\|_{L^r(\mathbb R^{2d})}^r
    \le
    \|f_0\|_{L^r(\mathbb R^{2d})}^r
\end{equation*}
and
\begin{equation*}
    r(r-1)
    \int_0^T\int_{\mathbb R^{2d}}
    \big(\eps+\Psi_\eps'(f_\eps)\big)
    f_\eps^{r-2}|\nabla_v f_\eps|^2\,dzdt
    \le
    \|f_0\|_{L^r(\mathbb R^{2d})}^r .
\end{equation*}
\end{proposition}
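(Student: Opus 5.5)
The plan is to apply the chain rule of Lemma~\ref{chain_rule} with $F(u)=|u|^r$ (suitably regularized), localized by a cutoff, and then remove the cutoff using the $L^1$ and $L^2$ bounds already available.

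\textbf{Choice of $F$.} For $r\in(1,2]$ the function $u\mapsto |u|^r$ is not $C^2$ at $0$ when $r<2$, and $F''$ is unbounded near $0$. So for $\delta>0$ we take
\begin{equation*}
F_\delta(u):=(u^2+\delta^2)^{r/2}-\delta^r .
\end{equation*}
One checks directly that $|F_\delta'(u)|\le r|u|^{r-1}\le C(1+|u|)$, that $F_\delta''\ge0$, and that
\begin{equation*}
F_\delta''(u)=r(u^2+\delta^2)^{r/2-2}\big((r-1)u^2+\delta^2\big)\le C\delta^{r-2}.
\end{equation*}
Hence $F_\delta$ satisfies the hypotheses of Lemma~\ref{chain_rule} for fixed $\delta$. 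We apply the lemma with test function $\beta_r(x,v)=\beta(x/R^2,v/R)$ as in the proof of Lemma~\ref{lem:L1}, with $R$ in place of $r$. Writing $A_\eps=\eps+\Psi_\eps'$, this gives
\begin{align*}
\int F_\delta(f_\eps(t))\beta_R+\int_0^t\!\!\int F_\delta''(f_\eps)A_\eps(f_\eps)|\nabla_v f_\eps|^2\beta_R
=\int F_\delta(f_0)\beta_R+\int_0^t\!\!\int G_\delta(f_\eps)\Delta_v\beta_R+\int_0^t\!\!\int F_\delta(f_\eps)v\cdot\nabla_x\beta_R .
\end{align*}
Here $G_\delta(u):=\int_0^u F_\delta'(\zeta)A_\eps(\zeta)\,d\zeta$, and the $\Delta_v$ term comes from integrating by parts exactly as for $I_2$ in Lemma~\ref{lem:L1}.

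\textbf{Removing the cutoff ($R\to\infty$).} Since $\Psi_\eps'$ is bounded for fixed $\eps$, we have $|G_\delta(u)|\le C_\eps|u|^r\le C_\eps(|u|+u^2)$ and $F_\delta(u)\le |u|^r$. Also $|\Delta_v\beta_R|\lesssim R^{-2}$ and $|v\cdot\nabla_x\beta_R|\lesssim R\cdot R^{-2}=R^{-1}$ on the support. The boundary terms are therefore bounded by $C_\eps T R^{-1}\sup_s\|f_\eps(s)\|_{L^1\cap L^2}^{\,\cdot}$, which tends to zero because $f_\eps\in L^\infty(L^1\cap L^2)$ by Lemma~\ref{mass-conservation-eps} and the weak-solution bounds. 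The dissipation term is nonnegative, so monotone convergence applies to it.

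\textbf{Limit $\delta\to0$.} We have $F_\delta(u)\to|u|^r$ with $F_\delta\le|u|^r$, so the initial term converges by dominated convergence. For the left-hand side we use Fatou. Since $f_\eps\ge0$ (Lemma~\ref{lem:L1}), on $\{f_\eps>0\}$
\begin{equation*}
F_\delta''(f_\eps)\to r(r-1)f_\eps^{r-2}.
\end{equation*}
On $\{f_\eps=0\}$ we have $\nabla_v f_\eps=0$ a.e., by the standard property of Sobolev functions, so that set contributes nothing. Fatou's lemma then yields \eqref{Llambda-es} with "$\le$". The supremum bounds follow immediately, since both terms on the left are nonnegative.

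\textbf{Main obstacle.} The delicate point is the singular weight $f_\eps^{r-2}$ near $f_\eps=0$. Lemma~\ref{chain_rule} requires bounded $F''$, which forces the $\delta$-regularization. This is why only an inequality can be expected, and why the limit passes through Fatou together with the vanishing of $\nabla_v f_\eps$ on the zero set. The cutoff removal is routine for fixed $\eps$ because $\Psi_\eps'$ is bounded; this is also why all constants there may depend on $\eps$ while the final bound does not.
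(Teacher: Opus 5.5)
Your proposal is correct and follows essentially the same route as the paper. The shared steps are: a $\delta$-regularized power fed into Lemma~\ref{chain_rule} with the anisotropic cutoff $\beta_R$; removal of the cutoff via the boundedness of $\eps+\Psi_\eps'$ and the $L^1\cap L^2$ bounds; and then $\delta\downarrow0$ by dominated convergence plus Fatou. The differences are minor. The paper uses $(u+\delta)^r-\delta^r$ on $u\ge0$, whereas your $(u^2+\delta^2)^{r/2}-\delta^r$ is globally $C^2$ and so meets the hypotheses of Lemma~\ref{chain_rule} verbatim. Your treatment of $\{f_\eps=0\}$ makes explicit what the paper's Fatou step leaves implicit. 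One small correction: for fixed $\delta$, the step $R\to\infty$ in the dissipation term is justified by dominated rather than monotone convergence, since $\beta_R$ need not be monotone in $R$.
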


\begin{proof}
Fix $r\in(1,2]$. 
For $\delta\in(0,1)$ and $u\ge0$, we define
\begin{equation*}
    F_\delta(u):=(u+\delta)^r-\delta^r,
    \qquad
    F_\delta'(u)=r(u+\delta)^{r-1},
    \qquad
    F_\delta''(u)=r(r-1)(u+\delta)^{r-2}\in(0,r(r-1)\delta^{r-2}).
\end{equation*}
Recall
\begin{equation*}
    A_\eps(a):=\eps+\Psi_\eps'(a).
\end{equation*}
For $R\ge1$ and cut-off function $\alpha\in C^\infty_c(\mR^d)$, set
\begin{equation*}
    \beta_R(x,v):=\alpha_{R^2}(x)\alpha_R(v).
\end{equation*}
Then Lemma \ref{chain_rule} gives that
\begin{align}\label{0613:01}
\begin{split}
    &\int_{\mathbb R^{2d}} F_\delta(f_\eps(t))\beta_R\,dz
    +
    \int_0^t\int_{\mathbb R^{2d}}
    F_\delta''(f_\eps)A_\eps(f_\eps)
    |\nabla_v f_\eps|^2\beta_R\,dzds       \\
    &=
    \int_{\mathbb R^{2d}} F_\delta(f_0)\beta_R\,dz
    -
    \int_0^t\int_{\mathbb R^{2d}}
    F_\delta'(f_\eps)A_\eps(f_\eps)
    \nabla_v f_\eps\cdot\nabla_v\beta_R\,dzds +
    \int_0^t\int_{\mathbb R^{2d}}
    F_\delta(f_\eps)v\cdot\nabla_x\beta_R\,dzds .
    \end{split}
\end{align}
We claim that the last two terms vanish as $R\to\infty$, for fixed $\delta$.

Indeed, define
\begin{equation*}
    B_{\eps,\delta}(a):=
    \int_0^a F_\delta'(b)A_\eps(b)\,db .
\end{equation*}
Since $A_\eps$ is bounded, there exists a constant
$C_\eps>0$ such that
\begin{equation*}
    0\le B_{\eps,\delta}(a)\le C_\eps F_\delta(a),
    \qquad a\ge0.
\end{equation*}
Hence, by integration by parts in $v$,
\begin{align*}
    \left|
    \int_0^t\int_{\mathbb R^{2d}}
    F_\delta'(f_\eps)A_\eps(f_\eps)
    \nabla_v f_\eps\cdot\nabla_v\beta_R\,dzds
    \right|    
    &=
    \left|
    \int_0^t\int_{\mathbb R^{2d}}
    B_{\eps,\delta}(f_\eps)\Delta_v\beta_R\,dzds
    \right| \\
    &\le
    \frac{C_\eps}{R^2}
    \int_0^t\int_{\mathbb R^{2d}}
    F_\delta(f_\eps)\,dzds .
\end{align*}
Since $r\le2$, for fixed $\delta\in(0,1)$,
\begin{equation*}
    F_\delta(a)\le C_{\delta,r}(a+a^2),
    \qquad a\ge0.
\end{equation*}
Using the uniform $L^1$-bound and the uniform $L^2$-estimate for $f_\eps$, the
right-hand side tends to zero as $R\to\infty$.

Similarly, since
$|\nabla_x\alpha_{R^2}|\le C/R^2$ and $|v|\alpha_R(v)\le CR$, we have
\begin{align*}
    &\left|
    \int_0^t\int_{\mathbb R^{2d}}
    F_\delta(f_\eps)v\cdot\nabla_x\beta_R\,dzds
    \right| \le
    \frac{C}{R}
    \int_0^t\int_{\mathbb R^{2d}}
    F_\delta(f_\eps)\,dzds
    \to0,
    \qquad R\to\infty .
\end{align*}
Therefore, letting $R\to\infty$ in \eqref{0613:01}, we get
\begin{align*}
    &\int_{\mathbb R^{2d}} F_\delta(f_\eps(t))\,dz
    +
    r(r-1)
    \int_0^t\int_{\mathbb R^{2d}}
    A_\eps(f_\eps)(f_\eps+\delta)^{r-2}
    |\nabla_v f_\eps|^2\,dzds \le
    \int_{\mathbb R^{2d}} F_\delta(f_0)\,dz .
\end{align*}
Finally, we let $\delta\downarrow0$. By dominated convergence,
\begin{equation*}
    \int_{\mathbb R^{2d}}F_\delta(f_\eps(t))\,dz
    \to
    \int_{\mathbb R^{2d}}f_\eps(t)^r\,dz,
    \qquad
    \int_{\mathbb R^{2d}}F_\delta(f_0)\,dz
    \to
    \int_{\mathbb R^{2d}}f_0^r\,dz .
\end{equation*}
Moreover, by Fatou's lemma,
\begin{align*}
    &r(r-1)
    \int_0^t\int_{\mathbb R^{2d}}
    A_\eps(f_\eps)f_\eps^{r-2}
    |\nabla_v f_\eps|^2\,dzds        \\
    &\qquad\le
    \liminf_{\delta\downarrow0}
    r(r-1)
    \int_0^t\int_{\mathbb R^{2d}}
    A_\eps(f_\eps)(f_\eps+\delta)^{r-2}
    |\nabla_v f_\eps|^2\,dzds .
\end{align*}
This proves \eqref{Llambda-es} and completes the proof.
\end{proof}
}

In the following, we introduce the semigroup generated by $\varepsilon\Delta_v+\Psi'(\zeta)\Delta_v$ for every $\varepsilon\in(0,1)$ and $\zeta\in \mathbb{R}$. Precisely, for every $\varepsilon\in(0,1)$ and $\zeta \in \mathbb{R}$, we define the stochastic process
$$
(X_t^\varepsilon(\zeta), V_t^\varepsilon(\zeta)) := \left(- \int_0^t \sqrt{2(\varepsilon+\Psi_{\varepsilon}'(\zeta))} B_s \, \dif s, \, \sqrt{2(\varepsilon+\Psi_{\varepsilon}'(\zeta))} B_t \right).
$$
The corresponding kinetic semigroup $(P^\varepsilon_t(\zeta))_{t \geq 0}$ associated with this process is defined by
\begin{align}\label{k-semigroup-ep}
    (P_t^\varepsilon(\zeta) f)(x,v) 
    = \mathbb{E}\big[f(x - tv + X_t^\varepsilon(\zeta),\, v + V_t^\varepsilon(\zeta))\big] 
    = (\Gamma_t p_t^\varepsilon(\zeta)) \ast (\Gamma_t f)(x,v),
\end{align}
where $p_t^\varepsilon(\zeta)$ denotes the transition density of the process $(X_t^\varepsilon(\zeta), V_t^\varepsilon(\zeta))$, and $\Gamma_t$ is the translation operator defined by \eqref{t-semigroup}.

Under Assumption \ref{A3}, for every $\varepsilon \in (0,1)$, let $f_{\varepsilon}$ be a nonnegative weak solution of \eqref{ep-approx-eq} with initial data $f_0$. Then by Lemma \ref{lem-equivalence}, $f_{\varepsilon}$ is also a renormalized kinetic solution. Applying Duhamel's formula based on the kinetic formula and using the definition of the kinetic function, we obtain
\begin{align*}
f_{\varepsilon}(t) 
= & \int_{\mathbb{R}} \chi_{\varepsilon}(t) \, d\zeta \\
= & \int_{\mathbb{R}} P_t^\varepsilon(\zeta) \chi_0 \, d\zeta + \int_0^t \int_{\mathbb{R}} \big(\Gamma_{t-s} p_{t-s}^\varepsilon(\zeta)\big) \ast \big(\Gamma_{t-s} \partial_{\zeta} q_{\varepsilon}\big) \, d\zeta ds \\
= & \int_{\mathbb{R}} P_t^\varepsilon(\zeta) \chi_0 \, d\zeta - \int_0^t \int_{\mathbb{R}} \partial_{\zeta} \big(\Gamma_{t-s} p_{t-s}^\varepsilon(\zeta)\big) \ast \big(\Gamma_{t-s} q_{\varepsilon}\big) \, d\zeta ds, 
\end{align*}
where the kinetic measure $q_{\varepsilon}$ admits an explicit formula $q_{\varepsilon}=\delta_{f_{\varepsilon}=\zeta}(\varepsilon+\Psi'_{\varepsilon}(\zeta))|\nabla_vf_{\varepsilon}|^2$. 

In the following, we derive several regularity estimates in order to verify the Aubin--Lions compactness criterion. 

\begin{proposition}\label{uniform-besov-2}
	(Uniform Besov regularity estimates)
Under Assumption \ref{A3} with constant $l\in(0,\frac13)$, for any $p\in(1,\frac{2d}{2d-l})$, $\beta:=2l-4(d-d/p)>0$ and $C=C(d,l,p,\beta,T,\|f_0\|_{L^1\cap L^2},c_0)>0$ such that
\begin{align}\label{besov-es}
    \sup_{\eps\in(0,1)}\|f_\eps\|_{L^1([0,T];\bB^\beta_{p;a})}\le C.
\end{align}    
\end{proposition}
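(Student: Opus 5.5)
The plan is to estimate each Littlewood--Paley block of $f_\eps$ through the kinetic Duhamel representation written just before the statement. The kernel bounds come from Lemma~\ref{lem-kineticsemigroup-es}, and the kinetic measure is controlled by Proposition~\ref{prp-uniform-Llambda} with $r=\lambda$.

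\textbf{Step 1: the coefficient.} Set $A_\eps(\zeta):=\eps+\Psi_\eps'(\zeta)$. Lemma~\ref{lem-kineticsemigroup-es} only uses that the diffusion coefficient is a positive $C^1$ function of $\zeta$, so it holds verbatim for $p^\eps_t(\zeta)$ with $\Psi'$ replaced by $A_\eps$. By Lemma~\ref{approx-Psi}, applied with $g=\Psi'$, exponent $1+l\in(1,2)$ and power $\lambda-2\in(-1,0]$, and using $A_\eps\ge\Psi'_\eps$ and $A_\eps'=\Psi_\eps''$, we get, uniformly in $\eps$,
\begin{equation*}
\frac{|A_\eps'(\zeta)|}{A_\eps(\zeta)^{1+l}}\le C\zeta^{\lambda-2},\qquad
A_\eps(\zeta)^{-l}\le (\Psi'(\zeta)\wedge\eps^{-1})^{-l}\le \Psi'(\zeta)^{-l}+1 .
\end{equation*}
The second inequality uses \eqref{est:bounded_eps} and $\eps<1$.

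\textbf{Step 2: pointwise-in-time bound for $j\ge0$.} Apply $\mathcal R^a_j$ to the Duhamel formula and use Minkowski and Young's inequality. In the initial term, $\chi_0(\zeta)$ is measured in $L^1(\mathbb R^{2d})$, i.e. by $|\{f_0>\zeta\}|$. In the measure term, $q_\eps(\cdot,\zeta,s)$ is measured by its total mass in $z$; note that $\Gamma$ preserves $L^1$ and total variation. Use Lemma~\ref{lem-kineticsemigroup-es} with $\ell=l$: $k=0$ for the first term and $k=1$ for the second. For $j\ge0$ and $\tau>0$,
\begin{equation*}
2^{4j(d-d/p)}\,\big[A_\eps^{-1}(2^{-2j}\tau^{-1}+2^{-6j}\tau^{-3})\big]^{l}\le 2^{-\beta j}A_\eps^{-l}\,(\tau^{-l}+\tau^{-3l}),
\end{equation*}
since $\beta=2l-4(d-d/p)$. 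This is the key point: the $j$-dependence collapses to exactly $2^{-\beta j}$ for each fixed time. Hence the supremum over $j$ can be taken inside the time integral, which is what the $L^1([0,T];\bB^\beta_{p;a})$ norm requires. Combined with Step 1, this gives
\begin{align*}
\sup_{j\ge0}2^{\beta j}\|\mathcal R^a_jf_\eps(t)\|_{L^p}
\lesssim{}& (t^{-l}+t^{-3l})\int_0^\infty(\Psi'(\zeta)^{-l}+1)\,|\{f_0>\zeta\}|\,d\zeta\\
&+\int_0^t\big((t-s)^{-l}+(t-s)^{-3l}\big)\int_{\mathbb R^{2d}\times(0,\infty)}\zeta^{\lambda-2}\,q_\eps(dz,d\zeta,s)\,ds .
\end{align*}
The first $\zeta$-integral equals $c_0+\|f_0\|_{L^1}$.

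\textbf{Step 3: integrate in time and close.} Since $l<1/3$, the kernels $\tau^{-l},\tau^{-3l}$ are integrable on $[0,T]$. Fubini in $(s,t)$ then reduces the second term to $C_T\int\!\!\int\zeta^{\lambda-2}\,dq_\eps$. By the explicit formula for $q_\eps$,
\begin{equation*}
\int\!\!\int\zeta^{\lambda-2}\,dq_\eps=\int_0^T\!\!\int_{\mathbb R^{2d}}f_\eps^{\lambda-2}A_\eps(f_\eps)|\nabla_vf_\eps|^2 ,
\end{equation*}
and Proposition~\ref{prp-uniform-Llambda} with $r=\lambda$ bounds this by $\|f_0\|_{L^\lambda}^\lambda/(\lambda(\lambda-1))$, which is finite by interpolation of $L^1\cap L^2$. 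For the block $j=-1$, note that $\|\mathcal R^a_{-1}f_\eps(t)\|_{L^p}\lesssim\|f_\eps(t)\|_{L^p}\le\|f_0\|_{L^1\cap L^2}$ by the $L^1$ bound of Lemma~\ref{mass-conservation-eps} and Proposition~\ref{prp-uniform-Llambda}. Summing the pieces yields \eqref{besov-es}.

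\textbf{Main obstacle.} The delicate step is the rigorous justification of the Duhamel formula with the $\zeta$-integration by parts moved onto the kernel. This needs Lemma~\ref{lem-equivalence}, which applies because $f_\eps$ preserves mass and $\Psi'_\eps$ is bounded. One must also handle the boundary contributions at $\zeta\to0$ and $\zeta\to\infty$. I would first test against cutoffs $\varphi_\beta(\zeta)\zeta_M(\zeta)$ and then remove them. The $\zeta\to\infty$ end uses \eqref{control CKM VI} together with $|\partial_\zeta\Gamma p|\lesssim\zeta^{\lambda-2}$ times integrable factors. The $\zeta\to0$ end uses \eqref{kinetic-measure-decay}, while the kernel factor is bounded by the $k=0$ estimate. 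A secondary check is that $p<\frac{2d}{2d-l}$ ensures $\beta>0$, which is compatible with the exponent bookkeeping above.
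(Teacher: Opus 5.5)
Your proposal is correct and follows essentially the same route as the paper's proof. Both apply $\mathcal R^a_j$ to the kinetic Duhamel formula and use Lemma~\ref{lem-kineticsemigroup-es} with $\ell=l$: $k=0$ for the initial layer, which is controlled by $c_0+\|f_0\|_{L^1}$, and $k=1$ for the measure term, which is controlled via Lemma~\ref{approx-Psi} and Proposition~\ref{prp-uniform-Llambda} with $r=\lambda$; you then use the integrability of $\tau^{-3l}$ and the $L^p$ bound for the block $j=-1$. Your discussion of removing the $\zeta$-cutoffs in the Duhamel formula adds a justification that the paper takes for granted.
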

\begin{proof}
Applying the action of the block operators, for every $j\geq0$ and $p\geq1$, we obtain that for every $t\in[0,T]$, 
\begin{align*}
\|\mathcal{R}^{a}_jf_{\varepsilon}(t)\|_{L^p(\mathbb{R}^{2d})}\leq&\int_{\mathbb{R}}\|\mathcal{R}^{a}_jP^\varepsilon_t(\zeta)\chi_0\|_{L^p(\mathbb{R}^{2d})}d\zeta+\int^t_0\left\|\int_{\mathbb{R}}\mathcal{R}^{a}_j(\partial_{\zeta}\Gamma_{t-s}p^\varepsilon_{t-s}(\zeta))\ast(\Gamma_{t-s}q_{\varepsilon})d\zeta\right\|_{L^p(\mathbb{R}^{2d})}ds\\
=:&I_1^j(t)+I_2^j(t). 	
\end{align*}
We focus on $I_2$. By using the explicit expression of the kinetic measure $q$, it follows from the convolutional Young's inequality that 
\begin{align*}
I_2^j(t)=&\int^t_0\Big\|\int_{\mathbb{R}^{2d+1}}\mathcal{R}^{a}_j\partial_{\zeta}(\Gamma_{t-s}p_{t-s}^\varepsilon)(\cdot-z',\zeta)\delta_{\Gamma_{t-s}f_{\varepsilon}(s,z')=\zeta}(\varepsilon+\Psi_{\varepsilon}'(\zeta))\Gamma_{t-s}|\nabla_vf_{\varepsilon}|^2(s,z')dz'd\zeta\Big\|_{L^p(\mathbb{R}^{2d})}ds\\
=&\int^t_0\Big\|\int_{\mathbb{R}^{2d}}\mathcal{R}^{a}_j\partial_{\zeta}(\Gamma_{t-s}p_{t-s}^\varepsilon)(\cdot-z',\Gamma_{t-s}f_{\varepsilon}(s,z'))(\varepsilon+\Psi_{\varepsilon}'(\Gamma_{t-s}f_{\varepsilon}(s,z')))\Gamma_{t-s}|\nabla_vf_{\varepsilon}|^2(s,z')dz'\Big\|_{L^p(\mathbb{R}^{2d})}ds	\\
\leq&\int^t_0\int_{\mathbb{R}^{2d}}\|\mathcal{R}^{a}_j\partial_{\zeta}(\Gamma_{t-s}p_{t-s}^\varepsilon)(\cdot-z',\Gamma_{t-s}f_{\varepsilon}(s,z'))\|_{L^p(\mathbb{R}^{2d})}|\varepsilon+\Psi_{\varepsilon}'(\Gamma_{t-s}f_{\varepsilon}(s,z'))|\Gamma_{t-s}|\nabla_vf_{\varepsilon}|^2(s,z')dz'ds\\
\leq&\int^t_0\int_{\mR^{2d}}\|\mathcal{R}^{a}_j\partial_{\zeta}(\Gamma_{t-s}p_{t-s}^\varepsilon)(\cdot-\Gamma_{s-t}z',f_{\varepsilon}(s,z'))\|_{L^p(\mathbb{R}^{2d})}(\varepsilon+\Psi_{\varepsilon}'(f_{\varepsilon}(s,z')))|\nabla_vf_{\varepsilon}(s,z')|^2dz'ds\\
\leq&\int^t_0\int_{\mR^{2d}}\|\mathcal{R}^{a}_j\partial_{\zeta}(\Gamma_{t-s}p_{t-s}^\varepsilon)(\cdot,f_{\varepsilon}(s,z'))\|_{L^p(\mathbb{R}^{2d})}(\varepsilon+\Psi_{\varepsilon}'(f_{\varepsilon}(s,z')))|\nabla_vf_{\varepsilon}(s,z')|^2dz'ds.
%\\
%\leq&\int^t_0\sup_{z'\in\mathbb{R}^{2d}}\|\mathcal{R}^{a}_j\partial_{\zeta}(\Gamma_{t-s}p_{t-s})(\cdot-\Gamma_{s-t}z',f(s,z'))\|_{L^p(\mathbb{R}^{2d})}\|\Psi'(f(s))|\nabla_vf(s)|^2\|_{L^1(\mathbb{R}^{2d})}ds. 
\end{align*}
Based on Lemma \ref{lem-kineticsemigroup-es}, for any $\ell>0$,
\begin{align}\label{1114:00}
    \|\mathcal{R}^{a}_j\partial_{\zeta}(\Gamma_{t-s}p^\varepsilon_{t-s})(\cdot,f_{\varepsilon}(s,z'))\|_{L^p(\mathbb{R}^{2d})}\lesssim 2^{4j(d-\frac{d}{p})}\frac{|\Psi_{\varepsilon}''(f_{\varepsilon}(s,z'))|}{\varepsilon+\Psi_{\varepsilon}'(f_{\varepsilon}(s,z'))}(\varepsilon+\Psi_{\varepsilon}'(f_{\varepsilon}(s,z')))^{-\ell} 2^{-2\ell j} (t-s)^{-3\ell}.
\end{align}
With the help of Assumption \ref{A3}, recall that $\lambda$ is the index in Assumption \ref{A3}, and combining with the construction of $\Psi_{\varepsilon}$ by Lemma \ref{approx-Psi}, we have that for $l\in(0,\frac{1}{3})$, 
\begin{align*}
 \|\mathcal{R}^{a}_j\partial_{\zeta}(\Gamma_{t-s}p^\varepsilon_{t-s})(\cdot,f_{\varepsilon}(s,z'))\|_{L^p(\mathbb{R}^{2d})}\lesssim 2^{4j(d-\frac{d}{p})}f_{\varepsilon}(s,z')^{\lambda-2} 2^{-2 l j} (t-s)^{-3l}.
\end{align*}
Thanks to Proposition \ref{prp-uniform-Llambda}, we conclude that for every $t\in[0,T]$, 
\begin{align*}
    I_2^j(t)\lesssim 2^{4j(d-\frac{d}{p})}\int_0^t2^{-2l j} (t-s)^{-3l}\|(f_{\varepsilon}(s))^{\lambda-2}(\varepsilon+\Psi_{\varepsilon}'(f_{\varepsilon}(s)))|\nabla_vf_{\varepsilon}(s)|^2\|_{L^1(\mathbb{R}^{2d})}ds,
\end{align*}
which by Young's inequality and \eqref{Llambda-es} imposes that
\begin{align}\label{0602:00}
    \|\sup_{j\ge0}2^{4j(-d+\frac{d}{p})+2l j} I_2^j\|_{L^1([0,T])}\lesssim  \int_0^T s^{-3l} ds \rev{\|f_0\|_{L^\lambda(\mathbb{R}^{2d})}^{\lambda}}\lesssim \rev{\|f_0\|_{L^\lambda(\mathbb{R}^{2d})}^{\lambda}}.
\end{align}

% In the following, we estimate $I_2^j(t)$ under the Assumption \ref{A3} respectively.

% {\bf Assume \ref{A1} holds:} In this case, we take $\ell\in(0,\frac13)$ and have 
% \begin{align*}
%     I_2^j(t)\lesssim 2^{4j(d-\frac{d}{p})}\int_0^t2^{-2\ell j} (t-s)^{-3\ell}\|(\varepsilon+\Psi_{\varepsilon}'(f(s)))|\nabla_vf_{\varepsilon}(s)|^2\|_{L^1(\mathbb{R}^{2d})}ds,
% \end{align*}
% which by Young's inequality and Proposition \ref{prp-uniform-Llambda} implies that
% \begin{align*}
%     \|I_2^j\|_{L^1([0,T])}\lesssim 2^{4j(d-\frac{d}{p})-2\ell j} \int_0^T s^{-3\ell} ds \|f_0\|_{L^2}^2\lesssim 2^{4j(d-\frac{d}{p})-2\ell j}.
% \end{align*}
% In particular, taking $p=1$, for any $\beta\in(0,2\ell)$,
% \begin{align}\label{1114:01}
%   \sum_{j=-1}^\infty 2^{\beta j}\|I_2^j\|_{L^1([0,T])}\lesssim 1.   
% \end{align}
% {\bf Assume \ref{A2} holds:} In this case, we take $\ell\in(0,\frac{1}{m-1}\wedge \frac13)$ and $\lambda:=1-\ell(m-1)\in(1,2]$. 
Now we consider the term $I^j_1(t)$. From the definition, we have
\begin{align*}
    I^j_1(t)\le \int_{\mR^{2d+1}}\|\cR_j^a\Gamma_t p^\eps_t\|_{L^p(\mR^{2d})} \mathbf{1}_{\{f_0(z)>\zeta>0\}}d zd\zeta,
\end{align*}
which, by Lemma \ref{lem-kineticsemigroup-es} with $k=0$ and Lemma \ref{approx-Psi}, yields
\begin{align*}
    I^j_1(t)&\lesssim 2^{4j(d-\frac{d}{p})}\int_{\mR^{2d+1}}\left((\eps+\Psi_\eps'(\zeta))^{-1}(2^{-2j}t^{-1}+2^{-6 j}t^{-3})\right)^{l} \mathbf{1}_{\{f_0(z)>\zeta>0\}}d zd \zeta\\
    &\lesssim 2^{4j(d-\frac{d}{p})-2l j}t^{-3l}\left(\int_{\mR^{2d}}\int_0^{f_0(z)} |\Psi'(\zeta)|^{-l} d \zeta d z+\eps^{l}\|f_0\|_{L^1(\mR^{2d})}\right).
\end{align*}
Finally, invoking Assumption \ref{A3}, we deduce that for $l\in(0,\frac13)$,
\begin{align*}
    \|\sup_{j\ge0}2^{4j(-d+\frac{d}{p})+2l j}I_1^j\|_{L^1([0,T])}\lesssim c_0+\|f_0\|_{L^1(\mR^{2d})}.
\end{align*}
This and \eqref{0602:00} yield that 
\begin{align*}
   \sup_\eps \int_0^T\sup_{j\ge0}2^{4j(-d+\frac{d}{p})+2l j}\|\cR^{a}_j f_\eps(t)\|_{L^p(\mR^{2d})}d t<\infty.
\end{align*}
For $j=-1$, by \eqref{Llambda-es} we have
\begin{align*}
    \sup_\eps \int_0^T\sup_{j=-1}2^{4j(-d+\frac{d}{p})+2l j}\|\cR^{a}_j f_\eps(t)\|_{L^p(\mR^{2d})}d t\lesssim \sup_\eps \int_0^T \|f_\eps(t)\|_{L^p(\mR^{2d})}d t\lesssim \|f_0\|_{L^p(\mR^{2d})},
\end{align*}
since $p\in(1,2)$. This completes the proof.
\end{proof}

\begin{corollary}\label{cor-besov-1-es-un}
Under the assumptions of Proposition~\ref{uniform-besov-2}, let $l$ be the index introduced in Assumption~\ref{A3}. For any cutoff function $\chi \in C^\infty_c(\mathbb{R}^{2d})$ and $\beta=2l-4(d-d/p)>0$, the following estimate holds:
\begin{align}\label{0425:02-cor}
\sup_{\varepsilon \in (0,1)} \| f_{\varepsilon} \chi \|_{L^1([0,T]; \mathbf{B}^\beta_{1;a})} \lesssim_{\beta,T} C(f_0,\chi).
\end{align}
\end{corollary}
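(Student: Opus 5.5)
The plan is to deduce the $\mathbf B^\beta_{1;a}$ bound for the localized function $f_\eps\chi$ from the $\mathbf B^\beta_{p;a}$ bound of Proposition~\ref{uniform-besov-2}. Two facts do the work: multiplication by a Schwartz function is bounded on anisotropic Besov spaces, and compact support lets us trade $L^p$ for $L^1$ at the cost of a constant depending on $\chi$.

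\textbf{Step 1: localize in $L^p$.} First I would show that for every $\beta\in(0,1)$ and $p\in[1,\infty]$,
\[
\|g\chi\|_{\mathbf B^\beta_{p;a}}\lesssim_{\chi}\|g\|_{\mathbf B^\beta_{p;a}}.
\]
This is the standard paraproduct/commutator estimate: decompose $g=\sum_k\mathcal R^a_k g$ and split $\mathcal R^a_j(\chi\,\mathcal R^a_k g)$ according to whether $|j-k|$ is large. Since $\hat\chi$ is rapidly decreasing, the off-diagonal contributions are bounded by $2^{-N|j-k|}\|\mathcal R^a_k g\|_{L^p}$ for any $N$. Alternatively, I would simply cite the corresponding product lemma for anisotropic Besov spaces from \cite{HRZ25} or \cite{HWZ20}.

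\textbf{Step 2: pass from $L^p$ to $L^1$ using support.} The blocks $\mathcal R^a_j(f_\eps\chi)$ are not compactly supported, so I would not apply H\"older directly. Instead I would fix $\tilde\chi\in C^\infty_c$ with $\tilde\chi=1$ on $\operatorname{supp}\chi$ and write
\[
\mathcal R^a_j(f_\eps\chi)=\tilde\chi\,\mathcal R^a_j(f_\eps\chi)+(1-\tilde\chi)\,\mathcal R^a_j(f_\eps\chi).
\]
\begin{itemize}
\item \emph{Near part.} H\"older on the compact set $\operatorname{supp}\tilde\chi$ gives a bound $C_{\tilde\chi}\|\mathcal R^a_j(f_\eps\chi)\|_{L^p}$.
\item \emph{Far part.} Away from $\operatorname{supp}\chi$ the kernel $\check\phi^a_j$ decays rapidly. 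Since $\operatorname{dist}(\operatorname{supp}(1-\tilde\chi),\operatorname{supp}\chi)>0$, the integrand is controlled by $2^{-Nj}(1+|z|)^{-N}\|f_\eps\chi\|_{L^1}$, which is in turn at most $C 2^{-Nj}\|f_0\|_{L^1}$ by the $L^1$ bound of Lemma~\ref{mass-conservation-eps}.
\end{itemize}
Combining the two parts, for $j\ge0$,
\[
2^{\beta j}\|\mathcal R^a_j(f_\eps\chi)\|_{L^1}\lesssim_\chi 2^{\beta j}\|\mathcal R^a_j(f_\eps\chi)\|_{L^p}+\|f_0\|_{L^1}.
\]
The block $j=-1$ is bounded directly by $\|f_\eps\chi\|_{L^1}\le\|f_0\|_{L^1}$.

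\textbf{Step 3: integrate in time.} Taking the supremum in $j$, applying Step 1, and integrating over $[0,T]$ gives
\[
\|f_\eps\chi\|_{L^1([0,T];\mathbf B^\beta_{1;a})}\lesssim_\chi \|f_\eps\|_{L^1([0,T];\mathbf B^\beta_{p;a})}+T\|f_0\|_{L^1}.
\]
The right-hand side is uniformly bounded in $\eps$ by \eqref{besov-es}, which proves the corollary.

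\textbf{Main obstacle.} The delicate point is Step 2: Besov blocks are nonlocal, so the passage from $L^p$ to $L^1$ must handle the tails of $\check\phi^a_j$. The plan is to use the kernel decay together with the uniform mass bound. If the kernel decay proves awkward under the anisotropic scaling, the fallback is to characterize $\mathbf B^\beta_{1;a}$ for $\beta\in(0,1)$ by anisotropic differences $\|g(\cdot+h)-g\|_{L^1}\lesssim|h|_a^\beta$. In that formulation localization is straightforward, since the differences live on a fixed compact set and H\"older applies there directly.
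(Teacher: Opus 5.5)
Your proposal is correct and follows the same route as the paper. The paper simply asserts $\|f_\varepsilon\chi\|_{\mathbf B^\beta_{1;a}}\le C_\chi\|f_\varepsilon\|_{\mathbf B^\beta_{p;a}}$ from boundedness of multiplication by $\chi$ plus compactness of $\operatorname{supp}\chi$; your near/far splitting of $\mathcal R^a_j(f_\varepsilon\chi)$ makes explicit the localization step it leaves implicit, with the far part bounded via the mass rather than via $\|f_\varepsilon\chi\|_{L^1}\le\|\chi\|_{L^{p'}}\|f_\varepsilon\|_{L^p}\lesssim\|f_\varepsilon\|_{\mathbf B^\beta_{p;a}}$ (either works). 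One small fix: take $\tilde\chi\equiv1$ on a \emph{neighbourhood} of $\operatorname{supp}\chi$, not merely on $\operatorname{supp}\chi$, so that $\operatorname{dist}(\operatorname{supp}(1-\tilde\chi),\operatorname{supp}\chi)>0$ actually holds.
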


\begin{proof}
The result follows immediately from Proposition \ref{uniform-besov-2}. Let $p$ be the exponent appearing in Proposition \ref{uniform-besov-2}. 
{\blue Since multiplication by a smooth compactly supported function is continuous
on anisotropic Besov spaces, and since the support of $\chi$ is bounded, we have
$$
\|f_\varepsilon\chi\|_{
\bB^\beta_{1;a}}
\le C_\chi \|f_\varepsilon\|_{
\bB^\beta_{p;a}}.
$$
Integrating in time and using Proposition~\ref{uniform-besov-2} gives the claim.}
%By the embedding, 
%$$
%\| f_{\varepsilon} \chi \|_{L^1([0,T]; \mathbf{B}^\beta_{1;a})} \lesssim \| f_{\varepsilon} \|_{L^1([0,T]; \mathbf{B}^\beta_{p;a})} \, \| \chi \|_{\mathbf{B}^\beta_{p';a}}, \quad \text{where } \frac{1}{p} + \frac{1}{p'} = 1.
%$$
\end{proof}

By applying the same argument as in Lemma \ref{uniform-time-regularity}, we deduce the following result.

\begin{lemma}\label{uniform-time-regularity-2}
Under the assumptions of Proposition \ref{uniform-besov-2}, for any cutoff function $\chi \in C^\infty_c(\mathbb{R}^{2d})$, there is a constant $C=C(\chi,T,\Psi,\|f_0\|_{L^1\cap L^2})>0$ such that
{\red
\begin{align}\label{time-regularity-eps}
\sup_{\varepsilon \in (0,1)} \| f_{\varepsilon} \chi \|_{W^{1,\infty}([0,T]; \mathbf{B}^{-2d-3}_{2;a})} \leq C.
\end{align}
}
%\begin{align}\label{time-regularity-eps}
%\sup_{\varepsilon \in (0,1)} \| f_{\varepsilon} \chi \|_{W^{1,1}([0,T]; \mathbf{B}^{-2d-3}_{2;a})} \leq C(f_0, T).
%\end{align}
\end{lemma}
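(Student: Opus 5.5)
We estimate $\partial_t(f_\eps\chi)$ directly from the weak formulation of \eqref{ep-approx-eq}, by duality. The target space $\bB^{-2d-3}_{2;a}$ is chosen so that its dual contains test functions whose $C^2$ norm (weighted by $|v|$ on compact sets) is controlled. For $\varphi$ in the predual $\bB^{2d+3}_{2;a}$, the weak formulation gives, in the sense of distributions in $t$,
\begin{align*}
\frac{d}{dt}\int_{\mR^{2d}} f_\eps\chi\varphi\,dz
=\int_{\mR^{2d}}\big(\eps f_\eps+\Psi_\eps(f_\eps)\big)\Delta_v(\chi\varphi)\,dz
+\int_{\mR^{2d}} f_\eps\, v\cdot\nabla_x(\chi\varphi)\,dz .
\end{align*}
The weak formulation is stated only for $C_c^\infty$ test functions. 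Since $\chi\varphi$ is compactly supported, a routine density argument extends it to such $\varphi$. Hence it suffices to bound the right-hand side by $C\|\varphi\|_{\bB^{2d+3}_{2;a}}$, uniformly in $t\in[0,T]$ and $\eps\in(0,1)$.

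The key steps, in order, are as follows.

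First, we use an anisotropic Sobolev embedding. Since $a=(3,1)$ and the homogeneous dimension is $a\cdot m=4d$, a regularity index $2d+3>4d/2+\text{(derivatives needed)}$ suffices for the embedding $\bB^{2d+3}_{2;a}\hookrightarrow C^2_b$ in the $v$ and $x$ directions. Indeed, one $x$-derivative costs $3$ units of anisotropic regularity and two $v$-derivatives cost $2$ units, while $L^2\to L^\infty$ costs $4d/2=2d$ units plus an arbitrarily small amount. The gap between $2d+3$ and $2d$ absorbs this. It follows that $\|\Delta_v(\chi\varphi)\|_{L^\infty}+\|v\cdot\nabla_x(\chi\varphi)\|_{L^\infty}\le C_\chi\|\varphi\|_{\bB^{2d+3}_{2;a}}$, using that $|v|$ is bounded on $\operatorname{supp}\chi$.

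Second, we need uniform-in-$\eps$ bounds on $\|f_\eps(t)\|_{L^1(K)}$ and $\|\Psi_\eps(f_\eps(t))\|_{L^1(K)}$, where $K=\operatorname{supp}\chi$. The first follows from Lemma \ref{mass-conservation-eps} (or Proposition \ref{prp-uniform-Llambda}). For the second, \eqref{est:bounded_eps} in Lemma \ref{approx-Psi} gives $\Psi_\eps'\le C_1\Psi'$, so $\Psi_\eps(\zeta)\le C_1\Psi(\zeta)+\Psi_\eps(0)$. For the nonlinearities in the paper, $\Psi(0)=0$ and $\Psi_\eps(0)=0$ by the definition of $\Psi_\eps$. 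Combined with $\Psi(\zeta)\le C(\zeta^{r_2}+\zeta^{r_1})$ where $r_1<2$, Hölder's inequality on the bounded set $K$ gives
\begin{align*}
\sup_{\eps}\sup_{t\in[0,T]}\|\Psi_\eps(f_\eps(t))\|_{L^1(K)}\lesssim_K \sup_{t}\big(\|f_\eps(t)\|_{L^1}+\|f_\eps(t)\|_{L^2}^2\big)\le C(\|f_0\|_{L^1\cap L^2}).
\end{align*}
Here we use Proposition \ref{prp-uniform-Llambda} with $r=2$, which holds uniformly in $\eps$. This uniform $L^\infty$-in-time bound is what upgrades the $W^{1,2}$ statement of Lemma \ref{uniform-time-regularity} to $W^{1,\infty}$.

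Combining the two steps yields $\sup_{t}\|\partial_t(f_\eps\chi)(t)\|_{\bB^{-2d-3}_{2;a}}\le C$. For the zeroth-order part, $\|f_\eps\chi(t)\|_{\bB^{-2d-3}_{2;a}}\le\|f_\eps\chi(t)\|_{L^2}\le C$ holds trivially.

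The main obstacle is the first step, namely justifying the embedding and duality rigorously in the anisotropic Besov scale. We will use that $\bB^{s}_{2;a}$ with $s>2d$ embeds into $L^\infty$ via Bernstein's inequality on the blocks $\mathcal R^a_j$: $\|\mathcal R^a_j g\|_\infty\lesssim 2^{2dj}\|\mathcal R^a_j g\|_2$, and derivatives act as $\nabla_x\sim2^{3j}$ and $\nabla_v\sim 2^{j}$ on each block. We will also use that the duality pairing between $\bB^{-s}_{2;a}$ and the corresponding $\ell^1$-type space is controlled by summing over blocks. If the sup-type norm causes a loss in the duality, we will take the test regularity slightly larger, which the index $2d+3$ permits. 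We will also check carefully that $\Psi_\eps(0)=0$, so that no constant term contributes.
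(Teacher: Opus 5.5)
Your argument is correct and is essentially the paper's proof read by duality. The paper keeps the derivatives on $f_\eps\chi$ (product rule, then $L^1\hookrightarrow\bB^{-2d}_{2;a}$, with cost $3$ per $x$-derivative and $1$ per $v$-derivative); you instead put them on the test function via the same anisotropic Bernstein inequality. Both routes use the same uniform $L^\infty_t(L^1\cap L^2)$ bounds and the growth bound $0\le\Psi_\eps\le C_1\Psi$.

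One correction to your first step: there is no slack in the $x$-direction. The index $2d+3$ is exactly $2d$ (for $L^2\to L^\infty$) plus $3$ (for $\nabla_x$). So the sup-type space $\bB^{2d+3}_{2;a}$ does not control $\|\nabla_x\varphi\|_{L^\infty}$, and raising the test regularity would only give $\bB^{-2d-3-\eta}_{2;a}$. Instead, test the weak formulation with $\chi\,\mathcal R^a_j\psi$, where $\|\psi\|_{L^2}\le1$. This test function is already in $C^\infty_c$, so no density argument is needed, and Bernstein gives $\|\nabla_x\mathcal R^a_j\psi\|_{L^\infty}+\|\nabla_v^2\mathcal R^a_j\psi\|_{L^\infty}\lesssim 2^{(2d+3)j}$ with no loss. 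Then take the supremum over $\psi$ and $j$.
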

\begin{proof}
\rev{For the anisotropic scale $a=(3,1)$, the homogeneous dimension is $3d+d=4d$. Hence the Bernstein estimate gives $L^1(\mathbb{R}^{2d})\hookrightarrow \mathbf B^{-2d-\eta}_{2;a}$ for every $\eta>0$. We choose the larger space $\mathbf B^{-2d-3}_{2;a}$ so that the two $v$-derivatives in $\Delta_v\Psi_\varepsilon(f_\varepsilon)$ and the first-order transport term are both absorbed in the time-regularity estimate below.}
From the equation \eqref{ep-approx-eq}, we estimate
\begin{align*}
    \|\partial_tf_{\varepsilon} \chi\|_{L^\infty([0,T]; \bB^{-2d-3}_{2;a})} 
    \leq&C(f_0,T)+\|\varepsilon\Delta_vf_{\varepsilon}\chi\|_{L^\infty([0,T];\bB^{-2d-3}_{2;a})}+\|\nabla_v\cdot(\Psi_{\varepsilon}'(f_{\varepsilon})\nabla_vf_{\varepsilon})\chi\|_{L^\infty([0,T]; \bB^{-2d-3}_{2;a})}\\ 
    &+ \|(v \cdot \nabla_x f_{\varepsilon}) \chi\|_{L^\infty([0,T]; \bB^{-2d-3}_{2;a})}.
\end{align*}
Noting that by the product rule, we have 
\begin{newrevblock}
\begin{align*}
\chi\Delta_vf_{\varepsilon}
=\Delta_v(\chi f_{\varepsilon})
-2\nabla_v\cdot(f_{\varepsilon}\nabla_v\chi)
+f_{\varepsilon}\Delta_v\chi.
\end{align*}
\end{newrevblock}
Then by \eqref{Llambda-es}, this implies that 
\begin{align*}
\|\varepsilon\Delta_vf_{\varepsilon}\chi\|_{L^\infty([0,T];\bB^{-2d-3}_{2;a})}\leq C(\chi)\|f_{\varepsilon}\|_{L^\infty([0,T];L^2(\mathbb{R}^{2d}))}\leq C(\chi,f_0,T). 
\end{align*}
Furthermore, thanks to the continuous embedding $L^1(\mathbb{R}^{2d})\subset \bB^{-2d-1}_{2;a}$, together with the chain rule and H\"older's inequality, the nonlinear term satisfies 
\begin{align*}
\|\nabla_v\cdot(\Psi_{\varepsilon}'(f_{\varepsilon})\nabla_vf_{\varepsilon})\chi\|_{L^\infty([0,T]; \bB^{-2d-3}_{2;a})}
=&{\|\Delta_v(\Psi_{\varepsilon}(f_{\varepsilon})\chi)
-2\nabla_v\cdot(\Psi_{\varepsilon}(f_{\varepsilon})\nabla_v\chi)
+\Psi_{\varepsilon}(f_{\varepsilon})\Delta_v\chi\|_{L^\infty([0,T]; \bB^{-2d-3}_{2;a})}}\\
\leq&C(\chi)\|\Psi_{\varepsilon}(f_{\varepsilon})\|_{L^\infty([0,T];L^1(\mathrm{supp}\chi))}\\
\leq&C(\chi,\Psi)\|1+|f_{\varepsilon}|^2\|_{L^\infty([0,T];L^1(\mathrm{supp}\chi))}\\
\leq& C(\chi,\Psi,T)(1+\|f_0\|_{L^2(\mathbb{R}^{2d})}). 
\end{align*}
\begin{newrevblock}
Finally, the transport term is estimated using
\begin{align*}
(v\cdot\nabla_xf_\varepsilon)\chi
=\nabla_x\cdot(vf_\varepsilon\chi)-f_\varepsilon v\cdot\nabla_x\chi.
\end{align*}
Since $v$ is bounded on $\operatorname{supp}\chi$, the $L^1$-bound and the
embedding $L^1\hookrightarrow\bB^{-2d-\eta}_{2;a}$ imply
\begin{align*}
\|(v\cdot\nabla_xf_\varepsilon)\chi\|_
{L^\infty([0,T];\bB^{-2d-3}_{2;a})}
\le C(\chi,T)\|f_0\|_{L^1}.
\end{align*}
Together with the preceding estimates, this proves
\eqref{time-regularity-eps}.
\end{newrevblock}
\end{proof}

We are now in a position to establish the required compactness.

\begin{lemma}\label{compactness-2}
Under the assumptions of Proposition~\ref{uniform-besov-2}, let $l$ be the
index introduced in Assumption~\ref{A3}. Then for any
$ p\in\Big(1,\frac{2d}{2d-l}\Big)$ and $\beta=2l-4\Big(d-\frac dp\Big)>0$,
the family $(f_\eps)_{\eps\in(0,1)}$ is \newrev{relatively compact in
$L^1(0,T;L^1(D))$}
for every bounded domain $D\subset\mathbb R^{2d}$.
\end{lemma}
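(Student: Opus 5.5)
The plan is to apply the Aubin--Lions--Simon criterion to the localized family $(f_\eps\chi)_{\eps\in(0,1)}$, where $\chi\in C_c^\infty(\mathbb R^{2d})$ satisfies $\chi\equiv1$ on $D$. Compactness of $f_\eps\chi$ in $L^1(0,T;L^1(\mathbb R^{2d}))$ gives the claim, since $f_\eps=f_\eps\chi$ on $D$.

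\textbf{The triple of spaces.} We work with
$$X:=\mathbf B^{\beta}_{1;a}\cap\{\text{support in }\operatorname{supp}\chi\},\qquad B:=L^1(\mathbb R^{2d}),\qquad Y:=\mathbf B^{-2d-3}_{2;a}.$$
\begin{itemize}
\item $X\hookrightarrow B$ is compact. This is Lemma \ref{App:cpt} with $p=1$ and $\beta>0$, applied on a bounded domain containing $\operatorname{supp}\chi$. Functions supported in $\operatorname{supp}\chi$ vanish outside it, so relative compactness in $L^1$ of that domain is the same as in $L^1(\mathbb R^{2d})$.
\item $B\hookrightarrow Y$ is continuous. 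This is the Bernstein-type embedding $L^1\hookrightarrow \mathbf B^{-2d-\eta}_{2;a}$ already used in Lemma \ref{uniform-time-regularity-2}, together with monotonicity of Besov spaces in the regularity index.
\end{itemize}

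\textbf{Uniform bounds.} Corollary \ref{cor-besov-1-es-un} gives a bound on $f_\eps\chi$ in $L^1([0,T];\mathbf B^\beta_{1;a})$, uniform in $\eps$. Lemma \ref{uniform-time-regularity-2} gives $\partial_t(f_\eps\chi)$ bounded in $L^\infty([0,T];Y)$, hence in $L^1([0,T];Y)$. Simon's version of the Aubin--Lions lemma allows an $L^1$-in-time bound in $X$ combined with a time derivative bounded in $L^1(0,T;Y)$. It then yields relative compactness in $L^1(0,T;B)$.

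\textbf{Main obstacle.} The delicate point is that the Besov bound is only $L^1$ in time and $X$ is not reflexive. I will therefore use Simon's criterion in its equivalent form:
\begin{enumerate}
\item for each $t_1<t_2$, the family $\int_{t_1}^{t_2}f_\eps\chi\,dt$ is relatively compact in $B$;
\item the time-translation estimate $\|\tau_hf_\eps\chi-f_\eps\chi\|_{L^1(0,T-h;B)}\to0$ holds uniformly in $\eps$ as $h\to0$.
\end{enumerate}

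Condition (1) follows from compactness of $X\hookrightarrow B$ and the uniform $L^1(0,T;X)$ bound.

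For condition (2), I will use an Ehrling-type interpolation: for every $\eta>0$,
$$\|g\|_{B}\le\eta\|g\|_X+C_\eta\|g\|_Y .$$
This holds for compact $X\hookrightarrow B$ and continuous $B\hookrightarrow Y$. Apply it with $g=\tau_hf_\eps\chi-f_\eps\chi$. The $X$-part contributes at most $2\eta\sup_\eps\|f_\eps\chi\|_{L^1(0,T;X)}$. The $Y$-part is bounded by $Ch$ by Lemma \ref{uniform-time-regularity-2}. Choosing $\eta$ small and then $h$ small gives (2).

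One more point needs care. Since $f_\eps\chi$ is only known to lie in $B$ for almost every $t$ (via $L^\infty(0,T;L^1)$, Lemma \ref{mass-conservation-eps}), the translation estimate should be applied on the time set where the Besov norm is finite. The $L^1$-in-time integrability handles this.

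A diagonal argument over an exhausting sequence of balls is unnecessary here, because $D$ is fixed.
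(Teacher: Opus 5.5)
Your proposal is correct and follows the paper's proof. You localize with $\chi\equiv1$ on $D$, use the same triple $\mathbf B^\beta_{1;a}\Subset L^1(\operatorname{supp}\chi)\hookrightarrow\mathbf B^{-2d-3}_{2;a}$, and combine Corollary~\ref{cor-besov-1-es-un} with Lemma~\ref{uniform-time-regularity-2} in the Aubin--Lions--Simon criterion. The only difference is that you spell out Simon's $L^1$-in-time version (time-translation characterization plus the Ehrling inequality), where the paper just cites it.
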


\begin{proof}
Let $D\subset\mathbb R^{2d}$ be bounded and choose
$\chi\in C_c^\infty(\mathbb R^{2d})$ such that $\chi\equiv1$ on $D$.
By Corollary~\ref{cor-besov-1-es-un}, the family $(\chi f_\eps)_\eps$ is
bounded in
$$
        \newrev{L^1([0,T];\bB^\beta_{1;a}).}
$$
Moreover, by Lemma~\ref{uniform-time-regularity-2}, it is bounded in
$W^{1,1}(0,T;\bB^{-2d-3}_{2;a})$. Since
$$
       \newrev{\bB^\beta_{1;a}}\Subset L^1(\operatorname{supp}\chi)
        \hookrightarrow \bB^{-2d-3}_{2;a},
$$
based on the Aubin--Lions--Simon compactness criterion, we see that
$(\chi f_\eps)_\eps$ is relatively compact in
$L^1([0,T];L^1(\mathbb R^{2d}))$. Since $\chi\equiv1$ on $D$, the claim follows.
\end{proof}
\section{Existence of the kinetic-porous-medium equation}\label{sec-6}
In this section, we establish the existence of both weak solutions and renormalized kinetic solutions to \eqref{PDE-0}.

{\red
\subsection{Weak solutions}
\begin{proposition}[Existence of weak solutions]\label{prop-existence-weak-pde}
Assume that $f_0$ and $\Psi$ satisfy Assumption~\ref{A3}. Let
$(f_\eps)_{\eps\in(0,1)}$ be the family of nonnegative weak solutions to
\eqref{ep-approx-eq} constructed in Proposition \ref{existence-regularized-pde}. Then there exists a nonnegative function
$f$, which is a weak solution of
\eqref{PDE-0} in the sense of Definition \ref{def-weaksolution} with $\eps=0$, such that, up to a subsequence,
\begin{equation*}
        f_\eps\to f
        \quad\text{strongly in }
        L^1\big([0,T];L^1_{\mathrm{loc}}(\mathbb R^{2d})\big)
\end{equation*}
and almost everywhere on $(0,T)\times\mathbb R^{2d}$. Moreover, assuming that $r_2\ge 1-1/d$ if $d\ge2$, then \eqref{preservation-mass} holds for $f$.
\end{proposition}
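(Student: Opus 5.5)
We extract a limit with Lemma~\ref{compactness-2}, pass to the limit in the weak formulation, and then upgrade the $L^1$ bound to conservation under the growth condition on $r_2$. By Lemma~\ref{compactness-2} and a diagonal argument over an exhausting sequence of balls $D_k\uparrow\mathbb R^{2d}$, there is a subsequence $\eps_n\to0$ and a measurable $f\ge0$ with $f_{\eps_n}\to f$ in $L^1([0,T];L^1_{\rm loc})$ and a.e. Nonnegativity is inherited from Lemma~\ref{lem:L1}.

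The uniform bounds pass to the limit as follows. Proposition~\ref{prp-uniform-Llambda} with $r=2$ gives $\sup_t\|f_\eps(t)\|_{L^2}\le\|f_0\|_{L^2}$ and a uniform bound on $\int\!\!\int(\eps+\Psi_\eps'(f_\eps))|\nabla_v f_\eps|^2=\|\nabla_v\mathcal H^{\eps}_\eps(f_\eps)\|_{L^2}^2$, where $\mathcal H^\eps_\eps(r)=\int_0^r(\eps+\Psi_\eps')^{1/2}$. Using \eqref{est:bounded_eps}, $\mathcal H^\eps_\eps(f_\eps)\to H(f)$ a.e. and locally in $L^1$, since $\Psi'_\eps\to\Psi'$ locally uniformly on $(0,\infty)$ and $\Psi_\eps'\le C_1\Psi'$. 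Weak $L^2$ compactness of $\nabla_v\mathcal H^\eps_\eps(f_\eps)$ together with lower semicontinuity then gives $\nabla_v H(f)\in L^2$ with the bound of Theorem~\ref{thm:main}. The same lower semicontinuity gives $\|f(t)\|_{L^2}$ and $\|f(t)\|_{L^1}\le\|f_0\|_{L^1}$ for a.e. $t$ via Fatou, using Lemma~\ref{mass-conservation-eps}. To obtain these for every $t$, I would first use the equicontinuity from Lemma~\ref{uniform-time-regularity-2} to get $f_{\eps_n}(t)\rightharpoonup f(t)$ weakly for every $t$, and then apply weak lower semicontinuity of the norms.

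To identify the limit equation, I would test \eqref{ep-approx-eq} in weak form against $\varphi\in C_c^\infty$. The linear terms converge by local $L^1$ convergence. For the nonlinear term, $\Psi_\eps(f_\eps)\to\Psi(f)$ a.e. For uniform integrability on $\operatorname{supp}\varphi$, note that $\Psi_\eps(r)\le C_1\Psi(r)\le C(r^{r_2}+r^{r_1})$ with $r_1<2$, and the $L^\infty_tL^2$ bound makes $r^{r_1}$ uniformly integrable on compact sets. Vitali's theorem then gives convergence in $L^1([0,T];L^1_{\rm loc})$. The $\eps\int f_\eps\Delta_v\varphi$ term vanishes. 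The identity holds for a.e. $t$, and extends to every $t$ by weak continuity.

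Mass conservation when $r_2\ge1-1/d$ is the main obstacle. Since $r_2<r_1$, either $r_1\ge1$ or we are in the pure fast-diffusion case. I would test with $\alpha_{R_1}(x)\alpha_{R_2}(v)$ and follow the proof of Lemma~\ref{mass-conservation-eps}: let $R_1\to\infty$ first, controlling the transport term by $CR_2R_1^{-1}T\|f_0\|_{L^1}$. The diffusion term is bounded by
\[
R_2^{-2}\int_0^t\int_{\{R_2\le|v|\le2R_2\}}(f^{r_2}+f^{r_1}).
\]
For $r_1\le1$, I would use Hölder in $v$ (after removing the $x$ cutoff, Hölder in $(x,v)$ needs care because the $x$-domain is infinite). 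My plan is therefore to interpolate pointwise: $f^{r}\le f+f^2$ for $r\in[1,2]$ handles the $r_1$-part, giving a bound of $R_2^{-2}\cdot$ tail, which tends to $0$. For $f^{r_2}$ with $r_2<1$, I would apply Hölder on the set $\{|x|\le 2R_1\}$, but $R_1$ has already gone to infinity, so I would instead take $R_1=R_2^{\gamma}$ jointly, with $\gamma>1$ so that the transport term still vanishes. The diffusion term is then bounded by
\[
R_2^{-2}(R_1^dR_2^d)^{1-r_2}\Big(\int_{A}f\Big)^{r_2},
\]
and this requires $\gamma$ close to $1$ with $2d(1-r_2)\le2$. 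The endpoint works because the $L^1$-tail over $|v|\ge R_2$ tends to zero, which is exactly the heuristic computed in the introduction. Letting $R_2\to\infty$ then yields \eqref{preservation-mass}. I expect the delicate balancing of $R_1$ against $R_2$ at the endpoint $r_2=1-1/d$ to be where the care lies. If it fails, the fallback is an anisotropic cutoff in which the $x$-support grows only slightly faster than $R_2$.
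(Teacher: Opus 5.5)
Your limit extraction, the Vitali argument for $\Psi_\eps(f_\eps)\to\Psi(f)$, the lower-semicontinuity identification of $\nabla_v H(f)$, and the every-$t$ representative from the equi-Lipschitz bound of Lemma~\ref{uniform-time-regularity-2} are sound and agree in substance with the paper. The gap is in the mass-conservation step, exactly at the endpoint $r_2=1-1/d$ with $d\ge2$, which the statement includes. With $R_1=R_2^\gamma$, your H\"older bound for the diffusion term reads
\begin{equation*}
R_2^{-2}\big(R_1^dR_2^d\big)^{1-r_2}\Big(\int_A f\Big)^{r_2}
=\frac{R_1}{R_2}\Big(\int_A f\Big)^{1-1/d}
=R_2^{\gamma-1}\Big(\int_A f\Big)^{1-1/d}
\qquad\text{when } d(1-r_2)=1 .
\end{equation*}
For $\gamma>1$ the prefactor diverges, while $\int_A f\le\int_0^T\int_{\{|v|\ge R_2\}}f$ tends to zero at no quantified rate. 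So your claim that ``the endpoint works because the $L^1$-tail tends to zero'' is false for this choice. Your fallback, letting the $x$-support grow slightly faster than $R_2$, fails for the same reason: any $R_1/R_2\to\infty$ appears as a divergent factor, unless the ratio is tuned to the unknown tail of the particular $f$, which you do not propose. For $r_2>1-1/d$, and for $d=1$, your power-law coupling with $\gamma>1$ close to $1$ does work.

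The missing observation is that you never need $R_1\gg R_2$. Your transport bound $CR_2R_1^{-1}T\|f_0\|_{L^1}$ throws away the support information. The paper uses a single radius, $\phi_R(x,v)=\alpha_R(x)\alpha_R(v)$. On the support of $\nabla_x\alpha_R(x)\alpha_R(v)$ one has $|x|\ge R$ and $|v|\le 2R$, so $|v\cdot\nabla_x\alpha_R(x)|\alpha_R(v)\le C$. The transport term is then bounded by $C\int_0^t\int_{\{|x|\ge R\}}f\,dz\,ds$, which tends to zero by dominated convergence because $f\in L^1([0,T]\times\mathbb R^{2d})$; the spatial $L^1$-tail replaces the total mass. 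With one radius, $A_R=(0,T)\times\{|x|\le2R\}\times\{R\le|v|\le2R\}$ has $|A_R|\lesssim TR^{2d}$. Using $\Psi(r)\le C(r^{1-1/d}+r^2)$, which follows from $r_2\ge1-1/d$ and $r_1<2$, you get
\begin{equation*}
\frac1{R^2}\int_{A_R}f^{1-1/d}
\le\frac{|A_R|^{1/d}}{R^2}\Big(\int_{A_R}f\Big)^{1-1/d}
\lesssim\Big(\int_0^T\int_{\{|v|\ge R\}}f\,dz\,ds\Big)^{1-1/d}\to0,
\end{equation*}
while $R^{-2}\int_{A_R}f^2\to0$ by the $L^\infty_tL^2$ bound. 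This closes exactly at the endpoint, with no decay rate needed.
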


\begin{proof}
\begin{revblock}
By Lemma~\ref{compactness-2} and a standard diagonal argument, there exist a
subsequence, still denoted by $(f_\eps)_{\eps\in(0,1)}$, and a nonnegative
function $f$ such that
\begin{equation*}
        f_\eps\to f
        \quad\text{strongly in }
        L^1\big([0,T];L^1_{\mathrm{loc}}(\mathbb R^{2d})\big)
\end{equation*}
and almost everywhere on $[0,T]\times\mathbb R^{2d}$. Since
$(f_\eps)_{\eps\in(0,1)}$ is uniformly bounded in
$L^\infty([0,T];L^1(\mathbb R^{2d}))$, Fatou's lemma gives
\begin{equation}\label{0611:01}
        \|f(t)\|_{L^1(\mathbb R^{2d})}
        \le
        \liminf_{\eps\to0}
        \|f_\eps(t)\|_{L^1(\mathbb R^{2d})}
        =
        \|f_0\|_{L^1(\mathbb R^{2d})}
\end{equation}
for a.e. $t\in[0,T]$. Moreover, \eqref{time-regularity-eps} implies that, for
any $\chi\in C_c^\infty(\mathbb R^{2d})$ and $0\le s<t\le T$,
\begin{equation}\label{0611:03}
    \|(f(t)-f(s))\chi\|_{\bB^{-2d-3}_{2;a}}
    \le C(f_0,T)|t-s|.
\end{equation}

We divide the proof into five steps. In Step 1, we identify the nonlinear flux.
In Step 2, we pass to the limit in the weak formulation. In Step 3, we choose a
representative for $f(t)$ which belongs to $L^1\cap L^2$ for every time and
satisfies the $L^1$-bound. In Step 4, we verify the weighted velocity-gradient
condition in Definition~\ref{def-weaksolution}-(2). Finally, in Step 5, under the additional condition, we prove the mass preservation \eqref{preservation-mass}.

\textbf{Step 1. Identification of the nonlinear flux.}
Let $K\subset\mathbb R^{2d}$ be compact and set $Q_T=[0,T]\times K$. By the
local strong $L^1$-convergence, $f_\eps\to f$ strongly in $L^1(Q_T)$ and almost
everywhere on $Q_T$. Since $(f_\eps)_\eps$ is uniformly bounded in
$L^\infty([0,T];L^2(\mathbb R^{2d}))$, interpolation gives, for every
$r\in[1,2)$,
\begin{equation}\label{0611:06}
        f_\eps\to f
        \quad\text{strongly in }L^r(Q_T).
\end{equation}

We first note that $\Psi_\eps\to\Psi$ locally uniformly on $[0,\infty)$. Indeed,
by Lemma~\ref{approx-Psi}, $\Psi_\eps'\to\Psi'$ locally uniformly on
$(0,\infty)$ and, by \eqref{est:bounded_eps},
\begin{equation*}
    0\le \Psi_\eps(r)=\int_0^r\Psi_\eps'(\zeta)\,d\zeta
    \le C\int_0^r\Psi'(\zeta)\,d\zeta
    =C\Psi(r)
    \le C(r^{r_2}+r^{r_1}),
    \qquad r\ge0.
\end{equation*}
Hence, for every $R>0$ and $0<\delta<R$,
\begin{align*}
    \sup_{0\le r\le\delta}|\Psi_\eps(r)-\Psi(r)|
    &\le C(\delta^{r_2}+\delta^{r_1}), \\
    \sup_{\delta\le r\le R}|\Psi_\eps(r)-\Psi(r)|
    &\le |\Psi_\eps(\delta)-\Psi(\delta)|
    +\int_\delta^R|\Psi_\eps'(\zeta)-\Psi'(\zeta)|\,d\zeta .
\end{align*}
Letting first $\eps\to0$ and then $\delta\downarrow0$ gives the local uniform
convergence on $[0,\infty)$.

Since $f_\eps\to f$ almost everywhere, the local uniform convergence of
$\Psi_\eps$ implies
\begin{equation*}
    \Psi_\eps(f_\eps)\to\Psi(f)
    \quad\text{a.e. on }Q_T.
\end{equation*}
Moreover, by the growth estimate above and Assumption~\ref{A3},
\begin{equation*}
    |\Psi_\eps(f_\eps)|+|\Psi(f)|
    \le C\big(f_\eps^{r_2}+f_\eps^{r_1}+f^{r_2}+f^{r_1}\big).
\end{equation*}
Since $0<r_2<r_1<2$ and $(f_\eps)_\eps$ is uniformly bounded in $L^2(Q_T)$,
the family on the right-hand side is uniformly integrable in $L^1(Q_T)$. Vitali's
convergence theorem yields
\begin{equation}\label{0611:00}
        \Psi_\eps(f_\eps)
        \to
        \Psi(f)
        \quad\text{strongly in }
        L^1(Q_T).
\end{equation}
Since $K$ was arbitrary, this proves the convergence strongly in
$L^1([0,T];L^1_{\mathrm{loc}}(\mathbb R^{2d}))$.

\textbf{Step 2. Passage to the weak formulation.}
For every $\varphi\in C_c^\infty(\mathbb R^{2d})$, the approximate solution
$f_\eps$ satisfies, for every $t\in[0,T]$,
\begin{align*}
    \int_{\mathbb R^{2d}} f_\eps(t)\varphi
    &=\int_{\mathbb R^{2d}} f_0\varphi
    +\int_0^t\int_{\mathbb R^{2d}}
        \big(\eps f_\eps+\Psi_\eps(f_\eps)\big)\Delta_v\varphi\,dzds \\
    &\quad+
    \int_0^t\int_{\mathbb R^{2d}} vf_\eps\cdot\nabla_x\varphi\,dzds.
\end{align*}
The term containing $\eps f_\eps$ vanishes, since
\begin{equation*}
    \left|\int_0^t\int_{\mathbb R^{2d}}
        \eps f_\eps\Delta_v\varphi\,dzds\right|
    \le \eps T\|f_0\|_{L^1}\|\Delta_v\varphi\|_{L^\infty}.
\end{equation*}
The convergence of the nonlinear term follows from Step 1, and the transport
term follows from the local strong $L^1$-convergence. Thus, for a.e.
$t\in[0,T]$,
\begin{equation}\label{0611:07}
\begin{aligned}
    \int_{\mathbb R^{2d}} f(t)\varphi
    &=\int_{\mathbb R^{2d}} f_0\varphi
    +\int_0^t\int_{\mathbb R^{2d}}
        \Psi(f)\Delta_v\varphi\,dzds  \\
    &\quad+
    \int_0^t\int_{\mathbb R^{2d}}
        vf\cdot\nabla_x\varphi\,dzds .
\end{aligned}
\end{equation}
By \eqref{0611:03}, the left-hand side has a continuous representative in $t$.
The two time integrals on the right-hand side are continuous in $t$, because
$f$ and $\Psi(f)$ are locally integrable. Therefore \eqref{0611:07} holds for
every $t\in[0,T]$.

\textbf{Step 3. Time representatives and the $L^1$-bound for every time.}
Let $\varphi\in C_c^\infty(\mathbb R^{2d})$. From \eqref{0611:01},
\begin{equation*}
    |\langle f(t),\varphi\rangle|
    \le \|f_0\|_{L^1(\mathbb R^{2d})}\|\varphi\|_{L^\infty(\mathbb R^{2d})}
\end{equation*}
for a.e. $t\in[0,T]$. Since $t\mapsto\langle f(t),\varphi\rangle$ is continuous,
the same inequality holds for every $t\in[0,T]$. Hence, for each fixed $t$, the
distribution $f(t)$ extends to a finite Radon measure $\mu_t$ with
\begin{equation*}
    |\mu_t|(\mathbb R^{2d})\le \|f_0\|_{L^1(\mathbb R^{2d})}.
\end{equation*}
Similarly, the uniform $L^\infty([0,T];L^2)$ estimate and Fatou's lemma give
\begin{equation*}
    |\langle f(t),\varphi\rangle|
    \le C\|\varphi\|_{L^2(\mathbb R^{2d})}
\end{equation*}
for a.e. $t$. Again by continuity in time, this holds for every $t\in[0,T]$.
Thus $f(t)$ is represented by a function $g_t\in L^2(\mathbb R^{2d})$. Since
$g_t\,dz$ and $\mu_t$ represent the same distribution, $\mu_t=g_t\,dz$.
Redefining $f(t):=g_t$, we obtain
\begin{equation*}
    f(t)\in L^1(\mathbb R^{2d})\cap L^2(\mathbb R^{2d}),
    \qquad
    \|f(t)\|_{L^1(\mathbb R^{2d})}\le \|f_0\|_{L^1(\mathbb R^{2d})}
\end{equation*}
for every $t\in[0,T]$. Moreover, since $f\ge0$ for a.e. time, the continuity of
$t\mapsto\langle f(t),\varphi\rangle$ for nonnegative test functions implies
that this representative is nonnegative for every $t$.

\textbf{Step 4. Weighted velocity-gradient estimate.}
Set
\begin{equation*}
    \mathcal H^{\rm app}_\eps(r):=\int_0^r(\eps+\Psi_\eps'(\zeta))^{1/2}\,d\zeta,
    \qquad
    \mathcal H(r):=\int_0^r(\Psi'(\zeta))^{1/2}\,d\zeta .
\end{equation*}
By the energy estimate for $f_\eps$,
\begin{equation*}
    \sup_{\eps\in(0,1)}
    \int_0^T\int_{\mathbb R^{2d}}
    |\nabla_v\mathcal H^{\rm app}_\eps(f_\eps)|^2\,dzdt<\infty.
\end{equation*}
Thus, up to a further subsequence, there exists
$G\in L^2((0,T)\times\mathbb R^{2d};\mathbb R^d)$ such that
\begin{equation*}
    \nabla_v\mathcal H^{\rm app}_\eps(f_\eps)
    \rightharpoonup G
    \quad\text{weakly in }
    L^2((0,T)\times\mathbb R^{2d}).
\end{equation*}
We identify $G$. Let $K\subset\mathbb R^{2d}$ be compact and set
$Q_T=[0,T]\times K$. From \eqref{0611:06}, $f_\eps\to f$ strongly in
$L^r(Q_T)$ for every $r\in[1,2)$.

We first prove that $\mathcal H^{\rm app}_\eps\to\mathcal H$ locally uniformly
on $[0,\infty)$. On every interval $[\delta,R]\subset(0,\infty)$, this follows
from the locally uniform convergence of $\Psi_\eps'$ to $\Psi'$ and the uniform
continuity of the square-root map on bounded subsets of $[0,\infty)$. Near the
origin, using \eqref{est:bounded_eps} and Assumption~\ref{A3},
\begin{align*}
    |\mathcal H^{\rm app}_\eps(r)|^2
    &\le r\int_0^r(\eps+\Psi_\eps'(\zeta))\,d\zeta
    \le r^2+C r(r^{r_2}+r^{r_1}),\\
    |\mathcal H(r)|^2
    &\le C r(r^{r_2}+r^{r_1}).
\end{align*}
Hence the convergence is locally uniform on $[0,\infty)$.

The preceding growth estimate gives, for
$q:=\max\{1,(1+r_1)/2\}<3/2$,
\begin{equation*}
    |\mathcal H^{\rm app}_\eps(r)|+|\mathcal H(r)|\le C(1+r^q),
    \qquad r\ge0,
\end{equation*}
uniformly in $\eps$. Since $q<2$, Vitali's theorem, together with the local
strong convergence of $f_\eps$, implies
\begin{equation*}
    \mathcal H^{\rm app}_\eps(f_\eps)	\to\mathcal H(f)
    \quad\text{strongly in }L^1(Q_T).
\end{equation*}
For every $\phi\in C_c^\infty((0,T)\times\mathbb R^{2d})$ and each
$i=1,\ldots,d$,
\begin{align*}
    \int_0^T\int_{\mathbb R^{2d}}\mathcal H(f)\partial_{v_i}\phi\,dzdt
    &=\lim_{\eps\to0}
    \int_0^T\int_{\mathbb R^{2d}}
    \mathcal H^{\rm app}_\eps(f_\eps)\partial_{v_i}\phi\,dzdt\\
    &=-\lim_{\eps\to0}
    \int_0^T\int_{\mathbb R^{2d}}
    \partial_{v_i}\mathcal H^{\rm app}_\eps(f_\eps)\phi\,dzdt
    =-\int_0^T\int_{\mathbb R^{2d}}G_i\phi\,dzdt.
\end{align*}
Thus $G=\nabla_v\mathcal H(f)$ in the sense of distributions. By weak lower
semicontinuity,
\begin{align*}
    \int_0^T\int_{\mathbb R^{2d}}|\nabla_v\mathcal H(f)|^2\,dzdt
    &\le \liminf_{\eps\to0}
    \int_0^T\int_{\mathbb R^{2d}}
    |\nabla_v\mathcal H^{\rm app}_\eps(f_\eps)|^2\,dzdt\\
    &=\liminf_{\eps\to0}
    \int_0^T\int_{\mathbb R^{2d}}
    (\eps+\Psi_\eps'(f_\eps))|\nabla_v f_\eps|^2\,dzdt<\infty.
\end{align*}
Therefore $\nabla_v\mathcal H(f)\in L^2((0,T)\times\mathbb R^{2d})$, which
verifies Definition~\ref{def-weaksolution}-(2) for $\eps=0$.

\textbf{Step 5. Mass preservation under the additional condition.}
Let $\alpha_R\in C_c^\infty(\mathbb R^d)$ satisfy $0\le\alpha_R\le1$,
$\alpha_R=1$ on $\{|y|\le R\}$, $\alpha_R=0$ on $\{|y|\ge2R\}$, and
\begin{equation*}
    |\nabla\alpha_R|\le \frac{C}{R},
    \qquad
    |\Delta\alpha_R|\le \frac{C}{R^2}.
\end{equation*}
Taking $\phi_R(x,v):=\alpha_R(x)\alpha_R(v)$ in \eqref{0611:07}, we obtain, for
all $t\in[0,T]$,
\begin{equation}\label{0611:10}
\begin{aligned}
    \int_{\mathbb R^{2d}}f(t)\phi_R\,dz
    &=\int_{\mathbb R^{2d}}f_0\phi_R\,dz
    +\int_0^t\int_{\mathbb R^{2d}}
        \Psi(f)\alpha_R(x)\Delta_v\alpha_R(v)\,dzds\\
    &\quad+
    \int_0^t\int_{\mathbb R^{2d}}
        f\,v\cdot\nabla_x\alpha_R(x)\alpha_R(v)\,dzds.
\end{aligned}
\end{equation}
Set
\begin{equation*}
    A_R:=(0,T)\times\{|x|\le2R\}\times\{R\le |v|\le2R\}.
\end{equation*}
For the diffusion boundary term, if $d\ge2$, the additional condition gives
\begin{align*}
    \left|\int_0^t\int_{\mathbb R^{2d}}
        \Psi(f)\alpha_R(x)\Delta_v\alpha_R(v)\,dzds\right|
    &\le \frac{C}{R^2}
    \int_{A_R}\big(f^{1-1/d}+f^2\big)\\
    &\le C\left(\int_{A_R}f\right)^{1-1/d}
    +\frac{C}{R^2}\int_{A_R}f^2\to0.
\end{align*}
If $d=1$, Assumption~\ref{A3} gives
\begin{equation*}
    |\Psi(r)|\le C(r^{r_2}+r^{r_1}),
    \qquad 0<r_2<r_1<2,
\end{equation*}
and for each $\theta\in\{r_2,r_1\}$ one has
\begin{equation*}
    \frac1{R^2}\int_{A_R}f^\theta\to0,
\end{equation*}
using the $L^1$-tail if $0<\theta<1$ and the $L^2$-tail if $1\le\theta<2$.
Thus the diffusion boundary term also vanishes when $d=1$.

For the transport boundary term, on the support of
$\nabla_x\alpha_R(x)\alpha_R(v)$, one has $R\le |x|\le2R$ and $|v|\le2R$.
Therefore
\begin{align*}
    \left|\int_0^t\int_{\mathbb R^{2d}}
        f\,v\cdot\nabla_x\alpha_R(x)\alpha_R(v)\,dzds\right|
    &\le C\int_0^t\int_{\{R\le |x|\le2R,\ |v|\le2R\}} f(s,x,v)\,dxdvds\\
    &\le C\int_0^t\int_{\{|x|\ge R\}}f(s,x,v)\,dxdvds\to0,
\end{align*}
because $f\in L^1([0,T]\times\mathbb R^{2d})$. Letting $R\to\infty$ in
\eqref{0611:10} and using dominated convergence for $f(t)$ and $f_0$, we obtain
\begin{equation*}
    \int_{\mathbb R^{2d}}f(t,z)\,dz
    =
    \int_{\mathbb R^{2d}}f_0(z)\,dz,
    \qquad t\in[0,T].
\end{equation*}
This proves \eqref{preservation-mass} and completes the proof.
\end{revblock}
\end{proof}
}

\subsection{Renormalized kinetic solutions}
\begin{proposition}\label{existence-kinetic-solution}
Assume that $f_0$ and $\Psi$ satisfy Assumption~\ref{A3}. Moreover, assume that $r_2 \ge 1 - 1/d$ if $d \ge 2$. Then the function $f$, constructed in Proposition~\ref{prop-existence-weak-pde}, is a renormalized kinetic solution to \eqref{PDE-0} with initial data $f_0$ and satisfies \eqref{preservation-mass}. 
\end{proposition}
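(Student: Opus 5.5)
Mass preservation \eqref{preservation-mass} for $f$ is already the last assertion of Proposition~\ref{prop-existence-weak-pde}, and the $L^1$-bound, $L^\infty L^2$ bound and $\nabla_v\mathcal H(f)\in L^2$ are Steps 3--4 there. It remains to build a kinetic measure $q$ satisfying \eqref{control KM}, \eqref{control CKM VI} and the kinetic equation \eqref{MC kenitic solution}. The approach is to pass to the limit in the kinetic formulations of the approximations $f_\eps$. By Lemma~\ref{lem-equivalence} (applicable since $\Psi_\eps'$ is bounded, $\eps>0$, and mass is preserved by Lemma~\ref{mass-conservation-eps}), each $f_\eps$ is a renormalized kinetic solution with exact measure $q_\eps=\delta_{f_\eps=\zeta}(\eps+\Psi_\eps'(\zeta))|\nabla_v f_\eps|^2$. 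Its total mass is $\int_0^T\!\!\int|\nabla_v\mathcal H^{\rm app}_\eps(f_\eps)|^2\le \|f_0\|_{L^2}^2$ by Proposition~\ref{prp-uniform-Llambda} with $r=2$. Hence, along a further subsequence, $q_\eps\rightharpoonup q$ weakly-$*$ as nonnegative Radon measures on $\mathbb R^{2d}\times(0,\infty)\times[0,T]$, with finite total mass.

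To pass to the limit in \eqref{MC kenitic solution} for $f_\eps$ with fixed $\psi\in C_c^\infty(\mathbb R^{2d}\times(0,\infty))$, I would proceed term by term. Since $f_\eps\to f$ a.e., we have $\chi_\eps\to\chi$ a.e. off the null set $\{f=\zeta\}$, so the terms with $\chi$ (including the transport term, using compact support of $\psi$) converge by dominated convergence. The $q$-term converges by weak convergence, first for a.e.\ $t$ and then for every $t$ using the time continuity from \eqref{0611:03}. For the flux term I would rewrite it as $\int (\eps+\Psi_\eps'(f_\eps))\nabla_vf_\eps\cdot(\nabla_v\psi)(z,f_\eps)=\int\nabla_v\cdot\Big[\int_0^{f_\eps}(\eps+\Psi_\eps'(\zeta))\nabla_v\psi(z,\zeta)d\zeta\Big]-\int\int_0^{f_\eps}(\eps+\Psi_\eps')\Delta_v\psi\,d\zeta$. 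The divergence term vanishes after integration, and the rest converges by local uniform convergence of $\Psi_\eps'$ on $\operatorname{supp}_\zeta\psi\Subset(0,\infty)$ and a.e.\ convergence of $f_\eps$. Then I would undo the identity at the limit using the chain rule for $\mathcal H(f)$, valid because $\Psi'$ is bounded below on $\operatorname{supp}_\zeta\psi$.

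\textbf{Domination \eqref{control KM}.} This is the main obstacle. For nonnegative $\rho\in C_c$, I would write $\int\rho\,dq_\eps=\int\!\!\int\rho(t,z,f_\eps)|\nabla_v\mathcal H^{\rm app}_\eps(f_\eps)|^2$. Setting $\Phi_\eps(t,z,r):=\int_0^r\sqrt{\rho(t,z,\zeta)}(\eps+\Psi'_\eps(\zeta))^{1/2}d\zeta$, the $v$-gradient $\sqrt{\rho}\,\nabla_v\mathcal H^{\rm app}_\eps(f_\eps)$ equals $\nabla_v[\Phi_\eps(\cdot,f_\eps)]$ minus a lower-order term involving $\nabla_v\sqrt\rho$. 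To handle the possible non-smoothness of $\sqrt\rho$, I would first take $\rho=\sigma^2$ with $\sigma$ smooth and then approximate general $\rho$ monotonically. By strong local convergence, $\Phi_\eps(\cdot,f_\eps)\to\Phi(\cdot,f)$ in $L^1_{loc}$, and $\sqrt\rho\,\nabla_v\mathcal H^{\rm app}_\eps(f_\eps)$ is bounded in $L^2$. Its weak limit is identified as $\sqrt{\rho(f)}\,\nabla_v\mathcal H(f)$, and weak lower semicontinuity of the $L^2$ norm then gives \eqref{control KM}.

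\textbf{Vanishing at infinity \eqref{control CKM VI}.} By Proposition~\ref{prp-uniform-Llambda} with $r=2$, $\int(M\wedge\zeta)\,dq_\eps$-type bounds give $q_\eps(\mathbb R^{2d}\times[M,M+1]\times[0,T])\le M^{-1}\int\!\!\int f_\eps(\eps+\Psi'_\eps)|\nabla_vf_\eps|^2\lesssim M^{-1}\|f_0\|_{L^2}^2$ uniformly in $\eps$. Lower semicontinuity on open neighbourhoods $(M-1,M+2)$ then transfers this bound to $q$, so the $\liminf$ in \eqref{control CKM VI} is $0$.
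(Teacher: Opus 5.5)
Up to the last step your plan follows the paper: a weak-$*$ limit $q$ of $q_\eps$, term-by-term passage in the kinetic formulation, and for \eqref{control KM} the identification of the weak $L^2$-limit of $\sqrt{\rho}\,\nabla_v\mathcal H^{\rm app}_\eps(f_\eps)$ followed by lower semicontinuity. Your $\rho=\sigma^2$ device even covers general $\rho$, not only the products $\rho(t,z)\phi(\zeta)$ used in the paper.

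The gap is the sentence ``first for a.e.\ $t$ and then for every $t$ using the time continuity from \eqref{0611:03}''. The limit passage gives \eqref{MC kenitic solution} only for a.e.\ $t$ outside the set of time atoms of $q$. The integrand $\mathbf 1_{[0,t]}(s)\partial_\zeta\psi$ is discontinuous on $\{t\}\times\mathbb R^{2d}\times(0,\infty)$. So $\int_0^t\!\int\partial_\zeta\psi\,dq_\eps$ need not converge to the corresponding $q$-integral when $q$ charges that set. A weak-$*$ limit of the time-absolutely-continuous measures $q_\eps$ may well have such atoms, since the energy bound gives no equi-integrability in time. Nor can \eqref{0611:03} repair this. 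It only yields continuity of the linear maps $t\mapsto\langle f(t),\varphi\rangle$. By contrast, $\int\chi(t)\psi=\int\!\int_0^{f(t,z)}\psi(z,\zeta)\,d\zeta\,dz$ is nonlinear in $f(t)$ and is not continuous under weak convergence. Indeed, a jump of $t\mapsto\int\chi(t)\psi$ is exactly what an atom of $q$ at time $t$ would compensate.

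What is missing is twofold. First, you need strong time continuity: a representative of $f$ that is continuous in $L^1(\mathbb R^{2d})$, so that $\chi$ is continuous in $L^1_{\rm loc}(\mathbb R^{2d}\times(0,\infty))$. Second, you need the absence of time atoms of $q$. The paper obtains the first from the trace argument of \cite{FG24}. This is precisely where the mass preservation \eqref{preservation-mass} (hence the hypothesis $r_2\ge 1-1/d$) and the small-$\zeta$ decay of Proposition~\ref{vanish-0-kinetic} are used; in your proposal they never enter the construction of the kinetic solution. The second then follows by testing the kinetic formulation on $(t-\tau,t+\tau)$ and letting $\tau\downarrow0$. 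The transport and flux terms vanish, and the left side vanishes by the $L^1$-continuity. Hence $\int_{\{t\}\times\mathbb R^{2d}\times(0,\infty)}\partial_\zeta\psi\,dq=0$ for all $\psi$, and a finite measure $q_t$ with $\partial_\zeta q_t=0$ is zero. Without this step, $f$ is a kinetic solution only for a.e.\ $t$. That does not meet Definition~\ref{def-kineticsolution}, which requires the identity for every $t$, and the uniqueness argument relies on this.

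A smaller slip concerns \eqref{control CKM VI}. Your bound uses $\int_0^T\!\int f_\eps(\eps+\Psi_\eps'(f_\eps))|\nabla_v f_\eps|^2\lesssim\|f_0\|_{L^2}^2$, which would need Proposition~\ref{prp-uniform-Llambda} with $r=3$, i.e.\ $f_0\in L^3$. It is also unnecessary: finiteness of $q$ gives $\sum_M q(\mathbb R^{2d}\times[M,M+1]\times[0,T])\le 2\,q(\mathbb R^{2d}\times(0,\infty)\times[0,T])<\infty$.
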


\begin{proof}
Let $l$ be the index introduced in Assumption~\ref{A3}. By Lemma \ref{compactness-2} and a standard diagonal argument, there exists a subsequence of $(f_{\varepsilon})_{\varepsilon \in (0,1)}$, still denoted by $(f_{\varepsilon})_{\varepsilon \in (0,1)}$, such that for any $p\in(1,\frac{2d}{2d-l})$ and $\beta=2l-4(d-d/p)$, 
\begin{align}\label{0610:00}
&f_{\varepsilon} \to f \quad \newrev{\text{strongly in } L^1([0,T]; L^1_{\mathrm{loc}}(\mathbb{R}^{2d}))} \text{ and for almost every } (t,x) \in [0,T] \times \mathbb{R}^{2d}.
\end{align}
Here, $f \in L^1([0,T]; L^1_{\mathrm{loc}}(\mathbb{R}^{2d}))$ denotes the corresponding limit function. It follows from the above convergence that $f$ is a non-negative function and, by Fatou's lemma, $$
\|f(t)\|_{L^1(\mathbb{R}^{2d})}\leq \|f_0\|_{L^1(\mathbb{R}^{2d})}$$
for almost every $t\in[0,T]$. 

Applying the kinetic formulation of \eqref{ep-approx-eq}, for every test function $\psi \in C_c^{\infty}(\mathbb{R}^{2d}\times\mathbb{R}_+)$ and $t \in [0,T]$, we have
\begin{align*}
\int_{\mathbb{R}^{2d+1}} \chi_{\varepsilon}(t) \psi 
= & \int_{\mathbb{R}^{2d+1}} \chi_0 \psi + \int_0^t \int_{\mathbb{R}^{2d+1}} (\varepsilon + \Psi_{\varepsilon}'(\zeta)) \chi_{\varepsilon} \Delta_v \psi \\
& + \int_0^t \int_{\mathbb{R}^{2d+1}} v \chi_{\varepsilon} \cdot \nabla_x \psi - \int_0^t \int_{\mathbb{R}^{2d+1}} \partial_{\zeta} \psi \, q_{\varepsilon},
\end{align*}
where $q_{\varepsilon} = \delta_{f_{\varepsilon} = \zeta} (\varepsilon+\Psi_{\varepsilon}'(\zeta)) |\nabla_v f_{\varepsilon}|^2$.

Define $\chi(z,\zeta,t) := \mathbf{1}_{\{ f(z,t) > \zeta \}}$. By the almost everywhere convergence of $f_{\varepsilon}$ and the compact support of $\psi$, the dominated convergence theorem yields for almost every $t\in[0,T]$, 
\begin{align}\label{finite-kinetic-measure}
&\int_{\mathbb{R}^{2d+1}} \chi_{\varepsilon}(t) \psi - \int_{\mathbb{R}^{2d+1}} \chi_0 \psi - \int_0^t \int_{\mathbb{R}^{2d+1}} (\varepsilon + \Psi_{\varepsilon}'(\zeta)) \chi_{\varepsilon} \Delta_v \psi - \int_0^t \int_{\mathbb{R}^{2d+1}} v \chi_{\varepsilon} \cdot \nabla_x \psi \notag\\
\to & \int_{\mathbb{R}^{2d+1}} \chi(t) \psi - \int_{\mathbb{R}^{2d+1}} \chi_0 \psi - \int_0^t \int_{\mathbb{R}^{2d+1}} \Psi'(\zeta) \chi \Delta_v \psi - \int_0^t \int_{\mathbb{R}^{2d+1}} v \chi \cdot \nabla_x \psi,
\end{align}
as $\varepsilon \to 0$.

For the kinetic measure terms, the uniform $L^2(\mathbb{R}^{2d})$-estimate on $f_{\varepsilon}$ and the definition of $q_{\varepsilon}$ imply
\begin{align}\label{kmeasure-totalvariation}
q_{\varepsilon}([0,T] \times \mathbb{R}^{2d}\times(0,\infty)) = \int_0^T \int_{\mathbb{R}^{2d}} (\varepsilon+\Psi_{\varepsilon}'(f_{\varepsilon})) |\nabla_v f_{\varepsilon}|^2 \leq C(f_0) < \infty.
\end{align}
Furthermore, for every $M\geq 1$, by taking a sequence of smooth approximations of the indicator function $\mathbf{1}_{\{[M,M+1]\}}(\zeta)$ and passing to the limit, we obtain
\begin{align*}
q_{\varepsilon}([0,T] \times \mathbb{R}^{2d}\times[M,M+1])
= \int_0^T \int_{\mathbb{R}^{2d}} \mathbf{1}_{\{f_{\varepsilon}\in[M,M+1]\}}(\varepsilon+\Psi_{\varepsilon}'(f_{\varepsilon})) |\nabla_v f_{\varepsilon}|^2.
\end{align*}
It then follows from the dominated convergence theorem that
$$
\liminf_{M\to\infty} q_{\varepsilon}([0,T] \times \mathbb{R}^{2d}\times[M,M+1]) = 0.
$$

\rev{Let $\mathcal M([0,T]\times\mathbb R^{2d}\times(0,\infty))$ be the space of finite Radon measures, identified with the dual of $C_0([0,T]\times\mathbb R^{2d}\times(0,\infty))$. The uniform total variation bound and Banach--Alaoglu yield a subsequence $(\varepsilon_k)$ and a finite Radon measure $q$ such that}
$$
q_{\varepsilon_k} \rightharpoonup q \quad \rev{\text{weak-* in } C_0([0,T]\times\mathbb R^{2d}\times(0,\infty))^*} \quad \text{as } \varepsilon_k \to 0.
$$
\rev{Equivalently, the convergence is vague on compact subsets; the global $C_0$-weak-* formulation is justified here by the uniform total variation bound.}

In the following, we characterize the limiting measure $q$. {\color{darkergreen}Set
\begin{equation*}
\mathcal H^{\rm app}_\varepsilon(r):=\int_0^r(\varepsilon+\Psi_\varepsilon'(\zeta))^{1/2}\,d\zeta,
\qquad
\mathcal H(r):=\int_0^r(\Psi'(\zeta))^{1/2}\,d\zeta .
\end{equation*}}
For every nonnegative test function $\phi \in C_c^{\infty}(\mathbb{R}_+)$ and \newrev{every $\rho \in C_c^{\infty}([0,T)\times\mathbb{R}^{2d}; \mathbb{R}_+)$}, we have
\begin{align*}
q(\rho\phi)
= \lim_{k \to \infty} q_{\varepsilon_k}(\rho\phi)
= \lim_{k \to \infty} \int_0^T \int_{\mathbb{R}^{2d}} {\color{darkergreen}\rho(t,z)\phi(f_{\varepsilon_k}) |\nabla_v\mathcal H^{\rm app}_{\varepsilon_k}(f_{\varepsilon_k})|^2}
\leq C(f_0).
\end{align*}
This implies that
\begin{align*}
{\color{darkergreen}\rho^{1/2}\phi(f_{\varepsilon_k})^{1/2}\nabla_v\mathcal H^{\rm app}_{\varepsilon_k}(f_{\varepsilon_k})}
\rightharpoonup g
\quad \text{weakly in } L^2([0,T]; L^2(\mathbb{R}^{2d})),
\end{align*}
as $k \to \infty$, for some limit $g \in L^2([0,T]; L^2(\mathbb{R}^{2d}))$.

% Since $\phi$ has compact support in $\mathbb{R}_+$, for sufficiently small $\varepsilon_k$ we have $\Psi'_{\varepsilon_k}(\zeta) = \Psi'(\zeta)$ for all $\zeta$ in the support of $\phi$. Hence,
% \begin{align*}
% \phi(f_{\varepsilon_k})^{1/2} \Psi'_{\varepsilon_k}(f_{\varepsilon_k})^{1/2} \nabla_v f_{\varepsilon_k}
% = \phi(f_{\varepsilon_k})^{1/2} \Psi'(f_{\varepsilon_k})^{1/2} \nabla_v f_{\varepsilon_k}.
% \end{align*}

For every \newrev{$\varphi \in C_c^{\infty}([0,T)\times\mathbb{R}^{2d}; \mathbb{R}^d)$}, it follows from integration by parts that
\begin{align*}
\int_0^T\int_{\mathbb{R}^{2d}} \varphi \cdot g
&= \lim_{k \to \infty} \int_0^T\int_{\mathbb{R}^{2d}} {\color{darkergreen}\varphi\rho^{1/2} \cdot \phi^{1/2}(f_{\varepsilon_k})\nabla_v\mathcal H^{\rm app}_{\varepsilon_k}(f_{\varepsilon_k})} \\
&= - \lim_{k \to \infty} \int_0^T\int_{\mathbb{R}^{2d}} \nabla_v \cdot (\varphi\rho^{1/2}) \left( \int_0^{f_{\varepsilon_k}} \phi^{1/2}(\zeta) {\color{darkergreen}((\varepsilon_k+\Psi_{\varepsilon_k}'(\zeta))^{1/2}- \Psi'(\zeta)^{1/2})} \, \mathrm{d}\zeta \right) \\
&\quad - \lim_{k \to \infty} \int_0^T\int_{\mathbb{R}^{2d}} \nabla_v \cdot (\varphi\rho^{1/2}) \left( \int_0^{f_{\varepsilon_k}} \phi^{1/2}(\zeta)  \Psi'(\zeta)^{1/2} \, \mathrm{d}\zeta \right).
\end{align*}
Since $\phi$ has compact support in $\mathbb{R}_+$, by the dominated convergence theorem, 
\begin{align*}
&\left|- \lim_{k \to \infty} \int_0^T\int_{\mathbb{R}^{2d}} \nabla_v \cdot (\varphi\rho^{1/2}) \left( \int_0^{f_{\varepsilon_k}} \phi^{1/2}(\zeta) {\color{darkergreen}((\varepsilon_k+\Psi_{\varepsilon_k}'(\zeta))^{1/2}- \Psi'(\zeta)^{1/2})} \, \mathrm{d}\zeta \right)\right|\\
\leq&\limsup_{k \to \infty} \int_0^T\int_{\mathbb{R}^{2d}} |\nabla_v \cdot (\varphi\rho^{1/2})| \left| \int_{\mathrm{supp}\ \phi} \phi^{1/2}(\zeta) {\color{darkergreen}((\varepsilon_k+\Psi_{\varepsilon_k}'(\zeta))^{1/2}- \Psi'(\zeta)^{1/2})} \, \mathrm{d}\zeta \right|=0.
\end{align*}
Furthermore, the convergence $f_{\varepsilon_k} \to f$ in $L^1([0,T]; L^1_{\mathrm{loc}}(\mathbb{R}^{2d}))$ implies that 
\begin{align*}
\int_0^T\int_{\mathbb{R}^{2d}} \varphi \cdot g&= - \lim_{k \to \infty} \int_0^T\int_{\mathbb{R}^{2d}} \nabla_v \cdot (\varphi\rho^{1/2}) \left( \int_0^{f_{\varepsilon_k}} \phi^{1/2}(\zeta)  \Psi'(\zeta)^{1/2} \, \mathrm{d}\zeta \right)\\
&=- \int_0^T\int_{\mathbb{R}^{2d}} \nabla_v \cdot (\varphi\rho^{1/2}) \left( \int_0^{f} \phi^{1/2}(\zeta) \Psi'(\zeta)^{1/2} \, \mathrm{d}\zeta \right) \\
&= \int_0^T\int_{\mathbb{R}^{2d}} {\color{darkergreen}\varphi \cdot \rho^{1/2}\phi^{1/2}(f) \nabla_v\mathcal H(f).}
\end{align*}
{\color{darkergreen}This shows that $g = \rho^{1/2}\phi^{1/2}(f) \nabla_v\mathcal H(f)$ almost everywhere.}

{\color{darkergreen}Consequently, by the lower semicontinuity of the $L^2$-norm, for every nonnegative $\rho\in C_c^\infty([0,T)\times\mathbb R^{2d})$ and $\phi\in C_c^\infty((0,\infty))$,}
\begin{align*}
{\color{darkergreen}
\int \rho(t,z)\phi(\zeta)\,dq
\geq
\int_0^T\int_{\mathbb R^{2d}}
\rho(t,z)\phi(f(t,z))|\nabla_v\mathcal H(f(t,z))|^2\,dzdt .}
\end{align*}
Furthermore, since $q_{\varepsilon_k}\rightharpoonup q$ weakly-* in
$C_0([0,T]\times\mathbb{R}^{2d}\times(0,\infty))^*$, it follows from the lower semicontinuity of the total variation norm and \eqref{kmeasure-totalvariation} that 
\begin{align*}
q([0,T]\times\mathbb{R}^{2d}\times(0,\infty))\leq\liminf_{k\to\infty}q_{\varepsilon_k}([0,T]\times\mathbb{R}^{2d}\times(0,\infty))\leq C(f_0). 
\end{align*}
% \begin{align*}
% q([0,T]\times\mathbb{R}^{2d}\times(0,\infty))
% &= \lim_{M\to\infty} q([0,T]\times\mathbb{R}^{2d}\times[0,M]) \\
% &= \lim_{M\to\infty} \lim_{k\to\infty} q_{\varepsilon_k}([0,T]\times\mathbb{R}^{2d}\times[0,M]) \\
% &\leq C(f_0).
% \end{align*}
Therefore, $q$ is a finite measure, and hence we obtain
$$
\liminf_{M\to\infty} q([0,T]\times\mathbb{R}^{2d}\times[M,M+1]) = 0.
$$ 

Let 
$$
A=\{t\in[0,T]: q(\{t\}\times\mathbb{R}^{2d}\times(0,\infty))\neq0\}
$$
be the atoms of the kinetic measure. Hence, for every $\psi \in C_c^{\infty}(\mathbb{R}^{2d+1})$ and almost every $t \in [0,T]\backslash A$, the kinetic formulation holds:
\begin{align}\label{final-kinetic-formula}
\int_{\mathbb{R}^{2d+1}} \chi(t) \psi
&= \int_{\mathbb{R}^{2d+1}} \chi_0 \psi
+ \int_0^t \int_{\mathbb{R}^{2d+1}} \Psi'(\zeta) \chi \Delta_v \psi
+ \int_0^t \int_{\mathbb{R}^{2d+1}} v \chi \cdot \nabla_x \psi\notag \\
&\quad - \int_0^t \int_{\mathbb{R}^{2d+1}} \partial_{\zeta} \psi \, q.
\end{align}
%Moreover, assume that $r_2 \ge 1 - 1/d$ if $d \ge 2$. Then, by Proposition~\ref{prop-existence-weak-pde}, \eqref{preservation-mass} holds for $f$ for all $t \in [0,T]$. This implies that the kinetic measure satisfies \eqref{kinetic-measure-decay}.
{\red
Since $q$ is finite, $A$ is at most countable. Moreover, by the mass
preservation obtained in Proposition~\ref{prop-existence-weak-pde} and the
small-$\zeta$ estimate in Proposition~\ref{vanish-0-kinetic}, the same trace
argument as in \cite[Pages 44--45]{FG24} gives an $L^1(\mathbb R^{2d})$-
continuous representative, still denoted by $f$, and the associated kinetic
function $\chi=\mathbf 1_{\{0<\zeta<f\}}$ is continuous in
$L^1_{\rm loc}(\mathbb R^{2d}\times(0,\infty))$ with respect to time.

It remains only to rule out atoms of $q$. Fix $t\in A$ and test the kinetic
formulation on $(t-\tau,t+\tau)$ with a function
$\psi\in C_c^\infty(\mathbb R^{2d}\times(0,\infty))$. Letting $\tau\downarrow0$,
the transport and diffusion terms vanish, while the left-hand side vanishes by
the $L^1$-continuity of $\chi$. Therefore
$$
        \int_{\{t\}\times\mathbb R^{2d}\times(0,\infty)}
        \partial_\zeta\psi\,dq=0
        \qquad
        \text{for all }\psi\in C_c^\infty(\mathbb R^{2d}\times(0,\infty)).
$$
Thus $\partial_\zeta q_t=0$ in the sense of distributions, where
$q_t:=q|_{\{t\}\times\mathbb R^{2d}\times(0,\infty)}$. Since $q_t$ is a finite
measure in the $\zeta$-variable, this implies $q_t=0$. Hence $q$ has no atoms
in time. Consequently, the kinetic formulation holds for every $t\in[0,T]$.
}
%By following the same argument as in \cite[Pages 43--44]{FG24}, the renormalized kinetic solution $f$ admits a continuous modification in $L^1(\mathbb{R}^{2d})$, which we denote by $\tilde{f}$. Moreover, the associated kinetic function $\tilde{\chi}=\mathbf{1}_{\{0<\zeta<\tilde{f}\}}$ satisfies \eqref{final-kinetic-formula} for every $t\in[0,T]$, and the kinetic measure $q$ has no atoms.
\end{proof}

{\red
\begin{proof}[Proof of Theorem \ref{thm:main}]
Based on Proposition \ref{prop-existence-weak-pde}, under the condition Assumption \ref{A3}, there is a non-negative function $f$ is a weak solution.

It remains to get the Besov regularity. For each fixed dyadic block $\mathcal R_j^a$ and each finite $q>1$, the uniform
$L^\infty(0,T;L^1\cap L^2)$ bounds imply, after extracting a subsequence, that
$$
        \mathcal R_j^a f_\varepsilon
        \rightharpoonup
        \mathcal R_j^a f
        \quad\text{weakly in }L^q(0,T;L^p(\mathbb R^{2d})).
$$
in the sense of distributions, and the weak limit is identified by the local
strong convergence of $f_\varepsilon$ to $f$. For every $N\ge -1$, define
$$
        \mathcal F_N(g)
        :=
        \int_0^T
        \max_{-1\le j\le N}
        2^{\beta j}\|\mathcal R_j^a g(t)\|_{L^p}\,dt .
$$
The functional $\mathcal F_N$ is convex and lower semicontinuous under the above
weak convergence. Hence
$$
        \mathcal F_N(f)
        \le
        \liminf_{\varepsilon\to0}\mathcal F_N(f_\varepsilon)
        \le
        \liminf_{\varepsilon\to0}
        \|f_\varepsilon\|_{L^1(0,T;\bB^\beta_{p;a})}.
$$
Letting $N\to\infty$ and using monotone convergence yields
$$
        \|f\|_{L^1(0,T;\bB^\beta_{p;a})}
        \le
        \liminf_{\varepsilon\to0}
        \|f_\varepsilon\|_{L^1(0,T;\bB^\beta_{p;a})}.
$$
This completes the proof.
    \end{proof}
Finally, we provide a proof for Corollary \ref{corollary-sde}. 
\begin{proof}[Proof of Corollary \ref{corollary-sde}]
We note that
\begin{align*}
    I&:=\int_0^T \int_{\mR^{2d}} \frac{a(f(t,z))}{1+|z|^2}f(t,z)dz dt= \int_0^T \int_{\mR^{2d}} \frac{\Psi(f(t,z))}{1+|z|^2}dz dt\\
    &\lesssim \int_0^T \int_{\mR^{2d}} \frac{f(t,z)^{r_2}+f(t,z)^{r_1}}{1+|z|^2}dz dt\lesssim \int_0^T \int_{\mR^{2d}} \frac{f(t,z)^{r_2\wedge 1}+f(t,z)^{2}}{1+|z|^2}dz dt.
\end{align*}
Since $f\in L^\infty([0,T];L^2(\mathbb{R}^{2d}))$ and H\"older's inequality implies that
\begin{align*}
    \int_{\mR^{2d}} \frac{f(t,z)^{r_2\wedge1}}{1+|z|^2}dz\le \|f(t)\|_{L^1(\mathbb{R}^{2d})}^{r_2\wedge 1}\left(\int_{\mR^{2d}} (1+|z|^2)^{-\frac{1}{1-r_2\wedge1}}dz\right)^{1-r_2\wedge1}<\infty,
\end{align*}
provided that $\frac{2}{1-r_2\wedge1}>\frac{2}{1-(1-1/d)\wedge1}>2d$.

 Thus the Hypothesis (1.3) in \cite{BRS19} can be verified, based on the superposition principle, \cite[Theorem 1.1]{BRS19} and the argument in \cite[Section 2]{BR20}, we complete the proof. 
\end{proof}
}

\section{Uniqueness}\label{sec-7}
In this section, we establish the uniqueness of renormalized kinetic solutions to \eqref{PDE-0}.

\begin{theorem}\label{Uniqueness-pde}
Assume that $f_{1,0}$, $f_{2,0}$, and $\Psi$ satisfy Assumption \ref{A3}. Let $f_1$ and $f_2$ be two renormalized kinetic solutions to \eqref{PDE-0} associated with initial data $f_{1,0}$ and $f_{2,0}$, respectively. {\red Assume that for all $t\in[0,T]$ and $i=1,2$,
\begin{equation*}%\label{l1-bound-kinetic}
		\|f_i(t)\|_{L^{1}(\mathbb{R}^{2d})}=\|f_{i,0}\|_{L^{1}\left(\mathbb{R}^{2d}\right)}.
		\end{equation*}
} Then the following stability estimate holds:
\begin{align*}
\sup_{t\in[0,T]}\|f_1(t)-f_2(t)\|_{L^1(\mathbb{R}^{2d})}
\leq \|f_{1,0}-f_{2,0}\|_{L^1(\mathbb{R}^{2d})}.
\end{align*}
\end{theorem}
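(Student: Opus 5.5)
We follow the doubling-of-variables strategy for kinetic solutions of degenerate parabolic equations, in the form used by Fehrman--Gess \cite{FG24}, adapted to kinetic transport. Let $\chi_i=\mathbf 1_{\{0<\zeta<f_i\}}$ with kinetic measures $q_i$. We start from the identity
$$
\int|f_1-f_2|\,dz=\int\!\!\int\big(\chi_1+\chi_2-2\chi_1\chi_2\big)\,d\zeta\,dz .
$$
The task is then to compute $\frac{d}{dt}$ of the mixed term $\int\chi_1\chi_2$, the other two terms being controlled by mass preservation.

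\textbf{Regularisation and the mixed term.} We mollify in $(x,v,\zeta)$. Set $\chi_i^{\delta}:=\chi_i*\rho_\delta$, where the mollifier is adapted to the kinetic scaling, i.e.\ $\rho_\delta(x,v,\zeta)=\delta^{-3d}\kappa_\delta(\zeta)\rho(x/\delta^{3},v/\delta)$, so that the commutator with $v\cdot\nabla_x$ is $O(\delta^{2})$ as in \eqref{0613:00}. We also insert the cutoffs $\varphi_\beta(\zeta)\zeta_M(\zeta)$ and a phase-space cutoff $\alpha_{R_1}(x)\alpha_{R_2}(v)$. Testing \eqref{MC kenitic solution} for each $i$ against the mollified, cut-off kinetic function of the other solution yields an equation for
$$
\int\chi_1^\delta\chi_2^\delta\,\varphi_\beta\zeta_M\alpha_{R_1}\alpha_{R_2}.
$$
Since $\chi_1$ and $\chi_2$ are transported by the same field $v\cdot\nabla_x$, the transport terms cancel up to the commutator, which vanishes as $\delta\to0$, and up to boundary terms from $\nabla_x\alpha_{R_1}$. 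These boundary terms are removed first by letting $R_1\to\infty$ for fixed $R_2$, as in Proposition \ref{vanish-0-kinetic}. The diffusion terms and the kinetic-measure terms combine into
$$
-2\int\!\!\int\rho_\delta\,\Psi'(\zeta)^{1/2}\Psi'(\eta)^{1/2}\,\nabla_v f_1\cdot\nabla_v f_2
\;+\;\int\!\!\int\rho_\delta\big(dq_1\,\delta_{f_2=\eta}+dq_2\,\delta_{f_1=\zeta}\big).
$$
Here the second piece arises from $\partial_\zeta$ falling on $\chi$ of the other solution. By the domination \eqref{control KM}, $q_i\ge\delta_{f_i=\zeta}|\nabla_vH(f_i)|^2$, so this combination is bounded below by $\int\rho_\delta|\nabla_vH(f_1)-\nabla_vH(f_2)|^2\ge0$, up to an error coming from the difference $\sqrt{\Psi'(\zeta)}-\sqrt{\Psi'(\eta)}$ on $|\zeta-\eta|<\delta$. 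On $\operatorname{supp}\varphi_\beta\zeta_M$ the function $\Psi'$ is locally uniformly continuous, so this error vanishes as $\delta\to0$ for fixed $\beta,M$. Hence the mixed term is nondecreasing modulo the error terms.

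\textbf{Removing the cutoffs.} The terms in which $\partial_\zeta$ hits $\zeta_M$ are bounded by $q_i(\mathbb R^{2d}\times[M,M+1]\times[0,T])$, and these vanish along a subsequence by \eqref{control CKM VI}. The velocity cutoff terms $\Delta_v\alpha_{R_2}$ are handled by integrating by parts onto $\int_0^{f}\Psi'\varphi_\beta\zeta_M$, which is bounded by $C(\beta,M)f$. This contribution is $O(R_2^{-2})$ by the $L^1$-bound, exactly as in Proposition \ref{vanish-0-kinetic}.

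\textbf{Main obstacle: the lower cutoff $\beta\to0$.} The term carrying $\varphi_\beta'=\frac2\beta\mathbf 1_{[\beta/2,\beta]}$ is the one we expect to be hardest. It has the form $\beta^{-1}\int\chi_j\,dq_i$ restricted to $\zeta\in[\beta/2,\beta]$. Since $\chi_j\le1$, it is bounded by $\beta^{-1}q_i(\mathbb R^{2d}\times[\beta/2,\beta]\times[0,T])$. This tends to $0$ by Proposition \ref{vanish-0-kinetic}, which is exactly where the mass-preservation hypothesis enters; the singularity $\Psi'(\zeta)\to\infty$ as $\zeta\to0$ is thereby avoided. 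The same proposition, together with the mass identity, shows that $\int\chi_i\varphi_\beta\to\|f_i(t)\|_{L^1}=\|f_{i,0}\|_{L^1}$.

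Collecting the estimates gives $\|f_1(t)-f_2(t)\|_{L^1}\le\|f_{1,0}-f_{2,0}\|_{L^1}$ for every $t\in[0,T]$. Here we use the time continuity of $\chi_i$ in $L^1_{\rm loc}$ established in Proposition \ref{existence-kinetic-solution}, or alternatively absence of atoms, to evaluate the identity at every $t$ rather than almost every $t$.
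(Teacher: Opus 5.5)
Your overall architecture matches the paper's: the identity $\int|f_1-f_2|=\int(\chi_1+\chi_2-2\chi_1\chi_2)$, a Fehrman--Gess doubling with cutoffs $\varphi_\beta\zeta_M\alpha_{R_1}\alpha_{R_2}$, nonnegativity of the measure term from \eqref{control KM}, removal of $\zeta_M$ by \eqref{control CKM VI} and of $\varphi_\beta$ by Proposition~\ref{vanish-0-kinetic}, and $R_1\to\infty$ before $R_2\to\infty$.

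\textbf{The gap: the transport term.} Your mollifier $\rho(x/\delta^3,v/\delta)$ uses the kinetic scaling (its normalisation should be $\delta^{-4d}$). For it, the commutator kernel is $w\cdot\nabla_x\rho_\delta(y,w)$. Here $|w|\lesssim\delta$ on the support, while $\nabla_x\rho_\delta$ carries a factor $\delta^{-3}$. So the kernel is $\delta^{-2}$ times a rescaled kernel of mass zero: the commutator is of size $\delta^{-2}$, not $\delta^{2}$. The doubled computation behaves the same way. Up to the boundary term $\int v\cdot\nabla_x\phi\,\chi_1^\delta\chi_2^\delta$, the transport remainder is
\begin{equation*}
-\int\phi\,\chi_1(z_1,\zeta_1)\chi_2(z_2,\zeta_2)\Big[(u_1-v)\cdot\nabla_x\rho^{(1)}_\delta\,\rho^{(2)}_\delta+(u_2-v)\cdot\rho^{(1)}_\delta\,\nabla_x\rho^{(2)}_\delta\Big],\qquad \rho^{(i)}_\delta:=\rho_\delta(z-z_i,\zeta_i-\eta),
\end{equation*}
with $z_i=(x_i,u_i)$. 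Its size is governed by the ratio of velocity scale to spatial scale, which is $\delta/\delta^3=\delta^{-2}$ for your choice. To make $\delta^{-2}$ times a mass-zero approximate identity acting on $\chi_i$ vanish, you would need anisotropic regularity of order larger than $2$ (in the scaling $a=(3,1)$). Renormalized kinetic solutions have no such regularity; even the constructed solution only gains $\beta<2/3$ in Theorem~\ref{thm:main}. Note also that \eqref{0613:00} uses the opposite anisotropy: spatial scale $h$, velocity scale $h^2$.

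\textbf{The repair.} Make the velocity mollification finer than the spatial one. For example, $\rho_\delta=\delta^{-4d}\rho(x/\delta,v/\delta^3)$ really gives an $O(\delta^2)$ remainder. Alternatively, as in the paper, use separate parameters and send $\varepsilon_v\downarrow0$ before $\varepsilon_x\downarrow0$.

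\textbf{Two smaller points.} First, in the error term, integrating the two $\zeta$-mollifiers in $\eta$ costs a factor $\delta^{-1}$. You therefore need $|(\Psi')^{1/2}(a)-(\Psi')^{1/2}(b)|^2\lesssim_{\beta,M}\delta^2$ for $|a-b|\le2\delta$. This is Lipschitz continuity of $(\Psi')^{1/2}$ on $[\beta/4,M+2]$, available since $\Psi''\in C(0,\infty)$; mere uniform continuity does not suffice. Second, in the mixed term the velocity-cutoff contribution carries the factor $\chi_j^\delta$. It is therefore not a pure $v$-derivative and cannot simply be integrated by parts onto $\int_0^f\Psi'\varphi_\beta\zeta_M$. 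Bound it directly by $C(\beta,M,T)R_2^{-1}\|\nabla_v\mathcal H(f_i)\|_{L^2}(\|f_{i,0}\|_{L^1}/\beta)^{1/2}$.

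With these corrections your argument is essentially the paper's proof. Your shortcut of treating $\int\chi_1+\int\chi_2$ directly by mass preservation is fine.
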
 
\begin{proof}
	Let $\chi_1$ and $\chi_2$ denote the renormalized kinetic functions associated with $f_1$ and $f_2$, respectively. 
{\color{darkergreen}
Throughout this proof, set
\begin{equation*}
    \mathcal H(r):=\int_0^r\Psi'(\zeta)^{1/2}\,d\zeta .
\end{equation*}
We understand $\Psi'(f_i)^{1/2}\nabla_v f_i$ as $\nabla_v\mathcal H(f_i)$, and $\Psi'(f_i)\nabla_v f_i$ as $\Psi'(f_i)^{1/2}\nabla_v\mathcal H(f_i)$ on the support of the kinetic cutoffs. Since this support is contained in $\{\beta/2\le f_i\le M+1\}$ after the mollification in the $\zeta$-variable, the corresponding coefficients are bounded.
}

\begin{revblock}
Let $\kappa_x,\kappa_v,\kappa_\zeta$ be standard even mollifiers and write $\varepsilon=(\varepsilon_x,\varepsilon_v)$. We set
\begin{align*}
\kappa^{\varepsilon,\delta}(z,w,\zeta,\eta)
=\kappa_x^{\varepsilon_x}(x-y)\kappa_v^{\varepsilon_v}(v-u)\kappa_\zeta^\delta(\zeta-\eta),
\quad z=(x,v),\ w=(y,u).
\end{align*}
The limits are always taken in the order
\begin{align*}
\varepsilon_v\downarrow0,\quad \varepsilon_x\downarrow0,\quad \delta\downarrow0,\quad R_1\uparrow\infty,\quad R_2\uparrow\infty,\quad M\uparrow\infty,\quad \beta\downarrow0,
\end{align*}
with the spatial cutoff removed before the velocity cutoff.
\end{revblock}
For $i=1,2$, we introduce the regularized kinetic functions
$$
\chi_i^{\varepsilon,\delta}(z,\eta)
= \chi_i \ast \kappa^{\varepsilon,\delta}
:= \int_{\mathbb{R}^{2d+1}} \chi_i(z',\zeta)\, \kappa^{\varepsilon,\delta}(z, z', \zeta, \eta)\, dz'\, d\zeta.
$$

By the definition of kinetic functions together with the elementary properties of indicator functions, we obtain
\begin{align}\label{L1-difference}
\int_{\mathbb{R}^{2d}} |f_1(t) - f_2(t)|\, dz
&= \int_{\mathbb{R}^{2d+1}}  |\chi_1(t) - \chi_2(t)|\, d\zeta\, dz
= \int_{\mathbb{R}^{2d+1}}  |\chi_1(t) - \chi_2(t)|^2\, d\zeta\, dz \notag\\
%&= \int_{\mathbb{R}^{2d+1}} \big(\chi_1(t)^2 + \chi_2(t)^2 - 2\chi_1(t)\chi_2(t)\big)\, dz\, d\zeta \notag\\
&= \int_{\mathbb{R}^{2d+1}} \big(\chi_1(t) + \chi_2(t) - 2\chi_1(t)\chi_2(t)\big)\, dz\, d\zeta \notag\\
&= \lim_{\varepsilon,\delta,\beta \to 0} \lim_{M,R \to \infty} 
\int_{\mathbb{R}^{2d+1}} \big(\chi_1^{\varepsilon,\delta}(t) + \chi_2^{\varepsilon,\delta}(t) - 2\chi_1^{\varepsilon,\delta}(t)\chi_2^{\varepsilon,\delta}(t)\big)
\varphi_{\beta}\zeta_M\alpha_R\, dz\, d\zeta.
\end{align}

For $i=1,2$ and $s\in[0,T]$, define
$$
\bar{\kappa}^{\delta}_{s,i}(z,\eta) := \kappa^{\delta}(f_i(z,s), \eta),
\qquad 
\bar{\kappa}^{\varepsilon,\delta}_{s,i}(z,w,\eta) := \kappa^{\varepsilon,\delta}(z, w, f_i(z,s), \eta),
$$
for $(z,w,\eta)\in\mathbb{R}^{2d}\times\mathbb{R}^{2d}\times\mathbb{R}$. For each $i=1,2$, applying the kinetic formulation \eqref{MC kenitic solution} with test function $\kappa^{\varepsilon,\delta}$ yields that, for every $(z,\eta)\in\mathbb{R}^{2d}\times(\delta/2,+\infty)$,
\begin{align*}
\chi^{\varepsilon,\delta}_{s,i}(z,\eta)\Big|^t_{s=0}
&= \nabla_v \cdot \Big( \int_0^t ({\color{darkergreen}\Psi'(f_i)^{1/2}\nabla_v\mathcal H(f_i)} \ast \bar{\kappa}^{\varepsilon,\delta}_{s,i})(z,\eta)\, ds \Big)
- \int_0^t ((v \cdot \nabla_x \chi_i) \ast \kappa^{\varepsilon,\delta}_{s,i})(z,\eta)\, ds \\
&\quad + \partial_{\eta} \Big( \int_0^t \kappa^{\varepsilon,\delta}_{s} \ast q_i(z,\eta)\, ds \Big).
\end{align*}
Here and in the following, we use convolution notation for brevity. For example,
$$
({\color{darkergreen}\Psi'(f_i)^{1/2}\nabla_v\mathcal H(f_i)} \ast \bar{\kappa}^{\varepsilon,\delta}_{s,i})(z,\eta)
= \int_{\mathbb{R}^{2d}} {\color{darkergreen}\Psi'(f_i)^{1/2}\nabla_v\mathcal H(f_i)}(w)\, \kappa^{\varepsilon,\delta}(z, w, f_i(w,s), \eta)\, dw.
$$
% We further emphasize that the spatial derivative is understand in the distributional sense: 
% $$
% (v\cdot\nabla_xf_i)\ast\bar{\kappa}^{\varepsilon,\delta}_{s,i})(z,\eta)=\int_{\mathbb{R}^{2d}}v'\nabla_xf_i\kappa^{\varepsilon,\delta}
% $$

Recall that $(\alpha_R)_{R\geq1}$ is defined by \eqref{truncation-property} and \eqref{truncation-property-2}. Furthermore, for every $R_1,R_2\in(1,+\infty)$, we denote $\alpha_{R_1,R_2}(z)=\alpha_{R_1}(x)\alpha_{R_2}(v)$. We next examine the first two contributions on the right-hand side of \eqref{L1-difference}. For every $\varepsilon,\beta\in(0,1)$, $M\in\mathbb{N}$, $R_1,R_2\in(1,+\infty)$, $\delta\in(0,\beta/4)$, $t\in[0,T]$, and $i=1,2$, we have
\begin{align*}
\int_{\mathbb{R}^{2d+1}} \chi^{\varepsilon,\delta}_{s,i}(z,\eta)\, \varphi_{\beta}(\eta)\zeta_M(\eta)\alpha_{R_1,R_2}(z)\, dz\, d\eta \Big|^t_{s=0}
= I^{i,\mathrm{cut}}_t + I^{i,\mathrm{trans}}_t,
\end{align*}
where
\begin{align*}
I^{i,\mathrm{cut}}_t
&= \int_{\mathbb{R}^{2d+1}} \nabla_v \cdot \Big( \int_0^t ({\color{darkergreen}\Psi'(f_i)^{1/2}\nabla_v\mathcal H(f_i)} \ast \bar{\kappa}^{\varepsilon,\delta}_{s,i})(z,\eta)\, ds \Big)
\varphi_{\beta}(\eta)\zeta_M(\eta)\alpha_{R_1,R_2}(z) \\
&\quad + \int_{\mathbb{R}^{2d+1}} \partial_{\eta} \Big( \int_0^t \kappa^{\varepsilon,\delta}_{s} \ast q_i(z,\eta)\, ds \Big)
\varphi_{\beta}(\eta)\zeta_M(\eta)\alpha_{R_1,R_2}(z),
\end{align*}
and
\begin{align*}
I^{i,\mathrm{trans\text{-}x}}_t
= -\int_0^t \int_{\mathbb{R}^{2d+1}} ((v \cdot \nabla_x \chi_i) \ast \kappa^{\varepsilon,\delta}_{s,i})(z,\eta)
\varphi_{\beta}(\eta)\zeta_M(\eta)\alpha_{R_1,R_2}(z).
\end{align*}

For the mixed term in \eqref{L1-difference}, for every $\beta\in(0,1)$, $M\in\mathbb{N}$, and $R_1,R_2\in(1,+\infty)$, we test against $\varphi_{\beta}\zeta_M\alpha_R$ and apply the chain rule to obtain, for every $t\in[0,T]$,
\begin{align*}
&\quad\int_{\mathbb{R}^{2d+1}} \chi^{\varepsilon,\delta}_{s,1}(z,\eta)\chi^{\varepsilon,\delta}_{s,2}(z,\eta)
\varphi_{\beta}(\eta)\zeta_M(\eta)\alpha_{R_1,R_2}(z)\Big|^t_{s=0}\\
&= \int_0^t \int_{\mathbb{R}^{2d+1}} \chi^{\varepsilon,\delta}_{s,2}(z,\eta)\, d\chi^{\varepsilon,\delta}_{s,1}(z,\eta)
\varphi_{\beta}(\eta)\zeta_M(\eta)\alpha_{R_1,R_2}(z) \\
&\quad + \int_0^t \int_{\mathbb{R}^{2d+1}} \chi^{\varepsilon,\delta}_{s,1}(z,\eta)\, d\chi^{\varepsilon,\delta}_{s,2}(z,\eta)
\varphi_{\beta}(\eta)\zeta_M(\eta)\alpha_{R_1,R_2}(z).
\end{align*}

We first expand the term $\chi^{\varepsilon,\delta}_{s,2}\, d\chi^{\varepsilon,\delta}_{s,1}$. Applying the kinetic formulation \eqref{MC kenitic solution} to $\chi^{\varepsilon,\delta}_{s,1}$, we obtain that for every $\beta\in(0,1)$, $M\in\mathbb{N}$, $R_1,R_2\in(1,+\infty)$, and $t\in[0,T]$,
\begin{align}\label{21-formula}
\int_0^t \int_{\mathbb{R}^{2d+1}} 
\chi^{\varepsilon,\delta}_{s,2}(z,\eta)\, d\chi^{\varepsilon,\delta}_{s,1}(z,\eta)\,
\varphi_{\beta}(\eta)\zeta_M(\eta)\alpha_{R_1,R_2}(z)
= I^{2,1,\mathrm{err}}_t + I^{2,1,\mathrm{meas}}_t + I^{2,1,\mathrm{cut}}_t + I^{2,1,\mathrm{trans}}_t,
\end{align}
where the error term is given by
\begin{align*}
I^{2,1,\mathrm{err}}_t
&= -\int_0^t \int_{\mathbb{R}^{2d+1}}
{\color{darkergreen}\Psi'(f_1)^{1/2}\nabla_v\mathcal H(f_1)} \ast \bar{\kappa}^{\varepsilon,\delta}_{s,1}
\cdot {\color{darkergreen}\Psi'(f_2)^{-1/2}\nabla_v\mathcal H(f_2)} \ast \bar{\kappa}^{\varepsilon,\delta}_{s,2}\,
\varphi_{\beta}(\eta)\zeta_M(\eta)\alpha_{R_1,R_2}(z) \\
&\quad + \int_0^t \int_{\mathbb{R}^{2d+1}}
{\color{darkergreen}\nabla_v\mathcal H(f_1)} \ast \bar{\kappa}^{\varepsilon,\delta}_{s,1}
\cdot {\color{darkergreen}\nabla_v\mathcal H(f_2)} \ast \bar{\kappa}^{\varepsilon,\delta}_{s,2}\,
\varphi_{\beta}(\eta)\zeta_M(\eta)\alpha_{R_1,R_2}(z),
\end{align*}
the measure term is given by
\begin{align*}
I^{2,1,\mathrm{meas}}_t
&= \int_0^t \int_{\mathbb{R}^{2d+1}}
\kappa^{\varepsilon,\delta} \ast q_1 \, \bar{\kappa}^{\varepsilon,\delta}_{s,2}(z,w)\,
\varphi_{\beta}(\eta)\zeta_M(\eta)\alpha_{R_1,R_2}(z) \\
&\quad - \int_0^t \int_{\mathbb{R}^{2d+1}}
{\color{darkergreen}\nabla_v\mathcal H(f_1)} \ast \bar{\kappa}^{\varepsilon,\delta}_{s,1}
\cdot {\color{darkergreen}\nabla_v\mathcal H(f_2)} \ast \bar{\kappa}^{\varepsilon,\delta}_{s,2}\,
\varphi_{\beta}(\eta)\zeta_M(\eta)\alpha_{R_1,R_2}(z),
\end{align*}
the cutoff term is given by
\begin{align*}
I^{2,1,\mathrm{cut}}_t
&= -\int_0^t \int_{\mathbb{R}^{2d+1}}
(\kappa^{\varepsilon,\delta} \ast q_1)\,
\chi^{\varepsilon,\delta}_{s,2}\,
\partial_{\eta}(\varphi_{\beta}\zeta_M)\alpha_{R_1,R_2} \\
&\quad - \int_0^t \int_{\mathbb{R}^{2d+1}}
({\color{darkergreen}\Psi'(f_1)^{1/2}\nabla_v\mathcal H(f_1)} \ast \bar{\kappa}^{\varepsilon,\delta}_{s,1})\,
\chi^{\varepsilon,\delta}_{s,2}\cdot \nabla_v \alpha_{R_1,R_2}\,
\varphi_{\beta}\zeta_M,
\end{align*}
and the transport term is given by
\begin{align*}
I^{2,1,\mathrm{trans}}_t
= -\int_0^t \int_{\mathbb{R}^{2d+1}}
\big((v \cdot \nabla_x \chi_1) \ast \kappa^{\varepsilon,\delta}_{s,1}\big)\,
\chi^{\varepsilon,\delta}_{s,2}\,
\varphi_{\beta}\zeta_M\alpha_{R_1,R_2}.
\end{align*}

We recall that \rev{$q_1$ and $q_2$} denote the kinetic measures associated with $f_1$ and $f_2$, respectively.  
Moreover, in $I^{2,1,\mathrm{meas}}_t$ and $I^{2,1,\mathrm{cut}}_t$, we adopt the convolution notation
$$
\kappa^{\varepsilon,\delta} \ast q_1
= \int_{\mathbb{R}^{2d+1}} \kappa^{\varepsilon,\delta}(z, w, \zeta, \eta)\, dq_1(w, \zeta, s).
$$

Analogously to \eqref{21-formula}, the term
$$
\int_0^t \int_{\mathbb{R}^{2d+1}}
\chi^{\varepsilon,\delta}_{s,1}\, d\chi^{\varepsilon,\delta}_{s,2}\,
\varphi_{\beta}\zeta_M\alpha_{R_1,R_2}
$$
admits a similar representation, which we omit for brevity.  
Combining all the above identities, we deduce that
\begin{align*}
\int_{\mathbb{R}^{2d+1}}
\chi^{\varepsilon,\delta}_{s,1}\chi^{\varepsilon,\delta}_{s,2}\,
\varphi_{\beta}\zeta_M\alpha_{R_1,R_2}
\Big|^{t}_{s=0}
= I^{\mathrm{err}}_t + I^{\mathrm{meas}}_t + I^{\mathrm{cut}}_t + I^{\mathrm{trans}}_t,
\end{align*}
where the error term is given by
\begin{align*}
I^{\mathrm{err}}_t
= -\int_0^t \int_{\mathbb{R}^{2d+1}}
{\color{darkergreen}\frac{\big(\Psi'(f_1)^{1/2} - \Psi'(f_2)^{1/2}\big)^2}{\Psi'(f_1)^{1/2}\Psi'(f_2)^{1/2}}
\nabla_v\mathcal H(f_1) \cdot \nabla_v\mathcal H(f_2)\,}
\bar{\kappa}^{\varepsilon,\delta}_{s,1}\bar{\kappa}^{\varepsilon,\delta}_{s,2}\,
\varphi_{\beta}\zeta_M\alpha_{R_1,R_2},
\end{align*}
the measure term is given by
\begin{align*}
I^{\mathrm{meas}}_t
&= I^{2,1,\mathrm{meas}}_t + I^{1,2,\mathrm{meas}}_t \\
&= \int_0^t \int_{\mathbb{R}^{2d+1}}
\kappa^{\varepsilon,\delta} \ast q_1\, \bar{\kappa}^{\varepsilon,\delta}_{s,2}(z,w)\,
\varphi_{\beta}(\eta)\zeta_M(\eta)\alpha_{R_1,R_2}(z) \\
&\quad + \int_0^t \int_{\mathbb{R}^{2d+1}}
\kappa^{\varepsilon,\delta} \ast q_2\, \bar{\kappa}^{\varepsilon,\delta}_{s,1}(z,w)\,
\varphi_{\beta}(\eta)\zeta_M(\eta)\alpha_{R_1,R_2}(z) \\
&\quad - 2\int_0^t \int_{\mathbb{R}^{2d+1}}
{\color{darkergreen}\nabla_v\mathcal H(f_1)} \ast \bar{\kappa}^{\varepsilon,\delta}_{s,1}
\cdot {\color{darkergreen}\nabla_v\mathcal H(f_2)} \ast \bar{\kappa}^{\varepsilon,\delta}_{s,2}\,
\varphi_{\beta}(\eta)\zeta_M(\eta)\alpha_{R_1,R_2}(z),
\end{align*}
and the cutoff and transport terms are defined by
\begin{align*}
I^{\mathrm{cut}}_t
&= I^{1,\mathrm{cut}}_t + I^{2,\mathrm{cut}}_t
- 2\big(I^{2,1,\mathrm{cut}}_t + I^{1,2,\mathrm{cut}}_t\big), \\
I^{\mathrm{trans}}_t
&= I^{1,\mathrm{trans}}_t + I^{2,\mathrm{trans}}_t
- 2\big(I^{2,1,\mathrm{trans}}_t + I^{1,2,\mathrm{trans}}_t\big).
\end{align*}
Consequently, we arrive at
\begin{align*}
\int_{\mathbb{R}^{2d+1}}
\big(\chi^{\varepsilon,\delta}_{s,1}
+ \chi^{\varepsilon,\delta}_{s,2}
- 2\chi^{\varepsilon,\delta}_{s,1}\chi^{\varepsilon,\delta}_{s,2}\big)
\varphi_{\beta}\zeta_M
\Big|^{t}_{s=0}
= -2I^{\mathrm{err}}_t - 2I^{\mathrm{meas}}_t + I^{\mathrm{cut}}_t + I^{\mathrm{trans}}_t.
\end{align*}

{\bf The measure term.} 
Thanks to the regularity property \eqref{control KM}, and by applying H\"older's and Young's inequalities, we obtain that for every $t \in [0,T]$,
\begin{align*}
I^{\mathrm{meas}}_t \geq 0.
\end{align*}
{\color{darkergreen}Indeed, after convolution, the domination \eqref{control KM} implies that each measure contribution dominates the square of the corresponding mollified weighted field. Therefore $I^{\mathrm{meas}}_t$ is bounded below by an integral of $|A_1-A_2|^2$ with a nonnegative cutoff weight, where $A_i$ denotes the mollified field $\nabla_v\mathcal H(f_i)$. This argument does not require equality in \eqref{control KM}; the possible defect part of $q_i$ is nonnegative and only improves the lower bound.}

{\bf The error term.} 
With the aid of the truncation functions $\varphi_{\beta}$ and $\zeta_M$, there exists a constant $c \in (0,\infty)$ depending on $M$ and $\beta$ such that
\begin{align*}
\limsup_{\varepsilon \to 0}|I^{\mathrm{err}}_t|
\leq c \delta \int_0^t \int_{\mathbb{R}^{2d+1}} 
\mathbf{1}_{\{0 < |f_1 - f_2| < c\delta\}} 
{\color{darkergreen}|\nabla_v\mathcal H(f_1)|\, |\nabla_v\mathcal H(f_2)|} 
(\delta \bar{\kappa}^{\delta}_{s,1}) \bar{\kappa}_{s,2} 
\varphi_{\beta} \zeta_M \alpha_{R_1,R_2}.
\end{align*}
{\color{darkergreen}The constants depending on $\beta$ and $M$ absorb the bounded coefficients relating $\nabla_v f_i$ and $\nabla_v\mathcal H(f_i)$ on the support of the kinetic cutoffs. Indeed, after integration in $\eta$, the kernel factor has uniformly bounded
mass, while
$|\nabla_v\mathcal H(f_1)|\,|\nabla_v\mathcal H(f_2)|\in L^1$ by Cauchy's
inequality.}
By the dominated convergence theorem, we then obtain
\begin{align*}
\limsup_{\delta \to 0} \limsup_{\varepsilon \to 0} |I^{\mathrm{err}}_t| = 0.
\end{align*}

{\bf The cutoff term.} 
For the cutoff term, following the proof of \cite[(4.30)]{FG24}, we have
\begin{align*}
&\limsup_{\varepsilon, \delta \to 0} I^{\mathrm{cut}}_t
\leq \int_0^t \int_{\mathbb{R}^{2d+1}} 
|\partial_{\eta}(\varphi_{\beta}\zeta_M)| \alpha_{R_1,R_2} (dq_1 + dq_2) \\
& + \int_0^t \int_{\mathbb{R}^{2d+1}} 
\sgn(f_2 - f_1)
\big( 
\varphi_{\beta}\zeta_M(f_1){\color{darkergreen}\Psi'(f_1)^{1/2}\nabla_v\mathcal H(f_1)} 
- \varphi_{\beta}\zeta_M(f_2){\color{darkergreen}\Psi'(f_2)^{1/2}\nabla_v\mathcal H(f_2)}
\big) 
\cdot \nabla_v \alpha_{R_1,R_2}.
\end{align*}
\begin{revblock}
The part containing $\partial_\eta\zeta_M$ vanishes as $M\to\infty$ by \eqref{control CKM VI}. The part containing $\partial_\eta\varphi_\beta$ is supported on $\{\beta/2\leq \eta\leq\beta\}$, and Proposition~\ref{vanish-0-kinetic} gives
\begin{align*}
\beta^{-1}(q_1+q_2)(\mathbb R^{2d}\times[\beta/2,\beta]\times[0,T])\to0.
\end{align*}
The remaining spatial and velocity boundary terms are removed in the order $R_1\to\infty$ for fixed $R_2$, then $R_2\to\infty$, exactly as in the mass cutoff argument; hence no first velocity moment is used. For more details, we refer the reader to \cite[(4.30)]{FG24}. Using Proposition~\ref{vanish-0-kinetic}, we deduce that 
\end{revblock}
\begin{align*}
\lim_{\beta\to0}\lim_{M \to \infty} \lim_{R_2 \to \infty}\lim_{R_1 \to \infty} 
\lim_{\delta \to 0} \lim_{\varepsilon_x \to 0}\lim_{\varepsilon_v \to 0} 
\Big( \max_{t \in [0,T]} I^{\mathrm{cut}}_t \Big) = 0.
\end{align*}

{\bf The transport term. }
By following the proof of \cite[Theorem 3.2, pages 25,26]{HWZ25}, we conclude that for every $t \in [0,T]$,
$$
\lim_{\beta\to0}\lim_{M \to \infty} \lim_{R_2 \to \infty}\lim_{R_1 \to \infty} \limsup_{\delta \to 0} \limsup_{\varepsilon_x \to 0}\limsup_{\varepsilon_v \to 0} I^{\mathrm{trans}}_t = 0.
$$

{\bf Conclusion.} Combining all the estimates derived above, we conclude the proof of uniqueness. Precisely, for every $t \in [0,T]$, we have the estimate
\begin{align*}
&\|f_1(t) - f_2(t)\|_{L^1(\mathbb{R}^{2d})}\\
\leq& \lim_{\beta\to0}\lim_{M \to \infty} \lim_{R_2 \to \infty}\lim_{R_1 \to \infty} \lim_{\delta \to 0} \lim_{\varepsilon_x \to 0} \lim_{\varepsilon_v \to 0} 
\int_{\mathbb{R}^{2d+1}} \bigl( \chi_1^{\varepsilon,\delta}(t) + \chi_2^{\varepsilon,\delta}(t) - 2 \chi_1^{\varepsilon,\delta}(t) \chi_2^{\varepsilon,\delta}(t) \bigr) \varphi_{\beta} \zeta_M \alpha_R \, dz d\zeta \\
\leq& \|f_{1,0}-f_{2,0}\|_{L^1(\mathbb{R}^{2d})}.
\end{align*}
This implies that $f_1 \equiv f_2$ almost everywhere in $[0,T] \times \mathbb{R}^{2d}$ when \rev{$f_{1,0}\equiv f_{2,0}$} almost everywhere in $\mathbb{R}^{2d}$, completing the proof.

\end{proof}

{\red
\begin{proof}[Proof of Theorem \ref{thm:unique}] 
By Proposition \ref{prop-existence-weak-pde}, $f$ satisfies mass preservation \eqref{mass-conservation}. By Proposition \ref{existence-kinetic-solution}, $f$ is also a renormalized kinetic solution. Then Theorem \ref{Uniqueness-pde} yields uniqueness. This completes the proof.
\end{proof}
}

\bibliographystyle{alpha}
\bibliography{reference-kpm-revised}

@misc{BCD26,
      title={The fundamental solution of a nonlinear kinetic Fokker-Planck equation}, 
      author={Giovanni Brigati and Guillaume Carlier and Jean Dolbeault},
      year={2026},
      eprint={2603.26650},
      archivePrefix={arXiv},
      primaryClass={math.AP},
      url={https://arxiv.org/abs/2603.26650}, 
}

@article {BRS19,
    AUTHOR = {Bogachev, Vladimir I. and R\"ockner, Michael and Shaposhnikov,
              Stanislav V.},
     TITLE = {On the {A}mbrosio-{F}igalli-{T}revisan superposition principle
              for probability solutions to {F}okker-{P}lanck-{K}olmogorov
              equations},
   JOURNAL = {J. Dynam. Differential Equations},
  FJOURNAL = {Journal of Dynamics and Differential Equations},
    VOLUME = {33},
      YEAR = {2021},
    NUMBER = {2},
     PAGES = {715--739},
      ISSN = {1040-7294,1572-9222},
   MRCLASS = {60J60 (35Q84)},
  MRNUMBER = {4248630},
MRREVIEWER = {Giacomo\ Ascione},
       DOI = {10.1007/s10884-020-09828-5},
       URL = {https://doi.org/10.1007/s10884-020-09828-5},
}

@article{BR20,
author = {Viorel Barbu and Michael R\"ockner},
title = {{From nonlinear Fokker-Planck equations to solutions of distribution dependent SDE}},
volume = {48},
journal = {The Annals of Probability},
number = {4},
publisher = {Institute of Mathematical Statistics},
pages = {1902 -- 1920},
year = {2020},
doi = {10.1214/19-AOP1410},
URL = {https://doi.org/10.1214/19-AOP1410}
}

@misc{HWZ25,
      title={Kinetic Theory with Fluctuations: Strong Well-Posedness of the Vlasov-Fokker-Planck-Dean-Kawasaki System}, 
      author={Zimo Hao and Zhengyan Wu and Johannes Zimmer},
      year={2025},
      eprint={2511.10194},
      archivePrefix={arXiv},
      primaryClass={math.PR},
      url={https://arxiv.org/abs/2511.10194}, 
}

@book {V07,
    AUTHOR = {V{\'a}zquez, Juan Luis},
     TITLE = {The porous medium equation},
    SERIES = {Oxford Mathematical Monographs},
      NOTE = {Mathematical theory},
 PUBLISHER = {The Clarendon Press, Oxford University Press, Oxford},
      YEAR = {2007},
     PAGES = {xxii+624},
      ISBN = {978-0-19-856903-9; 0-19-856903-3},
   MRCLASS = {35-02 (35B05 35B40 35K57 76S05)},
  MRNUMBER = {2286292},
MRREVIEWER = {Vicen\c tiu\ D.\ R\u adulescu},
}

@article {CF80,
    AUTHOR = {Caffarelli, Luis A. and Friedman, Avner},
     TITLE = {Regularity of the free boundary of a gas flow in an
              {$n$}-dimensional porous medium},
   JOURNAL = {Indiana Univ. Math. J.},
  FJOURNAL = {Indiana University Mathematics Journal},
    VOLUME = {29},
      YEAR = {1980},
    NUMBER = {3},
     PAGES = {361--391},
      ISSN = {0022-2518,1943-5258},
   MRCLASS = {35R35 (35K99 76S05)},
  MRNUMBER = {570687},
MRREVIEWER = {Emmanuele\ di Benedetto},
       DOI = {10.1512/iumj.1980.29.29027},
       URL = {https://doi.org/10.1512/iumj.1980.29.29027},
}

@article {BGV08,
    AUTHOR = {Bonforte, Matteo and Grillo, Gabriele and Vazquez, Juan Luis},
     TITLE = {Fast diffusion flow on manifolds of nonpositive curvature},
   JOURNAL = {J. Evol. Equ.},
  FJOURNAL = {Journal of Evolution Equations},
    VOLUME = {8},
      YEAR = {2008},
    NUMBER = {1},
     PAGES = {99--128},
      ISSN = {1424-3199,1424-3202},
   MRCLASS = {35K55 (35B45 35B65 35K65)},
  MRNUMBER = {2383484},
MRREVIEWER = {Yoshio\ Yamada},
       DOI = {10.1007/s00028-007-0345-4},
       URL = {https://doi.org/10.1007/s00028-007-0345-4},
}

@book {DK07,
    AUTHOR = {Daskalopoulos, Panagiota and Kenig, Carlos E.},
     TITLE = {Degenerate diffusions},
    SERIES = {EMS Tracts in Mathematics},
    VOLUME = {1},
      NOTE = {Initial value problems and local regularity theory},
 PUBLISHER = {European Mathematical Society (EMS), Z\"urich},
      YEAR = {2007},
     PAGES = {x+198},
      ISBN = {978-3-03719-033-3},
   MRCLASS = {35K57 (35-02 35B40 35B41 35B45)},
  MRNUMBER = {2338118},
MRREVIEWER = {Roberto\ Natalini},
       DOI = {10.4171/033},
       URL = {https://doi.org/10.4171/033},
}

@article {BV14,
    AUTHOR = {Bonforte, Matteo and V\'azquez, Juan Luis},
     TITLE = {Quantitative local and global a priori estimates for
              fractional nonlinear diffusion equations},
   JOURNAL = {Adv. Math.},
  FJOURNAL = {Advances in Mathematics},
    VOLUME = {250},
      YEAR = {2014},
     PAGES = {242--284},
      ISSN = {0001-8708,1090-2082},
   MRCLASS = {35R11 (35B45 35B65 35K55 35K65)},
  MRNUMBER = {3122168},
MRREVIEWER = {Nuri\ \"Ozalp},
       DOI = {10.1016/j.aim.2013.09.018},
       URL = {https://doi.org/10.1016/j.aim.2013.09.018},
}

@article {HP85,
    AUTHOR = {Herrero, Miguel A. and Pierre, Michel},
     TITLE = {The {C}auchy problem for {$u_t=\Delta u^m$} when {$0<m<1$}},
   JOURNAL = {Trans. Amer. Math. Soc.},
  FJOURNAL = {Transactions of the American Mathematical Society},
    VOLUME = {291},
      YEAR = {1985},
    NUMBER = {1},
     PAGES = {145--158},
      ISSN = {0002-9947,1088-6850},
   MRCLASS = {35K55 (76X05)},
  MRNUMBER = {797051},
MRREVIEWER = {L.\ Hsiao},
       DOI = {10.2307/1999900},
       URL = {https://doi.org/10.2307/1999900},
}

@article {LPT94,
    AUTHOR = {Lions, P.-L. and Perthame, B. and Tadmor, E.},
     TITLE = {A kinetic formulation of multidimensional scalar conservation
              laws and related equations},
   JOURNAL = {J. Amer. Math. Soc.},
  FJOURNAL = {Journal of the American Mathematical Society},
    VOLUME = {7},
      YEAR = {1994},
    NUMBER = {1},
     PAGES = {169--191},
      ISSN = {0894-0347,1088-6834},
   MRCLASS = {35L65 (35K65)},
  MRNUMBER = {1201239},
MRREVIEWER = {Long\ Wei\ Lin},
       DOI = {10.2307/2152725},
       URL = {https://doi.org/10.2307/2152725},
}

@book {D16,
    AUTHOR = {Dafermos, Constantine M.},
     TITLE = {Hyperbolic conservation laws in continuum physics},
    SERIES = {Grundlehren der mathematischen Wissenschaften [Fundamental
              Principles of Mathematical Sciences]},
    VOLUME = {325},
   EDITION = {Fourth},
 PUBLISHER = {Springer-Verlag, Berlin},
      YEAR = {2016},
     PAGES = {xxxviii+826},
      ISBN = {978-3-662-49449-3; 978-3-662-49451-6},
   MRCLASS = {35L65 (35-02 35L67 74J40 76-02)},
  MRNUMBER = {3468916},
MRREVIEWER = {Marta\ Lewicka},
       DOI = {10.1007/978-3-662-49451-6},
       URL = {https://doi.org/10.1007/978-3-662-49451-6},
}

@article {K70,
    AUTHOR = {Kruv zkov, S. N.},
     TITLE = {First order quasilinear equations with several independent
              variables},
   JOURNAL = {Mat. Sb. (N.S.)},
  FJOURNAL = {Matematicheski\u i\ Sbornik. Novaya Seriya},
    VOLUME = {81(123)},
      YEAR = {1970},
     PAGES = {228--255},
      ISSN = {0368-8666},
   MRCLASS = {35.37},
  MRNUMBER = {267257},
MRREVIEWER = {Z.\ Zielezny},
}

@article {Lax57,
    AUTHOR = {Lax, P. D.},
     TITLE = {Hyperbolic systems of conservation laws. {II}},
   JOURNAL = {Comm. Pure Appl. Math.},
  FJOURNAL = {Communications on Pure and Applied Mathematics},
    VOLUME = {10},
      YEAR = {1957},
     PAGES = {537--566},
      ISSN = {0010-3640,1097-0312},
   MRCLASS = {35.00},
  MRNUMBER = {93653},
MRREVIEWER = {M.\ A.\ Hyman},
       DOI = {10.1002/cpa.3160100406},
       URL = {https://doi.org/10.1002/cpa.3160100406},
}

@article {CP82,
    AUTHOR = {Crandall, Michael and Pierre, Michel},
     TITLE = {Regularizing effects for {$u\sb{t}+A\varphi (u)=0$}\ in
              {$L\sp{1}$}},
   JOURNAL = {J. Functional Analysis},
  FJOURNAL = {Journal of Functional Analysis},
    VOLUME = {45},
      YEAR = {1982},
    NUMBER = {2},
     PAGES = {194--212},
      ISSN = {0022-1236},
   MRCLASS = {34G20 (47H20)},
  MRNUMBER = {647071},
MRREVIEWER = {Olusola\ Akinyele},
       DOI = {10.1016/0022-1236(82)90018-0},
       URL = {https://doi.org/10.1016/0022-1236(82)90018-0},
}

@article {AB79,
    AUTHOR = {Aronson, Donald G. and B\'enilan, Philippe},
     TITLE = {R\'egularit\'e{} des solutions de l'\'equation des milieux
              poreux dans {${\bf R}\sp{N}$}},
   JOURNAL = {C. R. Acad. Sci. Paris S\'er. A-B},
  FJOURNAL = {Comptes Rendus Hebdomadaires des S\'eances de l'Acad\'emie des
              Sciences. S\'eries A et B},
    VOLUME = {288},
      YEAR = {1979},
    NUMBER = {2},
     PAGES = {A103--A105},
      ISSN = {0151-0509},
   MRCLASS = {35K55},
  MRNUMBER = {524760},
}

@article {Ko34,
    AUTHOR = {Kolmogoroff, A.},
     TITLE = {Zuf\"allige {B}ewegungen (zur {T}heorie der {B}rownschen
              {B}ewegung)},
   JOURNAL = {Ann. of Math. (2)},
  FJOURNAL = {Annals of Mathematics. Second Series},
    VOLUME = {35},
      YEAR = {1934},
    NUMBER = {1},
     PAGES = {116--117},
      ISSN = {0003-486X,1939-8980},
   MRCLASS = {99-04},
  MRNUMBER = {1503147},
       DOI = {10.2307/1968123},
       URL = {https://doi.org/10.2307/1968123},
}

@article {HZZZ24,
    AUTHOR = {Hao, Zimo and Zhang, Xicheng and Zhu, Rongchan and Zhu,
              Xiangchan},
     TITLE = {Singular kinetic equations and applications},
   JOURNAL = {Ann. Probab.},
  FJOURNAL = {The Annals of Probability},
    VOLUME = {52},
      YEAR = {2024},
    NUMBER = {2},
     PAGES = {576--657},
      ISSN = {0091-1798,2168-894X},
   MRCLASS = {60H15 (35K15 35R09 35R60)},
  MRNUMBER = {4718402},
MRREVIEWER = {Pedro\ Mar\'in Rubio},
       DOI = {10.1214/23-aop1666},
       URL = {https://doi.org/10.1214/23-aop1666},
}

@article {HRZ25,
    AUTHOR = {Hao, Z. and R\"ockner, M. and Zhang, X.},
     TITLE = {Second-order fractional mean-field {SDE}s with singular
              kernels and measure initial data},
   JOURNAL = {Ann. Probab.},
  FJOURNAL = {The Annals of Probability},
    VOLUME = {54},
      YEAR = {2026},
    NUMBER = {1},
     PAGES = {1--62},
      ISSN = {0091-1798,2168-894X},
   MRCLASS = {60H10 (35R11 35R60)},
  MRNUMBER = {5019007},
       DOI = {10.1214/24-AOP1709},
       URL = {https://doi.org/10.1214/24-AOP1709},
}

@article {RZ25,
    AUTHOR = {Ren, Chongyang and Zhang, Xicheng},
     TITLE = {Heat kernel estimates for kinetic {SDE}s with drifts being
              unbounded and in {K}ato's class},
   JOURNAL = {Bernoulli},
  FJOURNAL = {Bernoulli. Official Journal of the Bernoulli Society for
              Mathematical Statistics and Probability},
    VOLUME = {31},
      YEAR = {2025},
    NUMBER = {2},
     PAGES = {1402--1427},
      ISSN = {1350-7265,1573-9759},
   MRCLASS = {60H10},
  MRNUMBER = {4863081},
       DOI = {10.3150/24-bej1775},
       URL = {https://doi.org/10.3150/24-bej1775},
}

@article {Bo02,
    AUTHOR = {Bouchut, F.},
     TITLE = {Hypoelliptic regularity in kinetic equations},
   JOURNAL = {J. Math. Pures Appl. (9)},
  FJOURNAL = {Journal de Math\'ematiques Pures et Appliqu\'ees. Neuvi\`eme
              S\'erie},
    VOLUME = {81},
      YEAR = {2002},
    NUMBER = {11},
     PAGES = {1135--1159},
      ISSN = {0021-7824},
   MRCLASS = {82C40 (35B65 35H10)},
  MRNUMBER = {1949176},
MRREVIEWER = {Florent\ Berthelin},
       DOI = {10.1016/S0021-7824(02)01264-3},
       URL = {https://doi.org/10.1016/S0021-7824(02)01264-3},
}

@article {HWZ20,
    AUTHOR = {Hao, Zimo and Wu, Mingyan and Zhang, Xicheng},
     TITLE = {Schauder estimates for nonlocal kinetic equations and
              applications},
   JOURNAL = {J. Math. Pures Appl. (9)},
  FJOURNAL = {Journal de Math\'ematiques Pures et Appliqu\'ees. Neuvi\`eme
              S\'erie},
    VOLUME = {140},
      YEAR = {2020},
     PAGES = {139--184},
      ISSN = {0021-7824,1776-3371},
   MRCLASS = {35K65 (35K08 35R09 35R60 60H10)},
  MRNUMBER = {4124429},
MRREVIEWER = {J\o rgen\ Endal},
       DOI = {10.1016/j.matpur.2020.06.003},
       URL = {https://doi.org/10.1016/j.matpur.2020.06.003},
}

@article {IS21,
    AUTHOR = {Imbert, Cyril and Silvestre, Luis},
     TITLE = {The {S}chauder estimate for kinetic integral equations},
   JOURNAL = {Anal. PDE},
  FJOURNAL = {Analysis \& PDE},
    VOLUME = {14},
      YEAR = {2021},
    NUMBER = {1},
     PAGES = {171--204},
      ISSN = {2157-5045,1948-206X},
   MRCLASS = {35K70 (35R09)},
  MRNUMBER = {4229202},
       DOI = {10.2140/apde.2021.14.171},
       URL = {https://doi.org/10.2140/apde.2021.14.171},
}

@article {CHM21,
    AUTHOR = {Chaudru de Raynal, Paul-\'Eric and Honor\'e, Igor and Menozzi,
              St\'ephane},
     TITLE = {Sharp {S}chauder estimates for some degenerate {K}olmogorov
              equations},
   JOURNAL = {Ann. Sc. Norm. Super. Pisa Cl. Sci. (5)},
  FJOURNAL = {Annali della Scuola Normale Superiore di Pisa. Classe di
              Scienze. Serie V},
    VOLUME = {22},
      YEAR = {2021},
    NUMBER = {3},
     PAGES = {989--1089},
      ISSN = {0391-173X,2036-2145},
   MRCLASS = {35B65 (35R60 60H10)},
  MRNUMBER = {4334312},
}

@article {Me11,
    AUTHOR = {Menozzi, St\'ephane},
     TITLE = {Parametrix techniques and martingale problems for some
              degenerate {K}olmogorov equations},
   JOURNAL = {Electron. Commun. Probab.},
  FJOURNAL = {Electronic Communications in Probability},
    VOLUME = {16},
      YEAR = {2011},
     PAGES = {234--250},
      ISSN = {1083-589X},
   MRCLASS = {60G46 (60H10)},
  MRNUMBER = {2802040},
MRREVIEWER = {Christos\ E.\ Kountzakis},
       DOI = {10.1214/ECP.v16-1619},
       URL = {https://doi.org/10.1214/ECP.v16-1619},
}

@misc{BDM26,
      title={Nonlinear kinetic Fokker-Planck equations: existence and diffusion limits}, 
      author={Emeric Bouin and Jean Dolbeault and Antoine Mellet},
      year={2026},
      eprint={2606.31899},
      archivePrefix={arXiv},
      primaryClass={math.PR},
      url={https://arxiv.org/abs/2606.31899}, 
}

@article {FG24,
    AUTHOR = {Fehrman, Benjamin and Gess, Benjamin},
     TITLE = {Well-{P}osedness of the {D}ean--{K}awasaki and the {N}onlinear
              {D}awson--{W}atanabe {E}quation with {C}orrelated {N}oise},
   JOURNAL = {Arch. Ration. Mech. Anal.},
  FJOURNAL = {Archive for Rational Mechanics and Analysis},
    VOLUME = {248},
      YEAR = {2024},
    NUMBER = {2},
     PAGES = {Paper No. 20},
      ISSN = {0003-9527},
   MRCLASS = {35R60 (35B09 35K40 60H15)},
  MRNUMBER = {4716244},
       DOI = {10.1007/s00205-024-01963-3},
       URL = {https://doi.org/10.1007/s00205-024-01963-3},
}

\end{document}